\documentclass[10.5pt, one sided]{amsart}
\RequirePackage[colorlinks,citecolor=blue,urlcolor=blue]{hyperref}

\usepackage{latexsym,amssymb,amsmath,amsfonts,amsthm}
\usepackage{amsthm,amsmath}
\usepackage{float}
\usepackage{enumerate}
\usepackage{scalerel}[2016/12/29]

\usepackage[margin=2.5cm]{geometry}
\usepackage{bbm}
\usepackage{subfigure}
\usepackage{graphicx}
\usepackage[toc,page]{appendix}
\usepackage{multirow}
\usepackage{amsfonts}
\usepackage[all]{xy}
\usepackage{caption}
 \usepackage{geometry}                
\geometry{a4paper}                   
\usepackage{graphicx}
\usepackage{amssymb}
\usepackage{epstopdf}
\usepackage{color}
\usepackage{bbm, dsfont}
\DeclareGraphicsRule{.tif}{png}{.png}{`convert #1 `dirname #1`/`basename #1 .tif`.png}
\usepackage{hyperref}
\usepackage[utf8]{inputenc}
\setcounter{tocdepth}{2}
\usepackage{wasysym}
\usepackage{comment}

\usepackage{wrapfig}

\usepackage{amssymb,amsthm,amsmath,amssymb,dsfont}
\usepackage[dvipsnames]{xcolor}
\definecolor{myred}{RGB}{251,154,133}
\definecolor{myblue}{RGB}{153,206,227}
\definecolor{mylightblue}{RGB}{0, 150, 255}
\definecolor{mygreen}{RGB}{32, 210, 64}
\definecolor{mygray}{RGB}{220, 220, 220}

\usepackage{tikz}
\usetikzlibrary{decorations.pathmorphing}
\tikzset{snake it/.style={decorate, decoration=snake}}
\usetikzlibrary{shapes.geometric,positioning,decorations.pathreplacing} 
\usepackage{pgfplots}

\newtheorem{theorem}{Theorem}
\newtheorem{definition}{Definition}[section]
\newtheorem{lemma}{Lemma}[section]
\newtheorem{remark}{Remark}[section]

\newtheorem{corollary}[lemma]{Corollary}

\def\beq{ \begin{equation} }
\def\eeq{ \end{equation} }

\def\square{\vcenter{\vbox{\hrule height .4pt
  \hbox{\vrule width .4pt height 5pt \kern 5pt
        \vrule width .4pt} \hrule height .4pt}}}

\newcommand{\smallnearrow}{\scaleobj{0.6}{\nearrow}}
\newcommand{\smallsearrow}{\scaleobj{0.6}{\searrow}}
\newcommand{\smallrightarrow}{\scaleobj{0.6}{\rightarrow}}



\newcommand{\bae}{\begin{equation}\begin{aligned}}
\newcommand{\eae}{\end{aligned}\end{equation}}

\DeclareFontFamily{OML}{rsfs}{\skewchar\font'177}
\DeclareFontShape{OML}{rsfs}{m}{n}{ <5> <6> rsfs5 <7> <8> <9>
rsfs7 <10> <10.95> <12> <14.4> <17.28> <20.74> <24.88> rsfs10 }{}
\DeclareMathAlphabet{\mathfs}{OML}{rsfs}{m}{n}



\newcommand{\note}[1]{{\color{red}{ \bf{ [Note: #1]}}}}


\begin{document}

\title{The Double Bubble Problem in the Hexagonal Norm}

\author{Parker Duncan}
\address[Parker Duncan]{Faculty of Data and Decision Sciences, Technion - Israel Institute of Technology}
\email{parkerduncan@campus.technion.ac.il}

\author{Rory O'Dwyer}
\address[Rory O'Dwyer]{Department of Physics, Stanford University}
\email{rodwyer@stanford.edu}

\author{Eviatar B. Procaccia}
\address[Eviatar B. Procaccia]{Faculty of Data and Decision Sciences, Technion - Israel Institute of Technology}
\urladdr{https://procaccia.net.technion.ac.il}
\email{eviatarp@technion.ac.il}

\maketitle
\begin{abstract}

We study the double bubble problem where the perimeter is taken with respect to the hexagonal norm, i.e. the norm whose unit circle in $\mathbb{R}^2$ is the regular hexagon. We provide an elementary proof for the existence of minimizing sets for volume ratio parameter $\alpha\in (0,1]$ by arguing that any minimizer must belong to a small family of parameterized sets. This family is further simplified by showing that $60^{\circ}$ angles are not optimal as well as other geometric exclusions. We then provide a  minimizer for all $\alpha\in(0,1]$ except at a single point, for which we find two minimizing configurations.

\end{abstract}



\section{Introduction}

The Double Bubble problem asks what configuration or configurations of two sets of given volumes minimize their surface area; this is a natural extension of the classical isoperimetric problem. In the case of the Euclidean norm on $\mathbb{R}^2$ and $\mathbb{R}^3$, it has been shown in \cite{foisy1993standard} and \cite{hutchings2002proof} that the minimizing geometry consists of three spherical caps that intersect at an angle of $\frac{2\pi}{3}$. This was extended in \cite{reichardt2007proof} to dimensions $n\geq4$. More recently, it was shown by Milman and Neeman in \cite{milman2022gaussian} that the same solution holds when measuring volume and perimeter with respect to the Gaussian measure. Other natural variants on the classical double bubble problem in Euclidean space have also been solved. These include the solution to the problem in the $\ell^1$ norm on the plane in \cite{morgan1998wulff}; more recently an elementary solution was given to this problem in \cite{duncan2023elementary}. A generalization to this problem, in which the interface between the two sets is given a potentially different weight than the non-joint boundary, was recently solved in \cite{friedrich2023characterization}. 

In this paper we study the double bubble problem with respect to the hexagonal norm, which is to say the norm whose unit ball is the regular hexagon. The hexagonal norm is a natural norm to study in problems related to the classical double bubble problem. In \cite{hales2001honeycomb}, it was shown that partitioning the plane into regions of equal-area using the regular hexagonal honeycomb tiling minimizes, in some sense, the perimeter of the tiling. For a large, but finite number of equal area regions, hexagons also appear. It has been shown in \cite{Cox_2013} that for a large number of sets, the structure that minimizes perimeter consists of many interior hexagons, while for the sets on the periphery it is best to round some of the edges. 

The hexagonal norm comes from a family of norms which presents many interesting features. In this family, there is a norm that gives as its unit ball the square, the regular hexagon, the regular octagon, and in general any regular $2n$-gon ($n>1$). Thus, as the number of sides increases, the balls in these norms converge, in the Hausdorff sense, to the Euclidean ball. We would, therefore, suppose that the solution to the double bubble problem also converges to the solution in the Euclidean norm. Likely, the most difficult part of this problem would be to show that there is a unique solution to the double bubble problem for each norm for $n$ large enough. Note that, in this paper, we show that uniqueness fails for at least one volume ratio.  

As mentioned, in the Euclidean norm all three caps in the solution meet at $120^{\circ}$ angles. It arises naturally in the hexagonal norm that the solution also consists of three paths that meet at $120^{\circ}$ angles. Indeed, a great part of the difficulty presented by the problem in this paper is to eliminate the possibility of $60^{\circ}$ angles in the solution's shape and to show optimality of $120^{\circ}$ angles.

To account for the possibility that the double bubble minimizing shapes might contain interior $60^{\circ}$ angles, we form a nice family of configurations that depends on a finite number of parameters that represent side lengths, and that we hope contains a shape that is a solution. The plan was to follow the argument in \cite{duncan2023elementary} and use the KKT method to optimize the length of each of the sides in this family. This is a powerful optimization method and is presented in \cite{Karush2014}. However, the number of parameters is too great to be able to apply the KKT method. Therefore, we considered the optimization problem of each set individually given a fixed joint boundary\footnote{Inspiration for this came from the story of Princess Dido and the founding of Carthage, made famous by Virgil in \cite{virgil2009aeneid}}. That is to say, we fixed the joint boundary of the two sets and then optimized the side lengths of the non-joint boundary. This so significantly reduced the number of parameters required that the KKT method was no longer necessary, and we were able to use the methods of elementary calculus instead. 

Other double bubble work has been done in the discrete setting, which presents its own challenges. In relation to this paper, natural questions to ask might be about the structure of the double bubble minimizing configurations on the hexagonal lattice, and how much larger is the double bubble perimeter of these configurations than their continuous case counterparts. Such question have been asked in  \cite{duncan2023discrete} and \cite{friedrich2021double}. In \cite{duncan2023discrete}, it was shown that the discrete solution to the double bubble problem in the $\ell^1$ norm is at most two more than the ceiling function of the continuous case solution. While in \cite{friedrich2021double}, some of the features of double bubble minimizers on the lattice are described, and the case when both sets have equal volume is thoroughly analysed. Furthermore, in this latter result, as in \cite{friedrich2023characterization}, the weight given to the interface is allowed to vary.

\section{Notation and Results}

For any Lebesgue-measurable set $A\subset\mathbb{R}^2$, let $\mu(A)$ be its Lebesgue measure.  Further define the hexagonal norm on $\mathbb{R}^2$ to be: 

$$\mathcal{D}\left(x,y\right)=\max\left\{\left|x\right|+\frac{1}{\sqrt{3}}\left|y\right|,\frac{2}{\sqrt{3}}\left|y\right|\right\},\text{ }(x,y)\in\mathbb{R}^2.$$

We will see later that the unit circle to this norm is the regular hexagon as is the solution to the isoperimetric problem. Thus, the norm is self dual by \cite{taylor1975unique}. 

For a simple curve $\lambda:[a,b]\rightarrow\mathbb{R}^2$, not necessarily closed, where $\lambda(t)=(x(t),y(t))$, define its length $\rho$ with respect to $\mathcal{D}$ by 
$$\rho(\lambda)=\sup_{N\geq1}\sup_{a\leq t_1\leq...\leq t_N\leq b}\sum_{i=1}^N\mathcal{D}\Big(x(t_{i+1})-x(t_{i}),y(t_{i+1})-y(t_{i})\Big).$$

If we wish to measure only a portion of the curve $\lambda$, it will be denoted $\rho(\lambda
([t,t']))$, where $[t,t']\subset[a,b]$.  For simplicity we assume that $[a,b]=[0,1]$ unless otherwise stated.

We say that two curves $\lambda,\lambda':[0,1]\rightarrow\mathbb{R}^2$ intersect nontrivially if there are intervals $[s,s'],[t,t']\subset[0,1]$ such that $\lambda([s,s'])=\lambda'([t,t'])$; their nontrivial intersection can be written as the union of curves $\lambda_i$ such that $\lambda_i([0,1])=\lambda([s_i,s_{i+1}])\rightarrow\mathbb{R}^2$ for some intervals $[s_i,s_{i+1}]$, and we define the length of the nontrivial intersection to be $\rho(\lambda\cap\lambda'):=\sum_i\rho(\lambda_i)$.  \\

Here we are interested in the double bubble perimeter of two simply connected open sets $A,B\subset\mathbb{R}^2$ where the boundary of $A$, denoted as $\partial A$, is a closed, simple, rectifiable curve, and similarly for $B$, and where the intersection of the boundaries of $A$ and $B$ is a union of disjoint, rectifiable curves.  The double bubble perimeter is defined as $$\rho_{\text{DB}}(A,B)=\rho(\lambda)+\rho(\lambda')-\rho(\lambda\cap\lambda'),$$ where $\partial A=\lambda([0,1])$, and $\partial B=\lambda'([0,1])$.  We will also use the notation $\rho(\lambda)=\rho(\partial A)$.

For $\alpha\in(0,1]$, define:
$\gamma_{\alpha}=\{(A,B):A,B\subset\mathbb{R}^2$, where $ A,B$ are disjoint, simply connected open sets, and $\partial A$,$\partial B$,$\partial A\cap \partial B$ are unions of closed, continuous, simple, rectifiable curves, with $\mu(A)=1,\mu(B)=\alpha\}$.  

Let $$\rho_{\text{DB}}(\alpha):=\inf\{\rho_{\text{DB}}(A,B):(A,B)\in\gamma_{\alpha}\},$$ be the infimum of the double bubble perimeter (bounded below by zero).

The main result in this paper is summarized by the following theorem:

\begin{theorem}\label{thm:maintheorem}

For $0<\alpha\le 1$, 
\begin{enumerate}[I.]
\item \label{existencepart}The set $\Gamma_{\alpha}:=\{(A,B)\in\gamma_{\alpha}: \rho_{\text{DB}}(A,B)=\rho_{\text{DB}}(\alpha)\}$ is not empty.  

\item \label{part1} Furthermore, there is a phase transition at an $\alpha_0\in(0,1)$ such that for $\alpha\leq\alpha_0$, the infimum is given by the lefthand side of Figure \ref{fig:TheSolutions}, and for $\alpha\geq\alpha_0$, the minimizing configuration is given by the righthand side of Figure \ref{fig:TheSolutions}. Finally, $\alpha_0$ is calculated, as are the side lengths as a function of $\alpha$. 

\begin{figure}[H]

\begin{tikzpicture}[scale=0.3]

\draw[red, thin] (0,0) to (-1,1.732050808);
\draw[red, thin] (-1,1.732050808) to (-3.5,1.732050808);
\draw[red, thin] (-3.5,1.732050808) to (-4.5,0);
\draw[red, thin] (-4.5,0) to (-3.5,-1.732050808);
\draw[red, thin] (-3.5,-1.732050808) to (-1,-1.732050808);
\draw[red, thin] (-1,-1.732050808) to (0,0);

\draw (0.2,1) node{$\scaleobj{0.8}{y_1}$};
\draw (0.2,-1) node{$\scaleobj{0.8}{y_1}$};
\draw (-2,2.2) node{$\scaleobj{0.8}{y_2}$};
\draw (-2,-2.2) node{$\scaleobj{0.8}{y_2}$};
\draw (-4.5,1) node {\scaleobj{0.8}{y_1}};
\draw (-4.5,-1) node {\scaleobj{0.8}{y_1}};
\draw (-3.4,2.6) node {\scaleobj{0.8}{y_3}};
\draw (-3.4,-2.6) node {\scaleobj{0.8}{y_3}};
\draw (-5.1,3.9) node {\scaleobj{0.8}{y_4}};
\draw (-5.1,-3.9) node {\scaleobj{0.8}{y_4}};
\draw (-7.3,2.6) node {\scaleobj{0.8}{y_5}};
\draw (-7.3,-2.6) node {\scaleobj{0.8}{y_5}};

\draw[blue, thin] (-3.5,1.732050808) to (-4.5,3.464101615);
\draw[blue, thin] (-4.5,3.464101615) to (-6,3.464101615);
\draw[blue, thin] (-6,3.464101615) to (-8,0);
\draw[blue, thin] (-8,0) to (-6,-3.464101615);
\draw[blue, thin] (-4.5,-3.464101615) to (-6,-3.464101615);
\draw[blue, thin] (-4.5,-3.464101615) to (-3.5,-1.732050808);

\draw[blue, thin] (6,3.46410161) to (10,3.46410161);
\draw[blue, thin] (10,3.46410161) to (11,1.7320508);
\draw[blue, thin] (11,1.7320508) to (10,0);
\draw[blue, thin] (10,0) to (6,0);
\draw[blue, thin] (6,0) to (5,1.7320508);
\draw[blue, thin] (5,1.7320508) to (6,3.46410161);

\draw (8,4) node{$\scaleobj{0.8}{x_1}$};
\draw (11,3) node{$\scaleobj{0.8}{x_1}$};
\draw (5,3) node{$\scaleobj{0.8}{x_1}$};
\draw (5,0.8) node{$\scaleobj{0.8}{x_2}$};
\draw (11,0.8) node{$\scaleobj{0.8}{x_2}$}; 
\draw (8,0.35) node{$\scaleobj{0.8}{x_3}$};
\draw (5,-0.8) node{$\scaleobj{0.8}{x_4}$};
\draw (11,-0.8) node{$\scaleobj{0.8}{x_4}$};
\draw (5,-2.9) node{$\scaleobj{0.8}{x_5}$};
\draw (11,-2.9) node{$\scaleobj{0.8}{x_5}$};
\draw (8,-3.9) node{$\scaleobj{0.8}{x_5}$};

\draw[red, thin] (10,0) to (11,-1.7320508);
\draw[red, thin] (11,-1.7320508) to (10,-3.46410161);
\draw[red, thin] (10,-3.46410161) to (6,-3.46410161);
\draw[red, thin] (6,-3.46410161) to (5,-1.7320508);
\draw[red, thin] (5,-1.7320508) to (6,0);

\draw (8,1.6) node{$A$};
\draw (8,-1.6) node{$B$};

\end{tikzpicture}

\caption{\label{fig:TheSolutions}}

\end{figure}

\end{enumerate}

\end{theorem}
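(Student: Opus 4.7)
The plan is to reduce the infinite-dimensional minimization to one over a finite-parameter family of hexagonal polygons, then resolve the finite-dimensional problem by elementary calculus.  I begin with crystalline structure: since the unit ball of $\mathcal{D}$ is the regular hexagon, the Wulff shape for $\rho$ is also a hexagon, and by \cite{taylor1975unique} the hexagon is (up to scaling and translation) the unique isoperimetric set.  The first step is to show that every $(A,B) \in \Gamma_\alpha$ has boundary components consisting entirely of line segments in the three hexagonal directions (horizontal and $\pm 60^\circ$ from horizontal).  I would do this via a local exchange argument: any boundary arc that is not a union of hexagonal-direction segments can be replaced by one that is, of no greater $\rho$-length and enclosing the same signed area on either side, by decomposing its displacement vector into hexagonal components.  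Combined with lower semicontinuity of $\rho_{\text{DB}}$, this reduces consideration to polygonal configurations.

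The next stage is to argue that $A$, $B$, and $A \cup B$ are all convex, and that $\partial A \cap \partial B$ is a single polyline.  Any reflex angle in the exterior, or any additional component of the joint boundary, admits an area-preserving, perimeter-decreasing local modification --- a reflection or a strip shift --- so such features cannot occur in a minimizer.  What remains is a finite combinatorial list of types, distinguished by the angles at which the joint polyline meets the outer boundary.  In the crystalline setting these angles are multiples of $60^\circ$, and the crucial reduction is to exclude interior $60^\circ$ angles in favor of $120^\circ$ ones.  For each candidate $60^\circ$ corner I would isolate a small hexagonal neighborhood and exhibit an explicit rearrangement --- opening the corner into a pair of $120^\circ$ corners at the cost of a short additional joint segment --- that strictly decreases $\rho_{\text{DB}}$ while preserving both volumes.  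This leaves exactly the two families depicted in Figure \ref{fig:TheSolutions}, parametrized by the side-length vectors $(y_1,\dots,y_5)$ and $(x_1,\dots,x_5)$.

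To optimize within each family without invoking the full KKT machinery, I would use the ``Princess Dido'' device mentioned in the introduction: fix the joint boundary and separately optimize the non-joint boundary of $A$ subject to $\mu(A) = 1$ and of $B$ subject to $\mu(B) = \alpha$.  Each subproblem reduces to a one- or two-variable calculus exercise whose solution can be written in closed form.  Substituting the optimal side lengths yields $\rho_{\text{DB}}$ as an explicit function of $\alpha$ for each family; comparing the two functions produces a unique crossing point $\alpha_0 \in (0,1)$, below which the left configuration is strictly better, above which the right one is, and at which both realize $\rho_{\text{DB}}(\alpha_0)$ --- giving the non-uniqueness asserted in Part II.  Part I (existence) is then a byproduct, since by the reductions above the infimum over $\gamma_\alpha$ equals the infimum over a finite-dimensional compact slice of parameter space on which $\rho_{\text{DB}}$ is continuous.

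The principal obstacle I anticipate is the $60^\circ$-exclusion argument.  Because the norm is crystalline, many locally competing modifications are available near a $60^\circ$ corner, and the area-preservation constraints couple corners across the whole configuration.  Intuition from the Euclidean case (where the Plateau angle is $120^\circ$) tells us what answer to aim for, but transferring that intuition to the hexagonal setting requires delicate combinatorial case work, and I expect this to be the most labor-intensive part of the argument.
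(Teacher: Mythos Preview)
Your overall strategy matches the paper's --- reduce to hexagonal-direction polygons, eliminate $60^\circ$ angles, then optimize within the remaining finite-parameter families via the fixed-joint-boundary (``Dido'') device --- and you correctly identify the $60^\circ$-exclusion as the hard part.  However, there is a concrete gap in your reduction step.

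You propose to argue that in any minimizer ``$A$, $B$, and $A\cup B$ are all convex.''  This is false: in the left configuration of Figure~\ref{fig:TheSolutions} (the embedded-hexagon case, which \emph{is} the minimizer for $\alpha\le\alpha_0$), the set $A$ is manifestly non-convex --- it carries a wedge-shaped indentation on one side into which $B$ fits.  Only $B$ and $A\cup B$ are convex there.  So any argument that forces convexity of both bubbles would eliminate an actual minimizer and cannot be correct.  The paper avoids this trap by never asserting convexity of the individual sets; instead it works with the enclosing hexagons $A^{\varhexagon}$, $B^{\varhexagon}$ and studies the pairs $(A^{\varhexagon}\setminus B^{\varhexagon},\,B^{\varhexagon})$ or $(A^{\varhexagon},\,B^{\varhexagon}\setminus A^{\varhexagon})$, choosing whichever has larger joint boundary.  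The reduction to the family $\mathfs{F}_\alpha$ then proceeds by a long case analysis on how $\partial A^{\varhexagon}$ and $\partial B^{\varhexagon}$ intersect (both angles $120^\circ$; one $60^\circ$; both $60^\circ$; $B^{\varhexagon}\subset A^{\varhexagon}$), with explicit constructions and volume-adjustment arguments in each case.  You will need some replacement for your convexity step that accommodates the embedded configuration.

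A secondary point: your description of $60^\circ$-elimination as a purely local ``open the corner'' move underestimates what is needed.  In the paper the exclusion arguments (Lemmas~\ref{lemma:NoSixtyDegreeAngles}--\ref{lem:ThreeLineSegBndry}) are not local: they involve reorienting entire boundary segments, translating one bubble relative to the other, and then repairing volumes by moving far-away sides.  A truly local cut-and-paste would have to preserve both volumes simultaneously, and since a $60^\circ$ corner sits on the joint boundary, any small modification moves area between $A$ and $B$; compensating for that forces you to touch the configuration globally.
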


\begin{remark}
The expressions giving $\alpha_0$ as well as the side lengths are calculated implicitly but are too long to include here. The method used to find the side lengths are presented in Appendix \ref{sec:TheBigPolynomial}. A numeric approximation for the point of the phase transition is $\alpha_0\approx0.152$. 
\end{remark}

We are now ready to outline the strategy for proving Theorem \ref{thm:maintheorem}, which follows 5 main steps:
\begin{enumerate}
\item Begin with any two sets $(A,B)\in \gamma_{\alpha}$ and find sets $(\tilde{A},\tilde{B})$ that belong to a nicer family of sets, call it $\mathfs{F}_{\alpha}$ (see Figure \ref{fig:GeneralCase}), with $\rho_{\text{DB}}(\tilde{A},\tilde{B})\le \rho_{\text{DB}}(A,B)$. This part is done in Section \ref{sec:optimalsets}. The idea is to replace $B$ with the smallest hexagon containing it, call it $B^{\varhexagon}$, and $A$ with the smallest hexagon containing it, call it $A^{\varhexagon}$. Then we want to replace $(A,B)$ with something as close to $\left(A^{\varhexagon}\setminus B^{\varhexagon},B^{\varhexagon}\right)$ or $\left(A^{\varhexagon},B^{\varhexagon}\setminus A^{\varhexagon}\right)$ as we can. However, it may be that doing this creates an incorrect volume ratio. We may therefore have to adjust the volume of one or both of the new sets. Included in these arguments is the fact that if there is a minimizer, its joint boundary cannot consist of four or more line segments. 
\item The sets in $\mathfs{F}_{\alpha}$ are too complicated to analyse because there are too many parameters. So, we reduce the possible configurations we need to look at by eliminating many instances where there is a sixty degree angle formed by the line segments in $\partial A$ and $\partial B$ ($(A,B)\in\mathfs{F}_{\alpha}$). This is done in section \ref{sec:Elimination}. In these arguments, we also begin to show that the joint boundary of any minimizer can consist of one or two line segments, but not three. 

\item In section \ref{sec:NoSixties}, we finish showing that the joint boundary cannot consist of three line segments. We also categorize the remaining configurations that have no $60^{\circ}$ angles. At the end of this section, we are left only with configurations in a nicer family than $\mathfs{F}_{\alpha}$, call it $\mathfs{G}_{\alpha}$.

\item The family $\mathfs{G}_{\alpha}$ is simple enough to analyse using methods from calculus. Therefore, we can show the existence of $$\arg\inf\{\rho_{\text{DB}}({A},{B}):(A,B)\in\mathfs{G}_{\alpha}\}.$$ This part is done in Section \ref{sec:kkt}.

\item Finally, by the previous points, these sets achieve the infimum over all of $\gamma_\alpha$ proving the existence of an element in $\Gamma_{\alpha}$. Moreover we get a phase transition in $\alpha$. This is done in Section \ref{sec:ProofOfMainTheorem}.
\end{enumerate}

\section{Properties of $\mathcal{D}$}

In this section we include some of the pertinent properties of the norm $\mathcal{D}$ that will be helpful in the forthcoming sections. We begin with a description of the unit ball under $\mathcal{D}$.

\begin{lemma}\label{lem:UnitBall}
The unit ball in the metric $\mathcal{D}$ is a regular hexagon, call it $\mathfs{H}$, that is the convex hull of the following points: $\left\lbrace(1,0),\left(\frac{1}{2},\frac{\sqrt{3}}{2}\right),\left(\frac{-1}{2},\frac{\sqrt{3}}{2}\right),\left(-1,0\right),\left(\frac{-1}{2},\frac{-\sqrt{3}}{2}\right),\left(\frac{1}{2},\frac{-\sqrt{3}}{2}\right)\right\rbrace$. 

\end{lemma}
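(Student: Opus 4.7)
The plan is to unfold the definition of $\mathcal{D}$ into a pair of linear inequalities, read off their intersection as a convex polygon, and identify this polygon with the claimed hexagon. The condition $\mathcal{D}(x,y) \leq 1$ is equivalent to the conjunction
\[
|x| + \tfrac{1}{\sqrt{3}}|y| \leq 1 \quad\text{and}\quad |y| \leq \tfrac{\sqrt{3}}{2}.
\]
The first inequality cuts out a closed rhombus $R$ with vertices $(\pm 1,0)$ and $(0,\pm\sqrt{3})$; the second cuts out a horizontal strip $S$. Thus the unit ball in $\mathcal{D}$ is the bounded convex polygon $R \cap S$.

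Next I would describe $R \cap S$ explicitly. The horizontal lines $y = \pm \sqrt{3}/2$ meet the boundary of $R$ at exactly the four points $(\pm 1/2, \pm \sqrt{3}/2)$, slicing off the top and bottom vertices of $R$ while leaving $(\pm 1,0)$ intact. A direct inspection of each boundary segment of $R \cap S$ (classified by which of the two defining inequalities is active) shows that the only extreme points are the six points listed in the statement, so by convexity $R\cap S$ equals their convex hull.

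For the regularity claim, I would compute that each of the six vertices has Euclidean norm equal to $1$ and that consecutive vertices occur at polar angles differing by $\pi/3$; this places them at the vertices of a regular hexagon inscribed in the Euclidean unit circle, which is what is meant by calling $\mathfs{H}$ regular.

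I do not expect any genuine obstacle here: the proof is essentially a short unpacking of the max defining $\mathcal{D}$ together with an elementary intersection computation. The only mild point worth stating carefully is the verification that the cut by $S$ does not introduce any additional extreme points beyond those listed, which is immediate from the explicit boundary description above.
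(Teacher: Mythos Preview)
Your proof is correct and slightly different from the paper's. You use the observation that $\max(a,b)\le 1$ is equivalent to $a\le 1$ and $b\le 1$, which immediately realizes the unit ball as the intersection of a rhombus and a horizontal strip; the vertices then fall out from intersecting the strip boundary with the rhombus boundary. The paper instead first determines on which region of the plane each branch of the $\max$ is active (the two expressions agree on the lines $y=\pm\sqrt{3}x$, with $\tfrac{2}{\sqrt{3}}|y|$ dominating above and below both lines and $|x|+\tfrac{1}{\sqrt{3}}|y|$ dominating in between), and then sets the active branch equal to $1$ in each region to obtain the six bounding lines. Your route is a bit more direct for describing the ball as a convex polytope; the paper's route has the side benefit of identifying where each branch of the norm is active, information that is reused in the geodesic lemma that follows.
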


\begin{proof}
It is easily verified that $|x|+\frac{1}{\sqrt{3}}|y|$ and $\frac{2}{\sqrt{3}}|y|$ agree on the lines $y=\pm\sqrt{3}x$. These lines divide the plane into four sections.  A simple calculation shows that above both $y=\sqrt{3}x$ and $y=-\sqrt{3}x$, $\frac{2}{\sqrt{3}}|y|\geq|x|+\frac{1}{\sqrt{3}}|y|$, and similarly for below both of these lines. On the other hand, between the lines $y=\sqrt{3}x$ and $y=-\sqrt{3}x$, $|x|+\frac{1}{\sqrt{3}}|y|\geq\frac{2}{\sqrt{3}}|y|$. 

The lines $\frac{2}{\sqrt{3}}\cdot|y|=1$ result in the top and bottom edges of our unit ball.  Similarly, $|x|+\frac{1}{\sqrt{3}}|y|=1$ produces four lines. This makes six lines in total, and some simple arithmetic shows that they meet at the indicated points. 
\end{proof}

We now describe a family of geodesics under the metric $\mathcal{D}$. To do this, we divide the plane into sextants. Let $S_1$ be the set of points of $\mathbb{R}^2$ that are between the positive $x$-axis and the ray emanating from the origin and with an angle of $\frac{\pi}{3}$ (measured from the positive $x$-axis). We let $S_1$ contain both the $x$-axis, and the ray with angle $\frac{\pi}{3}$. Similarly, define $S_2$ to be the set of points in $\mathbb{R}^2$ that fall between the two rays emanating from the origin and with angles $\frac{\pi}{3}$ and $\frac{2\pi}{3}$. Again, we let $S_2$ contain both of these two rays, which means that $S_1$ and $S_2$ have non-empty intersection. Continuing in this way, we also get $S_3$, $S_4$, $S_5$, and $S_6$.

\begin{lemma}\label{lem:geodesics}

Let $(x,y)\in S_1$, and $(x',0)$ be the point of intersection of the $x$-axis and the line $L(t)=(x,y)+t\cdot\left(\frac{1}{2},\frac{\sqrt{3}}{2}\right)$. The path from the origin to $(x',0)$ and then from $(x',0)$ to $(x,y)$ along the line $L(t)$ is a geodesic between $(0,0)$ and $(x,y)$. Similar statements are true for $(x,y)\in S_2,...,S_6$.
\end{lemma}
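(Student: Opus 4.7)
The plan is to verify the claim directly in $S_1$ by a length calculation and then invoke symmetry of $\mathcal{D}$ (and of the construction) for the remaining sextants. The key observation is that in $S_1$ we have $0 \le y \le \sqrt{3}\,x$, so $x \ge y/\sqrt{3}$ and hence
\[
|x|+\tfrac{1}{\sqrt{3}}|y| \;\ge\; \tfrac{2}{\sqrt{3}}|y|,
\]
which by Lemma \ref{lem:UnitBall}'s reasoning means $\mathcal{D}(x,y)=x+y/\sqrt{3}$ on $S_1$. Setting the $y$-coordinate of $L(t)$ to zero gives $t=-2y/\sqrt{3}$ and therefore $x'=x-y/\sqrt{3}\ge 0$, so $(x',0)$ indeed lies on the non-negative $x$-axis.

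Next I would compute the $\rho$-length of the proposed two-segment path. The first segment from $(0,0)$ to $(x',0)$ has $\rho$-length $\mathcal{D}(x',0)=x-y/\sqrt{3}$. The second segment from $(x',0)$ to $(x,y)$ has displacement $(y/\sqrt{3},\,y)$, which lies in $S_1$ (it is on the boundary ray $y=\sqrt{3}x$), so its length equals
\[
\mathcal{D}\bigl(y/\sqrt{3},\,y\bigr)=\max\bigl\{y/\sqrt{3}+y/\sqrt{3},\ 2y/\sqrt{3}\bigr\}=\tfrac{2}{\sqrt{3}}y.
\]
Because $\mathcal{D}$ is translation invariant on line segments (any straight segment in a fixed direction contributes its $\mathcal{D}$-norm to $\rho$), and $\rho$ is additive on concatenations, the total length of the path is
\[
\bigl(x-y/\sqrt{3}\bigr)+\tfrac{2}{\sqrt{3}}y \;=\; x+\tfrac{1}{\sqrt{3}}y \;=\; \mathcal{D}(x,y).
\]

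To upgrade this to a geodesic statement, I would show that $\mathcal{D}(x,y)$ is a lower bound on $\rho(\lambda)$ for any simple rectifiable $\lambda$ from $(0,0)$ to $(x,y)$. This is the standard fact for norm-induced length structures: for any partition $0=t_0\le t_1\le\cdots\le t_N=1$, the triangle inequality for the norm $\mathcal{D}$ gives
\[
\sum_{i=1}^{N}\mathcal{D}\bigl(\lambda(t_i)-\lambda(t_{i-1})\bigr)\;\ge\;\mathcal{D}\!\left(\sum_{i=1}^{N}\bigl(\lambda(t_i)-\lambda(t_{i-1})\bigr)\right)=\mathcal{D}(x,y),
\]
and taking suprema over partitions yields $\rho(\lambda)\ge \mathcal{D}(x,y)$. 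Combined with the explicit computation, the two-segment path attains this infimum and is therefore a geodesic.

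Finally, to handle $S_2,\dots,S_6$, I would appeal to the symmetries of the hexagonal norm. The unit ball $\mathfs{H}$ from Lemma \ref{lem:UnitBall} is invariant under rotations by $\pi/3$ and under reflection through the $x$-axis, and each of these isometries maps $S_1$ onto some $S_k$ while mapping the direction $(1/2,\sqrt{3}/2)$ onto the corresponding bounding direction of $S_k$. Applying the appropriate isometry reduces each case to the $S_1$ computation already performed, completing the proof. The only substantive step is the length calculation in $S_1$; no real obstacle arises, since the construction is designed precisely so that the ``diagonal'' leg lies in a direction where both expressions inside the $\max$ in $\mathcal{D}$ coincide.
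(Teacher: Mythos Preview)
Your proof is correct and follows essentially the same approach as the paper: verify in $S_1$ that the two-segment path has total $\mathcal{D}$-length exactly $\mathcal{D}(x,y)$, then extend to the other sextants by symmetry. You are in fact slightly more thorough than the paper, since you explicitly compute $x'=x-y/\sqrt{3}$, spell out the triangle-inequality lower bound $\rho(\lambda)\ge\mathcal{D}(x,y)$ for any path $\lambda$, and make the rotational symmetry argument for $S_2,\dots,S_6$ explicit rather than just stating the proofs are similar.
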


\begin{proof}

The proofs for each of $S_1,...,S_6$ are all similar, and essentially elementary. We will therefore include only the proof for $(x,y)\in S_1$. So, let $(x,y)\in S_1$. It is easily seen that $|x|\geq\frac{1}{\sqrt{3}}|y|$, whence $|x|+\frac{1}{\sqrt{3}}|y|\geq\frac{2}{\sqrt{3}}|y|$. Let $(x',0)$ be the projection of $(x,y)$ onto the $x$-axis in the direction of the vector $\left(\frac{1}{2},\frac{\sqrt{3}}{2}\right)$. Then, the length of $\left(x',0\right)$ plus the length of $\left(x-x',y\right)$, under the metric $\mathcal{D}$, is $\mathcal{D}\left((x',0),(0,0)\right)+\mathcal{D}\left((x,y),(x',0)\right)=|x'|+|x-x'|+\frac{1}{\sqrt{3}}|y|=|x|+\frac{1}{\sqrt{3}}|y|=\mathcal{D}\left((x,y),(0,0)\right)$. Thus, the path from $(0,0)$ to $(x',0)$ and then from $(x',0)$ to $(x,y)$ is a geodesic. It follows that any path from $(0,0)$ to $(x,y)$ that consists of line segments that are parallel to either the $x$-axis or the vector $\left(\frac{1}{2},\frac{\sqrt{3}}{2}\right)$, and such that the lengths of the segments in the direction of $\left(1,0\right)$ adds to the length of $\left(x',0\right)$, and the lengths of the segments in the direction of $\left(\frac{1}{2},\frac{\sqrt{3}}{2}\right)$ adds to the length of $\left(x-x',y\right)$ is a geodesic between $(0,0)$ and $(x,y)$.

\end{proof}

A necessary step towards solving the double bubble problem is to first solve the isoperimetric problem in this metric. The following definitions will help us achieve this. 

\begin{definition}\label{def:ImportantLines}
For $b\in\mathbb{R}$, let $\Lambda_{\scaleobj{0.6}{\nearrow}}(b)$ be the line whose slope is $\sqrt{3}$ and intersects the vertical axis at $b$. Similarly, define $\Lambda_{\scaleobj{0.6}{\searrow}}(b)$ to be the line whose slope is $-\sqrt{3}$ and intersects the vertical axis at $b$, and $\Lambda_{\scaleobj{0.6}{\rightarrow}}(b)$ to be the line whose slope is zero and intersects the vertical axis at $b$. 
\end{definition}

Each of these lines divides the plane into two half-planes, an upper half-plane and a lower half-plane. 
\begin{definition}\label{def:Halfplanes}
Let $U_{\scaleobj{0.6}{\nearrow}}(b)$ be the upper half-plane formed by removing $\Lambda_{\smallnearrow}(b)$ from $\mathbb{R}^2$, and $L_{\smallnearrow}(b)$ be the lower half-plane formed by removing $\Lambda_{\smallnearrow}(b)$ from $\mathbb{R}^2$.  Similarly define $U_{\smallsearrow}(b)$,$L_{\smallsearrow}(b)$, $U_{\smallrightarrow}(b)$, and $L_{\smallrightarrow}(b)$. 
\end{definition}

\begin{definition}\label{def:ImportantIntercepts}
For a bounded set $A\subset\mathbb{R}^2$, let $s_{\smallnearrow}^A=\sup\{b:A\subset U_{\smallnearrow}(b)\}$, $i_{\smallnearrow}^A=\inf\{b:A\subset L_{\smallnearrow}(b)\}$, $s_{\smallsearrow}^A=\sup\{b:A\subset U_{\smallsearrow}(b)\}$, $i_{\smallsearrow}^A=\inf\{b:A\subset L_{\smallsearrow}(b)\}$, $s_{\smallrightarrow}^A=\sup\{b:A\subset U_{\smallrightarrow}(b)\}$, and $i_{\smallrightarrow}^A=\inf\{b:A\subset L_{\smallrightarrow}(b)\}$. 
\end{definition}

\begin{definition}
The closure of the connected component of  
$$\mathbb{R}^2\setminus\left[\Lambda_{\smallnearrow}(s_{\smallnearrow}^A)\cup\Lambda_{\smallnearrow}\left(i_{\smallnearrow}^A\right)\cup\Lambda_{\smallsearrow}\left(s_{\smallsearrow}^A\right)\cup\Lambda_{\smallsearrow}\left(i_{\smallsearrow}^A\right)\cup\Lambda\left(s_{\smallrightarrow}^A\right)\cup\Lambda_H\left(i_{\smallrightarrow}^A\right)\right],$$
which contains $A$, is what we call $A^{\varhexagon}$, pronounced $A$-hexagon. 
\end{definition}

The following figure illustrates the previous four definitions:\\

\begin{figure}[H]
\begin{tikzpicture}[scale=0.6]

\draw[blue, thin] (-1,0) to (1,0);
\draw[blue, thin] (1,0) to (3,3.4641);
\draw[blue, thin] (3,3.4641) to (1,6.9282);
\draw[blue, thin] (1,6.9282) to (-1,6.9282);
\draw[blue, thin] (-1,6.9282) to (-3,3.4641);
\draw[blue, thin] (-3,3.4641) to (-1,0);

\draw [red,thick] plot [smooth, tension=0.5] coordinates {(0,0) (-2.48,2.6)(-1,4.5) (-1.52,6)(0,6.855) (2.68,4)(2.2,2.5) (2.12,2)(0,0)};

\draw[red] (0,3) node{$\scaleobj{0.6}{A}$};
\draw[blue] (0.775,6.725) node{$\scaleobj{0.6}{A^{\varhexagon}}$};

\draw (3,1.8) node{$\scaleobj{0.6}{\Lambda_{\smallnearrow}(s_{\smallnearrow}^A)}$};
\draw (3,5.3) node{$\scaleobj{0.6}{\Lambda_{\smallsearrow}(i_{\smallsearrow}^A)}$};
\draw (0,7.2) node{$\scaleobj{0.6}{\Lambda_{\smallrightarrow}(i_{\smallrightarrow}^A)}$};
\draw (-2.9,5.3) node{$\scaleobj{0.6}{\Lambda_{\smallnearrow}(i_{\smallnearrow}^A)}$};
\draw (-3.1,1.8) node{$\scaleobj{0.6}{\Lambda_{\smallsearrow}(s_{\smallsearrow}^A)}$};
\draw (0,-0.26) node{$\scaleobj{0.6}{\Lambda_{\smallrightarrow}(s_{\smallrightarrow}^A)}$};

\end{tikzpicture}
\end{figure}

Notice that $A^{\varhexagon}$ is not necessarily a shape with six sides of positive length.  For example, in the following figure $A=A^{\varhexagon}$, and there are only four sides of positive length.

\begin{figure}[H]
\begin{tikzpicture}[scale=0.5]

\draw[blue, thin] (-3,3.4641) to (3,3.4641);
\draw[blue, thin] (3,3.4641) to (1,6.9282);
\draw[blue, thin] (1,6.9282) to (-1,6.9282);
\draw[blue, thin] (-1,6.9282) to (-3,3.4641);


\draw (0,5) node{$A=A^{\varhexagon}$};

\end{tikzpicture}
\end{figure}

In this case $\Lambda_{\smallsearrow}(s_{\smallsearrow})\cap\partial A^{\varhexagon}$ has length zero, as does $\Lambda_{\smallnearrow}\left(s_{\smallnearrow}^A\right)\cap\partial A^{\varhexagon}$. However, we still consider $A^{\varhexagon}$ to have six sides, but two of them have length zero. 

It is also possible for $A^{\varhexagon}$ to have three or five sides of positive length. With the definition of $A^{\varhexagon}$ we can now solve the isoperimetric problem in this metric.

\begin{lemma}\label{lem:IsoperimetricProblem}
In the metric $\mathcal{D}$, the solution to the isoperimetric problem is the renormalized unit ball $\frac{\mathfs{H}}{\mu\left(\mathfs{H}\right)}$, where $\mathfs{H}$ is the unit ball as described in Lemma \ref{lem:UnitBall}. 
\end{lemma}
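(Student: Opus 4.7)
My plan is a two-step reduction. First, I would show that for any admissible set $A$, $\rho(\partial A)\geq\rho(\partial A^{\varhexagon})$ while $\mu(A^{\varhexagon})\geq\mu(A)$, so the isoperimetric problem reduces to one over hexagons with sides in the six prescribed directions. Second, I would minimize over such hexagons to identify the regular hexagon as optimal.

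The first step rests on the pointwise identity
\[
\mathcal{D}(v) \;=\; \frac{1}{\sqrt{3}}\bigl(|v\cdot n_1|+|v\cdot n_2|+|v\cdot n_3|\bigr),\qquad v\in\mathbb{R}^2,
\]
where $n_1=(0,1)$, $n_2=(\tfrac{\sqrt{3}}{2},\tfrac{1}{2})$, $n_3=(\tfrac{\sqrt{3}}{2},-\tfrac{1}{2})$ are three of the six outward unit normals to the sides of $\mathfs{H}$. I would verify this by splitting into the two defining cases of $\mathcal{D}$ and applying the elementary relation $|a+b|+|a-b|=2\max(|a|,|b|)$; both sides reduce to $|v_x|+|v_y|/\sqrt{3}$ when $|v_x|\geq|v_y|/\sqrt{3}$ and to $2|v_y|/\sqrt{3}$ otherwise. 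Integrating the identity around $\partial A$ parameterized by $\gamma$ yields
\[
\rho(\partial A) \;=\; \frac{1}{\sqrt{3}}\sum_{i=1}^{3} \int |\gamma'\cdot n_i|\,dt,
\]
and each integral is the total variation of $\gamma\cdot n_i$, which for a closed curve around $A$ is bounded below by $2w_{n_i}(A)$, the Euclidean width of $A$ in direction $n_i$. Since $A$ and $A^{\varhexagon}$ share the same widths, and each side of $A^{\varhexagon}$ is parallel to a side-direction of $\mathfs{H}$ (so its hex length coincides with its Euclidean length), applying the identity to $A^{\varhexagon}$ itself gives $\rho(\partial A^{\varhexagon})=\tfrac{2}{\sqrt{3}}(w_{n_1}+w_{n_2}+w_{n_3})$, and the desired inequality follows.

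For the second step, I would minimize $\rho(\partial H)$ over hexagons $H$ with sides in the six prescribed directions and area $V$. Parameterizing such a hexagon by its six side lengths $\ell_1,\ldots,\ell_6\ge 0$ subject to the two-equation closure condition $\sum_i \ell_i e_i=0$, the perimeter equals $\sum_i \ell_i$ while the area is an explicit polynomial in $(\ell_i)$. A Lagrange-multiplier calculation singles out the regular hexagon $\ell_1=\cdots=\ell_6$ as the unique interior critical point, and the few degenerate boundary cases (triangles and parallelograms) can be ruled out by direct comparison with the regular hexagon of the same area. Alternatively, the sharp inequality $\rho(\partial A)^2\geq 8\sqrt{3}\,\mu(A)$, with equality iff $A$ is homothetic to $\mathfs{H}$, can be obtained in one stroke from the Brunn--Minkowski inequality applied to $A+\epsilon\mathfs{H}$, using the relation between $\rho$ and the Minkowski perimeter of $A$ with respect to $\mathfs{H}$.

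Combining, for any $A$ with $\mu(A)=V$ one obtains $\rho(\partial A)\geq\rho(\partial A^{\varhexagon})\geq\rho\bigl(\partial(\sqrt{V/\mu(\mathfs{H})}\,\mathfs{H})\bigr)$, using first the reduction to $A^{\varhexagon}$, then the hexagonal isoperimetric inequality, together with the monotonicity $\rho\propto\sqrt{\mu}$ for regular hexagons and $\mu(A^{\varhexagon})\geq V$. I expect the main obstacle to be the finite-dimensional optimization in the second step; the reduction $A\mapsto A^{\varhexagon}$ is by contrast remarkably clean thanks to the exact pointwise identity for $\mathcal{D}$.
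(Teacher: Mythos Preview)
Your proposal is correct, and the second step (the finite-dimensional optimization over hexagons) matches what the paper does: it parameterizes by side lengths and defers the minimization to a calculus computation in Section~\ref{sec:kkt}. The first step, however, takes a genuinely different route. The paper appeals to the geodesic lemma (Lemma~\ref{lem:geodesics}): it picks a touching point $p(\cdot)$ of $\partial A$ on each of the six supporting lines of $A^{\varhexagon}$ and observes that the arc of $\partial A$ between consecutive touching points is at least as long as the corresponding two-segment piece of $\partial A^{\varhexagon}$, which is a $\mathcal{D}$-geodesic between those points. Summing over the six arcs gives $\rho(\partial A)\ge\rho(\partial A^{\varhexagon})$. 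Your route instead exploits the exact pointwise decomposition $\mathcal{D}(v)=\tfrac{1}{\sqrt{3}}\sum_{i}|v\cdot n_i|$, reducing the comparison to a width/total-variation bound in three directions. This is cleaner and sidesteps Lemma~\ref{lem:geodesics} entirely; it also makes transparent why the hex-perimeter of $A^{\varhexagon}$ equals $\tfrac{2}{\sqrt{3}}\sum_i w_{n_i}(A)$. What the paper's approach buys is a local, arc-by-arc picture that it reuses heavily later (e.g., in the constructions of Section~\ref{sec:optimalsets}), whereas your identity gives the inequality in one stroke. Your Brunn--Minkowski alternative for the second step would additionally deliver uniqueness directly; the paper instead argues uniqueness separately by noting that $A\neq A^{\varhexagon}$ forces a strict volume gain and hence a strict perimeter drop after rescaling.
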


\begin{proof}
Let $A$ be any set in $\mathbb{R}^2$ such that $\mu(A)=1$, and consider $A^{\varhexagon}$. Since $A\subset A^{\varhexagon}$, $\mu(A)\leq\mu\left(A^{\varhexagon}\right)$. Let $p_{\smallnearrow}(i_{\smallnearrow}^A)$ be any point where $\Lambda_{\smallnearrow}\left(i_{\smallnearrow}^A\right)$ intersects $\partial A$.  Similarly define $p_{\smallrightarrow}(i_{\smallrightarrow}^A)$, $p_{\smallsearrow}\left(i_{\smallsearrow}^A\right)$, $p_{\smallnearrow}\left(s_{\smallnearrow}^A\right)$, $p_{\smallrightarrow}(s_{\smallrightarrow}^A)$, and $p_{\smallnearrow}\left(s_{\smallnearrow}^A\right)$. By lemma \ref{lem:geodesics}, the portion of $\partial A^{\varhexagon}$ along $\Lambda_{\smallnearrow}\left(i_{\smallnearrow}^A\right)$ and $\Lambda_{\smallrightarrow}\left({i_{\smallrightarrow}^A}\right)$ that connects $p_{\smallnearrow}\left(i_{\smallnearrow}^A\right)$ and $p_{\smallrightarrow}\left(i_{\smallrightarrow}^A\right)$ is a geodesic.  Similarly, we can show that the portion of $\partial A^{\varhexagon}$ along $\Lambda_{\smallrightarrow}\left(i_{\smallrightarrow}^A\right)$ and $\Lambda_{\smallnearrow}\left(i_{\smallnearrow}^A\right)$ that connects $p_{\smallrightarrow}\left(i_{\smallrightarrow}^A\right)$ and $p_{\smallnearrow}\left(i_{\smallnearrow}^A\right)$ is a geodesic. We can repeat this process to connect $p_{\smallsearrow}\left(s_{\smallsearrow}^A\right)$ to $p_{\smallrightarrow}\left(s_{\smallrightarrow}^A\right)$, $p_{\smallrightarrow}\left(s_{\smallrightarrow}^A\right)$ to $p_{\smallnearrow}\left(s_{\smallnearrow}^A\right)$, and $p_{\smallnearrow}\left(s_{\smallnearrow}^A\right)$ to $p_{\smallsearrow}\left(s_{\smallsearrow}^A\right)$. Each of these geodesics is no longer than the portion of $\partial A$ that connects the same points. Therefore, $\rho\left(\partial A^{\varhexagon}\right)\leq\rho\left(\partial A\right)$. Since $\mu\left(A^{\varhexagon}\right)\geq1$, we can rescale $A^{\varhexagon}$ so that it has volume $1$, which can only reduce the length of the perimeter. We now have a figure that looks like the following (again, it is possible that some of the parameters equal zero): \\

\begin{figure}[H]

\begin{tikzpicture}[scale=0.4]

\draw[blue, thin] (-1,0) to (1,0);
\draw[blue, thin] (1,0) to (3,3.4641);
\draw[blue, thin] (3,3.4641) to (1,6.9282);
\draw[blue, thin] (1,6.9282) to (-1,6.9282);
\draw[blue, thin] (-1,6.9282) to (-3,3.4641);
\draw[blue, thin] (-3,3.4641) to (-1,0);

\draw (2.7,5) node{$x_1$};
\draw (0,7.35) node{$x_2$};
\draw (-2.75,4.95) node{$x_3$};
\draw (-2.7,2) node{$x_4$};
\draw (0,-0.4) node{$x_5$};
\draw (2.75,2) node{$x_6$};

\end{tikzpicture}
\caption{\label{fig:params}}
\end{figure}

We can now use methods from calculus to minimize the perimeter of this figure with the constraints that the volume must equal $1$ and $x_1,...,x_6\geq0$ to obtain a hexagon with six sides of equal length as a solution to the isoperimetric problem. This is done in section \ref{sec:kkt} along with the rest of the calculations. We have yet, however, to show that this solution is unique. Notice, though, that if $A\neq A^{\varhexagon}$, then $\mu(A)<\mu\left(A^{\varhexagon}\right)$. Therefore, when we rescale $A^{\varhexagon}$ so that its volume equals $1$, the perimeter will strictly decrease. This means that the original figure could not have been a solution to the isoperimetric problem. Therefore, the solution is unique and is a hexagon with six sides of equal length. 

\end{proof}

Note that by \cite{taylor1975unique}, the solution of the isoperimetric problem is given by the renormalized unit ball in the dual norm. Thus, by the previous lemma, we obtain that $\mathcal{D}$ is self dual.

\section{Further Definitions}

Now, we define in Figure \ref{fig:GeneralCase} the family of sets $\mathfs{F}_{\alpha}\subset\gamma_{\alpha}$, and in Figure \ref{fig:TheGoodFamily}, the subfamily of $\mathfs{F}_{\alpha}$, which we call $\mathfs{G}_{\alpha}$, that were just mentioned in the outline of the proof strategy. The definition of $\pi_1$ and $\pi_2$ in Figure \ref{fig:GeneralCase} will be given presently.

\begin{figure}[H]

\begin{tikzpicture}[scale=0.6]

\draw[blue, thin] (-1,0) to (1,0);
\draw[blue, thin] (1,0) to (2.2,2.0784609);
\draw[blue, thin] (2.5,4.330127) to (1,6.9282);
\draw[blue, thin] (1,6.9282) to (-1,6.9282);
\draw[blue, thin] (-1,6.9282) to (-3,3.4641);
\draw[blue, thin] (-3,3.4641) to (-1,0);

\draw[red, thin] (3.7,-0.5) to (5.7,-0.5);
\draw[red, thin] (5.7,-0.5) to (7.7,2.9641);
\draw[red, thin] (7.7,2.9641) to (5.7,6.4282);
\draw[red, thin] (5.7,6.4282) to (3.7,6.4282);
\draw[red, thin] (1.7,2.96410135) to (3.7,-0.5);
\draw[red, thin] (3.7,6.4282) to (1.7,2.96410135);

\draw (4.7,6.7) node{$x_1$};
\draw (6.9,5) node{$x_2$};
\draw (6.9,0.9) node{$x_3$};
\draw (4.8,-0.8) node{$x_4$};
\draw (2.5,0.9) node{$x_5$};
\draw (1.8,2.25) node{$x_6$};
\draw (1.4,4.6) node{$x_7$};
\draw[black, thin] (1.5,4.4) to (2.38,4.16);
\draw (3,6) node{$x_{8}$};

\draw (2,6) node{$x_{9}$};
\draw (0.15,7.2) node{$x_{10}$};
\draw (-2.45,5.5) node{$x_{11}$};
\draw (-2.45,1.8) node{$x_{12}$};
\draw (0,-0.2) node{$x_{13}$};
\draw (1.15,1) node{$x_{14}$};

\draw (-0.2,3.4) node{$A$};
\draw (5,3) node{$B$};

\draw[blue, thin] (14.5,4.330127) to (13,6.9282);
\draw[blue, thin] (13,6.9282) to (11,6.9282);
\draw[blue, thin] (11,6.9282) to (9,3.4641);
\draw[blue, thin] (9,3.4641) to (12,-1.7320508);
\draw[blue, thin] (12,-1.7320508) to (20,-1.7320508);
\draw[blue, thin] (20,-1.7320508) to (21,0);
\draw[blue, thin] (21,0) to (19.5,2.5980762);

\draw (19.5,2.5980762) node{$\cdot$};
\draw[black, thin] (19.5,2.5980762) to (19,2.6);
\draw (18.5,2.6) node{$\pi_2$};

\draw (14.5,4.330127) node{$\cdot$};
\draw[black, thin] (14.5,4.330127) to (15,4.5);
\draw (15.4,4.5) node{$\pi_1$};

\draw[red, thin] (15.7,-0.5) to (17.7,-0.5);
\draw[red, thin] (17.7,-0.5) to (19.7,2.9641);
\draw[red, thin] (19.7,2.9641) to (17.7,6.4282);
\draw[red, thin] (17.7,6.4282) to (15.7,6.4282);
\draw[red, thin] (13.7,2.96410135) to (15.7,-0.5);
\draw[red, thin] (15.7,6.4282) to (13.7,2.96410135);

\draw (16.7,6.7) node{$x_1$};
\draw (18.9,5) node{$x_2$};
\draw (20.7,2.4) node{$x_3$};
\draw[black, thin] (20.3,2.4) to (19.6,2.75);
\draw (19,1) node{$x_4$};
\draw (16.8,-0.8) node{$x_5$};
\draw (14.5,0.9) node{$x_6$};
\draw (13.4,4.6) node{$x_7$};
\draw[black, thin] (13.5,4.4) to (14.38,4.16);
\draw (15,6) node{$x_{8}$};

\draw (14,6) node{$x_{9}$};
\draw (12.15,7.2) node{$x_{10}$};
\draw (9.55,5.5) node{$x_{11}$};
\draw (9.55,1.8) node{$x_{12}$};
\draw (15.8,-2.2) node{$x_{13}$};
\draw (21,-1) node{$x_{14}$};
\draw (21,1) node{$x_{15}$};

\draw (11.8,3.4) node{$A$};
\draw (17,3) node{$B$};










\end{tikzpicture}

\caption{\label{fig:GeneralCase}}

\end{figure}

The family $\mathfs{F}_{\alpha}$ consists of all pairs of sets $\left(A,B\right)$ that can be formed from Figure \ref{fig:GeneralCase}, where any collection of the parameters are allowed to equal zero so long as one of $A$ or $B$ has volume $\alpha$, and the other one has volume $1$. Admittedly, the configuration on the righthand side does not look very nice. It is more general than the configuration on the left in the sense that any configuration that can be formed from the configuration on the left can also be formed from the configuration on the right. However, the cases that arise from the configuration on the right that can't be created from the configuration on the left do not result in double bubble minimizers. So, we include the lefthand side of Figure \ref{fig:GeneralCase} to emphasize that this is the important configuration from which we can find our minimizers. 

The configurations in Figure \ref{fig:GeneralCase} have too many parameters to be easily analysed. So, we define in Figure \ref{fig:TheGoodFamily} a subfamily $\mathfs{G}_{\alpha}\subset\mathfs{F}_{\alpha}$. 

\begin{figure}[H]

\begin{tikzpicture}[scale=0.6]

\draw[red, thin] (0,0) to (-1,1.732050808);
\draw[red, thin] (-1,1.732050808) to (-3.5,1.732050808);
\draw[red, thin] (-3.5,1.732050808) to (-4.5,0);
\draw[red, thin] (-4.5,0) to (-3.5,-1.732050808);
\draw[red, thin] (-3.5,-1.732050808) to (-1,-1.732050808);
\draw[red, thin] (-1,-1.732050808) to (0,0);

\draw[blue, thin] (-3.5,1.732050808) to (-4.5,3.464101615);
\draw[blue, thin] (-4.5,3.464101615) to (-6,3.464101615);
\draw[blue, thin] (-6,3.464101615) to (-8,0);
\draw[blue, thin] (-8,0) to (-6,-3.464101615);
\draw[blue, thin] (-4.5,-3.464101615) to (-6,-3.464101615);
\draw[blue, thin] (-4.5,-3.464101615) to (-3.5,-1.732050808);

\draw (-6,0) node{$A$};
\draw (-2.5,0) node{$B$};

\draw (-2.3,2) node{$x_1$};
\draw (-0.1,1) node{$x_2$};
\draw (-0.1,-1) node{$x_3$};
\draw (-2.3,-2) node{$x_4$};
\draw (-4.5,-1) node{$x_5$};
\draw (-4.5,1) node{$x_6$};
\draw (-3.5,2.5) node{$x_7$};
\draw (-5.1,3.8) node{$x_8$};
\draw (-7,2.5) node{$x_9$};
\draw (-7.1,-2.5) node{$x_{10}$};
\draw (-5.1,-3.8) node{$x_{11}$};
\draw (-3.4,-2.5) node{$x_{12}$};

\draw[blue, thin] (6,3.46410161) to (10,3.46410161);
\draw[blue, thin] (10,3.46410161) to (11,1.7320508);
\draw[blue, thin] (11,1.7320508) to (10,0);
\draw[blue, thin] (10,0) to (6,0);
\draw[blue, thin] (6,0) to (5,1.7320508);
\draw[blue, thin] (5,1.7320508) to (6,3.46410161);

\draw (11,0.8) node{$x_1$};
\draw (11,2.4) node{$x_2$};
\draw (8,3.65) node{$x_3$};
\draw (5,2.4) node{$x_4$};
\draw (5,0.8) node{$x_5$};
\draw (8,0.25) node{$x_6$};
\draw[black, thin] (6,0.2) to (7.6,0.2);
\draw[black, thin] (8.4,0.2) to (10,0.2);

\draw (8,-0.3) node{$x_7$};
\draw[black, thin] (7,-0.2) to (7.6,-0.2);
\draw[black, thin] (8.4,-0.2) to (9,-0.2);
\draw (6.15,-0.8) node{$x_8$};
\draw (6.15,-2.5) node{$x_9$};
\draw (8,-3.8) node{$x_{10}$};
\draw (10,-2.5) node{$x_{11}$};
\draw (10,-0.8) node{$x_{12}$};

\draw[red, thin] (9,0) to (10,-1.7320508);
\draw[red, thin] (10,-1.7320508) to (9,-3.46410161);
\draw[red, thin] (9,-3.46410161) to (7,-3.46410161);
\draw[red, thin] (7,-3.46410161) to (6,-1.7320508);
\draw[red, thin] (6,-1.7320508) to (7,0);

\draw (8,1.6) node{$A$};
\draw (8,-1.6) node{$B$};

\end{tikzpicture}

\caption{\label{fig:TheGoodFamily}}

\end{figure}

Throughout the paper, there are two points and two lines which we repeatedly use. We give their definitions here. 

\begin{definition}\label{def:TwoImportantPoints}
Let $\left(A,B\right)\in\mathfs{F}_{\alpha}$, the nice family of configurations in Figure \ref{fig:GeneralCase}. The joint boundary ($\partial A\cap\partial B$) consists of a path with two endpoints. Let $\pi_1\left(A,B\right),\pi_2\left(A,B\right)$ be these points, chosen in the following way: if the second coordinates of the two points in question are different, we let $\pi_1$ be the point with larger second coordinate, and $\pi_2$ the other point. If the second coordinate of each point is the same, then we let $\pi_1$ be the point with larger first coordinate, and $\pi_2$ be the other point. It does not occur that $\pi_1$ and $\pi_2$ are the same point. 
\end{definition}

\begin{definition}\label{def:ImportantLineSegments}
Let $\left(A,B\right)\in\mathfs{F}_{\alpha}$. Define $l_1^A$ to be the line segment in $\partial A$ that is not joint boundary, and has one endpoint at $\pi_1$. Define $a_1$ to be the line segment in $\partial A$ that meets the endpoint of $l_1^A$ that is not $\pi_1$. Similarly define $l_1^B$ and $b_1$. We also define the line segments $l_2^A$, $a_2$, $l_2^B$, and $b_2$ to be the analogous line segments associated with $\pi_2$. 

\end{definition}

To illustrate the previous definition, in the righthand side of Figure \ref{fig:GeneralCase}, $x_9=l_1^A$, $x_{10}=a_1$, $x_8=l_1^B$, and $x_1=b_2$.

We end this section with a final definition. 

\begin{definition}
Let $A\subset \mathbb{R}^2$ be open and bounded. We call the line segments $\Lambda_{\smallnearrow}\left(i_{\smallnearrow}^A\right)\cap A^{\varhexagon}$ and $\Lambda_{\smallsearrow}\left(s_{\smallsearrow}^A\right)\cap\partial A^{\varhexagon}$ the "left sides" of $A^{\varhexagon}$, even if one of them has length zero. Similarly, we call $\Lambda_{\smallrightarrow}\left(i_{\smallrightarrow}^A\right)\cap\partial A^{\varhexagon}$ the "top" of $A^{\varhexagon}$, and $\Lambda_{\smallrightarrow}\left(s_{\smallrightarrow}^A\right)\cap\partial A^{\varhexagon}$ the "bottom" of $A^{\varhexagon}$, even if one or both of these line segments has length zero. Finally, we call $\Lambda_{\smallnearrow}\left(s_{\smallnearrow}^A\right)\cap\partial A^{\varhexagon}$, and $\Lambda_{\smallnearrow}\left(s_{\smallnearrow}^A\right)\cap\partial A^{\varhexagon}$ the "right sides" of $\partial A^{\varhexagon}$, even if one of their lengths is zero. 
\end{definition}

We will frequently have to adjust sides of the sets we analyse. This results in sets that are not necessarily the smallest hexagon containing any set. However, we will still refer to the line segments forming these new sets as the "left" or "right" sides in a natural way. Furthermore, we will often have to move line segments either to the left, right, up, or down. If we say that we are moving a line segment contained in $\Lambda_{\smallnearrow}\left(b\right)$ to the right, it means that we are decreasing the value of $b$. We give similar definitions to "moving to the left", "moving up", and so forth. Finally, whenever we say that we are moving some line segment, it must be understood that, unless otherwise specifically stated, this may require another line segment to become shorter or longer so that we always have two bounded sets whose boundaries are closed, simple, rectifiable curves.

\section{Comparing general configurations to $\mathfs{F}_{\alpha}$}\label{sec:optimalsets}

Our goal in this section is to show that given any two sets $(A,B)\in\gamma_{\alpha}$, we can find two new sets $(\tilde{A},\tilde{B})\in\mathfs{F}_{\alpha}$ such that $\rho_{{DB}}(\tilde{A},\tilde{B})\leq\rho_{{DB}}(A,B)$. Note that, with two exceptional configurations, $\left(A^{\varhexagon}\setminus B^{\varhexagon},B^{\varhexagon}\right)$ is of the correct form to belong to $\mathfs{F}_{\alpha'}$ for some $\alpha'\in(0,1]$, and $\left(A^{\varhexagon},B^{\varhexagon}\setminus A^{\varhexagon}\right)$ has the correct form to belong to $\mathfs{F}_{\alpha''}$ for some $\alpha''\in(0,1]$. The problem that may occur is that the sets, besides having the incorrect volume ratio, in these two configurations may not have the correct volumes. The two exceptional cases are when $A^{\varhexagon}\cap B^{\varhexagon}=\emptyset$, and when $A^{\varhexagon}$ is contained in $B^{\varhexagon}$ (or vice versa). Apart from these two exceptions, in this section, we aim to begin with $\left(A^{\varhexagon}\setminus B^{\varhexagon},B^{\varhexagon}\right)$ or $\left(A^{\varhexagon},B^{\varhexagon}\setminus A^{\varhexagon}\right)$ and alter the lengths of the already existing line segments in a way that results in sets that are in $\mathfs{F}_{\alpha}$.

We treat three cases.  First, we consider when $\mu\left(A^{\varhexagon}\cap B^{\varhexagon}\right)=0$. Second, we consider when \newline $\mu\left(B^{\varhexagon}\setminus A^{\varhexagon}\right),\mu\left(A^{\varhexagon}\cap B^{\varhexagon}\right),\mu\left(A^{\varhexagon}\setminus B^{\varhexagon}\right)>0$. Finally, we consider when $B^{\varhexagon}\subset A^{\varhexagon}$ (or vice versa). \\

\subsection{The first case}

The first case, in which $\mu\left(A^{\varhexagon}\cap B^{\varhexagon}\right)=0$, is simple.  We know that $\mu\left(A\right)\leq\mu\left(A^{\varhexagon}\right)$, and $\rho\left(\partial A^{\varhexagon}\right)\leq\rho\left(A\right)$ (similarly for $B$). So, we begin by replacing $A$ with $A^{\varhexagon}$, $B$ with $B^{\varhexagon}$. This may increase joint boundary, but since we know that $\rho\left(A^{\varhexagon}\right)\leq\rho\left(A\right)$, and $\rho\left(B^{\varhexagon}\right)\leq\rho\left(B\right)$, it follows that $\rho_{DB}\left(A^{\varhexagon},B^{\varhexagon}\right)=\rho\left(\partial A^{\varhexagon}\right)+\rho\left(\partial B^{\varhexagon}\right)-\rho\left(\partial A^{\varhexagon}\cap\partial B^{\varhexagon}\right)\leq\rho\left(\partial A\right)+\rho\left(\partial B\right)-\rho\left(\partial A\cap\partial B\right)=\rho_{DB}\left(A,B\right)$. Since it may be the case that $\mu\left(A\right)<\mu\left(A^{\varhexagon}\right)$, and $\mu\left(B\right)<\mu\left(B^{\varhexagon}\right)$, we may need to rescale both of these sets until their volumes are $1$ and $\alpha$, respectively. Call these rescaled sets $A_r^{\varhexagon}$ (to replace $A^{\varhexagon}$), and $B_r^{\varhexagon}$ (to replace $B^{\varhexagon}$). Rescaling $A^{\varhexagon}$ can only reduce the length of its perimeter, and similarly for $B^{\varhexagon}$. This rescaling may reduce the joint boundary, i.e. $\rho\left(\partial A^{\varhexagon}_r\cap\partial B^{\varhexagon}_r\right)\leq\rho\left(\partial A^{\varhexagon}\cap\partial B^{\varhexagon}\right)$, or even result in two sets such that $\partial A^{\varhexagon}_r\cap\partial B^{\varhexagon}_r=\emptyset$. In this latter case, we can translate one or both of $A^{\varhexagon}_r,B^{\varhexagon}_r$ so that their longest sides share as much boundary as possible. This process can only reduce the double bubble perimeter because $\rho\left(A^{\varhexagon}_r\right)\leq\rho\left(A^{\varhexagon}\right)$, and $\rho\left(\partial B^{\varhexagon}\setminus\partial A^{\varhexagon}_r\right)\leq\rho\left(\partial B^{\varhexagon}\setminus \partial A^{\varhexagon}\right)$.  Thus, the result is two sets of the correct volume, and whose double bubble perimeter is no more than that of $(A,B)$. Furthermore, the resulting configuration is a member of the family $\mathfs{F}_{\alpha}$. Notice that if $A$ and $B$ did not have any joint boundary, then placing $A^{\varhexagon}_r$ and $B^{\varhexagon}_r$ next to each other so that they share a side strictly decreases the double bubble perimeter.  It follows that $(A,B)$ cannot be in $\Gamma_{\alpha}$, and therefore we only need to interest ourselves in configurations whose joint boundary has strictly positive length. \\

\subsection{The second case}

We now consider when $\mu\left(A^{\varhexagon}\setminus B^{\varhexagon}\right),\mu\left(A^{\varhexagon}\cap B^{\varhexagon}\right),\mu\left(B^{\varhexagon}\setminus A^{\varhexagon}\right)>0$. This means that two sides of $\partial B^{\varhexagon}$ cross one or two sides of $\partial A^{\varhexagon}$ (or vice versa). We must decide whether to analyse $\left(A^{\varhexagon}\setminus B^{\varhexagon},B^{\varhexagon}\right)$ or $\left(A^{\varhexagon},B^{\varhexagon}\setminus A^{\varhexagon}\right)$, and it is beneficial for us to choose the configuration with larger joint boundary; this will be helpful when we have to adjust volumes to ensure that we have a configuration consisting of two sets of the correct volumes. Let us assume that this is $\left(A^{\varhexagon}\setminus B^{\varhexagon},B^{\varhexagon}\right)$. The arguments when $\left(A^{\varhexagon},B^{\varhexagon}\setminus A^{\varhexagon}\right)$ has larger joint boundary are similar. The important point to notice is that if a side of $B^{\varhexagon}$ crosses a side of $A^{\varhexagon}$, then the angle (measured inside $A^{\varhexagon}$) is either $60^{\circ}$ or $120^{\circ}$. 
We begin with a bit of notation and a figure to illustrate Definitions \ref{def:TwoImportantPoints} and \ref{def:ImportantLineSegments} (see Figure \ref{fig:OneCornerTwoHexagons}). The line segment $l_1^A$ is part of a potentially longer line segment in $\partial A^{\varhexagon}$, part of which is contained in the interior of $B^{\varhexagon}$. We denote the part of this longer line segment that is contained in $B^{\varhexagon}$ by $l_1^{A_{ext}}$. The notation means that, in some sense, $l_1^{A_{ext}}$ is the extension of $l_1^A$. We make similar definitions for $l_2^A$, $l_2^{A_{ext}}$, $l_1^B$, $l_2^B$, $l_1^{B_{ext}}$, and $l_2^{B_{ext}}$. Figure \ref{fig:OneCornerTwoHexagons} illustrates this definition for $l_2^{B_{ext}}$. Now, define $\theta_1$ to be the angle between $l_1^A$ and $l_1^{B_{ext}}$ (measured inside $A^{\varhexagon}$), and similarly define $\theta_2$ to be the angle between $l_2^A$ and $l_2^{B_{ext}}$. Note that if $\left(A^{\varhexagon},B^{\varhexagon}\setminus A^{\varhexagon}\right)$ has greater joint boundary than $\left(A^{\varhexagon}\setminus B^{\varhexagon},B^{\varhexagon}\right)$, then $\theta_1$ and $\theta_2$ would be measured in the interior of $B^{\varhexagon}$.

\begin{figure}

\begin{tikzpicture}[scale=0.45]

\draw[blue, thin] (-1,0) to (1,0);
\draw[blue, thin] (1,0) to (3,3.4641);
\draw[blue, thin] (3,3.4641) to (1,6.9282);
\draw[blue, thin] (1,6.9282) to (-1,6.9282);
\draw[blue, thin] (-1,6.9282) to (-3,3.4641);
\draw[blue, thin] (-3,3.4641) to (-1,0);

\draw[red, thin] (3.7,-0.5) to (5.7,-0.5);
\draw[red, thin] (5.7,-0.5) to (7.7,2.9641);
\draw[red, thin] (7.7,2.9641) to (5.7,6.4282);
\draw[red, thin] (5.7,6.4282) to (3.7,6.4282);
\draw[red, thin] (3.7,6.4282) to (1.7,2.9641);
\draw[red, thin] (1.7,2.9641) to (3.7,-0.5);

\draw (0,4.5) node{$A^{\varhexagon}$};
\draw (4.8,3) node{$B^{\varhexagon}$};

\draw[black, thin] (4.5,6.4282) to (5,7.5);
\draw (5.3,7.8) node{$b_1$};

\draw[black, thin] (4.5,-0.5) to (5.5,-1.5);
\draw (5.7,-1.8) node{$b_2$};

\draw [black,thin] plot [smooth, tension=0.2] coordinates {(2.55,-0.7) (2.5,0.2)(2.7,1.2)};
\draw (2.6,-1) node{$l_2^B$};

\draw [black,thin] plot [smooth, tension=0.2] coordinates {(3,7.2) (3.3,6.5)(3.34,5.8)};
\draw (3,7.6) node{$l_1^B$};

\draw [black,thin] plot [smooth, tension=0.2] coordinates {(0.8,-1) (1.4,-0.4)(1.3,0.5)};
\draw (0.6,-1) node{$l_2^A$};

\draw [black,thin] plot [smooth, tension=0.2] coordinates {(1.35,7.45) (1.6,6.8)(1.4,6.2)};
\draw (1.2,7.6) node{$l_1^A$};

\draw (0.18,-0.26) node{$a_2$};
\draw (0.18,7.15) node{$a_1$};

\draw (0,2.55) node{$l_2^{B_{ext}}$};
\draw[black, thin] (1.95,2.5) to (0.8,2.6);
\draw (1.78,2.15) node{$\theta_2$};
\draw (2.1,4.35) node{$\theta_1$};

\draw (2.208,2.067) node{$\cdot$};
\draw (2.1,0.5) node{$\pi_2$};
\draw[black, thin] (2.208,2.067) to (2.15,0.8);

\draw (2.49,4.33) node{$\cdot$};
\draw (2.3,5.7) node{$\pi_1$};
\draw[black, thin] (2.49,4.33) to (2.4,5.5);

\end{tikzpicture}

\caption{\label{fig:OneCornerTwoHexagons}}

\end{figure}

\begin{lemma}

Let $(A,B)\in\gamma_{\alpha}$ be such that $\mu\left(A^{\varhexagon}\setminus B^{\varhexagon}\right),\mu\left(A^{\varhexagon}\cap B^{\varhexagon}\right),\mu\left(B^{\varhexagon}\setminus A^{\varhexagon}\right)>0$.  If, using the notation presented before this lemma, both of $\theta_1$ and $\theta_2$ are $120^{\circ}$ angles, then \newline$\max\left\{\rho_{DB}\left(A^{\varhexagon}\setminus B^{\varhexagon},B^{\varhexagon}\right),\rho_{DB}\left(A^{\varhexagon},B^{\varhexagon}\setminus A^{\varhexagon}\right)\right\}\leq\rho_{DB}\left(A,B\right)$. 


\label{lem:Two120DegreeAngles}

\end{lemma}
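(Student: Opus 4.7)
The plan is to reduce both target inequalities to a single bound comparing the original joint boundary length $J$ against the hexagonalization deficits plus a hexagonal geodesic, then establish that bound via a pocket decomposition argument. Write $A_{\mathrm{in}} = \rho(\partial A^{\varhexagon} \cap \overline{B^{\varhexagon}})$, $B_{\mathrm{in}} = \rho(\partial B^{\varhexagon} \cap \overline{A^{\varhexagon}})$, $\Delta_A = \rho(\partial A) - \rho(\partial A^{\varhexagon}) \geq 0$, $\Delta_B = \rho(\partial B) - \rho(\partial B^{\varhexagon}) \geq 0$, and $J = \rho(\partial A \cap \partial B)$. A direct boundary decomposition of $A^{\varhexagon} \setminus B^{\varhexagon}$ and $B^{\varhexagon} \setminus A^{\varhexagon}$ into their hexagonal and cross-hexagonal pieces yields
\begin{equation*}
\rho_{DB}(A^{\varhexagon} \setminus B^{\varhexagon}, B^{\varhexagon}) = \rho(\partial A^{\varhexagon}) + \rho(\partial B^{\varhexagon}) - A_{\mathrm{in}},
\end{equation*}
\begin{equation*}
\rho_{DB}(A^{\varhexagon}, B^{\varhexagon} \setminus A^{\varhexagon}) = \rho(\partial A^{\varhexagon}) + \rho(\partial B^{\varhexagon}) - B_{\mathrm{in}}.
\end{equation*}
Combined with $\rho_{DB}(A,B) = \rho(\partial A) + \rho(\partial B) - J$, the lemma reduces to showing the single inequality $J \leq \Delta_A + \Delta_B + \min\{A_{\mathrm{in}}, B_{\mathrm{in}}\}$.

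Next, invoking the $120^\circ$ hypothesis at both $\pi_1$ and $\pi_2$, I would apply Lemma \ref{lem:geodesics} to conclude that both $A_{\mathrm{in}}$ and $B_{\mathrm{in}}$ are $\mathcal{D}$-geodesics from $\pi_1$ to $\pi_2$. A finite case analysis on the relative orientations of the crossing edges shows that the two segments comprising $A_{\mathrm{in}}$ meet at the vertex of $A^{\varhexagon}$ interior to $B^{\varhexagon}$ with the consistent pair of hexagonal directions required by Lemma \ref{lem:geodesics}; symmetrically for $B_{\mathrm{in}}$. Hence $A_{\mathrm{in}} = B_{\mathrm{in}} = \mathcal{D}(\pi_1, \pi_2)$, and it suffices to prove the one-sided bound $J \leq A_{\mathrm{in}} + \Delta_A$ (swapping the roles of $A$ and $B$ gives the other).

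For the one-sided bound, I would decompose $\partial A = (\partial A \cap \partial A^{\varhexagon}) \cup P_A$, where $P_A$ collects the ``pocket'' sub-arcs interior to $A^{\varhexagon}$; each component of $P_A$ closes off against a $\mathcal{D}$-geodesic sub-arc of $\partial A^{\varhexagon}$ bounding a component of $A^{\varhexagon} \setminus A$, and the total length excess sums to $\Delta_A$. Since $J \subset \partial A \cap \overline{B^{\varhexagon}}$, the portion of $J$ lying on $\partial A^{\varhexagon}$ contributes at most $A_{\mathrm{in}}$; each sub-arc of $J$ lying in a pocket of $A$ is replaced by the corresponding geodesic shortcut along $\partial A^{\varhexagon}$, with length excess charged against $\Delta_A$. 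Chaining the shortcuts produces a hexagonal curve joining $\pi_1$ to $\pi_2$ of total length at most $A_{\mathrm{in}}$, yielding the desired bound. The main obstacle is the combinatorial bookkeeping in this final step: ensuring the shortcut replacements can be assembled without double-counting and that any small gaps between the endpoints of $J$ and the points $\pi_1, \pi_2$ are absorbed by $\Delta_A$. The $120^\circ$ hypothesis is essential precisely because it forces $A_{\mathrm{in}} = B_{\mathrm{in}}$, making the same upper bound on DB perimeter achievable for both replacement configurations.
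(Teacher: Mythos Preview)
Your algebraic reduction to $J \leq \Delta_A + \Delta_B + \min\{A_{\mathrm{in}}, B_{\mathrm{in}}\}$ is correct and cleaner than the paper's bookkeeping. However, the claim $A_{\mathrm{in}} = B_{\mathrm{in}} = \mathcal{D}(\pi_1,\pi_2)$ is false. The paper's proof of the companion lemma (Lemma~\ref{lem:Two120AnglesVolAdj}) explicitly distinguishes two $120^\circ$ configurations; in the second one (right panel of Figure~\ref{fig:Two120sVolAdj}) both $\pi_1$ and $\pi_2$ lie on the \emph{same} side $\Lambda_{\smallnearrow}(s_{\smallnearrow}^A)$ of $A^{\varhexagon}$, so $A_{\mathrm{in}}$ is a single segment of $\mathcal D$-length $\mathcal D(\pi_1,\pi_2)$, while $B_{\mathrm{in}}$ traverses three consecutive sides of $B^{\varhexagon}$ (top, upper-left, lower-left). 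A direct computation gives $B_{\mathrm{in}}-A_{\mathrm{in}}$ equal to the length of the lower-left segment, strictly positive. So the $120^\circ$ hypothesis does \emph{not} force $A_{\mathrm{in}}=B_{\mathrm{in}}$, and your stated reason for why $120^\circ$ is ``essential'' is not the actual mechanism.

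The real gap is in the pocket argument for $J\le A_{\mathrm{in}}+\Delta_A$, which as written never uses the $120^\circ$ hypothesis. Your bound $J\le A_{\mathrm{in}}+\Delta_A$ is equivalent to $\rho(\partial A\setminus B^{\varhexagon})\ge A_{\mathrm{out}}$, and this is exactly the nontrivial geometric content. Your shortcut scheme breaks because a pocket of $A$ can have its mouth on $\partial A^{\varhexagon}\setminus B^{\varhexagon}$ (i.e.\ on $A_{\mathrm{out}}$) yet still dip into $B^{\varhexagon}$ and carry a piece of $J$; the ``shortcut'' for that piece then lies in $A_{\mathrm{out}}$, not $A_{\mathrm{in}}$, and the chaining no longer produces a curve bounded by $A_{\mathrm{in}}$. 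The paper handles precisely this obstruction by a sliding argument: it translates the side $l_2^A$ of $A^{\varhexagon}$ along $l_2^B\cup l_2^{B_{ext}}$ until the translated line first meets $\partial A$ at some point $p$, builds a two-segment geodesic from $p(s_{\smallrightarrow}^A)$ through $p$ to $\partial B^{\varhexagon}$, and then uses that at a $120^\circ$ crossing the translated segment, the original $l_2^A$, and the reoriented piece of $a_2$ bound an \emph{equilateral} triangle---so the translated geodesic has the same length as the original piece of $A_{\mathrm{out}}$. That equilateral-triangle trick is where the $120^\circ$ hypothesis actually enters, and it is absent from your proposal.
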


\begin{proof}

We may assume, after a series of reflections and/or $60^{\circ}$ rotations, that the line segments in $\partial A^{\varhexagon}$ and $\partial B^{\varhexagon}$ that form $\theta_2$ are contained in $\Lambda_{\smallnearrow}\left(s_{\smallnearrow}^A\right)$ and $\Lambda_{\smallsearrow}\left(s_{\smallsearrow}^B\right)$, respectively. This is as in Figure \ref{fig:OneCornerTwoHexagons}. This means that there are two possibilities for the formation of $\theta_1$. Either it is formed by the intersection of the lines $\Lambda_{\smallsearrow}\left(i_{\smallsearrow}^A\right)$ and $\Lambda_{\smallnearrow}\left(i_{\smallnearrow}^B\right)$ (this is again as in Figure \ref{fig:OneCornerTwoHexagons}), or the intersection of the lines $\Lambda_{\smallnearrow}\left(s_{\smallnearrow}^A\right)$ and $\Lambda_{\smallrightarrow}\left(i_{\smallrightarrow}^B\right)$. 

As mentioned above, we assume that the joint boundary of $\left(A^{\varhexagon}\setminus B^{\varhexagon},B^{\varhexagon}\right)$ is greater than the joint boundary of $\left(A^{\varhexagon},B^{\varhexagon}\setminus A^{\varhexagon}\right)$. In this case, we replace $B$ with $B^{\varhexagon}$ (we know that the perimeter of $B^{\varhexagon}$ is no more than that of $B$), and we want to argue that there is enough uncounted perimeter to make the boundary of $A^{\varhexagon}\setminus B^{\varhexagon}$ that does not form joint boundary with $B^{\varhexagon}$. 

We use the notation introduced before this lemma. We only need to do the proof for $\theta_2$ as the process is the same for both angles; the process we perform for $\theta_2$ does not affect the process for $\theta_1$, and vice versa. Therefore, we include only the proof for $\theta_2$ with the understanding that, after an appropriate change of notation, the proof is the same for $\theta_1$. So, suppose that $\theta_2=120^{\circ}$. The situation resembles something like Figure \ref{fig:OneCornerTwoHexagons}. Note, however, that the angle formed by $a_2$ and $l_2^A$ could be either $60^{\circ}$ or $120^{\circ}$. Similarly for $b_2$ and $l_2^B$. 

Now, since $B$ has already been replaced by $B^{\varhexagon}$, consider the sides of $A^{\varhexagon}$ that do not intersect $B^{\varhexagon}$. It follows from Lemma \ref{lem:geodesics} that since $\partial A$ must touch each of these line segments in at least one point, the path in $\partial A^{\varhexagon}$ that connects them (that doesn't intersect $B^{\varhexagon}$) can be no longer than the path in $\partial A$ that connects them (and doesn't intersect $B^{\varhexagon}$). See the lefthand side of the Figure \ref{fig:OneCornerConstruction1}.

\begin{figure}[H]

\begin{tikzpicture}[scale=0.4]

\draw[blue, thin] (-1,0) to (-0.4,0);
\draw[blue, thin, dashed] (1,0) to (3,3.4641);
\draw[blue, thin, dashed] (3,3.4641) to (1,6.9282);
\draw[blue, thin, dashed] (-0.4,0) to (1,0);
\draw[blue, thin] (0.1,6.9282) to (-1,6.9282);
\draw[blue, thin, dashed] (0.1,6.9282) to (1,6.9282);
\draw[blue, thin] (-1,6.9282) to (-3,3.4641);
\draw[blue, thin] (-3,3.4641) to (-1,0);

\draw[red, thin] (3.7,-0.5) to (5.7,-0.5);
\draw[red, thin] (5.7,-0.5) to (7.7,2.9641);
\draw[red, thin] (7.7,2.9641) to (5.7,6.4282);
\draw[red, thin] (5.7,6.4282) to (3.7,6.4282);
\draw[red, thin] (3.7,6.4282) to (1.7,2.9641);
\draw[red, thin] (1.7,2.9641) to (3.7,-0.5);

\draw (-0.1,3.5) node{$A^{\varhexagon}$};
\draw (4.3,3) node{$B^{\varhexagon}$};

\draw (-0.4,0) node{$\bullet$};
\draw (-0.4,-0.8) node{$p\left(s_{\smallrightarrow}^A\right)$};

\draw (0.1,6.9282) node{$\bullet$};
\draw (0.1,7.7) node{$p\left(i_{\smallrightarrow}^A\right)$};

\draw (-2,5.15) node{$\bullet$};
\draw (-4,5.15) node{$p\left(i_{\smallnearrow}^A\right)$};

\draw (-2.5,2.58) node{$\bullet$};
\draw (-4.7,2.58) node{$p\left(s_{\smallsearrow}^A\right)$};

\draw[->] (8.6,2.9) to (9.4,2.9);


\draw[blue, thin] (13,0) to (13.6,0);

\draw[black, thin, dashed] (14.5,0) to (16,2.59808);

\draw[blue, thin, dashed] (16.2,2.07846) to (17,3.4641);

\draw[blue, thin, dashed] (17,3.4641) to (15,6.9282);
\draw[blue, thin, dashed] (13.6,0) to (15,0);
\draw[blue, thin] (14.1,6.9282) to (13,6.9282);
\draw[blue, thin, dashed] (14.1,6.9282) to (15,6.9282);
\draw[blue, thin] (13,6.9282) to (11,3.4641);
\draw[blue, thin] (11,3.4641) to (13,0);

\draw[red, thin] (17.7,-0.5) to (19.7,-0.5);
\draw[red, thin] (19.7,-0.5) to (21.7,2.9641);
\draw[red, thin] (21.7,2.9641) to (19.7,6.4282);
\draw[red, thin] (19.7,6.4282) to (17.7,6.4282);
\draw[red, thin] (17.7,6.4282) to (15.7,2.9641);
\draw[red, thin] (15.7,2.9641) to (17.7,-0.5);



\draw [black,thin] plot [smooth, tension=0.2] coordinates {(15.1,1.03923) (14,1.2)(10,1)};
\draw (9.6,1) node{$p$};

\draw (15.1,1.03923) node{$\bullet$};

\draw (13.6,0) node{$\bullet$};
\draw (12.9,-0.8) node{$p\left(s_{\smallrightarrow}^A\right)$};




\draw[->] (22.6,2.9) to (23.4,2.9);


\draw[black, thick, dotted] (24,8.66025) to (32,-5.1961524);

\draw[blue, thin] (27,0) to (27.6,0);

\draw[blue, thin] (28.5,0) to (29.96,2.54);

\draw[blue, thin, dashed] (29,0) to (31,3.4641);


\draw[blue, thin, dashed] (31,3.4641) to (29,6.9282);
\draw[blue, thin] (27.6,0) to (28.5,0);
\draw[blue, thick, dotted] (28.5,0) to (29,0);
\draw[blue, thin] (28.1,6.9282) to (27,6.9282);
\draw[blue, thin, dashed] (28.1,6.9282) to (29,6.9282);
\draw[blue, thin] (27,6.9282) to (25,3.4641);
\draw[blue, thin] (25,3.4641) to (27,0);

\draw[red, thin] (31.7,-0.5) to (33.7,-0.5);
\draw[red, thin] (33.7,-0.5) to (35.7,2.9641);
\draw[red, thin] (35.7,2.9641) to (33.7,6.4282);
\draw[red, thin] (33.7,6.4282) to (31.7,6.4282);
\draw[red, thin] (31.7,6.4282) to (29.7,2.9641);
\draw[red, thin] (29.7,2.9641) to (31.7,-0.5);

\draw (32.15,-4.5) node{$\Lambda$};

\end{tikzpicture}

\caption{\label{fig:OneCornerConstruction1}}

\end{figure}

Here we are using $p\left(i_{\smallrightarrow}^A\right)$ to represent a point where $\partial A$ intersects $\Lambda_{\smallrightarrow}\left(i_{\smallrightarrow}^A\right)$, $p\left(i_{\smallnearrow}^A\right)$ to represent a point where $\partial A$ intersects $\Lambda_{\smallnearrow}\left(i_{\smallnearrow}^A\right)$, etc. The dashed lines in the lefthand configuration in Figure \ref{fig:OneCornerConstruction1} represent what was the remaining part of $\partial A^{\varhexagon}$ but for which we have not yet found enough perimeter in $\left(A,B\right)$ to form.

Now, recalling the notation shown in Figure \ref{fig:OneCornerTwoHexagons}, we translate $l_2^A$ up and to the left so that one of its endpoints is always contained in the line segment in $\partial B^{\varhexagon}$ consisting of $l_2^B\cup l_2^{B_{ext}}$. We do this until the line passing through $l_2^A$ intersects $\partial A$ at some point $p$ (which may be before we move $l_2^A$ at all). Notice first that this may increase the length of $l_2^A$. And notice second that we will not have to slide $l_2^A$ passed the end of $l_2^B\cup l_2^{B_{ext}}$ because we know that eventually $\partial A$ must intersect $\Lambda_{\smallnearrow}\left(i_{\smallnearrow}^B\right)$ (the discussion at the beginning of this section showed that we only need to analyse configurations that have joint boundary of positive length). Let's denote this translated version of $l_2^A$ by $\tilde{l}_2^A$. Then, we have created a geodesic from $p\left(s_{\smallrightarrow}^A\right)$ to $p$ that is partially in $a_2$ and partially in $\tilde{l}_2^A$. We also know that there is some path in $\partial A$ that connects $p$ to $\Lambda_{\smallsearrow}\left(s_{\smallsearrow}^B\right)$, and this path must be at least as long as the path in $\tilde{l}_2^A$ that connects this same point and line, which is a geodesic. The middle configuration in Figure \ref{fig:OneCornerConstruction1} illustrates the translation of $l_2^A$.

We now have a new path from $p\left(s_{\smallrightarrow}^A\right)$ to $l_2^B\cup l_2^{B_{ext}}$, and we want to argue that it actually has the same length as the original path in $\partial A^{\varhexagon}$ that connected this same point and line. To this end, let $\Lambda$ be the line with slope $-\sqrt{3}$ that passes through the intersection of $\Lambda_{\smallrightarrow}\left(s_{\smallrightarrow}^A\right)$ and $\Lambda_{\smallnearrow}\left(s_{\smallnearrow}^A\right)$. See the righthand side of Figure \ref{fig:OneCornerConstruction1}. 

Notice that, up to now, we have moved $l_2^A$ up and broken $a_2$ at an appropriate point and reoriented it so that it has slope $\sqrt{3}$. We know that this reoriented piece of $a_2$ has not changed length because, along with its original orientation as well as $\Lambda$, it forms a triangle whose interior angles are all $60^{\circ}$, whence the side lengths of this triangle are all equal. So, we know that we can reorient this broken piece of $a_2$ and move $l_2^A$ back to its original location without increasing the double bubble perimeter. 

Note that this argument still works if the angle between $a_2$ and $l_2^A$ is $60^{\circ}$. In this case the boundary of $A$ must intersect the point of intersection of $a_2$ and $l_2^A$, which means that we can make this point $p\left(s_{\smallrightarrow}^A\right)$. Therefore, the line $l_2^A$ is a geodesic between $p\left(s_{\smallrightarrow}^A\right)$ and $l_2^B$. That is, the path in the boundary of $\partial A$ that connects $p\left(s_{\smallrightarrow}^A\right)$ to $\Lambda_{\smallsearrow}\left(s_{\smallsearrow}^B\right)$ must be at least as long as $l_2^A$. 

Since we are assuming that $\theta_1=120^{\circ}$ we can make the same argument, with appropriate changes in notation, to show that there is still enough uncounted perimeter in $A\setminus B^{\varhexagon}$ to create the configuration $\left(A^{\varhexagon}\setminus B^{\varhexagon},B^{\varhexagon}\right)$ without increasing the double bubble perimeter.

\end{proof}

The following lemma shows that we can take the configuration resulting from Lemma \ref{lem:Two120DegreeAngles} and adjust the volumes, if necessary, to create a new configuration that has the correct volume ratio. 

\begin{lemma}\label{lem:Two120AnglesVolAdj}
Let $\left(A,B\right)\in\gamma_{\alpha}$ be as in Lemma \ref{lem:Two120DegreeAngles}. Suppose, without loss of generality, that $\left(A^{\varhexagon}\setminus B^{\varhexagon},B^{\varhexagon}\right)$ has greater joint boundary than $\left(A^{\varhexagon},B^{\varhexagon}\setminus A^{\varhexagon}\right)$. Then we can adjust the volumes of the two sets in the former configuration to obtain a new configuration $\left(\tilde{A},\tilde{B}\right)\in\mathfs{F}_{\alpha}$, with $\rho_{DB}\left(\tilde{A},\tilde{B}\right)\leq\rho_{DB}\left(A,B\right)$. 

\end{lemma}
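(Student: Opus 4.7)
The plan is to start from $(A_1, B_1) := (A^\varhexagon \setminus B^\varhexagon, B^\varhexagon)$, which by Lemma \ref{lem:Two120DegreeAngles} already satisfies $\rho_{DB}(A_1, B_1) \leq \rho_{DB}(A, B)$, and perform further deformations inside the family of Figure \ref{fig:GeneralCase} to reach volumes $(1,\alpha)$ without increasing $\rho_{DB}$. Set $v_A := \mu(A_1)$ and $v_B := \mu(B_1)$. Since $A \subset A^\varhexagon$, $B \subset B^\varhexagon$, and $A \cap B = \emptyset$, one has $v_A + v_B = \mu(A^\varhexagon \cup B^\varhexagon) \geq \mu(A) + \mu(B) = 1 + \alpha$ and $v_B \geq \mu(B) = \alpha$.

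If $v_A \geq 1$, then both sets carry excess volume and we shrink each independently. Pick a side of $\partial B^\varhexagon$ disjoint from $A^\varhexagon$ (a ``far'' side of $B^\varhexagon$; such a side exists because the overlap $A^\varhexagon \cap B^\varhexagon$ is localised on the face of $B^\varhexagon$ toward $A^\varhexagon$) and translate it parallel to itself into the interior of $B_1$. This leaves $A_1$ unchanged while continuously decreasing $\mu(B_1)$ and, by the standard hexagonal computation behind Lemma \ref{lem:IsoperimetricProblem}, strictly decreasing $\rho(\partial B_1)$; hence $\rho_{DB}$ strictly decreases. Stop when $\mu(B_1) = \alpha$. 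Analogously shrink $A_1$ to volume $1$ by translating inward a side of $\partial A^\varhexagon$ disjoint from $B^\varhexagon$. The resulting configuration lies in $\mathfs{F}_\alpha$ and has strictly smaller $\rho_{DB}$ than $(A_1,B_1)$.

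If instead $v_A < 1$, then $v_B \geq 1 + \alpha - v_A > \alpha$, so $B_1$ has excess $v_B - \alpha \geq 1 - v_A$ to hand over to $A_1$. We transfer this volume by translating the one or two edges of $\partial B^\varhexagon$ that contain the joint boundary parallel to themselves into the interior of $B_1$. As such an edge slides inward by an infinitesimal amount, the endpoints $\pi_1$ and $\pi_2$ of the joint boundary slide along $l_1^A$ and $l_2^A$. The $120^\circ$ hypotheses on $\theta_1, \theta_2$ are decisive here: by Lemma \ref{lem:geodesics} combined with the geometry of Lemma \ref{lem:UnitBall}, the infinitesimal sliver swept between the old and new positions of a joint-boundary edge and the corresponding sliding portion of $l_i^A$ is a ``hexagonal equilateral triangle'' (all three of its $\mathcal D$-side-lengths are equal), so the length lost from the portion of $l_i^A$ contributing to $\partial A_1$ is compensated exactly by the length gained on the joint boundary, and symmetrically on the $B_1$ side the shortening of the translated edge is matched by the lengthening of the adjacent edge $b_i$. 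Thus $\rho_{DB}$ does not increase under this motion, while $v_A$ grows and $v_B$ shrinks. Continue until $v_A = 1$; if $v_B$ still exceeds $\alpha$, apply the far-side shrinking move from the previous paragraph to $B_1$ alone to finish.

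The main obstacle is precisely the perimeter bookkeeping of the volume-transfer case: the $120^\circ$ hypothesis is essential for the exact cancellation of gains and losses in $\rho_{DB}$, one must separately handle the situation in which the joint boundary bends at a corner of $\partial B^\varhexagon$ strictly between $\pi_1$ and $\pi_2$ (so that two edges are translated simultaneously), and one must verify along the way that $v_B$ never dips below $\alpha$. The last point requires care because the translated edge of $\partial B^\varhexagon$ may extend beyond $A^\varhexagon$, in which case $B_1$ loses volume faster than $A_1$ gains it; this is remedied by interleaving the joint-boundary translation with the far-side shrinking move so that $B_1$ tracks the lower bound $\alpha$ continuously.
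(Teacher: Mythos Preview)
Your starting point and case split are correct, and the $v_A \geq 1$ case works essentially as you say. The $v_A < 1$ case, however, has a genuine gap in the perimeter accounting, and your own final paragraph concedes it is unresolved. The ``hexagonal equilateral triangle'' justification is incorrect on its face: the sliver bounded by the old position of a joint-boundary edge, its parallel translate, and a portion of $l_i^A$ cannot be a triangle, since two of those three bounding lines are parallel. Your description of the signs is also backward---pushing a left edge of $B^\varhexagon$ inward \emph{lengthens} $l_i^A$ and \emph{shortens} the joint boundary, not the reverse---and you do not track the simultaneous change in the adjacent non-joint edges of $\partial B^\varhexagon$ (its top and bottom), which is where the cancellation actually happens. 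A careful accounting does show $\rho_{DB}$ is preserved under such a translation, but for reasons different from those you give, and only while the leftmost corner of $B^\varhexagon$ remains inside $A^\varhexagon$; you do not address what happens when that corner exits $A^\varhexagon$, nor the second configuration of Figure~\ref{fig:Two120sVolAdj}, nor how to guarantee the result lands in $\mathfs{F}_\alpha$.

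The paper does not translate joint-boundary edges directly. After fixing the orientation it distinguishes the two configurations of Figure~\ref{fig:Two120sVolAdj} and, within the first, two further subcases according to the relative heights of $A^\varhexagon$ and $B^\varhexagon$. The key moves are to \emph{reorient} $l_1^B$ and $l_2^B$ to be horizontal (Figure~\ref{fig:OneCornerVolumeAdjustment1}), replace the resulting set by its hexagonal hull, and then slide a specified pair of its left sides rightward until the correct volume is reached (Figures~\ref{fig:OneCornerVolumeAdjustment2} and~\ref{fig:OneCornerVolumeAdjustment4}). Each step is verified individually to keep $\rho_{DB}$ from increasing and to keep the pair inside the family of Figure~\ref{fig:GeneralCase}; the final trimming of $A$ follows the explicit sequence ``bottom up, then top down, then left sides right.''
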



 \begin{proof}

After a sequence of reflections and/or $60^{\circ}$ rotations, we can assume that $\theta_2$ is formed by the two lines $\Lambda_{\smallnearrow}\left(s_{\smallnearrow}^A\right)$ and $\Lambda_{\smallsearrow}\left(s_{\smallsearrow}^B\right)$. The angle $\theta_1$ must also be formed by the intersection of two lines, and there are now only two possibilities. Either $\theta_1$ is formed by the two lines $\Lambda_{\smallsearrow}\left(i_{\smallsearrow}^A\right)$ and $\Lambda_{\smallnearrow}\left(i_{\smallnearrow}^B\right)$, or it is formed by the lines $\Lambda_{\smallnearrow}\left(s_{\smallnearrow}^A\right)$ and $\Lambda_{\smallrightarrow}\left(i_{\smallrightarrow}^B\right)$. Figure \ref{fig:Two120sVolAdj} demonstrates these two options:

\begin{figure}[H]

\begin{tikzpicture}[scale=0.2]

\draw[blue, thin] (0,0) to (2,3.464101615);
\draw[blue, thin] (2,3.464101615) to (4,3.464101615);
\draw[blue, thin] (4,3.464101615) to (6,0);
\draw[blue, thin] (6,0) to (4,-3.464101615);
\draw[blue, thin] (4,-3.464101615) to (2,-3.464101615);
\draw[blue, thin] (2,-3.464101615) to (0,0);

\draw[red, thin] (4,0) to (7,5.19615242);
\draw[red, thin] (7,5.19615242) to (9,5.19615242);
\draw[red, thin] (9,5.19615242) to (12,0);
\draw[red, thin] (12,0) to (9,-5.19615242);
\draw[red, thin] (9,-5.19615242) to (7,-5.19615242);
\draw[red, thin] (7,-5.19615242) to (4,0);

\draw[blue, thin] (15,0) to (18,5.19615242);
\draw[blue, thin] (18,5.19615242) to (22,5.19615242);
\draw[blue, thin] (22,5.19615242) to (23,3.464101615);
\draw[blue, thin] (23,3.464101615) to (18,-5.19615242);
\draw[blue, thin] (18,-5.19615242) to (17,-5.19615242);
\draw[blue, thin] (17,-5.19615242) to (15,0);

\draw[red, thin] (19,-1.7320508) to (21,1.7320508);
\draw[red, thin] (21,1.7320508) to (23,1.7320508);
\draw[red, thin] (23,1.7320508) to (24,0);
\draw[red, thin] (24,0) to (21,-5.19615242);
\draw[red, thin ](21,-5.19615242) to (20,-5.19615242);
\draw[red, thin] (20,-5.19615242) to (19,-1.7320508);

\end{tikzpicture}

\caption{\label{fig:Two120sVolAdj}}

\end{figure}

In both configurations we can analyse $\left(A^{\varhexagon}\setminus B^{\varhexagon},B^{\varhexagon}\right)$. Let us first analyse the configuration on the left. Note that $\mu\left(B^{\varhexagon}\right)\geq\mu\left(B\right)$, and that $\mu\left(A^{\varhexagon}\cup B^{\varhexagon}\right)\geq\mu\left(A\cup B\right)$. Thus, we argue that we can reduce the volume of $B^{\varhexagon}$ to obtain a set of volume $\alpha$. Also, all volume that is removed from $A^{\varhexagon}\cap B^{\varhexagon}$ is added to $A^{\varhexagon}\setminus B^{\varhexagon}$. This will guarantee that we have a set of volume $\alpha$, and another set of volume at least $1$. This latter set we then adjust until we have a set of volume $1$. So, we always begin by reducing the volume of $B^{\varhexagon}$ first. 

There are essentially two situations to examine. Either $s_{\smallrightarrow}^A\geq s_{\smallrightarrow}^B$ and $i_{\smallrightarrow}^A\leq i_{\smallrightarrow}^B$ (the highest point of $A^{\varhexagon}$ is higher than the highest point of $B^{\varhexagon}$ and the lowest point of $A^{\varhexagon}$ is lower than the lowest point of $B^{\varhexagon}$), or $s_{\smallrightarrow}^A>s_{\smallrightarrow}^B$ and $i_{\smallrightarrow}^A\geq i_{\smallrightarrow}^B$ (the highest point in $A^{\varhexagon}$ is higher than the highest point in $B^{\varhexagon}$ and the lowest point in $A^{\varhexagon}$ is also higher than the lowest point in $B^{\varhexagon}$). 

Let us take the first situation. With reference to Figure \ref{fig:OneCornerVolumeAdjustment1}, we merely have to reorient $l_1^B$ so that it is horizontal (if it isn't already), has one endpoint in $\partial A^{\varhexagon}\setminus B^{\varhexagon}$, and is contained in $\Lambda_{\smallrightarrow}\left(i_{\smallrightarrow}^B\right)$. Similarly, we reorient $l_2^B$. The following figure demonstrates this process:

\begin{figure}[H]

\begin{tikzpicture}[scale=0.2]

\draw[red, thin] (0,0) to (2,3.4641016);
\draw[red, thin] (2,3.4641016) to (4,3.4641016);
\draw[red, thin] (4,3.4641016) to (6,0);
\draw[red, thin] (6,0) to (4,-3.4641016);
\draw[red, thin] (4,-3.4641016) to (2,-3.4641016);
\draw[red, thin] (2,-3.4641016) to (0,0);

\draw[blue, thin] (1,1.7320508) to (-1,5.1961524);
\draw[blue, thin] (-1,5.1961524) to (-3,5.1961524);
\draw[blue, thin] (-3,5.1961524) to (-6,0);
\draw[blue, thin] (-6,0) to (-3,-5.1961524);
\draw[blue, thin] (-3,-5.1961524) to (-1,-5.1961524);
\draw[blue, thin] (-1,-5.1961524) to (1,-1.7320508);

\draw[->] (6.5,0) to (7.5,0);

\draw[red, thin] (14,0) to (15,1.7320508);
\draw[red, thin] (14,3.4641016) to (16,3.4641016);
\draw[red, thin] (16,3.4641016) to (18,3.4641016);
\draw[red, thin] (18,3.4641016) to (20,0);
\draw[red, thin] (20,0) to (18,-3.4641016);
\draw[red, thin] (18,-3.4641016) to (16,-3.4641016);
\draw[red, thin] (16,-3.4641016) to (14,-3.4641016);
\draw[red, thin] (15,-1.7320508) to (14,0);

\draw[blue, thin] (15,1.7320508) to (13,5.1961524);
\draw[blue, thin] (13,5.1961524) to (11,5.1961524);
\draw[blue, thin] (11,5.1961524) to (8,0);
\draw[blue, thin] (8,0) to (11,-5.1961524);
\draw[blue, thin] (11,-5.1961524) to (13,-5.1961524);
\draw[blue, thin] (13,-5.1961524) to (15,-1.7320508);

\end{tikzpicture}

\caption{\label{fig:OneCornerVolumeAdjustment1}}

\end{figure}


We no longer have $B^{\varhexagon}$, so we rename this new set $B_1$. Now, we rearrange the joint boundary by replacing $B_1$ with $B_1^{\varhexagon}$. We began with a set $B^{\varhexagon}$ whose volume was too great, and $B_1^{\varhexagon}$ has even greater volume. So, we move $\Lambda_{\smallsearrow}\left(s_{\smallsearrow}^{B_1^{\varhexagon}}\right)$ and $\Lambda_{\smallnearrow}\left(i_{\smallnearrow}^{B_1^{\varhexagon}}\right)$ to the right until we have a set whose volume is the same as the volume of $B$. See Figure \ref{fig:OneCornerVolumeAdjustment2}. We no longer have $B_1$ or $B_1^{\varhexagon}$, so we rename this new set $\tilde{B}$. Note that by this construction $\tilde{B}=\tilde{B}^{\varhexagon}$. We also no longer have $A^{\varhexagon}\setminus B^{\varhexagon}$. So, we call this new set $A_1$. We then replace $A_1$ with $A_1^{\varhexagon}\setminus\tilde{B}$. The following figure shows this process. 

\begin{figure}[H]

\begin{tikzpicture}[scale=0.2]

\draw[red, thin] (0,0) to (1,1.7320508);
\draw[red, thin] (4,3.4641016) to (0,3.4641016);
\draw[red, thin] (4,3.4641016) to (6,0);
\draw[red, thin] (6,0) to (4,-3.4641016);
\draw[red, thin] (4,-3.4641016) to (2,-3.4641016);
\draw[red, thin] (2,-3.4641016) to (0,-3.4641016);
\draw[red, thin] (1,-1.7320508) to (0,0);

\draw[blue, thin] (1,1.7320508) to (-1,5.1961524);
\draw[blue, thin] (-1,5.1961524) to (-3,5.1961524);
\draw[blue, thin] (-3,5.1961524) to (-6,0);
\draw[blue, thin] (-6,0) to (-3,-5.1961524);
\draw[blue, thin] (-3,-5.1961524) to (-1,-5.1961524);
\draw[blue, thin] (-1,-5.1961524) to (1,-1.7320508);

\draw (-2.2,0) node{$\scaleobj{0.5}{A^{\varhexagon}\setminus B^{\varhexagon}}$};
\draw (2.6,0) node{$\scaleobj{0.5}{B_1}$};

\draw[->] (6.5,0) to (7.5,0);


\draw[red, thin] (12,0) to (14,3.4641016);
\draw[red, thin] (14,3.4641016) to (16,3.4641016);
\draw[red, thin] (16,3.4641016) to (18,3.4641016);
\draw[red, thin] (18,3.4641016) to (20,0);
\draw[red, thin] (20,0) to (18,-3.4641016);
\draw[red, thin] (18,-3.4641016) to (16,-3.4641016);
\draw[red, thin] (16,-3.4641016) to (14,-3.4641016);
\draw[red, thin] (14,-3.4641016) to (12,0);

\draw[blue, thin] (14,3.4641016) to (13,5.1961524);
\draw[blue, thin] (13,5.1961524) to (11,5.1961524);
\draw[blue, thin] (11,5.1961524) to (8,0);
\draw[blue, thin] (8,0) to (11,-5.1961524);
\draw[blue, thin] (11,-5.1961524) to (13,-5.1961524);
\draw[blue, thin] (13,-5.1961524) to (14,-3.4641016);

\draw (10.1,0) node{$\scaleobj{0.5}{A^{\varhexagon}\setminus B_1^{\varhexagon}}$};
\draw (15.6,0) node{$\scaleobj{0.5}{B_1^{\varhexagon}}$};

\draw[->] (20.5,0) to (21.5,0);


\draw[red, thin] (27,0) to (29,3.4641016);
\draw[red, thin] (29,3.4641016) to (30,3.4641016);
\draw[red, thin] (30,3.4641016) to (32,3.4641016);
\draw[red, thin] (32,3.4641016) to (34,0);
\draw[red, thin] (34,0) to (32,-3.4641016);
\draw[red, thin] (32,-3.4641016) to (30,-3.4641016);
\draw[red, thin] (30,-3.4641016) to (28,-3.4641016);
\draw[red, thin] (29,-3.4641016) to (27,0);

\draw[blue, thin] (28,3.4641016) to (29,3.4641016);
\draw[blue, thin] (28,3.4641016) to (27,5.1961524);
\draw[blue, thin] (27,5.1961524) to (25,5.1961524);
\draw[blue, thin] (25,5.1961524) to (22,0);
\draw[blue, thin] (22,0) to (25,-5.1961524);
\draw[blue, thin] (25,-5.1961524) to (27,-5.1961524);
\draw[blue, thin] (27,-5.1961524) to (28,-3.4641016);
\draw[blue, thin] (28,-3.4641016) to (29,-3.4641016);

\draw (24.8,0) node{$\scaleobj{0.5}{A_1}$};
\draw (30.2,0) node{$\scaleobj{0.5}{\tilde{B}}$};

\draw[->] (34.5,0) to (35.5,0);


\draw[red, thin] (41,0) to (43,3.4641016);
\draw[red, thin] (43,3.4641016) to (44,3.4641016);
\draw[red, thin] (44,3.4641016) to (46,3.4641016);
\draw[red, thin] (46,3.4641016) to (48,0);
\draw[red, thin] (48,0) to (46,-3.4641016);
\draw[red, thin] (46,-3.4641016) to (44,-3.4641016);
\draw[red, thin] (44,-3.4641016) to (43,-3.4641016);
\draw[red, thin] (43,-3.4641016) to (41,0);

\draw[blue, thick, dotted] (42,3.4641016) to (43,3.4641016);
\draw[blue, thin] (41,5.1961524) to (42,5.1961524);
\draw[blue, thick, dotted] (42,3.4641016) to (41,5.1961524);
\draw[blue, thin] (43,3.4641016) to (42,5.1961524);

\draw[blue, thin] (41,5.1961524) to (39,5.1961524);
\draw[blue, thin] (39,5.1961524) to (36,0);
\draw[blue, thin] (36,0) to (39,-5.1961524);
\draw[blue, thin] (39,-5.1961524) to (41,-5.1961524);

\draw[blue, thick, dotted] (41,-5.1961524) to (42,-3.4641016);
\draw[blue, thin] (42,-5.1961524) to (43,-3.4641016);

\draw[blue, thin] (41,-5.1961524) to (42,-5.1961524);
\draw[blue, thick, dotted] (42,-3.4641016) to (43,-3.4641016);

\draw (38.8,0) node{$\scaleobj{0.5}{A_1^{\varhexagon}\setminus\tilde{B}}$};
\draw (44.2,0) node{$\scaleobj{0.5}{\tilde{B}}$};

\end{tikzpicture}

\caption{\label{fig:OneCornerVolumeAdjustment2}}

\end{figure}

Notice that $A_1^{\varhexagon}\setminus\tilde{B}$ may now have too much volume. However, we can decrease its volume by moving its bottom side up until, if necessary, it is collinear with $\Lambda_{\smallrightarrow}\left(s_{\smallrightarrow}^{\tilde{B}}\right)\cap\partial\tilde{B}$. Then, if necessary, we can move its top side down until it is collinear with $\Lambda_{\smallrightarrow}\left(i_{\smallrightarrow}^{\tilde{B}}\right)$. Finally, if necessary, we can move its left sides to the right until we have a set $\tilde{A}$ of volume $1$. Note that $\left(A_1^{\varhexagon}\setminus\tilde{B},\tilde{B}\right)$ is of the correct form to belong to some $\mathfs{F}_{\alpha'}$, except that possibly $\mu\left(A_1^{\varhexagon}\setminus\tilde{B}\right)>1$. All of the operations used on $A_1^{\varhexagon}\setminus\tilde{B}$ to obtain $\tilde{A}$ result in a $\tilde{A}$ such that $\left(\tilde{A},\tilde{B}\right)\in\mathfs{F}_{\alpha}$. 

On the other hand, suppose that $i_{\smallrightarrow}^A>i_{\smallrightarrow}^B$, and $s_{\smallrightarrow}^A\geq s_{\smallrightarrow}^B$. See Figure \ref{fig:OneCornerVolumeAdjustment3} for illustration. In this case, we begin by reorienting $l_1^B$ so that its endpoint that is not in the joint boundary remains the same, and so that it is contained in $\Lambda_{\smallrightarrow}\left(i_{\smallrightarrow}^B\right)$. Similarly, we reorient $l_2^A$. This results in two new sets that we call $A_1$ and $B_1$. See Figure \ref{fig:OneCornerVolumeAdjustment3}:
\begin{figure}[H]

\begin{tikzpicture}[scale=0.2]

\draw[red, thin] (-2,3.4641016) to (5,3.4641016);
\draw[red, thick, dotted] (-1,1.7320508) to (0,3.4641016);
\draw[red, thin] (5,3.4641016) to (7,0);
\draw[red, thin] (7,0) to (4,-5.1961524);
\draw[red, thin] (4,-5.1961524) to (1,-5.1961524);
\draw[red, thin] (1,-5.1961524) to (-1,-1.7320508);
\draw[red, thin] (-1,1.7320508) to (-2,0);
\draw[red, thin] (-2,0) to (-1,-1.7320508);

\draw[blue, thin] (-1,1.7320508) to (-3,5.1961524);
\draw[blue, thin] (-3,5.1961524) to (-6,5.1961524);
\draw[blue, thin] (-6,5.1961524) to (-9,0);
\draw[blue, thin] (-9,0) to (-7,-3.4641016);
\draw[blue, thin] (-7,-3.4641016) to (-2,-3.4641016);

\draw[blue, thin] (-2,-3.4641016) to (0,-3.4641016);
\draw[blue, thick, dotted] (-2,-3.4641016) to (-1,-1.7320508);


\draw (-5,0) node{$\scaleobj{0.5}{A_1}$};
\draw (2,0) node{$\scaleobj{0.5}{B_1}$};

\end{tikzpicture}

\caption{\label{fig:OneCornerVolumeAdjustment3}}

\end{figure}
We now replace $B_1$ and $A_1$ by, respectively, $B_1^{\varhexagon}$ and $A_1\setminus B_1^{\varhexagon}$. Then, we move the part of $\partial B_1^{\varhexagon}$ that is contained in $\Lambda_{\smallnearrow}\left(i_{\smallnearrow}^{B_1^{\varhexagon}}\right)$ to the right so that we simultaneously decrease the volume of $B_1^{\varhexagon}$ and increase the volume of $A_1$. This process decreases the length of the line segment $\partial B_1^{\varhexagon}\cap \Lambda_{\smallrightarrow}\left(i_{\smallrightarrow}^{B_1^{\varhexagon}}\right)$ and adds it to $\partial A_1$. We can do this until we get a set $A_2$ to replace $A_1\setminus B_1^{\varhexagon}$, and $B_2$ to replace $B_1^{\varhexagon}$ such that $\mu\left(B_2\right)\geq\alpha$, and $\mu\left(A_1\right)\geq1$. We know that we can do this because we can move $\partial B_1\cap\Lambda_{\smallnearrow}\left(i_{\smallnearrow}^{B_1^{\varhexagon}}\right)$ to the right either until our new set $B_2$ has volume $\alpha$, or until all of the volume of $A^{\varhexagon}\cap B^{\varhexagon}$ is contained in $A_2$, whichever comes first.

Similarly, $A_1\setminus B_1^{\varhexagon}$ has been replaced, and we call this new set $A_2$. Then, we replace $A_2$ with $A_2^{\varhexagon}\setminus B_2$. See the following figure for illustration:

\begin{figure}[H]

\begin{tikzpicture}[scale=0.2]

\draw[red, thin] (-2,3.4641016) to (5,3.4641016);
\draw[red, thick, dotted] (-1,1.7320508) to (-2,0);
\draw[red, thin] (5,3.4641016) to (7,0);
\draw[red, thin] (7,0) to (4,-5.1961524);
\draw[red, thin] (4,-5.1961524) to (1,-5.1961524);
\draw[red, thin] (1,-5.1961524) to (-1,-1.7320508);
\draw[red, thin] (-2,0) to (-1,-1.7320508);
\draw[red, thin] (-2,3.4641016) to (-3,1.7320508);
\draw[red, thin] (-2,0) to (-3,1.7320508);

\draw[red, thick, dotted] (-1,1.7320508) to (-2,0);

\draw[blue, thin] (-2,3.4641016) to (-3,5.1961524);
\draw[blue, thick, dotted] (-1,1.7320508) to (-2,3.4641016);

\draw[blue, thin] (-3,5.1961524) to (-6,5.1961524);
\draw[blue, thin] (-6,5.1961524) to (-9,0);
\draw[blue, thin] (-9,0) to (-7,-3.4641016);
\draw[blue, thin] (-7,-3.4641016) to (-2,-3.4641016);

\draw[blue, thin] (-2,-3.4641016) to (0,-3.4641016);


\draw (-4.6,0) node{$\scaleobj{0.5}{A_1\setminus B_1^{\varhexagon}}$};
\draw (1.4,0) node{$\scaleobj{0.5}{B_1^{\varhexagon}}$};

\draw[->] (7.5,0) to (8.5,0);


\draw[red, thin] (18,3.4641016) to (23,3.4641016);

\draw[red, thin] (18,3.4641016) to (16,0);

\draw[red, thin] (23,3.4641016) to (25,0);
\draw[red, thin] (25,0) to (22,-5.1961524);
\draw[red, thin] (22,-5.1961524) to (19,-5.1961524);
\draw[red, thin] (19,-5.1961524) to (17,-1.7320508);
\draw[red, thin] (16,0) to (17,-1.7320508);
\draw[red, thick, dotted] (16,3.4641016) to (15,1.7320508);
\draw[red, thick, dotted] (16,0) to (15,1.7320508);

\draw[blue, thin] (16,3.4641016) to (15,5.1961524);

\draw[blue, thin] (18,3.4641016) to (16,3.4641016);

\draw[blue, thin] (15,5.1961524) to (12,5.1961524);
\draw[blue, thin] (12,5.1961524) to (9,0);
\draw[blue, thin] (9,0) to (11,-3.4641016);
\draw[blue, thin] (11,-3.4641016) to (16,-3.4641016);

\draw[blue, thin] (16,-3.4641016) to (18,-3.4641016);


\draw[->] (25.5,0) to (26.5,0);


\draw[red, thin] (40,3.46410161) to (41,3.46410161);

\draw[red, thin] (40,3.4641016) to (36,-3.46410161);

\draw[red, thin] (41,3.4641016) to (43,0);

\draw[red, thin] (43,0) to (40,-5.1961524);
\draw[red, thin] (40,-5.1961524) to (37,-5.1961524);
\draw[red, thin] (37,-5.1961524) to (36,-3.46410161);
\draw[red, thick, dotted] (36,-3.46410161) to (34,0);
\draw[red, thick, dotted] (34,0) to (36,3.46410161);



\draw[blue, thin] (34,3.4641016) to (33,5.1961524);

\draw[blue, thin] (33,5.1961524) to (30,5.1961524);
\draw[blue, thin] (30,5.1961524) to (27,0);
\draw[blue, thin] (27,0) to (29,-3.4641016);
\draw[blue, thin] (29,-3.4641016) to (34,-3.4641016);

\draw[blue, thin] (34,-3.4641016) to (36,-3.4641016);


\draw[blue, thin] (34,3.4641016) to (40,3.4641016);

\draw (31.5,0) node{$\scaleobj{0.5}{A_2}$};
\draw (39,0) node{$\scaleobj{0.5}{B_2}$};

\draw[->] (43.5,0) to (44.5,0);


\draw[red, thin] (58,3.4641016) to (59,3.4641016);
\draw[red, thin] (58,3.4641016) to (54,-3.46410161);

\draw[red, thin] (59,3.4641016) to (61,0);
\draw[red, thin] (61,0) to (58,-5.1961524);
\draw[red, thin] (58,-5.1961524) to (55,-5.1961524);
\draw[red, thin] (55,-5.1961524) to (54,-3.46410161);

\draw[blue, thick, dotted] (52,3.4641016) to (51,5.1961524);
\draw[blue, thick, dotted] (52,3.46410161) to (58,3.46410161);
\draw[blue, thin] (57,5.1961524) to (58,3.46410161);

\draw[blue, thin] (57,5.1961524) to (51,5.1961524);
\draw[blue, thin] (51,5.1961524) to (48,5.1961524);
\draw[blue, thin] (48,5.1961524) to (45,0);
\draw[blue, thin] (45,0) to (47,-3.4641016);
\draw[blue, thin] (47,-3.4641016) to (54,-3.4641016);

\draw[blue, thin] (34,-3.4641016) to (36,-3.4641016);


\draw (50,0) node{$\scaleobj{0.5}{A_2^{\varhexagon}\setminus B_2}$};
\draw (57,0) node{$\scaleobj{0.5}{B_2}$};


\end{tikzpicture}

\caption{\label{fig:OneCornerVolumeAdjustment4}}

\end{figure}

If the volume of $B_2$ is still too much, we can move its bottom side up to form a new set $B_3$ that either has volume $\alpha$, or such that $s_{\smallrightarrow}^{B_3}=s_{\smallrightarrow}^{A_2}$, whichever comes first. Finally, if necessary, we can move the right sides of $B_3$ to the left until we have a set $\tilde{B}$ of volume $\alpha$. 

To adjust the volume of $A_2^{\varhexagon}\setminus B_2$, we can lower its top side until we get a new set $A_3$ that either has volume $1$, or such that $i_{\smallrightarrow}^{A_3}=i_{\smallrightarrow}^{B_3}$, whichever comes first. We can then move the left sides of $A_3$ to the right to obtain a set $\tilde{A}$ with volume $1$. 

We still need to argue that we can adjust the volumes of the sets in the right side of Figure \ref{fig:Two120sVolAdj}. This, however, is simple. We begin with the configuration $\left(A^{\varhexagon}\setminus B^{\varhexagon},B^{\varhexagon}\right)$. Then, we can move the line segment $\Lambda_{\smallnearrow}\left(i_{\smallnearrow}^B\right)$ to the right, if necessary, until it is contained in $\Lambda_{\smallnearrow}\left(s_{\smallnearrow}^A\right)$. This creates a new set, call it $B_1$, in place of $B^{\varhexagon}$. If the volume of $B_1$ is still too great, then we can move its right sides to the left until we have a set $\tilde{B}$ of volume $\alpha$. We may now have to adjust the volume of $A^{\varhexagon}\setminus\tilde{B}$. To do this, we can lower the top side of $A^{\varhexagon}\setminus\tilde{B}$ until it is level with the top of $\tilde{B}$. We can then raise the bottom side of this set until it is level with the bottom side of $\tilde{B}$, and finally, if necessary, we can move its left sides to the right until we have a set $\tilde{A}$ of volume $1$.

\end{proof}

We now discuss the situation when exactly one of $\theta_1$ or $\theta_2$ is $60^{\circ}$. 

\begin{lemma}\label{lemma:ExactlyOneSixty}
Let $\left(A,B\right)\in\gamma_{\alpha}$, and $\mu\left(A^{\varhexagon}\setminus B^{\varhexagon}\right),\mu\left(A^{\varhexagon}\cap B^{\varhexagon}\right),\mu\left(B^{\varhexagon}\setminus A^{\varhexagon}\right)>0$. Using the notation introduced before Lemma \ref{lem:Two120DegreeAngles}, suppose that exactly one of $\theta_1$ and $\theta_2$ is $60^{\circ}$, meaning that the other angle must be $120^{\circ}$. Then there is either a translation of $B^{\varhexagon}$, call it $B_t^{\varhexagon}$, or of $A^{\varhexagon}$, call it $A_t^{\varhexagon}$, such that $\rho_{DB}\left(A^{\varhexagon}\setminus B_t^{\varhexagon},B_t^{\varhexagon}\right)\leq\rho_{DB}\left(A,B\right)$, or $\rho_{DB}\left(A_t^{\varhexagon},B^{\varhexagon}\setminus A_t^{\varhexagon}\right)\leq\rho_{DB}\left(A,B\right)$.

\end{lemma}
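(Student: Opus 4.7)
The plan is to reduce the mixed-angle situation to the two-$120^{\circ}$-angle case of Lemma \ref{lem:Two120DegreeAngles} by a translation that either converts the offending $60^{\circ}$ corner into a $120^{\circ}$ corner, or pushes the translated hexagon into the disjoint or nested configuration already handled elsewhere in Section \ref{sec:optimalsets}. Without loss of generality I assume $\theta_1=120^{\circ}$ and $\theta_2=60^{\circ}$, and that $\left(A^{\varhexagon}\setminus B^{\varhexagon},B^{\varhexagon}\right)$ has the larger joint boundary, so the plan is to translate $B^{\varhexagon}$; the alternative sub-case is symmetric and handled by translating $A^{\varhexagon}$ instead. After a series of rotations and reflections, I may assume that the side of $\partial A^{\varhexagon}$ meeting $\pi_2$ lies along $\Lambda_{\smallnearrow}\left(s_{\smallnearrow}^A\right)$, and choose the translation direction to be $v=\left(\tfrac{1}{2},\tfrac{\sqrt{3}}{2}\right)$, parallel to this side.

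I will then consider the one-parameter family $B_t^{\varhexagon}:=B^{\varhexagon}+t\cdot v$ for $t\ge 0$. Translation preserves $\mu\left(B_t^{\varhexagon}\right)$ and $\rho\left(\partial B_t^{\varhexagon}\right)=\rho\left(\partial B^{\varhexagon}\right)\le\rho\left(\partial B\right)$, and because sides retain their orientation under translation, the angle at any intersection formed by the \emph{same} two underlying sides is unchanged. As $t$ grows, the side of $\partial B^{\varhexagon}$ responsible for the $60^{\circ}$ angle slides along $\Lambda_{\smallnearrow}\left(s_{\smallnearrow}^A\right)$ until, at some first moment $t^{*}>0$, exactly one of the following occurs: (i) $A^{\varhexagon}$ and $B_{t^{*}}^{\varhexagon}$ still intersect transversally but both intersection angles measured inside $A^{\varhexagon}$ are now $120^{\circ}$; (ii) $A^{\varhexagon}\cap B_{t^{*}}^{\varhexagon}=\emptyset$; or (iii) $B_{t^{*}}^{\varhexagon}\subset A^{\varhexagon}$.

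In case (i), Lemma \ref{lem:Two120DegreeAngles} applied to the shifted pair $\left(A,B+t^{*}v\right)$ yields the desired inequality, because the geodesic-reorientation argument in that lemma relies only on $\partial A$ reaching each of its six bounding lines, which is unaffected by translating $B$. In case (ii) the translated configuration is disjoint and the argument from the first case of Section \ref{sec:optimalsets} applies; in case (iii) the containment argument from the third case of Section \ref{sec:optimalsets} applies. In all three situations the conclusion takes the form $\rho_{DB}\left(A^{\varhexagon}\setminus B_{t^{*}}^{\varhexagon},B_{t^{*}}^{\varhexagon}\right)\leq\rho_{DB}\left(A,B\right)$.

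The main obstacle I anticipate is verifying that $t^{*}$ is finite and that the translation actually lands in one of (i)--(iii) rather than producing a new mixed-angle configuration, and that during the slide no new $60^{\circ}$ corner is created at $\pi_1$. This will require a short case analysis of the initial combinatorial layout at $\pi_2$ (which pair of sides forms the $60^{\circ}$ angle, and the position of the remaining sides of $B^{\varhexagon}$ relative to $\partial A^{\varhexagon}$); it is precisely here that the choice of $v$ parallel to the $A^{\varhexagon}$-side at $\pi_2$ is essential, since it guarantees that the offending corner of $B^{\varhexagon}$ exits the $60^{\circ}$ wedge in a direction that opens the angle to $120^{\circ}$ rather than reproducing another sharp crossing.
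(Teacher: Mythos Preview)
Your proposal has a genuine gap at the point where you invoke Lemma~\ref{lem:Two120DegreeAngles} on the shifted pair. Applying that lemma to $(A,\,B+t^{*}v)$ would yield
\[
\rho_{DB}\bigl(A^{\varhexagon}\setminus B_{t^{*}}^{\varhexagon},\,B_{t^{*}}^{\varhexagon}\bigr)\le \rho_{DB}\bigl(A,\,B+t^{*}v\bigr),
\]
not the desired bound by $\rho_{DB}(A,B)$. Translation preserves $\rho(\partial A)$ and $\rho(\partial B)$ but it does \emph{not} control the joint boundary: $\rho\bigl(\partial A\cap\partial(B+t^{*}v)\bigr)$ can be strictly smaller than $\rho(\partial A\cap\partial B)$ (indeed it will typically be zero once you slide $B$ off $A$), so $\rho_{DB}(A,B+t^{*}v)$ can exceed $\rho_{DB}(A,B)$. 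Worse, $(A,\,B+t^{*}v)$ need not lie in $\gamma_{\alpha}$ at all, since the translated $B$ may now overlap $A$. The same objection applies to your cases (ii) and (iii): the first and third cases of Section~\ref{sec:optimalsets} bound the perimeter of the hexagonalised configuration by $\rho_{DB}$ of the \emph{current} pair, not the original one. Your remark that the geodesic argument ``relies only on $\partial A$ reaching each of its six bounding lines'' is not sufficient; the sliding step in Lemma~\ref{lem:Two120DegreeAngles} also uses that $\partial A$ must reach the relevant side of $B^{\varhexagon}$, which is tied to the actual position of $B$ and the positivity of the original joint boundary.

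The paper's argument is structurally different: it never compares against a translated pair $(A,B+tv)$. Instead it works with the original $(A,B)$ throughout and introduces a new ingredient at the $60^{\circ}$ corner --- a parallel-line crossing count. Between the horizontal line through $\tilde{l}_2^A$ and $\Lambda_{\smallrightarrow}(s_{\smallrightarrow}^A)$ there must be two crossings of $\partial A$ and two of $\partial B$; after accounting for joint boundary this leaves at least three separate crossings, only two of which are consumed in building $B^{\varhexagon}$. The leftover crossing supplies exactly the extra length needed to restore $\tilde{l}_2^A$ to $l_2^A$. In two of the three combinatorial layouts this already yields $(A^{\varhexagon}\setminus B^{\varhexagon},B^{\varhexagon})$ with no translation at all; in the remaining layout the translation of $B^{\varhexagon}$ is chosen so that the residual length of $\tilde{l}_2^A$ is absorbed into $l_1^A$, and the bound against $\rho_{DB}(A,B)$ comes directly from this bookkeeping rather than from a black-box appeal to Lemma~\ref{lem:Two120DegreeAngles}. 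The crossing-count step is the missing idea in your outline.
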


\begin{proof}
As before, we begin with $\left(A,B\right)\in\gamma_{\alpha}$ and first must decide whether to analyse $\left(A^{\varhexagon}\setminus B^{\varhexagon},B^{\varhexagon}\right)$ or $\left(A^{\varhexagon},B^{\varhexagon}\setminus A^{\varhexagon}\right)$. We choose the configuration with the larger joint boundary. Let us assume that it is $\left(A^{\varhexagon}\setminus B^{\varhexagon},B^{\varhexagon}\right)$. After a sequence of reflections and/or $60^{\circ}$ rotations, we may assume that the $60^{\circ}$ degree angle is created by the intersection of the lines $\Lambda_{\smallrightarrow}\left(s_{\smallrightarrow}^A\right)$ and $\Lambda_{\smallsearrow}\left(s_{\smallsearrow}^B\right)$ (which means that it is $\theta_2$ that is the $60^{
\circ}$ angle). This leaves three possibilities for the two lines that together form $\theta_1$. The following figure demonstrates the options:

\begin{figure}[H]

\begin{tikzpicture}[scale=0.2]

\draw[blue, thin] (3,-5.19615242) to (8,-5.19615242);
\draw[blue, thin] (8,-5.19615242) to (9,-3.46410161);
\draw[blue, thin] (9,-3.46410161) to (4,5.19615242);
\draw[blue, thin] (4,5.19615242) to (3,5.19615242);
\draw[blue, thin] (3,5.19615242) to (1,5.19615242);
\draw[blue, thin] (1,5.19615242) to (-1,1.7320508);
\draw[blue, thin] (-1,1.7320508) to (1,-1.7320508);
\draw[blue, thin] (1,-1.7320508) to (3,-5.19615242);

\draw[red, thin] (4,0) to (7,5.19615242);
\draw[red, thin] (7,5.19615242) to (10,5.19615242);
\draw[red, thin] (10,5.19615242) to (13,0);
\draw[red, thin] (13,0) to (10,-5.19615242);
\draw[red, thin] (10,-5.19615242) to (7,-5.19615242);
\draw[red, thin] (7,-5.19615242) to (4,0);

\draw[blue, thin] (19,-1.7320508) to (23,-1.7320508);
\draw[blue, thin] (23,-1.7320508) to (28,6.92820323);
\draw[blue, thin] (28,6.92820323) to (27,8.66025403);
\draw[blue, thin] (27,8.66025403) to (19,8.66025403);
\draw[blue, thin] (19,8.66025403) to (16,3.46410161);
\draw[blue, thin] (16,3.46410161) to (19,-1.7320508);

\draw[red, thin] (21,0) to (24,5.19615242);
\draw[red, thin] (24,5.19615242) to (28,5.19615242);
\draw[red, thin] (28,5.19615242) to (31,0);
\draw[red, thin] (31,0) to (28,-5.19615242);
\draw[red, thin] (28,-5.19615242) to (24,-5.19615242);
\draw[red, thin] (24,-5.19615242) to (21,0);

\draw (20.5,4) node{$\scaleobj{0.5}{A^{\varhexagon}}$};
\draw (27,0) node{$\scaleobj{0.5}{B^{\varhexagon}}$};


\draw[red, thin] (38,1.7320508) to (40,5.1961524);
\draw[red, thin] (40,5.1961524) to (42,5.1961524);
\draw[red, thin] (42,5.1961524) to (46,-1.7320508);
\draw[red, thin] (46,-1.7320508) to (45,-3.46410161);
\draw[red, thin] (45,-3.46410161) to (41,-3.46410161);
\draw[red, thin] (41,-3.46410161) to (38,1.7320508);

\draw[blue, thin] (35,0) to (46,0);
\draw[blue, thin] (46,0) to (47,1.7320508);
\draw[blue, thin] (47,1.7320508) to (44,6.92820323);
\draw[blue, thin] (44,6.92820323) to (35,6.92820323);
\draw[blue, thin] (35,6.92820323) to (33,3.4641016);
\draw[blue, thin] (33,3.4641016) to (35,0);

\draw (35.5,4) node{$\scaleobj{0.5}{A^{\varhexagon}}$};
\draw (42,-1) node{$\scaleobj{0.5}{B^{\varhexagon}}$};


\end{tikzpicture}

\caption{\label{fig:One60One120}}

\end{figure}

As a side note, there is a slight variation on the configuration on the left of Figure \ref{fig:One60One120}, which occurs when the bottom line segment of $A^{\varhexagon}$ is above the bottom line segment of $B^{\varhexagon}$. In this case, the bottom line segment of $A^{\varhexagon}$ cannot extend into the interior of $B^{\varhexagon}$ because then the joint boundary of $\left(A^{\varhexagon},B^{\varhexagon}\setminus A^{\varhexagon}\right)$ would be greater than that of $\left(A^{\varhexagon}\setminus B^{\varhexagon},B^{\varhexagon}\right)$, and we would therefore analyse the former of these two configurations. This would result in a configuration (after some rotations of $60^{\circ}$) that is of the same form as the configuration in the middle of Figure \ref{fig:One60One120}. Figure \ref{fig:AnomalousOne60One120} gives an illustration of this variation of the lefthand side of Figure \ref{fig:One60One120}. The process that treats this case is similar, but simpler, to that for the lefthand side of Figure \ref{fig:One60One120}. 

\begin{figure}[H]

\begin{tikzpicture}[scale=0.2]

\draw[blue, thin] (1,-1.7320508) to (5,-1.7320508);
\draw[blue, thin] (5,-1.7320508) to (6,0);
\draw[blue, thin] (6,0) to (3,5.19615242);
\draw[blue, thin] (3,5.19615242) to (1,5.19615242);
\draw[blue, thin] (1,5.19615242) to (-1,1.7320508);
\draw[blue, thin] (-1,1.7320508) to (1,-1.7320508);

\draw[red, thin] (4,0) to (7,5.19615242);
\draw[red, thin] (7,5.19615242) to (9,5.19615242);
\draw[red, thin] (9,5.19615242) to (12,0);
\draw[red, thin] (12,0) to (9,-5.19615242);
\draw[red, thin] (9,-5.19615242) to (7,-5.19615242);
\draw[red, thin] (7,-5.19615242) to (4,0);

\draw (2,2) node{$\scaleobj{0.5}{A^{\varhexagon}}$};
\draw (8,0) node{$\scaleobj{0.5}{B^{\varhexagon}}$};

\end{tikzpicture}

\caption{\label{fig:AnomalousOne60One120}}

\end{figure}

In all situations, the beginning of the construction is the same as in Lemma \ref{lem:Two120DegreeAngles}. We begin by replacing $B$ with $B^{\varhexagon}$. Then, we consider each of the line segments in $\partial A^{\varhexagon}$ that do not intersect $B^{\varhexagon}$. There must be a point in each of these line segments that $\partial A$ intersects. The path connecting them that does not pass through $B^{\varhexagon}$ must be at least as long as the path in $\partial A^{\varhexagon}$ that connects them and doesn't pass through $B^{\varhexagon}$. Figure \ref{fig:One60One120DifficultCases1} illustrates the process up to this point.

\begin{figure}[H]

\begin{tikzpicture}[scale=0.3]

\draw[blue, thick, dotted] (3,-5.19615242) to (8,-5.19615242);
\draw[blue, thick, dotted] (8,-5.19615242) to (9,-3.46410161);
\draw[blue, thick, dotted] (9,-3.46410161) to (4,5.19615242);
\draw[blue, thick, dotted] (4,5.19615242) to (3,5.19615242);
\draw[blue, thin] (3,5.19615242) to (1,5.19615242);
\draw[blue, thin] (1,5.19615242) to (-1,1.7320508);
\draw[blue, thin] (-1,1.7320508) to (1,-1.7320508);
\draw[blue, thick, dotted] (1,-1.7320508) to (3,-5.19615242);

\draw (3,5.19615242) node{$\bullet$};
\draw[black, thin] (3,5.19615242) to (2,6);
\draw (1,6.7) node{$p\left(i_{\smallrightarrow}^A\right)$};

\draw (0,3.46410161) node{$\bullet$};
\draw[black, thin] (0,3.46410161) to (-1,4);
\draw (-2.5,4.2) node{$p\left(i_{\smallnearrow}^A\right)$};

\draw (1,-1.7320508) node{$\bullet$};
\draw[black, thin] (1,-1.7320508) to (-0.2,-1.7320508);
\draw (-2,-1.7320508) node{$p\left(s_{\smallsearrow}^A\right)$};

\draw (7.5,-5.19615242) node{$\bullet$};
\draw[black, thin] (7.5,-5.19615242) to (7.5,-6.5);
\draw (7.5,-7) node{$p\left(s_{\smallrightarrow}^A\right)$};

\draw[black, thin] (4,-5.19615242) to (3,-6.5);
\draw (2.8,-7.4) node{$l_2^A$};

\draw[red, thin] (4,0) to (7,5.19615242);
\draw[red, thin] (7,5.19615242) to (10,5.19615242);
\draw[red, thin] (10,5.19615242) to (13,0);
\draw[red, thin] (13,0) to (10,-5.19615242);
\draw[red, thin] (10,-5.19615242) to (7,-5.19615242);
\draw[red, thin] (7,-5.19615242) to (4,0);

\draw[blue, thick, dotted] (19,-1.7320508) to (24,-1.7320508);
\draw[blue, thick, dotted] (24,-1.7320508) to (29,6.92820323);
\draw[blue, thick, dotted] (29,6.92820323) to (28.5,7.7942286);
\draw[blue, thin] (28.5,7.7942286) to (28,8.66025403);
\draw[blue, thin] (28,8.66025403) to (19,8.66025403);
\draw[blue, thin] (19,8.66025403) to (16,3.46410161);
\draw[blue, thin] (16,3.46410161) to (17,1.7320508);
\draw[blue, thick, dotted] (17,1.7320508) to (19,-1.7320508);

\draw (17,1.7320508) node{$\bullet$};
\draw[black, thin] (17,1.7320508) to (16.5,0);
\draw (16.5,-0.8) node{$p\left(s_{\smallsearrow}^A\right)$};

\draw (17,5.19615242) node{$\bullet$};
\draw[black, thin] (17,5.19615242) to (16,6.4);
\draw (15,7.3) node{$p\left(i_{\smallnearrow}^A\right)$};

\draw (23,8.66025403) node{$\bullet$};
\draw[black, thin] (23,8.66025403) to (22,9.4);
\draw (21,10.2) node{$p\left(i_{\smallrightarrow}^A\right)$};

\draw (28.5,7.7942286) node{$\bullet$};
\draw[black, thin] (28.5,7.7942286) to (30,8.5);
\draw (32,8.7) node{$p\left(i_{\smallsearrow}^A\right)$};

\draw (23.5,-1.7320508) node{$\bullet$};
\draw[black, thin] (23.5,-1.7320508) to (23,-5);
\draw (23,-6.2) node{$p\left(s_{\smallrightarrow}^A\right)$};

\draw[black, thin] (20,-1.7320508) to (19,-3);
\draw (18.7,-4) node{$l_2^A$};

\draw[red, thin] (21,0) to (24,5.19615242);
\draw[red, thin] (24,5.19615242) to (29,5.19615242);
\draw[red, thin] (29,5.19615242) to (32,0);
\draw[red, thin] (32,0) to (29,-5.19615242);
\draw[red, thin] (29,-5.19615242) to (24,-5.19615242);
\draw[red, thin] (24,-5.19615242) to (21,0);

\draw[red, thin] (38,1.7320508) to (40,5.1961524);
\draw[red, thin] (40,5.1961524) to (42,5.1961524);
\draw[red, thin] (42,5.1961524) to (46,-1.7320508);
\draw[red, thin] (46,-1.7320508) to (45,-3.46410161);
\draw[red, thin] (45,-3.46410161) to (41,-3.46410161);
\draw[red, thin] (41,-3.46410161) to (38,1.7320508);

\draw[blue, thick, dotted] (35,0) to (46,0);
\draw[blue, thick, dotted] (46,0) to (46.5,0.866025403);
\draw[blue, thin] (46.5,0.866025403) to (47,1.7320508);
\draw[blue, thin] (47,1.7320508) to (44,6.92820323);
\draw[blue, thin] (44,6.92820323) to (35,6.92820323);
\draw[blue, thin] (35,6.92820323) to (33,3.4641016);
\draw[blue, thin] (33,3.4641016) to (34,1.7320508);
\draw[blue, thick, dotted] (34,1.7320508) to (35,0);

\draw (40,0) node{$\bullet$};
\draw[black, thin] (40,0) to (39,-1.5);
\draw (38.2,-2.4) node{$p\left(s_{\smallrightarrow}^A\right)$};

\draw (46.5,0.866025403) node{$\bullet$};
\draw[black, thin] (46.5,0.866025403) to (47.5,0.866025403);
\draw (49.5,0.866025403) node{$p\left(s_{\smallnearrow}^A\right)$};

\draw (45,5.19615242) node{$\bullet$};
\draw[black, thin] (45,5.19615242) to (46,5.5);
\draw (47.9,5.7) node{$p\left(i_{\smallsearrow}^A\right)$};

\draw (40,6.92820323) node{$\bullet$};
\draw[black, thin] (40,6.92820323) to (40,8.1);
\draw (40,9) node{$p\left(i_{\smallrightarrow}^A\right)$};

\draw (34,5.19615242) node{$\bullet$};
\draw[black, thin] (34,5.19615242) to (35,5);
\draw (36.5,5) node{$p\left(i_{\smallnearrow}^A\right)$};

\draw (34,1.7320508) node{$\bullet$};
\draw[black, thin] (34,1.7320508) to (34,-2);
\draw (34,-3.2) node{$p\left(s_{\smallsearrow}^A\right)$};

\draw[black, thin] (36,0) to (36,-5);
\draw (36,-6) node{$l_2^A$};

\end{tikzpicture}

\caption{\label{fig:One60One120DifficultCases1}}

\end{figure}

We assumed that $\theta_1=120^{\circ}$. Therefore, we can use the same argument as in Lemma \ref{lem:Two120DegreeAngles} to show that the path in $\partial A$ that connects $p\left(i_{\smallrightarrow}^A\right)$ (in the leftmost configuration), $p\left(i_{\smallsearrow}^A\right)$ (in the central configuration), and $p\left(s_{\smallnearrow}^A\right)$ (in the rightmost configuration) to the line passing through $l_1^B$ is at least as long as the geodesic in $\partial A^{\varhexagon}$ that connects this point and line. The result looks as follows:

\begin{figure}[H]

\begin{tikzpicture}[scale=0.3]

\draw[blue, thick, dotted] (3,-5.19615242) to (8,-5.19615242);
\draw[blue, thick, dotted] (8,-5.19615242) to (9,-3.46410161);
\draw[blue, thick, dotted] (9,-3.46410161) to (5.5,2.5980762);
\draw[blue, thin] (5.5,2.5980762) to (4,5.19615242);
\draw[blue, thin] (4,5.19615242) to (3,5.19615242);
\draw[blue, thin] (3,5.19615242) to (1,5.19615242);
\draw[blue, thin] (1,5.19615242) to (-1,1.7320508);
\draw[blue, thin] (-1,1.7320508) to (1,-1.7320508);
\draw[blue, thick, dotted] (1,-1.7320508) to (3,-5.19615242);

\draw (3,5.19615242) node{$\bullet$};
\draw[black, thin] (3,5.19615242) to (2,6);
\draw (1,6.7) node{$p\left(i_{\smallrightarrow}^A\right)$};

\draw (0,3.46410161) node{$\bullet$};
\draw[black, thin] (0,3.46410161) to (-1,4);
\draw (-2.5,4.2) node{$p\left(i_{\smallnearrow}^A\right)$};

\draw (1,-1.7320508) node{$\bullet$};
\draw[black, thin] (1,-1.7320508) to (-0.2,-1.7320508);
\draw (-2,-1.7320508) node{$p\left(s_{\smallsearrow}^A\right)$};

\draw (7.5,-5.19615242) node{$\bullet$};
\draw[black, thin] (7.5,-5.19615242) to (7.5,-6.5);
\draw (7.5,-7) node{$p\left(s_{\smallrightarrow}^A\right)$};

\draw[black, thin] (4,-5.19615242) to (3,-6.5);
\draw (2.8,-7.4) node{$l_2^A$};

\draw[red, thin] (4,0) to (7,5.19615242);
\draw[red, thin] (7,5.19615242) to (10,5.19615242);
\draw[red, thin] (10,5.19615242) to (13,0);
\draw[red, thin] (13,0) to (10,-5.19615242);
\draw[red, thin] (10,-5.19615242) to (7,-5.19615242);
\draw[red, thin] (7,-5.19615242) to (4,0);

\draw[blue, thick, dotted] (19,-1.7320508) to (24,-1.7320508);
\draw[blue, thick, dotted] (24,-1.7320508) to (28,5.19615242);
\draw[blue, thin] (28,5.19615242) to (29,6.92820323); 
\draw[blue, thin] (29,6.92820323) to (28.5,7.7942286);
\draw[blue, thin] (28.5,7.7942286) to (28,8.66025403);
\draw[blue, thin] (28,8.66025403) to (19,8.66025403);
\draw[blue, thin] (19,8.66025403) to (16,3.46410161);
\draw[blue, thin] (16,3.46410161) to (17,1.7320508);
\draw[blue, thick, dotted] (17,1.7320508) to (19,-1.7320508);

\draw (17,1.7320508) node{$\bullet$};
\draw[black, thin] (17,1.7320508) to (16.5,0);
\draw (16.5,-0.8) node{$p\left(s_{\smallsearrow}^A\right)$};

\draw (17,5.19615242) node{$\bullet$};
\draw[black, thin] (17,5.19615242) to (16,6.4);
\draw (15,7.3) node{$p\left(i_{\smallnearrow}^A\right)$};

\draw (23,8.66025403) node{$\bullet$};
\draw[black, thin] (23,8.66025403) to (22,9.4);
\draw (21,10.2) node{$p\left(i_{\smallrightarrow}^A\right)$};

\draw (28.5,7.7942286) node{$\bullet$};
\draw[black, thin] (28.5,7.7942286) to (30,8.5);
\draw (32,8.7) node{$p\left(i_{\smallsearrow}^A\right)$};

\draw (23.5,-1.7320508) node{$\bullet$};
\draw[black, thin] (23.5,-1.7320508) to (23,-5);
\draw (23,-6.2) node{$p\left(s_{\smallrightarrow}^A\right)$};

\draw[black, thin] (20,-1.7320508) to (19,-3);
\draw (18.7,-4) node{$l_2^A$};

\draw[red, thin] (21,0) to (24,5.19615242);
\draw[red, thin] (24,5.19615242) to (29,5.19615242);
\draw[red, thin] (29,5.19615242) to (32,0);
\draw[red, thin] (32,0) to (29,-5.19615242);
\draw[red, thin] (29,-5.19615242) to (24,-5.19615242);
\draw[red, thin] (24,-5.19615242) to (21,0);

\draw[red, thin] (38,1.7320508) to (40,5.1961524);
\draw[red, thin] (40,5.1961524) to (42,5.1961524);
\draw[red, thin] (42,5.1961524) to (46,-1.7320508);
\draw[red, thin] (46,-1.7320508) to (45,-3.46410161);
\draw[red, thin] (45,-3.46410161) to (41,-3.46410161);
\draw[red, thin] (41,-3.46410161) to (38,1.7320508);


\draw[blue, thick, dotted] (35,0) to (45,0);
\draw[blue, thin] (45,0) to (46,0);
\draw[blue, thin] (46,0) to (47,1.7320508);
\draw[blue, thin] (47,1.7320508) to (44,6.92820323);
\draw[blue, thin] (44,6.92820323) to (35,6.92820323);
\draw[blue, thin] (35,6.92820323) to (33,3.4641016);
\draw[blue, thin] (33,3.4641016) to (34,1.7320508);
\draw[blue, thick, dotted] (34,1.7320508) to (35,0);

\draw (40,0) node{$\bullet$};
\draw[black, thin] (40,0) to (39,-1.5);
\draw (38.2,-2.4) node{$p\left(s_{\smallrightarrow}^A\right)$};

\draw (46.5,0.866025403) node{$\bullet$};
\draw[black, thin] (46.5,0.866025403) to (47.5,0.866025403);
\draw (49.5,0.866025403) node{$p\left(s_{\smallnearrow}^A\right)$};

\draw (45,5.19615242) node{$\bullet$};
\draw[black, thin] (45,5.19615242) to (46,5.5);
\draw (47.9,5.7) node{$p\left(i_{\smallsearrow}^A\right)$};

\draw (40,6.92820323) node{$\bullet$};
\draw[black, thin] (40,6.92820323) to (40,8.1);
\draw (40,9) node{$p\left(i_{\smallrightarrow}^A\right)$};

\draw (34,5.19615242) node{$\bullet$};
\draw[black, thin] (34,5.19615242) to (35,5);
\draw (36.5,5) node{$p\left(i_{\smallnearrow}^A\right)$};

\draw (34,1.7320508) node{$\bullet$};
\draw[black, thin] (34,1.7320508) to (34,-2);
\draw (34,-3.2) node{$p\left(s_{\smallsearrow}^A\right)$};

\draw[black, thin] (36,0) to (36,-5);
\draw (36,-6) node{$l_2^A$};

\end{tikzpicture}

\caption{\label{fig:One60One120DifficultCases2}}

\end{figure}

Now, we move $l_2^A$ up and left so that one of its endpoints intersects $\partial B^{\varhexagon}$ until this shifted $l_2^A$ intersects $\partial A$ at some point $p$. We will still need to refer to the original $l_2^A$, so let's call this shifted line $\tilde{l}_2^A$. The following figure shows the procedure:

\begin{figure}[H]

\begin{tikzpicture}[scale=0.3]

\draw[blue, thick, dotted] (2,-3.46410161) to (6,-3.46410161);
\draw[blue, thick, dotted] (8,-5.19615242) to (9,-3.46410161);
\draw[blue, thick, dotted] (9,-3.46410161) to (5.5,2.5980762);
\draw[blue, thin] (5.5,2.5980762) to (4,5.19615242);
\draw[blue, thin] (4,5.19615242) to (3,5.19615242);
\draw[blue, thin] (3,5.19615242) to (1,5.19615242);
\draw[blue, thin] (1,5.19615242) to (-1,1.7320508);
\draw[blue, thin] (-1,1.7320508) to (1,-1.7320508);
\draw[blue, thick, dotted] (1,-1.7320508) to (3,-5.19615242);

\draw (3,5.19615242) node{$\bullet$};
\draw[black, thin] (3,5.19615242) to (2,6);
\draw (1,6.7) node{$p\left(i_{\smallrightarrow}^A\right)$};

\draw (0,3.46410161) node{$\bullet$};
\draw[black, thin] (0,3.46410161) to (-1,4);
\draw (-2.5,4.2) node{$p\left(i_{\smallnearrow}^A\right)$};

\draw (1,-1.7320508) node{$\bullet$};
\draw[black, thin] (1,-1.7320508) to (-0.2,-1.7320508);
\draw (-2,-1.7320508) node{$p\left(s_{\smallsearrow}^A\right)$};

\draw (7.5,-5.19615242) node{$\bullet$};
\draw[black, thin] (7.5,-5.19615242) to (7.5,-6.5);
\draw (7.5,-7) node{$p\left(s_{\smallrightarrow}^A\right)$};

\draw[black, thin] (3,-3.46410161) to (3,-6.5);
\draw (2.8,-7.4) node{$\tilde{l}_2^A$};

\draw (4,-3.46410161) node{$\bullet$};
\draw[black, thin] (4,-3.46410161) to (3,-2.5);
\draw (3,-2) node{$p$};

\draw[red, thin] (4,0) to (7,5.19615242);
\draw[red, thin] (7,5.19615242) to (10,5.19615242);
\draw[red, thin] (10,5.19615242) to (13,0);
\draw[red, thin] (13,0) to (10,-5.19615242);
\draw[red, thin] (10,-5.19615242) to (7,-5.19615242);
\draw[red, thin] (7,-5.19615242) to (4,0);

\draw[blue, thick, dotted] (24,-1.7320508) to (22,-1.7320508);
\draw[blue, thick, dotted] (18,0) to (21,0);
\draw[blue, thick, dotted] (24,-1.7320508) to (28,5.19615242);
\draw[blue, thin] (28,5.19615242) to (29,6.92820323); 
\draw[blue, thin] (29,6.92820323) to (28.5,7.7942286);
\draw[blue, thin] (28.5,7.7942286) to (28,8.66025403);
\draw[blue, thin] (28,8.66025403) to (19,8.66025403);
\draw[blue, thin] (19,8.66025403) to (16,3.46410161);
\draw[blue, thin] (16,3.46410161) to (17,1.7320508);
\draw[blue, thick, dotted] (17,1.7320508) to (19,-1.7320508);

\draw (17,1.7320508) node{$\bullet$};
\draw[black, thin] (17,1.7320508) to (16.5,0);
\draw (16.5,-0.8) node{$p\left(s_{\smallsearrow}^A\right)$};

\draw (17,5.19615242) node{$\bullet$};
\draw[black, thin] (17,5.19615242) to (16,6.4);
\draw (15,7.3) node{$p\left(i_{\smallnearrow}^A\right)$};

\draw (23,8.66025403) node{$\bullet$};
\draw[black, thin] (23,8.66025403) to (22,9.4);
\draw (21,10.2) node{$p\left(i_{\smallrightarrow}^A\right)$};

\draw (28.5,7.7942286) node{$\bullet$};
\draw[black, thin] (28.5,7.7942286) to (30,8.5);
\draw (32,8.7) node{$p\left(i_{\smallsearrow}^A\right)$};

\draw (23.5,-1.7320508) node{$\bullet$};
\draw[black, thin] (23.5,-1.7320508) to (23,-5);
\draw (23,-6.2) node{$p\left(s_{\smallrightarrow}^A\right)$};

\draw[black, thin] (19,0) to (19,-3);
\draw (18.7,-4) node{$\tilde{l}_2^A$};

\draw (20,0) node{$\bullet$};
\draw[black, thin] (20,0) to (20,1.4);
\draw (20,2) node{$p$};

\draw[red, thin] (21,0) to (24,5.19615242);
\draw[red, thin] (24,5.19615242) to (29,5.19615242);
\draw[red, thin] (29,5.19615242) to (32,0);
\draw[red, thin] (32,0) to (29,-5.19615242);
\draw[red, thin] (29,-5.19615242) to (24,-5.19615242);
\draw[red, thin] (24,-5.19615242) to (21,0);

\draw[red, thin] (38,1.7320508) to (40,5.1961524);
\draw[red, thin] (40,5.1961524) to (42,5.1961524);
\draw[red, thin] (42,5.1961524) to (46,-1.7320508);
\draw[red, thin] (46,-1.7320508) to (45,-3.46410161);
\draw[red, thin] (45,-3.46410161) to (41,-3.46410161);
\draw[red, thin] (41,-3.46410161) to (38,1.7320508);

\draw[blue, thick, dotted] (34.5,0.86602540) to (38.5,0.86602540);
\draw[blue, thick, dotted] (39,0) to (45,0);
\draw[blue, thin] (45,0) to (46,0);
\draw[blue, thin] (46,0) to (47,1.7320508);
\draw[blue, thin] (47,1.7320508) to (44,6.92820323);
\draw[blue, thin] (44,6.92820323) to (35,6.92820323);
\draw[blue, thin] (35,6.92820323) to (33,3.4641016);
\draw[blue, thin] (33,3.4641016) to (34,1.7320508);
\draw[blue, thick, dotted] (34,1.7320508) to (35,0);

\draw (40,0) node{$\bullet$};
\draw[black, thin] (40,0) to (39,-1.5);
\draw (38.2,-2.4) node{$p\left(s_{\smallrightarrow}^A\right)$};

\draw (46.5,0.866025403) node{$\bullet$};
\draw[black, thin] (46.5,0.866025403) to (47.5,0.866025403);
\draw (49.5,0.866025403) node{$p\left(s_{\smallnearrow}^A\right)$};

\draw (45,5.19615242) node{$\bullet$};
\draw[black, thin] (45,5.19615242) to (46,5.5);
\draw (47.9,5.7) node{$p\left(i_{\smallsearrow}^A\right)$};

\draw (40,6.92820323) node{$\bullet$};
\draw[black, thin] (40,6.92820323) to (40,8.1);
\draw (40,9) node{$p\left(i_{\smallrightarrow}^A\right)$};

\draw (34,5.19615242) node{$\bullet$};
\draw[black, thin] (34,5.19615242) to (35,5);
\draw (36.5,5) node{$p\left(i_{\smallnearrow}^A\right)$};

\draw (34,1.7320508) node{$\bullet$};
\draw[black, thin] (34,1.7320508) to (34,-2);
\draw (34,-3.2) node{$p\left(s_{\smallsearrow}^A\right)$};

\draw[black, thin] (36,0.86602540) to (36,-4);
\draw (36.4,-4.9) node{$\tilde{l}_2^A$};

\draw (37,0.86602540) node{$\bullet$};
\draw[black, thin] (37,0.86602540) to (36.8,1.6);
\draw (36.7,2) node{$p$};

\end{tikzpicture}

\caption{\label{fig:One60One120DifficultCasesFinal}}

\end{figure}

Doing this creates a new geodesic, part of which is in $a_2$, and part of which is in $\tilde{l}_2^A$, between the points $p\left(s_{\smallsearrow}^A\right)$ and $p$. The path in $\partial A$ that connects these two points must be at least as long as this geodesic. Furthermore, there is a path in $\partial A$ from $p$ to the line $\Lambda_{\smallsearrow}\left(s_{\smallsearrow}^B\right)$, and this path must be at least as long as the geodesic in $\tilde{l}_2^A$ that connects this same point and line. 

Let $\Lambda$ be the line passing through the line segment $\tilde{l}_2^A$. In the leftmost configuration and the middle configuration, there must be two crossings in $\partial A$ from $p\left(s_{\smallrightarrow}^A\right)$ to this line $\Lambda$, which is to say that there are two crossings between the lines $\Lambda_{\smallrightarrow}\left(s_{\smallrightarrow}^A\right)$ and $\Lambda$. Similarly, there must be two crossings in $\partial B$ between these same lines. This is a total of four such crossings. Taking into account the possibility of joint boundary, this means there must be at least three separate crossings between $\Lambda_{\smallrightarrow}\left(s_{\smallrightarrow}^A\right)$ and $\Lambda$. Only two such crossings are required to make $B^{\varhexagon}$, which means that there is a third crossing that we have not counted in our construction up to this point. Therefore, for these two configurations, we know that there is enough boundary to move $\tilde{l}_2^A$ back to its original position and regain $l_2^A$. This means we can construct $\left(A^{\varhexagon}\setminus B^{\varhexagon},B^{\varhexagon}\right)$ in the cases of the left and middle configurations without increasing the double bubble perimeter. 

In the configuration on the right, however, there are already four crossings between $\Lambda_{\smallrightarrow}\left(s_{\smallrightarrow}^A\right)$ and $\Lambda$ in $\partial\left(A^{\varhexagon}\setminus B^{\varhexagon}\right)\cup\partial B^{\varhexagon}$. What we can do, however, is translate $B^{\varhexagon}$ down between the two lines $\Lambda_{\smallsearrow}\left(s_{\smallsearrow}^B\right)$ and $\Lambda_{\smallsearrow}\left(i_{\smallsearrow}^B\right)$ until $\Lambda_{\smallsearrow}\left(s_{\smallsearrow}^B\right)\cap\Lambda_{\smallnearrow}\left(i_{\smallnearrow}^B\right)$ is an endpoint of $\tilde{l}_2^A$. Then, it can be translated to the left until it reaches $\Lambda_{\smallsearrow}\left(s_{\smallsearrow}^A\right)$. While doing this, we can add all of the length of $\tilde{l}_2^A$ to $l_1^A$. See the following figure:

\begin{figure}[H]

\begin{tikzpicture}[scale=0.25]

\draw[red, thin] (4.5,0.86602540) to (6.5,4.33012701);
\draw[red, thin] (6.5,4.33012701) to (8.5,4.33012701);
\draw[red, thin] (8.5,4.33012701) to (12.5,-2.59807621);
\draw[red, thin] (12.5,-2.59807621) to (11.5,-4.330127018);
\draw[red, thin] (11.5,-4.330127018) to (7.5,-4.330127018);
\draw[red, thin] (7.5,-4.330127018) to (4.5,0.86602540);

\draw[blue, thin] (0.5,0.86602540) to (4.5,0.86602540);
\draw[blue, thin] (11,0) to (12,0);
\draw[blue, thin] (12,0) to (13,1.7320508);
\draw[blue, thin] (13,1.7320508) to (10,6.92820323);
\draw[blue, thin] (10,6.92820323) to (1,6.92820323);
\draw[blue, thin] (1,6.92820323) to (-1,3.4641016);
\draw[blue, thin] (-1,3.4641016) to (0,1.7320508);
\draw[blue, thin] (0,1.7320508) to (0.5,0.86602540);

\draw[->] (14,1) to (15,1);


\draw[red, thin] (19.5,0.86602540) to (21.5,4.33012701);
\draw[red, thin] (21.5,4.33012701) to (23.5,4.33012701);
\draw[red, thin] (23.5,4.33012701) to (27.5,-2.59807621);
\draw[red, thin] (27.5,-2.59807621) to (26.5,-4.330127018);
\draw[red, thin] (26.5,-4.330127018) to (22.5,-4.330127018);
\draw[red, thin] (22.5,-4.330127018) to (19.5,0.86602540);

\draw[blue, thin] (17.5,0.86602540) to (19.5,0.86602540);
\draw[blue, thick, dotted] (19.5,0.86602540) to (21.5,0.86602540);
\draw[blue, thin] (26,0) to (29,0);
\draw[blue, thin] (29,0) to (30,1.7320508);
\draw[blue, thin] (30,1.7320508) to (27,6.92820323);
\draw[blue, thin] (27,6.92820323) to (18,6.92820323);
\draw[blue, thin] (18,6.92820323) to (16,3.4641016);
\draw[blue, thin] (16,3.4641016) to (17,1.7320508);
\draw[blue, thin] (17,1.7320508) to (17.5,0.86602540);

\draw[->] (31,1) to (32,1);


\draw[red, thin] (34.5,0.86602540) to (36.5,4.33012701);
\draw[red, thin] (36.5,4.33012701) to (38.5,4.33012701);
\draw[red, thin] (38.5,4.33012701) to (42.5,-2.59807621);
\draw[red, thin] (42.5,-2.59807621) to (41.5,-4.330127018);
\draw[red, thin] (41.5,-4.330127018) to (37.5,-4.330127018);
\draw[red, thin] (37.5,-4.330127018) to (34.5,0.86602540);

\draw[blue, thick, dotted] (34.5,0.86602540) to (36.5,0.86602540);
\draw[blue, thick, dotted] (36.5,0.86602540) to (38.5,0.86602540);
\draw[blue, thin] (41,0) to (46,0);
\draw[blue, thin] (46,0) to (47,1.7320508);
\draw[blue, thin] (47,1.7320508) to (44,6.92820323);
\draw[blue, thin] (44,6.92820323) to (35,6.92820323);
\draw[blue, thin] (35,6.92820323) to (33,3.4641016);
\draw[blue, thin] (33,3.4641016) to (34,1.7320508);
\draw[blue, thin] (34,1.7320508) to (34.5,0.86602540);

\end{tikzpicture}

\caption{\label{fig:One60One120DifficultCasesFinal}}

\end{figure}

This leaves us with a translated version of $B^{\varhexagon}$, which we call $B_t^{\varhexagon}$, and $A^{\varhexagon}\setminus B_t^{\varhexagon}$. Thus, in all cases we can replace $(A,B)$ with $\left(A^{\varhexagon}\setminus B^{\varhexagon},B^{\varhexagon}\right)$ or $\left(A^{\varhexagon}\setminus B_t^{\varhexagon},B_t^{\varhexagon}\right)$, as we wanted to show.

\end{proof}

\begin{lemma}\label{lem:OneSixtyVolAdj}
Let $\left(A,B\right)\in\gamma_{\alpha}$ and $\tilde{A}^{\varhexagon},\tilde{B}^{\varhexagon}$ be as in Lemma \ref{lemma:ExactlyOneSixty}. If $\left(A^{\varhexagon}\setminus\tilde{B}^{\varhexagon},\tilde{B}^{\varhexagon}\right)$ and $\left(\tilde{A}^{\varhexagon},B^{\varhexagon}\setminus\tilde{A}^{\varhexagon}\right)$ do not have the correct volume ratio, then we can adjust the volumes of the two sets in the configuration with larger joint boundary to obtain a new configuration $\left(A',B'\right)\in\mathfs{F}_{\alpha}$, with $\rho_{DB}\left(A',B'\right)\leq\rho_{DB}\left(A,B\right)$.

\end{lemma}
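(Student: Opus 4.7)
The plan is to adapt the construction used in Lemma \ref{lem:Two120AnglesVolAdj}, making the necessary modifications for the fact that now the joint boundary abuts a $60^{\circ}$ interior angle produced by Lemma \ref{lemma:ExactlyOneSixty}. Without loss of generality I would assume that $\left(A^{\varhexagon}\setminus\tilde{B}^{\varhexagon},\tilde{B}^{\varhexagon}\right)$ has the larger joint boundary (the symmetric case is identical after swapping roles), and after a suitable sequence of reflections and $60^{\circ}$ rotations I would place the $60^{\circ}$ angle $\theta_{2}$ at the intersection of $\Lambda_{\smallrightarrow}\left(s_{\smallrightarrow}^{A}\right)$ and $\Lambda_{\smallsearrow}\left(s_{\smallsearrow}^{B}\right)$, putting the configuration into one of the three normal forms of Figure \ref{fig:One60One120}. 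Since $\mu\left(\tilde{B}^{\varhexagon}\right)=\mu\left(B^{\varhexagon}\right)\geq\mu(B)=\alpha$ and $\mu\left(A^{\varhexagon}\cup\tilde{B}^{\varhexagon}\right)\geq\mu(A\cup B)=1+\alpha$, the volume of $\tilde{B}^{\varhexagon}$ is always the one that is too big, so the first wave of moves should shrink $\tilde{B}^{\varhexagon}$ while pushing the vacated area into the $A$ set, and only if the $A$ set is then still too big do we trim it with a second wave of moves.

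For the left and middle configurations of Figure \ref{fig:One60One120}, where $\tilde{B}^{\varhexagon}=B^{\varhexagon}$ is a genuine enclosing hexagon, I would first reorient the segment $l_{1}^{B}$ (and, if necessary, $l_{2}^{B}$) so that it becomes collinear with $\Lambda_{\smallrightarrow}\left(i_{\smallrightarrow}^{B}\right)$ or $\Lambda_{\smallrightarrow}\left(s_{\smallrightarrow}^{B}\right)$, producing a new set $B_{1}$; then replace $B_{1}$ with $B_{1}^{\varhexagon}$ and slide $\Lambda_{\smallsearrow}\left(s_{\smallsearrow}^{B_{1}^{\varhexagon}}\right)$ and $\Lambda_{\smallnearrow}\left(i_{\smallnearrow}^{B_{1}^{\varhexagon}}\right)$ inward to recover the correct volume $\alpha$, obtaining $\tilde{B}$. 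The $A$-side is then replaced by $A_{1}^{\varhexagon}\setminus\tilde{B}$ and its top, bottom, and left sides are pushed inward in turn until $\mu(\tilde{A})=1$. This is exactly the menu of moves that appears in Figures \ref{fig:OneCornerVolumeAdjustment1}--\ref{fig:OneCornerVolumeAdjustment4}, and each is perimeter-nonincreasing by the standard hexagonal-norm identity used throughout Section \ref{sec:optimalsets}: sliding a corner of a hexagon along one of its incident edges changes the combined length of those two edges by exactly the signed vertical or horizontal displacement.

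For the rightmost configuration of Figure \ref{fig:One60One120}, where $\tilde{B}^{\varhexagon}=B_{t}^{\varhexagon}$ has already been translated flush against the left side of $A^{\varhexagon}$, the adjustment is a bit simpler. I would first slide $\Lambda_{\smallnearrow}\left(i_{\smallnearrow}^{B_{t}^{\varhexagon}}\right)$ to the right until either $\mu(\tilde{B})=\alpha$ or this line reaches $\Lambda_{\smallnearrow}\left(s_{\smallnearrow}^{B_{t}^{\varhexagon}}\right)$, and then, if the volume is still too great, complete the reduction by lowering the top side, raising the bottom side, and finally sliding the right sides leftward. Excess volume on the $A$-side is then removed by the mirror sequence of moves. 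Because the two segments that form the $60^{\circ}$ angle $\theta_{2}$ lie in $\Lambda_{\smallrightarrow}\left(s_{\smallrightarrow}^{A}\right)$ and $\Lambda_{\smallsearrow}\left(s_{\smallsearrow}^{B}\right)$, which are on the opposite side of the joint boundary from where volume is being removed, these two segments are never disturbed, and the resulting configuration $\left(A',B'\right)$ lies in $\mathfs{F}_{\alpha}$.

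The main obstacle I anticipate is the bookkeeping at the transitions, namely the moments when a side of $\tilde{B}$ or of the $A$-set shrinks to length zero and the volume deficit has to be continued by moving a different side. One must verify both that the intermediate set remains simply connected with rectifiable boundary (so that it genuinely belongs to $\mathfs{F}_{\alpha}$ as drawn in Figure \ref{fig:GeneralCase}) and that the available menu of moves is rich enough to drive the volume all the way down to exactly $\alpha$ without ever being blocked by geometry emanating from the $60^{\circ}$ corner. This is precisely why the translation performed in Lemma \ref{lemma:ExactlyOneSixty} (producing the $B_{t}^{\varhexagon}$ of the right sub-case) is crucial: it pre-empts the only configuration in which a shrinking move would otherwise collide with the $60^{\circ}$ corner before reaching the target volume.
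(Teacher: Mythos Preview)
Your overall plan---reuse the volume-adjustment moves from Lemma~\ref{lem:Two120AnglesVolAdj}---is the right instinct, but the specific moves you propose do not match the geometry of these configurations, and in one case they fail outright.

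The first issue is that you reorient $l_{1}^{B}$, whereas the paper reorients $l_{1}^{A}$. In the leftmost configuration of Figure~\ref{fig:One60ToBeAdjusted} this distinction matters: the $60^{\circ}$ corner at $\pi_{2}$ forces $s_{\smallrightarrow}^{A}>s_{\smallrightarrow}^{B}$, so the bottom of $B^{\varhexagon}$ lies strictly below the bottom of $A^{\varhexagon}$, and a reoriented $l_{2}^{B}$ placed in $\Lambda_{\smallrightarrow}(s_{\smallrightarrow}^{B})$ would not meet $\partial A$ at all. The paper avoids this by working at the $120^{\circ}$ corner $\pi_{1}$ and reorienting $l_{1}^{A}$ (distinguishing the cases $i_{\smallrightarrow}^{A}\gtrless i_{\smallrightarrow}^{B}$), then pushing $\Lambda_{\smallnearrow}(i_{\smallnearrow}^{B})$ rightward; see Figure~\ref{fig:Second60Adjusted}.

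The more serious gap is that you lump the middle configuration in with the left one. In the middle configuration $l_{1}^{B}$ is already horizontal (it is part of the top of $B^{\varhexagon}$), so your first step is vacuous, and simply sliding the left sides of $B^{\varhexagon}$ inward does not keep the configuration in $\mathfs{F}_{\alpha}$: it forces the bottom of $A$ to grow while the top joint boundary shrinks, and you then have to reorient pieces of the joint boundary mid-process. The paper carries this out as a genuinely multi-stage procedure (Figure~\ref{fig:One60HardestAdjustment}), with separate phases for before and after $\Lambda_{\smallnearrow}(i_{\smallnearrow}^{B})$ reaches $\Lambda_{\smallrightarrow}(s_{\smallrightarrow}^{A})$, including a step where a portion of the joint boundary is reoriented to slope $\sqrt{3}$ and then translated. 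None of this is captured by ``reorient $l_{1}^{B}$, take $B_{1}^{\varhexagon}$, slide inward.''

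Finally, for the rightmost configuration your move---slide $\Lambda_{\smallnearrow}(i_{\smallnearrow}^{B_{t}^{\varhexagon}})$ rightward until it reaches $\Lambda_{\smallnearrow}(s_{\smallnearrow}^{B_{t}^{\varhexagon}})$---would collapse $B$ to a degenerate set. The paper instead lowers the \emph{top} of $\tilde{B}^{\varhexagon}$ (Figure~\ref{fig:First60Adjusted}), which is the move that actually transfers the part of $\tilde{B}^{\varhexagon}$ sitting inside $A^{\varhexagon}$ back to $A$. Your claim that the two segments forming the $60^{\circ}$ angle are ``never disturbed'' is also not right for the leftmost and middle configurations, where $\pi_{2}$ sits precisely on the left side of $B^{\varhexagon}$ that your procedure slides.
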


\begin{proof}

It follows from Lemma \ref{lemma:ExactlyOneSixty} that we merely have to adjust the volumes of the following three configurations, along with a slight variation on the leftmost configuration:

\begin{figure}[H]

\begin{tikzpicture}[scale=0.2]

\draw[blue, thin] (1,-1.7320508) to (5,-1.7320508);
\draw[blue, thin] (5,1.7320508) to (3,5.19615242);
\draw[blue, thin] (3,5.19615242) to (1,5.19615242);
\draw[blue, thin] (1,5.19615242) to (-1,1.7320508);
\draw[blue, thin] (-1,1.7320508) to (1,-1.7320508);

\draw[red, thin] (4,0) to (7,5.19615242);
\draw[red, thin] (7,5.19615242) to (9,5.19615242);
\draw[red, thin] (9,5.19615242) to (12,0);
\draw[red, thin] (12,0) to (9,-5.19615242);
\draw[red, thin] (9,-5.19615242) to (7,-5.19615242);
\draw[red, thin] (7,-5.19615242) to (4,0);

\draw[blue, thin] (19,-1.7320508) to (22,-1.7320508);
\draw[blue, thin] (27,5.19615242) to (28,6.92820323);
\draw[blue, thin] (28,6.92820323) to (27,8.66025403);
\draw[blue, thin] (27,8.66025403) to (19,8.66025403);
\draw[blue, thin] (19,8.66025403) to (16,3.46410161);
\draw[blue, thin] (16,3.46410161) to (19,-1.7320508);

\draw[red, thin] (21,0) to (24,5.19615242);
\draw[red, thin] (24,5.19615242) to (28,5.19615242);
\draw[red, thin] (28,5.19615242) to (31,0);
\draw[red, thin] (31,0) to (28,-5.19615242);
\draw[red, thin] (28,-5.19615242) to (24,-5.19615242);
\draw[red, thin] (24,-5.19615242) to (21,0);


\draw[red, thin] (35,0) to (37,3.46410161);
\draw[red, thin] (37,3.46410161) to (39,3.46410161);
\draw[red, thin] (39,3.46410161) to (42,-1.7320508);
\draw[red, thin] (42,-1.7320508) to (40,-5.19615242);
\draw[red, thin] (40,-5.19615242) to (38,-5.19615242);
\draw[red, thin] (38,-5.19615242) to (35,0);

\draw[blue, thin] (41,0) to (46,0);
\draw[blue, thin] (46,0) to (47,1.7320508);
\draw[blue, thin] (47,1.7320508) to (44,6.92820323);
\draw[blue, thin] (44,6.92820323) to (35,6.92820323);
\draw[blue, thin] (35,6.92820323) to (33,3.4641016);
\draw[blue, thin] (33,3.4641016) to (35,0);

\end{tikzpicture}

\caption{\label{fig:One60ToBeAdjusted}}

\end{figure}

The rightmost configuration is the simplest to adjust. First, since $\mu\left(\tilde{B}^{\varhexagon}\right)=\mu\left(B^{\varhexagon}\right)$, and $B^{\varhexagon}\supset B$, we may have to reduce the volume of $\tilde{B}^{\varhexagon}$, and we must do this in a way so that all of the volume removed from $\tilde{B}^{\varhexagon}$ is given to $A^{\varhexagon}\setminus\tilde{B}^{\varhexagon}$. All that is required is that we move the top of $\tilde{B}^{\varhexagon}$ down, adjusting the other sides accordingly. See the following figure:

\begin{figure}[H]

\begin{tikzpicture}[scale=0.2]

\draw[red, thin] (2,0) to (4,3.46410161);
\draw[red, thin] (4,3.46410161) to (6,3.46410161);
\draw[red, thin] (6,3.46410161) to (9,-1.7320508);
\draw[red, thin] (9,-1.7320508) to (7,-5.19615242);
\draw[red, thin] (7,-5.19615242) to (5,-5.19615242);
\draw[red, thin] (5,-5.19615242) to (2,0);

\draw[blue, thin] (8,0) to (13,0);
\draw[blue, thin] (13,0) to (14,1.7320508);
\draw[blue, thin] (14,1.7320508) to (11,6.92820323);
\draw[blue, thin] (11,6.92820323) to (2,6.92820323);
\draw[blue, thin] (2,6.92820323) to (0,3.4641016);
\draw[blue, thin] (0,3.4641016) to (2,0);

\draw[->] (15,1) to (16,1);


\draw[red, thin] (19,0) to (20,1.7320508);
\draw[red, thick, dotted] (20,1.7320508) to (21,3.46410161);
\draw[red, thin] (20,1.7320508) to (24,1.7320508);
\draw[red, thick, dotted] (21,3.46410161) to (23,3.46410161);
\draw[red, thick, dotted] (23,3.46410161) to (24,1.7320508);
\draw[red, thin] (24,1.7320508) to (26,-1.7320508);
\draw[red, thin] (26,-1.7320508) to (24,-5.19615242);
\draw[red, thin] (24,-5.19615242) to (22,-5.19615242);
\draw[red, thin] (22,-5.19615242) to (19,0);

\draw[blue, thin] (25,0) to (30,0);
\draw[blue, thin] (30,0) to (31,1.7320508);
\draw[blue, thin] (31,1.7320508) to (28,6.92820323);
\draw[blue, thin] (28,6.92820323) to (19,6.92820323);
\draw[blue, thin] (19,6.92820323) to (17,3.4641016);
\draw[blue, thin] (17,3.4641016) to (19,0);

\draw[->] (32,1) to (33,1);

\draw[red, thick, dotted] (36,0) to (37,1.7320508);
\draw[red, thick, dotted] (37,1.7320508) to (38,3.46410161);
\draw[red, thick, dotted] (38,3.46410161) to (40,3.46410161);
\draw[red, thick, dotted] (40,3.46410161) to (41,1.7320508);
\draw[red, thick, dotted] (41,1.7320508) to (42,0);
\draw[red, thin] (36,0) to (42,0);
\draw[red, thin] (42,0) to (43,-1.7320508);
\draw[red, thin] (43,-1.7320508) to (41,-5.19615242);
\draw[red, thin] (41,-5.19615242) to (39,-5.19615242);
\draw[red, thin] (39,-5.19615242) to (36,0);

\draw[blue, thin] (42,0) to (47,0);
\draw[blue, thin] (47,0) to (48,1.7320508);
\draw[blue, thin] (48,1.7320508) to (45,6.92820323);
\draw[blue, thin] (45,6.92820323) to (36,6.92820323);
\draw[blue, thin] (36,6.92820323) to (34,3.4641016);
\draw[blue, thin] (34,3.4641016) to (36,0);

\end{tikzpicture}

\caption{\label{fig:First60Adjusted}}

\end{figure}

As shown in Figure \ref{fig:First60Adjusted}, we can do this until, if necessary, we have lowered the top of $B^{\varhexagon}$ until it is collinear with $\Lambda_{\smallrightarrow}\left(s_{\smallrightarrow}^A\right)$. Let's call this resulting set $B_1$. If $\mu\left(B_1\right)>\alpha$, we can raise its bottom side until we obtain a set $B'$ with volume $\alpha$. To reduce $A^{\varhexagon}\setminus B'$ we can move its right sides to the left, and then its top side down. Taken together, these two processes guarantee that we can obtain a set $A'$ with volume $1$.

The arguments to adjust the leftmost and central configurations in Figure \ref{fig:One60ToBeAdjusted} are similar to each other. We begin with the left one. 

First, recalling the definition of $l_1^A$, we replace $A^{\varhexagon}$ with a new set $A_1$ by reorienting this line segment so that either all of it is horizontal (if $s_{\smallrightarrow}^A\leq s_{\smallrightarrow}^B$), or so that part of it is horizontal and intersects the top line segment in $B^{\varhexagon}$, and part of it is in the original orientation (this we do if $s_{\smallrightarrow}^A>s_{\smallrightarrow}^B$). In the latter case, we can then replace $A_1$ with $A_1^{\varhexagon}\setminus B^{\varhexagon}$. The following figure demonstrates the procedure: 

\begin{figure}[H]

\begin{tikzpicture}[scale=0.2]

\draw[blue, thin] (0,-1.7320508) to (5,-1.7320508);
\draw[blue, thin] (5,1.7320508) to (2,6.92820323);
\draw[blue, thin] (2,6.92820323) to (1,6.92820323);
\draw[blue, thin] (1,6.92820323) to (-2,1.7320508);
\draw[blue, thin] (-2,1.7320508) to (0,-1.7320508);

\draw[red, thin] (4,0) to (7,5.19615242);
\draw[red, thin] (7,5.19615242) to (9,5.19615242);
\draw[red, thin] (9,5.19615242) to (12,0);
\draw[red, thin] (12,0) to (9,-5.19615242);
\draw[red, thin] (9,-5.19615242) to (7,-5.19615242);
\draw[red, thin] (7,-5.19615242) to (4,0);

\draw (1,2) node{$\scaleobj{0.5}{A^{\varhexagon}\setminus B^{\varhexagon}}$};
\draw (7.6,0.8) node{$\scaleobj{0.5}{B^{\varhexagon}}$};

\draw[->] (13,1) to (14,1);


\draw[blue, thin] (17,-1.7320508) to (22,-1.7320508);
\draw[blue, thin] (24,5.19615242) to (20,5.19615242);
\draw[blue, thin] (20,5.19615242) to (19,6.92820323);
\draw[blue, thin] (19,6.92820323) to (18,6.92820323);
\draw[blue, thin] (18,6.92820323) to (15,1.7320508);
\draw[blue, thin] (15,1.7320508) to (17,-1.7320508);

\draw[red, thin] (21,0) to (24,5.19615242);
\draw[red, thin] (24,5.19615242) to (26,5.19615242);
\draw[red, thin] (26,5.19615242) to (29,0);
\draw[red, thin] (29,0) to (26,-5.19615242);
\draw[red, thin] (26,-5.19615242) to (24,-5.19615242);
\draw[red, thin] (24,-5.19615242) to (21,0);

\draw (18.6,2) node{$\scaleobj{0.5}{A_1}$};
\draw (24.4,0.8) node{$\scaleobj{0.5}{B^{\varhexagon}}$};

\draw[->] (30,1) to (31,1);


\draw[blue, thin] (34,-1.7320508) to (39,-1.7320508);
\draw[blue, thin] (40,6.92820323) to (36,6.92820323);
\draw[blue, thin] (41,5.19615242) to (40,6.92820323);

\draw[blue, thin] (36,6.92820323) to (35,6.92820323);
\draw[blue, thin] (35,6.92820323) to (32,1.7320508);
\draw[blue, thin] (32,1.7320508) to (34,-1.7320508);

\draw[red, thin] (38,0) to (41,5.19615242);
\draw[red, thin] (41,5.19615242) to (43,5.19615242);
\draw[red, thin] (43,5.19615242) to (46,0);
\draw[red, thin] (46,0) to (43,-5.19615242);
\draw[red, thin] (43,-5.19615242) to (41,-5.19615242);
\draw[red, thin] (41,-5.19615242) to (38,0);

\draw (35.8,2) node{$\scaleobj{0.5}{A_1^{\varhexagon}\setminus B^{\varhexagon}}$};
\draw (41.8,0.8) node{$\scaleobj{0.5}{B^{\varhexagon}}$};

\end{tikzpicture}

\caption{\label{fig:Second60Adjusted}}

\end{figure}

We must now adjust the volumes to create a configuration in $\mathfs{F}_{\alpha}$. We begin by moving the line segment $\Lambda_{\smallnearrow}\left(i_{\smallnearrow}^B\right)\cap B^{\varhexagon}$ to the right, and extending the top of $A_1$ by the same amount as demonstrated in Figure \ref{fig:Second60Adjusted}:

\begin{figure}[H]

\begin{tikzpicture}[scale=0.2]

\draw[blue, thin] (0,-1.7320508) to (5,-1.7320508);
\draw[blue, thin] (6,6.92820323) to (2,6.92820323);
\draw[blue, thin] (7,5.19615242) to (6,6.92820323);
\draw[blue, thin] (2,6.92820323) to (1,6.92820323);
\draw[blue, thin] (1,6.92820323) to (-2,1.7320508);
\draw[blue, thin] (-2,1.7320508) to (0,-1.7320508);

\draw[red, thin] (4,0) to (7,5.19615242);
\draw[red, thin] (7,5.19615242) to (9,5.19615242);
\draw[red, thin] (9,5.19615242) to (12,0);
\draw[red, thin] (12,0) to (9,-5.19615242);
\draw[red, thin] (9,-5.19615242) to (7,-5.19615242);
\draw[red, thin] (7,-5.19615242) to (4,0);

\draw[->] (13,0) to (14,0);

\draw[blue, thin] (17,-1.7320508) to (22,-1.7320508);
\draw[blue, thin] (23,6.92820323) to (19,6.92820323);
\draw[blue, thin] (24,5.19615242) to (23,6.92820323);
\draw[blue, thin] (19,6.92820323) to (18,6.92820323);
\draw[blue, thin] (18,6.92820323) to (15,1.7320508);
\draw[blue, thin] (15,1.7320508) to (17,-1.7320508);

\draw[blue, thin] (24,5.19615242) to (25,5.19615242);

\draw[red, thick, dotted] (21,0) to (24,5.19615242);
\draw[red, thin] (21.5,-0.86602540) to (25,5.19615242);

\draw[red, thin] (25,5.19615242) to (26,5.19615242);
\draw[red, thin] (26,5.19615242) to (29,0);
\draw[red, thin] (29,0) to (26,-5.19615242);
\draw[red, thin] (26,-5.19615242) to (24,-5.19615242);
\draw[red, thin] (24,-5.19615242) to (21.5,-0.86602540);
\draw[red, thick, dotted] (21.5,-0.86602540) to (21,0);

\draw[->] (30,0) to (31,0);

\draw[blue, thin] (34,-1.7320508) to (39,-1.7320508);
\draw[blue, thin] (40,6.92820323) to (36,6.92820323);
\draw[blue, thin] (42,5.19615242) to (41,6.92820323);
\draw[blue, thin] (36,6.92820323) to (35,6.92820323);
\draw[blue, thin] (35,6.92820323) to (32,1.7320508);
\draw[blue, thin] (32,1.7320508) to (34,-1.7320508);
\draw[blue, thin] (40,6.92820323) to (41,6.92820323);

\draw[red, thick, dotted] (38,0) to (41,5.19615242);
\draw[red, thin] (38.5,-0.86602540) to (42,5.19615242);

\draw[red, thick, dotted] (41,5.19615242) to (42,5.19615242);

\draw[red, thin] (42,5.19615242) to (43,5.19615242);
\draw[red, thin] (43,5.19615242) to (46,0);
\draw[red, thin] (46,0) to (43,-5.19615242);
\draw[red, thin] (43,-5.19615242) to (41,-5.19615242);
\draw[red, thin] (41,-5.19615242) to (38.5,-0.86602540);
\draw[red, thick, dotted] (38.5,-0.86602540) to (38,0);

\end{tikzpicture}

\caption{\label{fig:Second60Adjusted}}

\end{figure}

This process creates a new set to replace $B^{\varhexagon}$, which we call $B_1$, and a set to replace $A_1^{\varhexagon}\setminus B^{\varhexagon}$, which we call $A_2$. If necessary, we can move $\Lambda_{\smallnearrow}\left(i_{\smallnearrow}^B\right)\cap B^{\varhexagon}$ far enough that $A^{\varhexagon}\subset A_2$, which guarantees that $\mu\left(A_2\right)\geq1$. If $\mu\left(B_1\right)>\alpha$, we can raise its bottom side until it is level with the bottom side of $A_2$, and then move its right sides to the left. We stop this process as soon as we have a set $B'$ that has volume $\alpha$. Once we have $B'$, we must ensure that we can reduce the volume of $A_2$ so that it has volume $1$. We can begin by moving the top of $A_2$ down until it is collinear with the top side of $B'$. Then we can move its left sides to the right until we obtain a set $A'$ with volume $1$.

Note that if $s_{\smallrightarrow}^A=s_{\smallrightarrow}^B$, then adjusting volumes is even easier, as we can move both left sides, i.e. $\Lambda_{\smallsearrow}\left(s_{\smallsearrow}^B\right)\cap\partial B^{\varhexagon}$ and $\Lambda_{\smallnearrow}\left(i_{\smallnearrow}^B\right)\cap\partial B^{\varhexagon}$, of $\partial B^{\varhexagon}$ to the right. 

We still need to demonstrate how to adjust the volume of the lefthand configuration in Figure \ref{fig:One60ToBeAdjusted} when $i_{\smallrightarrow}^A<i_{\smallrightarrow}^B$. First, we adjust $l_1^A$ so that it is horizontal, similarly to the way described above, to create a new set in place of $A^{\varhexagon}\setminus B^{\varhexagon}$, which we call $A_1$. Then, the situation is analogous to the volume adjustment in Lemma \ref{lem:Two120AnglesVolAdj}, see Figure \ref{fig:OneCornerVolumeAdjustment2}, but we switch the roles of $A$ and $B$. The following figure gives a visual aid:

\begin{figure}[H]

\begin{tikzpicture}[scale=0.2]

\draw[blue, thin] (1,-3.46410161) to (6,-3.46410161);
\draw[blue, thin] (-1,0) to (1,-3.46410161);
\draw[blue, thin] (1,3.46410161) to (-1,0);
\draw[blue, thin] (1,3.46410161) to (4,3.46410161);
\draw[blue, thin] (4,3.46410161) to (5,1.7320508);

\draw[red, thin] (4,0) to (7,5.19615242);
\draw[red, thin] (7,5.19615242) to (9,5.19615242);
\draw[red, thin] (9,5.19615242) to (12,0);
\draw[red, thin] (12,0) to (9,-5.19615242);
\draw[red, thin] (9,-5.19615242) to (7,-5.19615242);
\draw[red, thin] (7,-5.19615242) to (4,0);

\draw[->] (13,0) to (14,0);

\draw[blue, thin] (18,-3.46410161) to (23,-3.46410161);
\draw[blue, thin] (16,0) to (18,-3.46410161);
\draw[blue, thin] (18,3.46410161) to (16,0);
\draw[blue, thin] (18,3.46410161) to (21,3.46410161);
\draw[blue, thick, dotted] (21,3.46410161) to (22,1.7320508);
\draw[blue, thin] (21,3.46410161) to (23,3.46410161);

\draw[red, thin] (21,0) to (24,5.19615242);
\draw[red, thin] (24,5.19615242) to (26,5.19615242);
\draw[red, thin] (26,5.19615242) to (29,0);
\draw[red, thin] (29,0) to (26,-5.19615242);
\draw[red, thin] (26,-5.19615242) to (24,-5.19615242);
\draw[red, thin] (24,-5.19615242) to (21,0);

\draw[->] (30,0) to (31,0);

\draw[blue, thin] (35,-3.46410161) to (40,-3.46410161);
\draw[blue, thin] (33,0) to (35,-3.46410161);
\draw[blue, thin] (35,3.46410161) to (33,0);
\draw[blue, thin] (35,3.46410161) to (38,3.46410161);
\draw[blue, thin] (38,3.46410161) to (40,3.46410161);
\draw[blue, thin] (40,3.46410161) to (42,0);
\draw[blue, thin] (42,0) to (40,-3.46410161);

\draw[red, thick, dotted] (38,0) to (40,3.46410161);
\draw[red, thin] (40,3.46410161) to (41,5.19615242);
\draw[red, thin] (41,5.19615242) to (43,5.19615242);
\draw[red, thin] (43,5.19615242) to (46,0);
\draw[red, thin] (46,0) to (43,-5.19615242);
\draw[red, thin] (43,-5.19615242) to (41,-5.19615242);
\draw[red, thin] (41,-5.19615242) to (40,-3.46410161);
\draw[red, thick, dotted] (40,-3.46410161) to (38,0);

\draw[->] (47,0) to (48,0);

\draw[blue, thin] (52,-3.46410161) to (57,-3.46410161);
\draw[blue, thin] (50,0) to (52,-3.46410161);
\draw[blue, thin] (52,3.46410161) to (50,0);
\draw[blue, thin] (52,3.46410161) to (55,3.46410161);
\draw[blue, thin] (55,3.46410161) to (56,3.46410161);
\draw[blue, thin] (56,3.46410161) to (58,0);
\draw[blue, thin] (58,0) to (56,-3.46410161);

\draw[red, thin] (57,3.46410161) to (58,5.19615242);
\draw[red, thin] (58,5.19615242) to (60,5.19615242);
\draw[red, thin] (60,5.19615242) to (63,0);
\draw[red, thin] (63,0) to (60,-5.19615242);
\draw[red, thin] (60,-5.19615242) to (58,-5.19615242);
\draw[red, thin] (58,-5.19615242) to (57,-3.46410161);

\draw[red, thin] (56,3.46410161) to (57,3.46410161);
\draw[red, thin] (56,-3.46410161) to (57,-3.46410161);

\draw[->] (64,0) to (65,0);

\draw[blue, thin] (69,-3.46410161) to (73,-3.46410161);
\draw[blue, thin] (67,0) to (69,-3.46410161);
\draw[blue, thin] (69,3.46410161) to (67,0);
\draw[blue, thin] (69,3.46410161) to (72,3.46410161);
\draw[blue, thin] (72,3.46410161) to (73,3.46410161);
\draw[blue, thin] (73,3.46410161) to (75,0);
\draw[blue, thin] (75,0) to (73,-3.46410161);

\draw[red, thin] (73,3.46410161) to (74,5.19615242);
\draw[red, thin] (75,5.19615242) to (77,5.19615242);
\draw[red, thin] (77,5.19615242) to (80,0);
\draw[red, thin] (80,0) to (77,-5.19615242);
\draw[red, thin] (77,-5.19615242) to (75,-5.19615242);
\draw[red, thin] (74,-5.19615242) to (73,-3.46410161);

\draw[red, thin] (74,5.19615242) to (75,5.19615242);
\draw[red, thin] (74,-5.19615242) to (75,-5.19615242);

\draw[red, thick, dotted] (74,3.46410161) to (73,3.46410161);
\draw[red, thick, dotted] (74,-3.46410161) to (73,-3.46410161);

\draw[red, thick, dotted] (74,-3.46410161) to (75,-5.19615242);
\draw[red, thick, dotted] (74,3.46410161) to (75,5.19615242);

\end{tikzpicture}

\caption{\label{fig:Second60Adjusted}}

\end{figure}

Finally, we adjust the volume of the central configuration in Figure \ref{fig:One60ToBeAdjusted}. First, we move $\Lambda_{\smallnearrow}\left(i_{\smallnearrow}^B\right)$ to the right to create a new set $B_1$. If necessary, we can do this until one of the endpoints of this line is contained in $\Lambda_{\smallrightarrow}\left(s_{\smallrightarrow}^A\right)$. Suppose that at this point the volume of $A^{\varhexagon}\setminus B_1$ is still less than $1$. Then, we can continue to move this same line to the right, decreasing the length of the top of $B_1$ and increasing the length of the bottom of $A^{\varhexagon}\setminus B_1$. This is shown in the top right configuration in Figure \ref{fig:One60HardestAdjustment}. As the length of the bottom side of $A^{\varhexagon}\setminus B_1$ increases, we can reorient the part of it that is joint boundary so that it has slope $-\sqrt{3}$. This continuously removes volume from $B_1$ and adds it to $A^{\varhexagon}\setminus B_1$ to create two new sets, which we call $B_2$ and $A_1$, respectively. This process is shown in the bottom two configurations of Figure \ref{fig:One60HardestAdjustment}. We can ensure that $\mu\left(A_1\right)\geq1$, and $\mu\left(B_2\right)\geq\alpha$. This is because the joint volumes of these two sets is at least $1+\alpha$, and we can continue this process until $A^{\varhexagon}\subset A_1$, if necessary.

\begin{figure}[H]

\begin{tikzpicture}[scale=0.2]

\draw[blue, thin] (3,-1.7320508) to (6,-1.7320508);
\draw[blue, thin] (11,5.19615242) to (12,6.92820323);
\draw[blue, thin] (12,6.92820323) to (11,8.66025403);
\draw[blue, thin] (11,8.66025403) to (3,8.66025403);
\draw[blue, thin] (3,8.66025403) to (0,3.46410161);
\draw[blue, thin] (0,3.46410161) to (3,-1.7320508);

\draw[red, thin] (5,0) to (8,5.19615242);
\draw[red, thin] (8,5.19615242) to (12,5.19615242);
\draw[red, thin] (12,5.19615242) to (15,0);
\draw[red, thin] (15,0) to (12,-5.19615242);
\draw[red, thin] (12,-5.19615242) to (8,-5.19615242);
\draw[red, thin] (8,-5.19615242) to (5,0);

\draw (3.5,3) node{$\scaleobj{0.7}{A^{\varhexagon}\setminus B^{\varhexagon}}$};
\draw (10.5,1) node{$\scaleobj{0.8}{B^{\varhexagon}}$};

\draw[->] (16,0) to (17,0);


\draw[blue, thin] (21,-1.7320508) to (24,-1.7320508);
\draw[blue, thin] (29,5.19615242) to (30,6.92820323);
\draw[blue, thin] (30,6.92820323) to (29,8.66025403);
\draw[blue, thin] (29,8.66025403) to (21,8.66025403);
\draw[blue, thin] (21,8.66025403) to (18,3.46410161);
\draw[blue, thin] (18,3.46410161) to (21,-1.7320508);

\draw[red, thick, dotted] (23,0) to (26,5.19615242);
\draw[red, thin] (23.5,-0.86602540) to (27,5.19615242);
\draw[red, thick, dotted] (26,5.19615242) to (27,5.19615242);
\draw[red, thin] (27,5.19615242) to (30,5.19615242);
\draw[red, thin] (30,5.19615242) to (33,0);
\draw[red, thin] (33,0) to (30,-5.19615242);
\draw[red, thin] (30,-5.19615242) to (26,-5.19615242);
\draw[red, thin] (26,-5.19615242) to (23.5,-0.86602540);
\draw[red, thick, dotted] (23.5,-0.86602540) to (23,0);

\draw[->] (34,0) to (35,0);


\draw[blue, thin] (39,-1.7320508) to (42,-1.7320508);
\draw[blue, thin] (47,5.19615242) to (48,6.92820323);
\draw[blue, thin] (48,6.92820323) to (47,8.66025403);
\draw[blue, thin] (47,8.66025403) to (39,8.66025403);
\draw[blue, thin] (39,8.66025403) to (36,3.46410161);
\draw[blue, thin] (36,3.46410161) to (39,-1.7320508);

\draw[red, thick, dotted] (41,0) to (44,5.19615242);
\draw[red, thin] (42,-1.7320508) to (46,5.19615242);
\draw[red, thick, dotted] (44,5.19615242) to (46,5.19615242);
\draw[red, thin] (46,5.19615242) to (48,5.19615242);
\draw[red, thin] (48,5.19615242) to (51,0);
\draw[red, thin] (51,0) to (48,-5.19615242);
\draw[red, thin] (48,-5.19615242) to (44,-5.19615242);
\draw[red, thin] (44,-5.19615242) to (42,-1.7320508);
\draw[red, thick, dotted] (42,-1.7320508) to (41,0);

\draw (40,3.8) node{$\scaleobj{0.7}{A^{\varhexagon}\setminus B_1}$};
\draw (47,1) node{$\scaleobj{0.7}{B_1}$};

\draw[->] (52,0) to (53,0);


\draw[blue, thin] (57,-1.7320508) to (60.5,-1.7320508);
\draw[blue, thin] (65,5.19615242) to (66,6.92820323);

\draw[blue, thin] (66,6.92820323) to (65,8.66025403);
\draw[blue, thin] (65,8.66025403) to (57,8.66025403);
\draw[blue, thin] (57,8.66025403) to (54,3.46410161);
\draw[blue, thin] (54,3.46410161) to (57,-1.7320508);

\draw[red, thick, dotted] (59,0) to (62,5.19615242);

\draw[red, thin] (60.5,-1.7320508) to (64.5,5.19615242);
\draw[red, thin] (64.5,5.19615242) to (65,5.19615242);

\draw[red, thick, dotted] (62,5.19615242) to (65,5.19615242);
\draw[red, thin] (65,5.19615242) to (66,5.19615242);
\draw[red, thin] (66,5.19615242) to (69,0);
\draw[red, thin] (69,0) to (66,-5.19615242);
\draw[red, thin] (66,-5.19615242) to (62,-5.19615242);
\draw[red, thin] (62,-5.19615242) to (60,-1.7320508);
\draw[red, thick, dotted] (60,-1.7320508) to (59,0);

\draw (71.5,-6) node{$\hookleftarrow$};


\draw[blue, thin] (4,-17.3205080) to (7,-17.3205080);

\draw[blue, thin] (7,-17.3205080) to (7.25,-17.7535207);
\draw[blue, thin] (7.25,-17.7535207) to (7.5,-17.3205080);

\draw[blue, thin] (7.5,-17.3205080) to (11.5,-10.3923048);
\draw[blue, thin] (11.5,-10.3923048) to (12,-10.3923048);

\draw[blue, thin] (12,-10.3923048) to (13,-8.66025403);

\draw[blue, thin] (13,-8.66025403) to (12,-6.92820323);
\draw[blue, thin] (12,-6.92820323) to (4,-6.92820323);
\draw[blue, thin] (4,-6.92820323) to (1,-12.124355);
\draw[blue, thin] (1,-12.124355) to (4,-17.3205080);

\draw[red, thick, dotted] (6,-15.58845726) to (9,-10.3923048);

\draw[red, thick, dotted] (9,-10.3923048) to (12,-10.3923048);
\draw[red, thin] (12,-10.3923048) to (13,-10.3923048);
\draw[red, thin] (13,-10.3923048) to (16,-15.58845726);
\draw[red, thin] (16,-15.58845726) to (13,-20.7846096);
\draw[red, thin] (13,-20.7846096) to (9,-20.7846096);

\draw[red, thin] (9,-20.7846096) to (7.25,-17.7535207);

\draw[red, thick, dotted] (7,-17.3205080) to (6,-15.58845726);

\draw[->] (17,-16) to (18,-16);


\draw[blue, thick, dotted] (22,-17.3205080) to (25,-17.3205080);%
\draw[blue, thin] (22.25,-17.7535207) to (25.25,-17.7535207);%
\draw[blue, thick, dotted] (25,-17.3205080) to (25.25,-17.75352077);

\draw[blue, thin] (22,-17.3205080) to (22.25,-17.75352077);

\draw[blue, thin] (25.25,-17.75352077) to (25.5,-17.3205080);
\draw[blue, thin] (29.5,-10.3923048) to (30,-10.3923048);
\draw[blue, thin] (25.5,-17.3205080) to (29.5,-10.3923048);
\draw[blue, thin] (30,-10.3923048) to (31,-8.66025403);
\draw[blue, thin] (31,-8.66025403) to (30,-6.92820323);
\draw[blue, thin] (30,-6.92820323) to (22,-6.92820323);
\draw[blue, thin] (22,-6.92820323) to (19,-12.124355);
\draw[blue, thin] (19,-12.124355) to (22,-17.3205080);

\draw[red, thick, dotted] (24,-15.58845726) to (27,-10.3923048);

\draw[red, thick, dotted] (27,-10.3923048) to (30,-10.3923048);
\draw[red, thin] (30,-10.3923048) to (31,-10.3923048);
\draw[red, thin] (31,-10.3923048) to (34,-15.58845726);
\draw[red, thin] (34,-15.58845726) to (31,-20.7846096);
\draw[red, thin] (31,-20.7846096) to (27,-20.7846096);
\draw[red, thin] (27,-20.7846096) to (25.25,-17.75352077);
\draw[red, thick, dotted] (25,-17.3205080) to (24,-15.58845726);

\draw (24,-12) node{$\scaleobj{0.7}{A_1}$};
\draw (28.6,-15.2) node{$\scaleobj{0.7}{B_2}$};

\end{tikzpicture}

\caption{\label{fig:One60HardestAdjustment}}

\end{figure}

If we need to reduce the volume of $B_2$, we can raise its bottom side until we obtain a set $B_3$ that either has volume $\alpha$, or such that $s_{\smallrightarrow}^{B_3}=s_{\smallrightarrow}^{A_1}$. Then, if necessary, we can move the right sides of $B_3$ to the left until we have a set $\tilde{B}$ with volume $\alpha$. If we need to reduce the volume of $A_1$, we can move its top side down to obtain a new set $A_2$ that either has volume $1$ or such that $i_{\smallrightarrow}^{A_2}=i_{\smallrightarrow}^{\tilde{B}}$, whichever comes first. Then, if necessary, we move the left sides of $A_2$ to the right until we have a set $\tilde{A}$ with volume $1$. Then, the configuration $\left(\tilde{A},\tilde{B}\right)\in\mathfs{F}_{\alpha}$ has double bubble perimeter no more than that of $(A,B)$.

\end{proof}

The last case to analyse when $\mu\left(A^{\varhexagon}\setminus B^{\varhexagon}\right),\mu\left(A^{\varhexagon}\cap B^{\varhexagon}\right),\mu\left(B^{\varhexagon}\setminus A^{\varhexagon}\right)>0$, is when both $\theta_1$, and $\theta_2$ are sixty degree angles. The analysis is somewhat delicate, and depends on the number of sides of either $\partial A^{\varhexagon}$ that intersect the interior of $B^{\varhexagon}$, or vice versa. We first consider which of $\left(A^{\varhexagon}\setminus B^{\varhexagon},B^{\varhexagon}\right)$ or $\left(A^{\varhexagon},B^{\varhexagon}\setminus A^{\varhexagon}\right)$ has greater joint boundary, and select that one.  As usual, we suppose, without loss of generality, that $\left(A^{\varhexagon}\setminus B^{\varhexagon},B^{\varhexagon}\right)$ has greater joint boundary.

The following technical lemma will be helpful with the proof of this case.

\begin{lemma}\label{lem:fourcrossings}
Let $A\subset\mathbb{R}^2$ be simply connected and such that $\partial A$ is closed, continuous, simple, and rectifiable. Furthermore, if there are two parallel lines $\Lambda_1$ and $\Lambda_2$ that have slope $\pm\sqrt{3}$ or slope $0$, with $\Lambda_1\neq\Lambda_2$, such that $\partial A$ crosses between these two lines at least four times, then $\rho\left(\partial A\right)$ is greater than $\rho\left(\partial A^{\varhexagon}\right)$ by at least the distance (in $\mathcal{D}$) between $\Lambda_1$ and $\Lambda_2$. 

\end{lemma}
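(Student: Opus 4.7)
My plan is to reduce the claim to a total-variation inequality by expressing the hexagonal length through three linear coordinates. Introduce
\[
f_+(x,y)=y-\sqrt{3}\,x,\qquad f_-(x,y)=y+\sqrt{3}\,x,\qquad f_0(x,y)=2y,
\]
so that the three families of special lines $\Lambda_{\smallnearrow}(b)$, $\Lambda_{\smallsearrow}(b)$, $\Lambda_{\smallrightarrow}(b/2)$ are precisely the level sets of $f_+,f_-,f_0$ respectively. Using the elementary equality $\max(|a|,|b|,|a+b|)=\tfrac12(|a|+|b|+|a+b|)$, which is verified case by case on the signs of $a,b$, one checks the pointwise identity $\mathcal{D}(\Delta x,\Delta y)=\tfrac{1}{2\sqrt{3}}\bigl(|\Delta f_+|+|\Delta f_-|+|\Delta f_0|\bigr)$. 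Summing over a partition of a rectifiable $\gamma$ and supremising gives the clean formula
\[
\rho(\gamma)=\tfrac{1}{2\sqrt{3}}\bigl(TV_\gamma(f_+)+TV_\gamma(f_-)+TV_\gamma(f_0)\bigr).
\]

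For any simply connected $A$ with rectifiable boundary one has $TV_{\partial A}(f_k)\ge 2\,\mathrm{range}_A(f_k)$ because $f_k$ attains its extreme values on $\partial A$, with equality on $\partial A^{\varhexagon}$ since convexity makes $f_k$ circularly unimodal there. So each difference $TV_{\partial A}(f_k)-TV_{\partial A^{\varhexagon}}(f_k)$ is nonnegative, which already recovers $\rho(\partial A)\ge\rho(\partial A^{\varhexagon})$; the lemma will follow as soon as one of these differences is at least $2(b_2-b_1)$, where after a rotation/reflection permuting $f_+,f_-,f_0$ I assume $\Lambda_1=\Lambda_{\smallnearrow}(b_1)$ and $\Lambda_2=\Lambda_{\smallnearrow}(b_2)$ with $b_1<b_2$. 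A direct computation shows the hex distance between these parallel lines is $d=(b_2-b_1)/\sqrt{3}$, realised, e.g., by a horizontal segment.

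The main step is to show that four crossings of the strip $\{b_1\le f_+\le b_2\}$ by $\partial A$ force
\[
TV_{\partial A}(f_+)\ge 2w_++2(b_2-b_1),\qquad w_+:=\mathrm{range}_A(f_+).
\]
Each crossing contributes at least $b_2-b_1$ to the total variation, so the four crossings alone yield $\ge 4(b_2-b_1)$. To control the remaining arcs I label each crossing $U$ or $D$ according as $f_+$ traverses $[b_1,b_2]$ upward or downward; the closed-curve condition forces the numbers of $U$'s and $D$'s to agree, and a cyclic pattern such as $UUDD$ is ruled out because the arc between the two $U$'s (ending at $b_2$ and beginning at $b_1$) cannot connect $b_2$ to $b_1$ while remaining outside the strip. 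Hence the cyclic pattern is $UDUD$, and the four intervening arcs lie alternately in $\{f_+>b_2\}$ and $\{f_+<b_1\}$, up to possible inward dips touching only one of $b_1,b_2$ (which only add to the TV). Since $f_+|_{\partial A}$ still attains its global maximum $M_+$ in an upper arc and its minimum $m_+$ in a lower arc, these arcs contribute at least $2(M_+-b_2)+2(b_1-m_+)=2w_+-2(b_2-b_1)$; combining with the crossings gives the claimed bound.

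Subtracting the two instances of the identity for $\rho$ then yields $\rho(\partial A)-\rho(\partial A^{\varhexagon})\ge 2(b_2-b_1)/(2\sqrt{3})=d$. The slope-$(-\sqrt{3})$ case is symmetric; for horizontal $\Lambda_1,\Lambda_2$ at heights $c_1<c_2$ I run the same argument with $f_0$ in place of $f_+$, and because the strip has $f_0$-width $2(c_2-c_1)$, four crossings force $TV_{\partial A}(f_0)\ge TV_{\partial A^{\varhexagon}}(f_0)+4(c_2-c_1)$, so $\rho(\partial A)-\rho(\partial A^{\varhexagon})\ge 2(c_2-c_1)/\sqrt{3}=\mathcal{D}(\Lambda_1,\Lambda_2)$. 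The main obstacle I anticipate is the combinatorial step above: defining ``crossing'' carefully in the presence of tangencies and inward dips of $\partial A$ that touch $\Lambda_1$ or $\Lambda_2$ without fully crossing, and verifying that any such pathology can only increase the relevant total variations, is where the genuine work lies; the rest is essentially algebraic once the identity for $\rho(\gamma)$ is in place.
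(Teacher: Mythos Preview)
Your proof is correct and takes a genuinely different route from the paper's. The paper argues geometrically: it picks out one specific excursion of $\partial A$ across the strip (the arc $\gamma([t_3,t_6])$ in its notation), caps it off with a geodesic segment along $\Lambda_1$ to form a region $S_1$, and fills $S_1$ in to obtain a set $A_1$ with $A\subset A_1\subset A^{\varhexagon}$; since the excised arc had $\rho$-length at least $\delta_1+\delta$ while the replacing geodesic has length $\delta_1$, one gets $\rho(\partial A)\ge\rho(\partial A_1)+\delta$, and then $\rho(\partial A_1)\ge\rho(\partial A_1^{\varhexagon})=\rho(\partial A^{\varhexagon})$ finishes it. Your approach instead globalises the problem through the identity $\rho(\gamma)=\tfrac{1}{2\sqrt 3}\sum_k TV_\gamma(f_k)$ and reduces everything to a one-dimensional total-variation estimate; this is cleaner, avoids constructing auxiliary sets, and the $UDUD$ argument can even be replaced by a single line via Banach's indicatrix, $TV(f_+\!\circ\gamma)=\int N(c)\,dc$ with $N(c)\ge 4$ on $(b_1,b_2)$ and $\ge 2$ on the rest of the range. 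What the paper's method buys is that it stays within the geometric language used throughout the rest of Section~5 (shifting line segments, replacing arcs by geodesics), while what your method buys is transparency, a formula for $\rho$ that could be reused elsewhere, and an argument that visibly generalises to any $2n$-gon norm. The caveat you flag about tangencies and dips is real but not fatal: the indicatrix formulation sidesteps it, and in any case the paper's own proof is no more careful on this point.
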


\begin{proof}

Without loss of generality, suppose that the slope of both lines in question is $-\sqrt{3}$, and that the $y$-intercept of $\Lambda_1$ is less than the $y$-intercept of $\Lambda_2$. Let $\delta$ be the distance between these two lines. The following figure can be used as an illustration:

\begin{figure}[H]

\begin{tikzpicture}[scale=0.5]

\draw[blue, thin] (0,0) to (2,3.46410161);
\draw[blue, thin] (2,3.46410161) to (6,3.46410161);
\draw[blue, thin] (6,3.46410161) to (8,0);
\draw[blue, thin] (8,0) to (6,-3.46410161);
\draw[blue, thin] (6,-3.46410161) to (2,-3.46410161);
\draw[blue, thin] (2,-3.46410161) to (0,0);

\draw[black, thick, dotted] (1,3.46410161) to (6,-5.19615242);
\draw (1,3.66) node{$\scaleobj{0.6}{\Lambda_1}$};

\draw[black, thick, dotted] (2,5.19615242) to (7,-3.46410161);
\draw (2,5.4) node{$\scaleobj{0.6}{\Lambda_2}$};

\draw (3.4,2.74) node{$\bullet$};
\draw[black, thin] (3.4,2.74) to (4.7,4);
\draw (5.2,4.2) node{$\scaleobj{0.65}{\gamma\left(t_1\right)}$};

\draw (1.8,2.1) node{$\bullet$};
\draw[black, thin] (1.8,2.1) to (0.2,2.1);
\draw (-0.4,2.1) node{$\scaleobj{0.65}{\gamma\left(t_2\right)}$};

\draw (2.6,0.75) node{$\bullet$};
\draw[black, thin] (2.6,0.75) to (1.4,0.75);
\draw (1,0.75) node{$\scaleobj{0.65}{\gamma\left(t_3\right)}$};

\draw (4.71,0.545) node{$\scaleobj{0.65}{\bullet}$};
\draw[black, thin] (4.71,0.545) to (5.8,1);
\draw (6,1.3) node{$\scaleobj{0.65}{\gamma\left(t_4\right)}$};

\draw (4.8,0.35) node{$\scaleobj{0.65}{\bullet}$};
\draw[black, thin] (4.8,0.35) to (6,0.1);
\draw (6.5,0) node{$\scaleobj{0.65}{\gamma\left(t_5\right)}$};

\draw (3,0) node{$\bullet$};
\draw[black, thin] (3,0) to (2,0);
\draw (1.2,0) node{$\scaleobj{0.65}{\gamma\left(t_6\right)}$};

\draw (4.75,-3.04) node{$\bullet$};

\draw [black, thick] plot [smooth, tension=0.5] coordinates {(7,1.7320508) (5,3.4)(4,3) (1,1.65)(3.2,0.6) (5.4,0.5)(3,0) (1.1,-1.66)(3,-3.44) (7,-1.732)(7,1.73)};

\end{tikzpicture}

\caption{\label{fig:FourCrossings}}

\end{figure}

Consider $\partial A$ as a curve $\gamma:[0,1]\rightarrow\mathbb{R}^2$ oriented counterclockwise, and with $\gamma(0)\in\Lambda_{\smallsearrow}\left(i_{\smallsearrow}^A\right)$. 
Let $t_1\geq0$ be the last time after $0$ that $\gamma$ intersects $\Lambda_2$ before intersecting $\Lambda_1$, $t_2$ be the first time after $t_1$ that $\gamma$ intersects $\Lambda_1$, $t_3$ be the last time after (or equal to) $t_2$ that $\gamma$ intersects $\Lambda_1$ before intersecting $\Lambda_2$ again, $t_4$ be the first time after $t_3$ that $\gamma$ intersects $\Lambda_2$, and similarly for $t_5$ and $t_6$. The above figure gives an illustration. Note that it is possible that the second coordinate of $\gamma\left(t_5\right)$ is greater than the second coordinate of $\gamma\left(t_3\right)$. 

Consider the union of $\gamma\left([t_3,t_6]\right)$ and the geodesic in $\Lambda_1$ that connects $\gamma(t_3)$ to $\gamma(t_6)$. Put together, these paths form a closed curve, which encloses a subset of $A^{\varhexagon}$, call it $S_1$. Either $S_1\cap A=\emptyset$ (as is the case in the above figure), or $S_1\subset A$. If $S_1\subset A$, then we must consider instead the curve $\gamma\left([t_5,t_8]\right)$ along with the geodesic in $\Lambda_2$ that connects $\gamma\left(t_5\right)$ to $\gamma\left(t_8\right)$. This also forms a closed curve which forms a subset $S_2$ of $A^{\varhexagon}$. If $S_1\subset A$, then $S_2\cap A=\emptyset$. The proofs in both cases are very similar, so we will do only the former, as in Figure \ref{fig:FourCrossings}. 

Define the set $A'$ to be the union of $A$, $S_1$, and $\partial A\cap\partial S_1$, and define $A_1$ to be the union of $A'$ and any bounded and connected components of $\mathbb{R}^2\setminus A'$. 
Let $\delta_1$ be the distance between $\gamma\left(t_3\right)$ and $\gamma\left(t_6\right)$. Then, the path (in $\partial A$) between $\gamma\left(t_3\right)$ and $\gamma\left(t_6\right)$ must be at least as long as $\delta_1+\delta$. Therefore, $\rho\left(\partial A\right)\geq\rho\left(A_1\right)+\delta\geq\rho\left(A^{\varhexagon}\right)+\delta$. The last inequality follows because $A_1\subset A^{\varhexagon}$.

\end{proof}

\begin{lemma}\label{lemma:TwoSixtyDegreeAngles}

Let $(A,B)\in\gamma_{\alpha}$ be such that $\mu\left(A^{\varhexagon}\setminus B^{\varhexagon}\right),\mu\left(A^{\varhexagon}\cap B^{\varhexagon}\right),\mu\left(B^{\varhexagon}\setminus A^{\varhexagon}\right)>0$.  If both $\theta_1$ and $\theta_2$ are sixty degree angles, then, using the notation from before this lemma, one of either $\left(\tilde{A},B^{\varhexagon}\right)$, $\left(A^{\varhexagon},\tilde{B}\right)$, $\left(A^{\varhexagon}\setminus B^{\varhexagon},B^{\varhexagon}\right)$, or $\left(A^{\varhexagon},B^{\varhexagon}\setminus A^{\varhexagon}\right)$ has double bubble perimeter no greater than the double bubble perimeter of $(A,B)$. Furthermore, we can, in each case, alter the volumes of these sets so that we obtain a configuration $\left(A',B'\right)\in\mathfs{F}_{\alpha}$ such that $\rho_{DB}\left(A',B'\right)\leq\rho_{DB}\left(A,B\right)$. 

\end{lemma}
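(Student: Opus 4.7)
The plan is to mirror the structure of the proofs for Lemmas \ref{lem:Two120DegreeAngles} and \ref{lemma:ExactlyOneSixty}, but with the additional input of Lemma \ref{lem:fourcrossings} to locate the extra perimeter that is needed at each $60^\circ$ corner. As before, I would begin by selecting the configuration with the larger joint boundary (WLOG $(A^{\varhexagon}\setminus B^{\varhexagon}, B^{\varhexagon})$), replace $B$ by $B^{\varhexagon}$ (which can only decrease perimeter), and then try to account for enough uncounted perimeter in $\partial A\setminus B^{\varhexagon}$ to reconstruct $\partial(A^{\varhexagon}\setminus B^{\varhexagon})$. The difference from Lemma \ref{lem:Two120DegreeAngles} is that the $60^\circ$ angles mean the slanted portions of $a_i$ are now on the \emph{wrong side}, so sliding $l_i^A$ along $a_i$ as before does not produce a triangle with zero net cost.

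First I would perform a case split on the number of sides of $\partial A^{\varhexagon}$ that enter the interior of $B^{\varhexagon}$ (after reflections/$60^\circ$ rotations fixing the orientation of the two angle-forming lines). For each configuration I would identify a pair of parallel lines $\Lambda_1,\Lambda_2$ (one of the slopes $0,\pm\sqrt 3$) whose strip contains the portion of $\partial A^{\varhexagon}$ that is hidden inside $B^{\varhexagon}$ and is the cause of the deficit. The geometric claim I would prove in each case is that, because both corners are $60^\circ$, any simple closed curve $\partial A$ whose Hausdorff envelope is $A^\varhexagon$ must make at least four crossings between $\Lambda_1$ and $\Lambda_2$: two from the $A^\varhexagon$ side (to enter and exit the strip at each end) plus two forced by the fact that the $60^\circ$ corners push $\partial A$ into $B^\varhexagon$ and back out. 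Then Lemma \ref{lem:fourcrossings} provides exactly the distance $\mathcal{D}(\Lambda_1,\Lambda_2)$ of extra perimeter in $\partial A$, which matches the length needed to draw the missing piece of $\partial(A^\varhexagon\setminus B^\varhexagon)$. In a handful of configurations (those where one of $\Lambda_{\smallnearrow}(s_{\smallnearrow}^B)$ or $\Lambda_{\smallsearrow}(i_{\smallsearrow}^B)$ is very short) it may be necessary, as in the rightmost case of Lemma \ref{lemma:ExactlyOneSixty}, to first translate $B^{\varhexagon}$ along a direction of constant $\mathcal{D}$-distance to $A^{\varhexagon}$ and work with $(A^\varhexagon\setminus B_t^\varhexagon,B_t^\varhexagon)$, which is then covered by the same accounting.

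Once perimeter has been accounted for, the volume adjustments proceed exactly as in Lemmas \ref{lem:Two120AnglesVolAdj} and \ref{lem:OneSixtyVolAdj}. The list of allowed moves is: lowering $\Lambda_{\smallrightarrow}(i^B_{\smallrightarrow})\cap\partial B^\varhexagon$ (or raising its analogue on the bottom), translating a slanted side of $B^\varhexagon$ inward along a line of constant $\mathcal{D}$-distance from $A$, and finally shrinking the left/right sides of $A^\varhexagon\setminus B^\varhexagon$. Each move weakly decreases perimeter and changes volume continuously, so by intermediate value we can reach the correct pair $(\alpha,1)$; and because the moves preserve the parameterized shape of Figure \ref{fig:GeneralCase}, the end result $(A',B')$ lies in $\mathfs{F}_\alpha$.

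The main obstacle will be the combinatorial bookkeeping in the case split: depending on which two sides of $\partial A^\varhexagon$ cross $\partial B^\varhexagon$, the strip between $\Lambda_1$ and $\Lambda_2$ may need to be chosen differently (horizontal in one case, slope $\sqrt 3$ in another), and one must verify in each subcase that the \emph{same} strip simultaneously accounts for the deficit at \emph{both} $60^\circ$ corners (so that the four-crossings bonus is not being double-counted) or else split the deficit between two different strips and apply Lemma \ref{lem:fourcrossings} twice. In the worst subcase, where two adjacent sides of $A^\varhexagon$ both poke into $B^\varhexagon$, one may need to fall back to analysing $(A^\varhexagon, B^\varhexagon\setminus A^\varhexagon)$ instead; this is precisely why the statement of the lemma offers the symmetric alternatives.
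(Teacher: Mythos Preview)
Your overall structure---case split on the number of sides of $\partial A^{\varhexagon}$ inside $B^{\varhexagon}$, then crossing arguments, then volume adjustments---matches the paper's. But the core mechanism you propose has a genuine gap.

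You want to apply Lemma \ref{lem:fourcrossings} to $\partial A$ alone, claiming that ``because both corners are $60^\circ$, any simple closed curve $\partial A$ whose Hausdorff envelope is $A^{\varhexagon}$ must make at least four crossings between $\Lambda_1$ and $\Lambda_2$.'' This is not true in general: $\partial A$ only needs to touch each side of $A^{\varhexagon}$ once, so it can cross any strip just twice. The $60^\circ$ angles describe how $\partial A^{\varhexagon}$ meets $\partial B^{\varhexagon}$, not how $\partial A$ itself behaves. So the four-crossing bonus from $\partial A$ alone is not available, and your accounting for \emph{both} deficits from a single strip fails.

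The paper's argument in the one-side and two-side cases is different. After sliding $l_1^A$ and $l_2^A$ to $\tilde l_1^A, \tilde l_2^A$ (with shifts $\delta_1,\delta_2$), it takes the \emph{smaller} shift, say $\delta_2$, and chooses a strip of that width. In this strip there are two crossings of $\partial A$ \emph{and} two of $\partial B$; after allowing for joint boundary this gives at least three separate crossings, only two of which were used in building $B^{\varhexagon}$. The leftover crossing supplies length $\delta_2$ and lets one move $\tilde l_2^A$ back to $l_2^A$. The other line $\tilde l_1^A$ stays shifted---this is exactly why the lemma statement permits $(\tilde A, B^{\varhexagon})$ with $\tilde A \neq A^{\varhexagon}\setminus B^{\varhexagon}$. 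You do not get both deficits back; you get one back and absorb the other into the shape of $\tilde A$.

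Lemma \ref{lem:fourcrossings} is used only in the three-side case, and there it is applied to $\partial B$, not $\partial A$: one tracks the paths $\gamma_1=\partial A$ and $\gamma_2=\partial B$ jointly and shows that if joint boundary occurs past a certain point, $\partial B$ is forced to make four crossings of a strip determined by that joint-boundary point, which then yields the extra length. The four-and-five-side cases are dismissed because the choice of larger joint boundary would have sent you to the symmetric configuration.

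Your volume-adjustment sketch is fine in spirit, but note that in the one-side case the paper uses a specific sequence (moving $\Lambda_{\smallnearrow}(i_{\smallnearrow}^B)\cap\partial B^{\varhexagon}$ rightward, then pushing $\tilde l_1^A$ with it) tailored to the fact that $\tilde l_1^A$ is still shifted; a generic invocation of the earlier volume lemmas would need to be checked against this.
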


\begin{proof}

Consider first when only one of the line segments forming $\partial A^{\varhexagon}$ intersects the interior of $B^{\varhexagon}$. It must be some line, and by a sequence of reflections and/or $60^{\circ}$ rotations, we may assume that it is the line contained in $\Lambda_{\smallnearrow}\left(s_{\smallnearrow}^A\right)$. Since we are assuming that both points of intersection of $\partial A^{\varhexagon}$ and $\partial B^{\varhexagon}$ occur at the corner of a sixty degree angle, we know that the two lines from $\partial B^{\varhexagon}$ that intersect $\Lambda_{\smallnearrow}\left(s_{\smallnearrow}^A\right)$ must be contained in $\Lambda_{\smallrightarrow}\left(s_{\smallrightarrow}^B\right)$ and $\Lambda_{\smallsearrow}\left(i_{\smallsearrow}^B\right)$. See the lefthand side of Figure \ref{fig:OneLineSegmentProcess} for illustration. The exact shapes are unimportant for this analysis. What matters is which lines must cross $\Lambda_{\smallnearrow}\left(s_{\smallnearrow}^A\right)$ at sixty degree angles, and the only possibility is shown in the lefthand side of this figure.

The beginning of our construction is similar to previous constructions. First, we replace $B$ with $B^{\varhexagon}$. Also, there must be at least one point in the intersection of $\partial A$ and each of the line segments forming $\partial A^{\varhexagon}$ that don't intersect $B^{\varhexagon}$. We can connect these points via the geodesics in $\partial A^{\varhexagon}$ and be sure that the length of this path is no more than the length of the path in $\partial A$ that connects the same points (and doesn't intersect $B^{\varhexagon}$).

\begin{figure}[H]

\begin{tikzpicture}[scale=0.4]

\draw[blue, thin] (-2,0) to (2,0);
\draw[blue, thin] (2,0) to (2.5,-0.86602540);
\draw[blue, thick, dotted] (2.5,-0.86602540) to (3,-1.7320508);
\draw[blue, thick, dotted] (3,-1.7320508) to (1,-5.19615242);
\draw[blue, thick, dotted] (0.5,-6.06217782) to (0,-6.9282032);
\draw[blue, thin] (-1,-6.9282032) to (-2,-6.9282032);
\draw[blue, thick, dotted] (0,-6.9282032) to (-1,-6.9282032);
\draw[blue, thin] (-2,-6.9282032) to (-4,-3.4641016);
\draw[blue, thin] (-4,-3.4641016) to (-2,0);

\draw[red, thin] (-1.5,-3.4641016) to (0,-3.4641016);
\draw[red, thin] (0,-3.4641016) to (1.5,-6.0621778);
\draw[red, thin] (1.5,-6.0621778) to (-1,-6.0621778);
\draw[red, thin] (-1,-6.0621778) to (-2,-4.330127);
\draw[red, thin] (-2,-4.330127) to (-1.5,-3.4641016);

\draw (-1,-6.9282032) node{$\bullet$};
\draw[black, thin] (-1,-6.9282032) to (-1,-8);
\draw (-1,-9) node{$p\left(s_{\smallrightarrow}^A\right)$};

\draw (-3,-5.19615242) node{$\bullet$};
\draw[black, thin] (-3,-5.19615242) to (-4,-5.3);
\draw (-5.1,-5.4) node{$p\left(s_{\smallsearrow}^A\right)$};

\draw (-3,-1.7320508) node{$\bullet$};
\draw[black, thin] (-3,-1.7320508) to (-4,-1.5);
\draw (-5.1,-1.4) node{$p\left(i_{\smallnearrow}^A\right)$};

\draw (-1,0) node{$\bullet$};
\draw[black, thin] (-1,0) to (-1,1);
\draw (-1,1.8) node{$p\left(i_{\smallrightarrow}^A\right)$};

\draw (2.5,-0.86602540) node{$\bullet$};
\draw[black, thin] (2.5,-0.86602540) to (3.2,-0.7);
\draw (4.5,-0.6) node{$p\left(i_{\smallsearrow}^A\right)$};

\draw[black, thin] (2,-3.46410161) to (4.5,-4);
\draw (5,-4.1) node{$l_1^A$};

\draw[black, thin] (0.25,-6.495190528) to (3,-6.495190528);
\draw (3.5,-6.495190528) node{$l_2^A$};

\draw[->] (6.5,-3) to (7.5,-3);

\draw[blue, thin] (12,0) to (16,0);
\draw[blue, thin] (16,0) to (16.5,-0.86602540);
\draw[blue, thick, dotted] (16.5,-0.86602540) to (17,-1.7320508);
\draw[blue, thick, dotted] (16.5,-0.86602540) to (14.5,-4.33012701);
\draw[blue, thick, dotted] (14,-6.06217782) to (13.5,-6.9282032);
\draw[blue, thin] (13,-6.9282032) to (12,-6.9282032);
\draw[blue, thick, dotted] (14,-6.9282032) to (13,-6.9282032);
\draw[blue, thin] (12,-6.9282032) to (10,-3.4641016);
\draw[blue, thin] (10,-3.4641016) to (12,0);

\draw[red, thin] (12.5,-3.4641016) to (14,-3.4641016);
\draw[red, thin] (14,-3.4641016) to (15.5,-6.0621778);
\draw[red, thin] (15.5,-6.0621778) to (13,-6.0621778);
\draw[red, thin] (13,-6.0621778) to (12,-4.330127);
\draw[red, thin] (12,-4.330127) to (12.5,-3.4641016);

\draw (13,-6.9282032) node{$\bullet$};
\draw[black, thin] (13,-6.9282032) to (13,-8);
\draw (13,-9) node{$p\left(s_{\smallrightarrow}^A\right)$};

\draw (11,-5.19615242) node{$\bullet$};

\draw (11,-1.7320508) node{$\bullet$};

\draw (13,0) node{$\bullet$};

\draw (16.5,-0.86602540) node{$\bullet$};
\draw[black, thin] (16.5,-0.86602540) to (17.2,-0.7);
\draw (19.8,-0.4) node{$p\left(i_{\smallsearrow}^A\right)=p_1$};

\draw (13.75,-6.495190528) node{$\bullet$};
\draw[black, thin] (13.75,-6.495190528) to (15,-6.5);
\draw (15.6,-6.58) node{$p_2$};

\draw[->] (20.5,-3) to (21.5,-3);

\draw[blue, thin] (26,0) to (30,0);
\draw[blue, thin] (30,0) to (30.5,-0.86602540);
\draw[blue, thick, dotted] (30.5,-0.86602540) to (31,-1.7320508);
\draw[blue, thin] (30.5,-0.86602540) to (28.5,-4.33012701);
\draw[blue, thin] (28,-6.06217782) to (27.5,-6.9282032);
\draw[blue, thin] (27,-6.9282032) to (27.5,-6.9282032);
\draw[blue, thin] (27,-6.9282032) to (26,-6.9282032);
\draw[blue, thick, dotted] (28,-6.9282032) to (27.5,-6.9282032);
\draw[blue, thin] (26,-6.9282032) to (24,-3.4641016);
\draw[blue, thin] (24,-3.4641016) to (26,0);

\draw[red, thin] (26.5,-3.4641016) to (28,-3.4641016);
\draw[red, thin] (28,-3.4641016) to (29.5,-6.0621778);
\draw[red, thin] (29.5,-6.0621778) to (27,-6.0621778);
\draw[red, thin] (27,-6.0621778) to (26,-4.330127);
\draw[red, thin] (26,-4.330127) to (26.5,-3.4641016);

\draw (27,-6.9282032) node{$\bullet$};
\draw[black, thin] (27,-6.9282032) to (27,-8);
\draw (27,-9) node{$p\left(s_{\smallrightarrow}^A\right)$};

\draw (25,-5.19615242) node{$\bullet$};

\draw (25,-1.7320508) node{$\bullet$};

\draw (27,0) node{$\bullet$};

\draw (30.5,-0.86602540) node{$\bullet$};
\draw[black, thin] (30.5,-0.86602540) to (31.2,-0.7);
\draw (33.8,-0.4) node{$p\left(i_{\smallsearrow}^A\right)=p_1$};

\draw (27.75,-6.495190528) node{$\bullet$};
\draw[black, thin] (27.75,-6.495190528) to (29.2,-6.8);
\draw (31,-6.58) node{$p_2\in\tilde{l}_2^A$};

\draw[black, thin] (29,-3.46410161) to (30,-3.46410161);
\draw (30.8,-3.46410161) node{$\tilde{l}_1^A$};

\end{tikzpicture}

\caption{\label{fig:OneLineSegmentProcess}}

\end{figure}

We now perform a procedure that is similar to what was done in Lemma \ref{lemma:ExactlyOneSixty}. We move $l_1^A$ to the left and up so that one of its endpoints is always in $\partial B^{\varhexagon}$ until it intersects $\partial A$ in some point $p_1$. Similarly, we move $l_2^A$ to the left until it intersects $\partial A$ in some point $p_2$. This is illustrated in the middle of Figure \ref{fig:OneLineSegmentProcess}. Since we have moved $l_1^A$ and $l_2^A$, we call the translated version of these lines $\tilde{l}_1^A$ and $\tilde{l}_2^A$, respectively. From $p\left(s_{\smallrightarrow}^A\right)$ to $p_2$, there is a geodesic that is partially in $a_2$, and partially in $\tilde{l}_2^A$. This geodesic can be no longer than the path in $\partial A$ that connects these same two points and doesn't pass through $p\left(s_{\smallsearrow}^A\right)$. There must also be a path in $\partial A$ that connects $p_2$ to the line $\Lambda_{\smallrightarrow}\left(s_{\smallrightarrow}^B\right)$. This path must be at least as long as the portion of $\tilde{l}_2^A$ that connects this same point and line. We can similarly argue that there is uncounted perimeter in $\partial A$ that is at least as long as the geodesics we have created from $p\left(i_{\smallsearrow}^A\right)$ to $p_1$ (which happen to be the same point in Figure \ref{fig:OneLineSegmentProcess}), and then from $p_1$ to $\Lambda_{\smallsearrow}\left(i_{\smallsearrow}^B\right)$. This is illustrated in the righthand side of Figure \ref{fig:OneLineSegmentProcess}.

Now, let $\delta_1$ be the distance between $l_1^A$ and $\tilde{l}_1^A$, and let $\delta_2$ be the distance between $\tilde{l}_2^A$ and $l_2^A$. Either $\delta_1\geq\delta_2$ or vice versa. Let's assume, without loss of generality, that $\delta_1\geq\delta_2$. In this case, let $\Lambda$ be the line with slope $\sqrt{3}$ that passes through $\tilde{l}_2^A$. In $\partial A$, there must be two crossings between $\Lambda_{\smallnearrow}\left(s_{\smallnearrow}^A\right)$ and $\Lambda$. Similarly, there must be two crossings in $\partial B$ that pass between these same two lines. Accounting for possible joint boundary, this leaves at least three separate crossings between $\Lambda$ and $\Lambda_{\smallnearrow}\left(s_{\smallnearrow}^A\right)$. Only two such crossings are required in the formation of $B^{\varhexagon}$. This leaves a third path between these two lines that we have yet to count, and this third path must be at least as long as $\delta_2$. Therefore, there is enough boundary in $\partial A$ that we have not counted to be able to move $\tilde{l}_2^A$ back to its original position and recover $l_2^A$. This resulting set is not quite $A^{\varhexagon}\setminus B^{\varhexagon}$, so we call it $A_1$

We may have to adjust the volumes of this configuration. First, we must ensure that if the volume of $B^{\varhexagon}$ is too large, then we can reduce it to a new set $\tilde{B}$ without increasing the double bubble perimeter. Then, we must be sure that, once $\mu\left(\tilde{B}\right)=\alpha$, we can alter $A_1$, if necessary, to create a new set $\tilde{A}$ that has volume $1$. Again, we must ensure that this process does not increase the double bubbler perimeter, and also that $\left(\tilde{A},\tilde{B}\right)\in\mathfs{F}_{\alpha}$. In this case, the process is quite simple. 

We begin by moving $\Lambda_{\smallnearrow}\left(i_{\smallnearrow}^B\right)\cap\partial B^{\varhexagon}$ to the right, adjusting the other line segments in $\partial B^{\varhexagon}$ accordingly. We can do this until one of the endpoints of this line is also one of the endpoints of $\tilde{l}_1^A$. If we need to continue to reduce the volume, we can keep moving this same line to the right, and simultaneously move $\tilde{l}_1^A$ to the right, increasing the length of $a_1$. If necessary, we can do this until we have regained all of $A^{\varhexagon}$. The following figure illustrates this procedure:

\begin{figure}[H]

\begin{tikzpicture}[scale=0.35]

\draw[blue, thin] (0,0) to (4,0);
\draw[blue, thin] (4,0) to (4.5,-0.86602540);
\draw[blue, thick, dotted] (4.5,-0.86602540) to (5,-1.7320508);
\draw[blue, thin] (4.5,-0.86602540) to (2.5,-4.33012701);
\draw[blue, thin] (2.5,-6.06217782) to (2,-6.9282032);
\draw[blue, thin] (0,-6.9282032) to (2,-6.9282032);
\draw[blue, thin] (0,-6.9282032) to (-2,-3.4641016);
\draw[blue, thin] (-2,-3.4641016) to (0,0);

\draw[red, thin] (0.5,-3.4641016) to (2,-3.4641016);
\draw[red, thin] (2,-3.4641016) to (3.5,-6.0621778);
\draw[red, thin] (3.5,-6.0621778) to (1,-6.0621778);
\draw[red, thin] (1,-6.0621778) to (0,-4.330127);
\draw[red, thin] (0,-4.330127) to (0.5,-3.4641016);

\draw (1.2,-1) node{$\scaleobj{0.7}{A_1}$};
\draw (1.5,-4) node{$\scaleobj{0.7}{B^{\varhexagon}}$};

\draw[->] (6,-3) to (7,-3);


\draw[blue, thin] (10,0) to (14,0);
\draw[blue, thin] (14,0) to (14.5,-0.86602540);
\draw[blue, thick, dotted] (14.5,-0.86602540) to (15,-1.7320508);
\draw[blue, thin] (14.5,-0.86602540) to (12.5,-4.33012701);
\draw[blue, thin] (12.5,-6.06217782) to (12,-6.9282032);
\draw[blue, thin] (10,-6.9282032) to (12,-6.9282032);
\draw[blue, thin] (10,-6.9282032) to (8,-3.4641016);
\draw[blue, thin] (8,-3.4641016) to (10,0);

\draw[red, thick, dotted] (10.5,-3.4641016) to (11.5,-3.4641016);
\draw[red, thin] (11.5,-3.46410161) to (12,-3.4641016);
\draw[red, thin] (12,-3.4641016) to (13.5,-6.0621778);
\draw[red, thin] (13.5,-6.0621778) to (11,-6.0621778);
\draw[red, thick, dotted] (10,-4.330127) to (10.5,-5.19615242);
\draw[red, thin] (10.5,-5.19615242) to (11,-6.0621778);
\draw[red, thick, dotted] (10,-4.330127) to (10.5,-3.4641016);
\draw[red, thin] (10.5,-5.19615242) to (11.5,-3.4641016);

\draw[->] (16,-3) to (17,-3);

\draw[blue, thin] (20,0) to (24,0);
\draw[blue, thin] (24,0) to (24.5,-0.86602540);
\draw[blue, thick, dotted] (24.5,-0.86602540) to (25,-1.7320508);
\draw[blue, thin] (24.5,-0.86602540) to (22.5,-4.33012701);
\draw[blue, thin] (22.5,-6.06217782) to (22,-6.9282032);
\draw[blue, thin] (20,-6.9282032) to (22,-6.9282032);
\draw[blue, thin] (20,-6.9282032) to (18,-3.4641016);
\draw[blue, thin] (18,-3.4641016) to (20,0);

\draw[red, thick, dotted] (20.5,-3.4641016) to (22,-3.4641016);
\draw[red, thin] (22,-3.4641016) to (23.5,-6.0621778);
\draw[red, thin] (23.5,-6.0621778) to (21,-6.0621778);
\draw[red, thick, dotted] (20,-4.330127) to (20.75,-5.62916512);
\draw[red, thin] (20.75,-5.62916512) to (21,-6.0621778);
\draw[red, thick, dotted] (20,-4.330127) to (20.5,-3.4641016);

\draw[red, thin] (20.75,-5.62916512) to (22,-3.4641016);


\draw[->] (26,-3) to (27,-3);

\draw[blue, thin] (30,0) to (34,0);
\draw[blue, thin] (34,0) to (34.5,-0.86602540);
\draw[blue, thick, dotted] (34.5,-0.86602540) to (35,-1.7320508);
\draw[blue, thin] (34.5,-0.86602540) to (32.5,-4.33012701);
\draw[blue, thin] (32.5,-6.06217782) to (32,-6.9282032);
\draw[blue, thin] (30,-6.9282032) to (32,-6.9282032);
\draw[blue, thin] (30,-6.9282032) to (28,-3.4641016);
\draw[blue, thin] (28,-3.4641016) to (30,0);

\draw[red, thick, dotted] (30.5,-3.4641016) to (32,-3.4641016);
\draw[red, thick, dotted] (32,-3.46410161) to (32.5,-4.33012701);
\draw[red, thin] (32.5,-4.33012701) to (33.5,-6.0621778);
\draw[red, thin] (33.5,-6.0621778) to (31.5,-6.0621778);
\draw[red, thick, dotted] (30,-4.330127) to (31,-6.06217782);
\draw[red, thick, dotted] (31,-6.06217782) to (31.5,-6.06217782);
\draw[red, thick, dotted] (30,-4.330127) to (30.5,-3.4641016);

\draw[red, thin] (31.5,-6.0621778) to (32.5,-4.33012701);

\draw[->] (36,-3) to (37,-3);


\draw[blue, thin] (40,0) to (44,0);
\draw[blue, thin] (44,0) to (44.75,-1.29903810);
\draw[blue, thin] (44.75,-1.29903810) to (42.75,-4.76313972);
\draw[blue, thick, dotted] (44.5,-0.86602540) to (42.5,-4.33012701);
\draw[blue, thin] (42.5,-6.06217782) to (42,-6.9282032);

\draw[blue, thin] (40,-6.9282032) to (42,-6.9282032);

\draw[blue, thin] (40,-6.9282032) to (38,-3.4641016);
\draw[blue, thin] (38,-3.4641016) to (40,0);

\draw[red, thick, dotted] (40.5,-3.4641016) to (42,-3.4641016);
\draw[red, thick, dotted] (42,-3.46410161) to (43,-5.19615242);

\draw[red, thin] (42.75,-4.76313972) to (43.5,-6.0621778);

\draw[red, thin] (43.5,-6.0621778) to (42,-6.0621778);
\draw[red, thick, dotted] (40,-4.330127) to (41,-6.06217782);
\draw[red, thick, dotted] (41,-6.06217782) to (42,-6.06217782);
\draw[red, thick, dotted] (40,-4.330127) to (40.5,-3.4641016);

\draw[red, thin] (42,-6.0621778) to (42.75,-4.76313972);







\draw[black, thin] (13.03,-3.46410161) to (14,-3.46410161);
\draw (14.7,-3.46410161) node{$\tilde{l}_1^A$};

\draw[black, thin] (12.25,-6.495190528) to (14,-6.495190528);
\draw (14.4,-6.495190528) node{$l_2^A$};

\draw (42,-1) node{$\scaleobj{0.7}{A_2}$};
\draw (42.8,-5.5) node{$\scaleobj{0.6}{B_1}$};

\end{tikzpicture}

\caption{\label{fig:OneLineSegmentVolAdj}}

\end{figure}

This process creates two new sets, $B_1$ to replace $B^{\varhexagon}$, and $A_2$ to replace $A_1$. If $\mu\left(B_1\right)=\alpha$, then we can adjust $A_2$. If this process results in $A_2=A^{\varhexagon}$, and $\mu\left(B_1\right)>\alpha$, then we can just rescale both $A^{\varhexagon}$ and $B_1$ until they have the correct volume ratio. If, however, our process results in $A_2\neq A^{\varhexagon}$, and $B_1$ such that $\mu\left(B_1\right)=\alpha$, then we may need to adjust the volume of $A_2$. This is straightforward. We can move the top of $A_2$ down to create a set $A_3$ such that either $\mu\left(A_3\right)=1$, or $i_{\smallrightarrow}^{A_3}=i_{\smallrightarrow}^{B_1}$, whichever comes first. Then, if necessary, we can move the bottom of $A_3$ up to create a set $A_4$ such that either $\mu\left(A_4\right)=1$, or such that $s_{\smallrightarrow}^{A_4}=s_{\smallrightarrow}^{B_1}$, whichever comes first. Finally, if necessary, we can move the left sides of $A_4$ to the right until we have a set of the correct volume. We now have a configuration in $\mathfs{F}_{\alpha}$ with shorter double bubble perimeter than $\rho_{DB}\left(A,B\right)$.

We next analyse when there are two line segments from $\partial A^{\varhexagon}$ contained in the interior of $B^{\varhexagon}$. They must be two adjacent lines, and (through a sequence of reflections and/or $60^{\circ}$ rotations) we may assume that they are the line segments contained in $\Lambda_{\smallrightarrow}\left(s_{\smallrightarrow}^A\right)$, and $\Lambda_{\smallnearrow}\left(s_{\smallnearrow}^A\right)$. The leftmost configuration in Figure \ref{fig:TwoLineSegmentsProcess} gives an example of this. It may seem like the two line segments of $\partial A$ can form an interior sixty degree angle, for example if they were contained in $\Lambda_{\smallrightarrow}\left(s_{\smallrightarrow}^A\right)$, and $\Lambda_{\smallsearrow}\left(i_{\smallsearrow}^A\right)$. However, we consider that this is actually three line segments, with the third one being a line segment of length zero contained in $\Lambda_{\smallnearrow}\left(s_{\smallnearrow}^A\right)$, that intersect the interior of $B^{\varhexagon}$.

We begin our construction, as usual, by replacing $B$ with $B^{\varhexagon}$. Then, consider each of the line segments in $\partial A^{\varhexagon}$ that do not intersect $B^{\varhexagon}$. There must be a point in each of these line segments that $\partial A$ intersects. In the orientation that we have chosen here, we call these points $p\left(s_{\smallsearrow}^A\right)$, $p\left(i_{\smallnearrow}^A\right)$, $p\left(i_{\smallrightarrow}^A\right)$, and $p\left(i_{\smallsearrow}^A\right)$. See the left side of Figure \ref{fig:TwoLineSegmentsProcess}. The path in $\partial A$ that connects $p\left(s_{\smallsearrow}^A\right)$ and $p\left(i_{\smallnearrow}^A\right)$, and does not pass through $p\left(i_{\smallsearrow}^A\right)$, must be at least as long as the geodesic in $\partial A^{\varhexagon}$ that connects these two points. We can make a similar argument for the other points just mentioned.

\begin{figure}[H]

\begin{tikzpicture}[scale=0.4]

\draw[blue, thin] (-1,0) to (3,0);
\draw[blue, thin] (3,0) to (4,-1.7320508);
\draw[blue, thick, dotted] (4,-1.7320508) to (5,-3.4641016);
\draw[blue, thick, dotted] (5,-3.4641016) to (4,-5.1961524);
\draw[blue, thick, dotted] (4,-5.1961524) to (0,-5.1961524);
\draw[blue, thick, dotted] (0,-5.1961524) to (-1,-3.46410161);
\draw[blue, thin] (-1,-3.46410161) to (-2,-1.7320508);
\draw[blue, thin] (-2,-1.7320508) to (-1,0);

\draw[red, thin] (1.5,-3.5) to (4,-3.5);
\draw[red, thin] (4,-3.5) to (6,-6.9641016);
\draw[red, thin] (6,-6.9641016) to (5,-8.6961524);
\draw[red, thin] (5,-8.6961524) to (3,-8.6961524);
\draw[red, thin] (3,-8.6961524) to (1,-4.3660254);
\draw[red, thin] (1,-4.3660254) to (1.5,-3.5);

\draw (4,-1.7320508) node{$\bullet$};
\draw[black, thin] (4,-1.7320508) to (5,-1.7);
\draw (6.5,-1.7) node{$p\left(i_{\smallsearrow}^A\right)$};

\draw (1,0) node{$\bullet$};
\draw[black, thin] (1,0) to (1,1);
\draw (2,1.5) node{$p\left(i_{\smallrightarrow}^A\right)$};

\draw (-1.5,-0.86602540) node{$\bullet$};
\draw[black, thin] (-1.5,-0.86602540) to (-2.2,-0.8);
\draw (-3.3,-0.7) node{$p\left(i_{\smallnearrow}^A\right)$};

\draw (-1,-3.46410161) node{$\bullet$};
\draw[black, thin] (-1,-3.46410161) to (-2,-3.46410161);
\draw (-3.4,-3.46410161) node{$p\left(s_{\smallsearrow}^A\right)$};

\draw[black, thin] (0.8,-5.1961524) to (0.6,-6.2);
\draw (0.5,-6.9) node{$l_2^A$};

\draw[black, thin] (4.75,-3.897114317) to (5.5,-3.88);
\draw (5.96,-3.88) node{$l_1^A$};

\draw[->] (8,-4) to (9,-4);

\draw[blue, thin] (14,0) to (18,0);
\draw[blue, thin] (18,0) to (19,-1.7320508);
\draw[blue, thick, dotted] (19,-1.7320508) to (20,-3.4641016);

\draw[blue, thick, dotted] (19.5,-2.59807621) to (19,-3.46410161);
\draw[blue, thick, dotted] (19.5,-4.33012701) to (19,-5.1961524);
\draw[blue, thick, dotted] (19,-5.1961524) to (16.5,-5.1961524);
\draw[blue, thick, dotted] (16,-4.33012701) to (14.5,-4.33012701);
\draw[blue, thick, dotted] (15,-5.1961524) to (14,-3.46410161);
\draw[blue, thin] (14,-3.46410161) to (13,-1.7320508);
\draw[blue, thin] (13,-1.7320508) to (14,0);

\draw[red, thin] (16.5,-3.5) to (19,-3.5);
\draw[red, thin] (19,-3.5) to (21,-6.9641016);
\draw[red, thin] (21,-6.9641016) to (20,-8.6961524);
\draw[red, thin] (20,-8.6961524) to (18,-8.6961524);
\draw[red, thin] (18,-8.6961524) to (16,-4.3660254);
\draw[red, thin] (16,-4.3660254) to (16.5,-3.5);

\draw (19,-1.7320508) node{$\bullet$};
\draw[black, thin] (19,-1.7320508) to (20,-1.7);
\draw (21.5,-1.7) node{$p\left(i_{\smallsearrow}^A\right)$};

\draw (16,0) node{$\bullet$};
\draw[black, thin] (16,0) to (16,1);
\draw (17,1.5) node{$p\left(i_{\smallrightarrow}^A\right)$};

\draw (13.5,-0.86602540) node{$\bullet$};
\draw[black, thin] (13.5,-0.86602540) to (12.8,-0.8);
\draw (11.7,-0.7) node{$p\left(i_{\smallnearrow}^A\right)$};

\draw (14,-3.46410161) node{$\bullet$};
\draw[black, thin] (14,-3.46410161) to (13,-3.46410161);
\draw (11.6,-3.46410161) node{$p\left(s_{\smallsearrow}^A\right)$};

\draw (19.25,-3.03108891) node{$\cdot$};
\draw[black, thin] (19.25,-3.03108891) to (18.25,-2.8);
\draw (17.75,-2.7) node{$p_1$};

\draw[black, thin] (19.1,-3.25) to (20.5,-3.88);
\draw (20.96,-3.88) node{$\tilde{l}_1^A$};

\draw[black, thin] (15.7,-4.33012701) to (15.6,-6.2);
\draw (15.5,-6.9) node{$\tilde{l}_2^A$};

\draw (15,-4.33012701) node{$\cdot$};
\draw[black, thin] (15,-4.33012701) to (15,-3.5);
\draw (15,-3) node{$p_2$};

\draw[->] (23,-4) to (24,-4);

\draw[blue, thin] (26,0) to (30,0);
\draw[blue, thin] (30,0) to (31.5,-2.5980762);

\draw[blue, thin] (31.5,-2.5980762) to (31,-3.5);
\draw[blue, thick, dotted] (31.5,-4.3660254) to (31,-5.1961524);

\draw[blue, thick, dotted] (31,-5.1961524) to (28.38,-5.1961524);
\draw[blue, thin] (28.38,-5.1961524) to (27,-5.1961524);
\draw[blue, thin] (27,-5.1961524) to (25,-1.7320508);
\draw[blue, thin] (25,-1.7320508) to (26,0);

\draw[red, thin] (28.5,-3.5) to (31,-3.5);
\draw[red, thin] (31,-3.5) to (33,-6.9641016);
\draw[red, thin] (33,-6.9641016) to (32,-8.6961524);
\draw[red, thin] (32,-8.6961524) to (30,-8.6961524);
\draw[red, thin] (30,-8.6961524) to (28,-4.3660254);
\draw[red, thin] (28,-4.3660254) to (28.5,-3.5);

\draw (31,-1.7320508) node{$\bullet$};

\draw (28,0) node{$\bullet$};

\draw (25.5,-0.86602540) node{$\bullet$};

\draw (26,-3.46410161) node{$\bullet$};

\end{tikzpicture}

\caption{\label{fig:TwoLineSegmentsProcess}}

\end{figure}

Now, similarly to previous arguments, we move $l_1^A$ up and left between $\Lambda_{\smallsearrow}\left(i_{\smallsearrow}^B\right)$ and $\Lambda_{\smallsearrow}\left(i_{\smallsearrow}^A\right)$ until it intersects a point $p_1\in\partial A$. Similarly, we move $l_2^A$ until it intersects a point $p_2\in\partial A$. See the middle configuration in Figure \ref{fig:TwoLineSegmentsProcess}. 

Let $\delta_1$ be the distance between $\tilde{l}_1^A$ and $l_1^A$, and $\delta_2$ be the distance between $\tilde{l}_2^A$ and $l_2^A$. Either $\delta_1\geq\delta_2$ or vice versa. Suppose, without loss of generality, that $\delta_1\geq\delta_2$. Let $\Lambda$ be the horizontal line passing through $\tilde{l}_2^A$. Then, between $\Lambda$ and $\Lambda_{\smallrightarrow}\left(s_{\smallrightarrow}^A\right)$, there must be two separate crossings in $\partial A$, which means that each has a length of at least $\delta_2$. Similarly, there must be two such paths in $\partial B$ as well. Accounting for possible joint boundary, this leaves at least three separate crossings between these two lines. In order to form $B^{\varhexagon}$ only two such crossings are required, which leaves at least one extra crossing that we have not yet counted. This means that there is uncounted boundary in $\partial A$ that has length at least $\delta_2$, which is enough boundary for us to move $\tilde{l}_2^A$ back to its original location and regain $l_2^A$. See the right side of Figure \ref{fig:TwoLineSegmentsProcess}. Furthermore, from $p\left(i_{\smallsearrow}^A\right)$ to $p_1$ there is a geodesic that is partially in $a_1$ and partially in $\tilde{l}_1^A$. Then, from $p_1$ to $\Lambda_{\smallsearrow}\left(i_{\smallsearrow}^B\right)$, there is a geodesic in $\tilde{l}_1^A$ that is no longer than the path in $\partial A$ that connects this same point and line (and doesn't pass through $p\left(i_{\smallsearrow}^A\right)$). 

Since $B^{\varhexagon}\supset B$, it may be that $\mu\left(B^{\varhexagon}\right)>\mu\left(B\right)$. If volume adjustment is needed, we can rotate the configuration counterclockwise by $60^{\circ}$ and then adjust the volumes in a nearly identical manner as was shown in Figure \ref{fig:OneCornerVolumeAdjustment2}.

Now, we consider the case when there are three line segments from $\partial A^{\varhexagon}$ that intersect the interior of $B^{\varhexagon}$, and again, both $\theta_1$ and $\theta_2$ are sixty degrees. Recall that this case includes the option that one of the three line segments has length zero, and the other two meet at a point, forming an interior sixty degree angle. Through a sequence of reflections and/or $60^{\circ}$ rotations, we can assume that the two line segments in $\partial A^{\varhexagon}$ that intersect $\partial B^{\varhexagon}$ are the ones contained in $\Lambda_{\smallrightarrow}\left(s_{\smallrightarrow}^A\right)$, and $\Lambda_{\smallsearrow}\left(i_{\smallsearrow}^A\right)$. See the following figure for illustration:

\begin{figure}[H]

\begin{tikzpicture}[scale=0.3]

\draw[blue, thin] (-2,3.46410161) to (4,3.46410161);
\draw[blue, thin] (4,3.46410161) to (7,-1.7320508);
\draw[blue, thin] (7,-1.7320508) to (4,-6.9282032);
\draw[blue, thin] (4,-6.9282032) to (-2,-6.9282032);
\draw[blue, thin] (-2,-6.9282032) to (-5,-1.7320508);
\draw[blue, thin] (-5,-1.7320508) to (-2,3.46410161);

\draw (1,3.46410161) node{$\cdot$};
\draw[black, thin] (1,3.46410161) to (1,4.2);
\draw (1,4.6) node{$a_1$};

\draw (5,1.7320508) node{$\cdot$};
\draw[black, thin] (5,1.7320508) to (6,2.4);
\draw (6.4,2.9) node{$l_1^A$};

\draw (6.75,-1.29903810) node{$\cdot$};
\draw[black, thin] (6.75,-1.29903810) to (10,2);
\draw (10.5,2.7) node{$l_1^{A_{ext}}$};

\draw (6,-3.46410161) node{$\cdot$};
\draw[black, thin] (6,-3.46410161) to (5.2,-3.46410161);
\draw (4.65,-3.46410161) node{$l_3^A$};

\draw (3,-6.9282032) node{$\cdot$};
\draw[black, thin] (3,-6.9282032) to (4,-7.5);
\draw (4.2,-8) node{$l_2^{A_{ext}}$};

\draw (-1,-6.9282032) node{$\cdot$};
\draw[black, thin] (-1,-6.9282032) to (-4,-9);
\draw (-4.3,-9.3) node{$l_2^A$};

\draw (-4,-3.46410161) node{$\cdot$};
\draw[black, thin] (-4,-3.46410161) to (-5,-3.46410161);
\draw (-5.5,-3.46410161) node{$a_2$};

\draw[red, thin] (1,-0.8660254) to (9,-0.8660254);
\draw[red, thin] (9,-0.8660254) to (12,-6.0621778);
\draw[red, thin] (12,-6.0621778) to (9,-11.2583302);
\draw[red, thin] (9,-11.2583302) to (3,-11.2583302);
\draw[red, thin] (3,-11.2583302) to (-1,-4.3301270);
\draw[red, thin] (-1,-4.3301270) to (1,-0.8660254);

\draw (2,-0.8660254) node{$\cdot$};
\draw[black, thin] (2,-0.8660254) to (2,0);
\draw (2,0.3) node{$l_1^{B_{ext}}$};

\draw (8,-0.8660254) node{$\cdot$};
\draw[black, thin] (8,-0.8660254) to (11,0);
\draw (11.1,0.45) node{$l_1^B$};

\draw (10.5,-3.46410161) node{$\cdot$};
\draw[black, thin] (10.5,-3.46410161) to (12,-3.46410161);
\draw (12.4,-3.46410161) node{$b_1$};

\draw (6,-11.2583302) node{$\cdot$};
\draw[black, thin] (6,-11.2583302) to (7,-12.3);
\draw (7.3,-12.6) node{$b_2$};

\draw (2,-9.5262794) node{$\cdot$};
\draw[black, thin] (2,-9.5262794) to (1,-10.2);
\draw (0.7,-10.4) node{$l_2^B$};

\draw (-0.5,-5.19615242) node{$\cdot$};
\draw[black, thin] (-0.5,-5.19615242) to (0.3,-5.19615242);
\draw (1.5,-5.19615242) node{$l_2^{B_{ext}}$};

\draw (-0.5,-3.46410161) node{$\cdot$};
\draw[black, thin] (-0.5,-3.46410161) to (-1,-3);
\draw (-1.4,-2.9) node{$l_3^B$};

\end{tikzpicture}

\caption{\label{fig:ThreeLineSegments}}

\end{figure}

Now, we can consider the case when three of the line segments forming $\partial A^{\varhexagon}$ intersect the interior of $B^{\varhexagon}$. We are assuming, as usual, that $\left(A^{\varhexagon}\setminus B^{\varhexagon},B^{\varhexagon}\right)$ has greater joint boundary than $\left(A^{\varhexagon},B^{\varhexagon}\setminus A^{\varhexagon}\right)$. We begin our construction in the usual way and replace $B$ with $B^{\varhexagon}$, and consider the line segments in $\partial A^{\varhexagon}$ that don't intersect the interior of $B^{\varhexagon}$. There must be a point in each of these line segments that $\partial A$ intersects. We call these points $p\left(i_{\smallrightarrow}^A\right)\in\Lambda_{\smallrightarrow}\left(i_{\smallrightarrow}^A\right)$, $p\left(i_{\smallnearrow}^A\right)\in\Lambda_{\smallnearrow}\left(i_{\smallnearrow}^A\right)$, and $p\left(s_{\smallsearrow}^A\right)\in\Lambda_{\smallsearrow}\left(s_{\smallsearrow}^A\right)$.

We next move $l_1^A$ to the left until it intersects a point $p_1\in\partial A$. Call this shifted line $\tilde{l}_1^A$. Similarly, we move $l_2^A$ up and left until it intersects a point $p_2\in\partial A$ and call this shifted line $\tilde{l}_2^A$. Let $\delta_1$ be the distance between $l_1^A$ and $\tilde{l}_1^A$, and $\delta_2$ be the distance between $l_2^A$ and $\tilde{l}_2^A$.  

\begin{figure}[H]

\begin{tikzpicture}[scale=0.23]

\draw[blue, thin] (-6,3.46410161) to (-2,3.46410161);
\draw[blue, thick, dotted ] (-2,3.46410161) to (4,3.46410161);
\draw[blue, thick, dotted] (4,3.46410161) to (7,-1.7320508);
\draw[blue, thick, dotted] (7,-1.7320508) to (4,-6.9282032);
\draw[blue, thick, dotted] (4,-6.9282032) to (-4,-6.9282032);
\draw[blue, thick, dotted] (-4,-6.9282032) to (-5,-5.19615242);
\draw[blue, thin] (-5,-5.19615242) to (-8,0);
\draw[blue, thin] (-8,0) to (-6,3.46410161);

\draw (-2,3.46410161) node{$\bullet$};
\draw[black, thin] (-2,3.46410161) to (-2,4.4);
\draw (-2,5.6) node{$p\left(i_{\smallrightarrow}^A\right)$};

\draw (-7,1.7320508) node{$\bullet$};
\draw[black, thin] (-7,1.7320508) to (-8.5,1.7320508);
\draw (-11,1.7320508) node{$p\left(i_{\smallnearrow}^A\right)$};

\draw (-5,-5.19615242) node{$\bullet$};
\draw[black, thin] (-5,-5.19615242) to (-6.5,-5.19615242);
\draw (-9,-5.19615242) node{$p\left(s_{\smallsearrow}^A\right)$};

\draw [blue, thick] plot [smooth, tension=0.5] coordinates {(-2,3.46410161) (-0.1,1.7320508)(-2.4,-0.8) (6.77,-1.33367912)(2,-6.9282032) (1,-4)(-5,-5.19615242) (-4.7,-4.2) (-7,1.7320508)(-2,3.46410161)};

\draw[red, thin] (-1.5,0) to (8.5,0);
\draw[red, thin] (8.5,0) to (12,-6.0621778);
\draw[red, thin] (12,-6.0621778) to (9,-11.2583302);
\draw[red, thin] (9,-11.2583302) to (3,-11.2583302);
\draw[red, thin] (3,-11.2583302) to (-2.5,-1.7320508);
\draw[red, thin] (-2.5,-1.7320508) to (-1.5,0);

\draw [red, thick] plot [smooth, tension=0.5] coordinates {(8,0) (9.5,-1.7320508)(11.35,-6.92820323) (7,-11.2)(1.15,-7.79422863) (2,-6.9282032)(4.225,-5.19615242)(6.7,-2.13042249)(6.77,-1.33367912) (5.834,-1)(4,-0.87)(-1.9,-0.866) (2,-0.2)(8,0)};


\draw[->] (13,-3) to (14,-3);


\draw[blue, thin] (23,3.46410161) to (27,3.46410161);
\draw[blue, thick, dotted ] (27,3.46410161) to (33,3.46410161);
\draw[blue, thick, dotted] (28,3.46410161) to (30,0);
\draw[blue, thick, dotted] (35,0) to (36,-1.7320508);
\draw[blue, thick, dotted] (36,-1.7320508) to (33,-6.9282032);
\draw[blue, thick, dotted] (33,-6.9282032) to (29.5,-6.9282032);
\draw[blue, thick, dotted] (28.5,-5.19615242) to (24,-5.19615242);
\draw[blue, thick, dotted] (25,-6.9282032) to (24,-5.19615242);
\draw[blue, thin] (24,-5.19615242) to (21,0);
\draw[blue, thin] (21,0) to (23,3.46410161);

\draw (27,3.46410161) node{$\bullet$};
\draw[black, thin] (27,3.46410161) to (20,5.7);
\draw (20,6.6) node{$\scaleobj{0.6}{p\left(i_{\smallrightarrow}^A\right)=\gamma_1\left(0\right)}$};

\draw (22,1.7320508) node{$\bullet$};
\draw[black, thin] (22,1.7320508) to (20.5,1.7320508);
\draw (18,1.7320508) node{$p\left(i_{\smallnearrow}^A\right)$};

\draw (24,-5.19615242) node{$\bullet$};
\draw[black, thin] (24,-5.19615242) to (22.5,-5.19615242);
\draw (20,-5.19615242) node{$p\left(s_{\smallsearrow}^A\right)$};

\draw [blue, thick] plot [smooth, tension=0.5] coordinates {(27,3.46410161) (28.9,1.7320508)(26.6,-0.8) (35.77,-1.33367912)(31,-6.9282032) (30,-4)(24,-5.19615242) (24.3,-4.2) (22,1.7320508)(27,3.46410161)};

\draw[red, thin] (27.5,0) to (37.5,0);
\draw[red, thin] (37.5,0) to (41,-6.0621778);
\draw[red, thin] (41,-6.0621778) to (38,-11.2583302);
\draw[red, thin] (38,-11.2583302) to (32,-11.2583302);
\draw[red, thin] (32,-11.2583302) to (26.5,-1.7320508);
\draw[red, thin] (26.5,-1.7320508) to (27.5,0);

\draw [red, thick] plot [smooth, tension=0.5] coordinates {(37,0) (38.5,-1.7320508)(40.35,-6.92820323) (36,-11.2)(30.15,-7.79422863) (31,-6.9282032)(33.225,-5.19615242)(35.7,-2.13042249)(35.77,-1.33367912) (34.834,-1)(33,-0.87)(27.1,-0.866) (31,-0.2)(37,0)};

\draw (38.5,-1.7320508) node{$\cdot$};
\draw[black, thin] (38.5,-1.7320508) to (40,-1.7320508);
\draw (41.5,-1.7320508) node{$\scaleobj{0.6}{\gamma_2\left(0\right)}$};

\draw (27.1,-0.866) node{$\cdot$};
\draw[black, thin] (27.1,-0.866) to (40.5,5.2);
\draw (40,5.6) node{$\scaleobj{0.6}{\gamma_1\left(\tau_1\right)=\gamma_2\left(t_1\right)}$};

\draw (35.77,-1.33367912) node{$\cdot$};
\draw[black, thin] (35.77,-1.33367912) to (40,1);
\draw (40,1.5) node{$\scaleobj{0.6}{\gamma_1\left(\tau_2\right)=\gamma_2\left(t_2\right)}$};


\draw[black, thick, dotted] (26,6.9282032) to (38,-13.8564064);
\draw (37,-13.8564064) node{$\scaleobj{0.6}{\Lambda_1}$};

\draw[black, thick, dotted] (31,6.9282032) to (43,-13.8564064);
\draw (45,-13.8564064) node{$\scaleobj{0.6}{\Lambda_{\smallsearrow}\left(i_{\smallsearrow}^A\right)}$};



\end{tikzpicture}

\caption{\label{fig:ThreeLineSegmentsConstruction}}

\end{figure}

Now, consider $\partial A$ as a path $\gamma_1:[0,1]\rightarrow\mathbb{R}^2$ that is oriented clockwise in the sense that $\gamma_1\left(0\right)\in a_1$, after which $\gamma_1\in l_1^A\cup l_1^{A_{ext}}$, etc. Similarly, we consider $\partial B$ as a path $\gamma_2$ that is oriented counterclockwise, and $\gamma_2(0)\in b_1$. This can be visualized with the righthand side of Figure \ref{fig:ThreeLineSegmentsConstruction}. 

Let $\tau_1$ be the last time (after $\tau=0$) that $\gamma_1$ intersects $\partial B^{\varhexagon}$ before intersecting $l_1^{A_{ext}}$, and let $t_1$ be the first time (after $t=0$) that $\gamma_2$ intersects $l_3^B$. 

Either the second coordinate of $\gamma_2\left(t_1\right)$ is greater than the second coordinate of $\gamma_1\left(\tau_1\right)$, or vice versa, or they are equal. If these two coordinates are equal, as they are in Figure \ref{fig:ThreeLineSegmentsConstruction}, then we still consider $\gamma_2\left(t_1\right)$ to be above $\gamma_1\left(\tau_1\right)$ if there exists some $\epsilon>0$ such that $\forall t\in(t_1-\epsilon,t)$ and $\forall\tau\in(\tau_1,\tau_1+\epsilon)$, the second coordinate of $\gamma_2\left(t\right)$ is greater than the second coordinate of $\gamma_1\left(\tau\right)$. Similarly, we can define what it means for $\gamma_2\left(t_1\right)$ to be below $\gamma_1\left(\tau_1\right)$. Both proofs are similar, and we will do only the one when $\gamma_2\left(t_1\right)$ is above $\gamma_1\left(\tau_1\right)$. 

Let $t_2$ be the first time after $t_1$ that $\gamma_2$ intersects $l_1^{A_{ext}}$. Also, let $\tau_2$ be the first time after $\tau_1$ that $\gamma_1$ intersects $l_1^{A_{ext}}$. Then $\gamma_2\left(t_2\right)$ must occur above $\gamma_1\left(\tau_2\right)$. Furthermore, between $t=0$ and $t=t_2$, $\gamma_2$ must make two crossings between $\Lambda_1$ and $\Lambda_{\smallsearrow}\left(i_{\smallsearrow}^A\right)$.

After $\tau_2$, there is a first time $\tau_3$ at which $\gamma_1$ intersects $l_3^A$. After $\tau_3$, there is a first time $\tau_4$ at which $\gamma_1$ intersects $l_2^{A_{ext}}$. Finally, after $\tau_4$, there is a final time $\tau_5$ that $\gamma_1$ intersects $\partial B^{\varhexagon}$ before intersecting $a_2$.

If after $t_2$ there is no joint boundary, then between $\tau_2$ and $\tau_5$, there is uncounted boundary in $\partial A$ with length at least $\delta_1+\delta_2$. Suppose, on the other hand that there is joint boundary after $t_2$. Let $t_3\geq t_2$ be the last time after $t_2$ that $\gamma_2$ intersects $\partial A$. Let $\Lambda_2$ be the line with slope $-\sqrt{3}$ passing through $\gamma_2\left(t_3\right)$, and $\Lambda_3$ be the horizontal line passing through this same point. 

Between $\Lambda_2$ and $\Lambda_{\smallsearrow}\left(i_{\smallsearrow}^A\right)$ there are four crossings (in $\partial B$). By Lemma \ref{lem:fourcrossings}, this means that $\rho\left(\partial B\right)$ is greater than $\rho\left(B^{\varhexagon}\right)$ plus the distance between $\Lambda_2$ and $\Lambda_{\smallsearrow}\left(i_{\smallsearrow}^A\right)$. This means that there is enough uncounted boundary to move $\tilde{l}_1^A$ back to its original position and regain $l_1^A$. 

In a similar manner, we can argue that there is enough uncounted boundary to move $\tilde{l}_2^A$ back to its original position and regain $l_2^A$. This means that $\rho_{DB}\left(A^{\varhexagon}\setminus B^{\varhexagon},B^{\varhexagon}\right)\leq\rho_{DB}\left(A,B\right)$. At this point, we may need to adjust the volumes to obtain a configuration in $\mathfs{F}_{\alpha}$. The procedure is described below with reference to the following figure:

\begin{figure}[H]

\begin{tikzpicture}[scale=0.18]

\draw[blue, thin] (-2,3.46410161) to (4,3.46410161);

\draw[blue, thin] (4,3.46410161) to (6.5,-0.86602540);

\draw[blue, thin] (0.5,-6.9282032) to (-2,-6.9282032);
\draw[blue, thin] (-2,-6.9282032) to (-5,-1.7320508);
\draw[blue, thin] (-5,-1.7320508) to (-2,3.46410161);

\draw (1,3.46410161) node{$\cdot$};
\draw[black, thin] (1,3.46410161) to (1,4.2);
\draw (1,4.6) node{$\scaleobj{0.8}{a_1}$};

\draw (5,1.7320508) node{$\cdot$};
\draw[black, thin] (5,1.7320508) to (6,2.4);
\draw (6.4,2.9) node{$\scaleobj{0.8}{l_1^A}$};




\draw (-1,-6.9282032) node{$\cdot$};
\draw[black, thin] (-1,-6.9282032) to (-4,-9);
\draw (-4.3,-9.3) node{$\scaleobj{0.8}{l_2^A}$};

\draw (-4,-3.46410161) node{$\cdot$};
\draw[black, thin] (-4,-3.46410161) to (-5,-3.46410161);
\draw (-5.5,-3.46410161) node{$\scaleobj{0.8}{a_2}$};

\draw[red, thin] (1,-0.8660254) to (9,-0.8660254);
\draw[red, thin] (9,-0.8660254) to (12,-6.0621778);
\draw[red, thin] (12,-6.0621778) to (9,-11.2583302);
\draw[red, thin] (9,-11.2583302) to (3,-11.2583302);
\draw[red, thin] (3,-11.2583302) to (-1,-4.3301270);
\draw[red, thin] (-1,-4.3301270) to (1,-0.8660254);

\draw (2,-0.8660254) node{$\cdot$};
\draw[black, thin] (2,-0.8660254) to (2,0);
\draw (2,0.3) node{$\scaleobj{0.8}{l_1^{B_{ext}}}$};

\draw (8,-0.8660254) node{$\cdot$};
\draw[black, thin] (8,-0.8660254) to (11,0);
\draw (11.1,0.45) node{$\scaleobj{0.8}{l_1^B}$};

\draw (10.5,-3.46410161) node{$\cdot$};
\draw[black, thin] (10.5,-3.46410161) to (12,-3.46410161);
\draw (12.4,-3.46410161) node{$\scaleobj{0.8}{b_1}$};

\draw (6,-11.2583302) node{$\cdot$};
\draw[black, thin] (6,-11.2583302) to (7,-12.3);
\draw (7.3,-12.6) node{$\scaleobj{0.8}{b_2}$};

\draw (2,-9.5262794) node{$\cdot$};
\draw[black, thin] (2,-9.5262794) to (1,-10.2);
\draw (0.7,-10.4) node{$\scaleobj{0.8}{l_2^B}$};

\draw (-0.5,-5.19615242) node{$\cdot$};
\draw[black, thin] (-0.5,-5.19615242) to (0.3,-5.19615242);
\draw (1.5,-5.19615242) node{$\scaleobj{0.8}{l_2^{B_{ext}}}$};

\draw (-0.5,-3.46410161) node{$\cdot$};
\draw[black, thin] (-0.5,-3.46410161) to (-1,-3);
\draw (-1.4,-2.9) node{$\scaleobj{0.8}{l_3^B}$};

\draw[->] (14,-2) to (15,-2);


\draw[blue, thin] (20,3.46410161) to (26,3.46410161);

\draw[blue, thin] (26,3.46410161) to (28.5,-0.86602540);

\draw[blue, thin] (22.5,-6.9282032) to (20,-6.9282032);
\draw[blue, thin] (20,-6.9282032) to (17,-1.7320508);
\draw[blue, thin] (17,-1.7320508) to (20,3.46410161);

\draw (23,3.46410161) node{$\cdot$};
\draw[black, thin] (23,3.46410161) to (23,4.2);
\draw (23,4.6) node{$\scaleobj{0.8}{a_1}$};

\draw (27,1.7320508) node{$\cdot$};
\draw[black, thin] (27,1.7320508) to (28,2.4);
\draw (28.4,2.9) node{$\scaleobj{0.8}{l_1^A}$};

\draw (21,-6.9282032) node{$\cdot$};
\draw[black, thin] (21,-6.9282032) to (18,-9);
\draw (17.7,-9.3) node{$\scaleobj{0.8}{l_2^A}$};

\draw (18,-3.46410161) node{$\cdot$};
\draw[black, thin] (18,-3.46410161) to (17,-3.46410161);
\draw (16.5,-3.46410161) node{$\scaleobj{0.8}{a_2}$};

\draw[red, thick, dotted] (23,-0.8660254) to (26,-0.86602540);
\draw[red, thin] (26,-0.86602540) to (31,-0.8660254);
\draw[red, thin] (31,-0.8660254) to (34,-6.0621778);
\draw[red, thin] (34,-6.0621778) to (31,-11.2583302);
\draw[red, thin] (31,-11.2583302) to (25,-11.2583302);
\draw[red, thin] (25,-11.2583302) to (22.5,-6.92820323);
\draw[red, thick, dotted] (22.5,-6.92820323) to (21,-4.3301270);
\draw[red, thin] (22.5,-6.92820323) to (26,-0.8660254);

\draw (30,-0.8660254) node{$\cdot$};
\draw[black, thin] (30,-0.8660254) to (33,0);
\draw (33.1,0.45) node{$\scaleobj{0.8}{l_1^B}$};

\draw (32.5,-3.46410161) node{$\cdot$};
\draw[black, thin] (32.5,-3.46410161) to (34,-3.46410161);
\draw (34.4,-3.46410161) node{$\scaleobj{0.8}{b_1}$};

\draw (28,-11.2583302) node{$\cdot$};
\draw[black, thin] (28,-11.2583302) to (29,-12.3);
\draw (29.3,-12.6) node{$\scaleobj{0.8}{b_2}$};

\draw (24,-9.5262794) node{$\cdot$};
\draw[black, thin] (24,-9.5262794) to (23,-10.2);
\draw (22.7,-10.4) node{$\scaleobj{0.8}{l_2^B}$};

\draw (23.7,1) node{$\scaleobj{0.8}{A_1}$};
\draw (28,-2) node{$\scaleobj{0.8}{B_1}$};


\draw[->] (36,-2) to (37,-2);

\draw[->] (58,-2) to (59,-2);



\draw[blue, thin] (42,3.46410161) to (48,3.46410161);

\draw[blue, thin] (48,3.46410161) to (50.5,-0.86602540);

\draw[blue, thin] (44.5,-6.9282032) to (42,-6.9282032);
\draw[blue, thin] (42,-6.9282032) to (39,-1.7320508);
\draw[blue, thin] (39,-1.7320508) to (42,3.46410161);
\draw[blue, thin] (44.5,-6.92820323) to (46,-6.92820323);

\draw (45,3.46410161) node{$\cdot$};
\draw[black, thin] (45,3.46410161) to (45,4.2);
\draw (45,4.6) node{$\scaleobj{0.8}{a_1}$};

\draw (49,1.7320508) node{$\cdot$};
\draw[black, thin] (49,1.7320508) to (50,2.4);
\draw (50.4,2.9) node{$\scaleobj{0.8}{l_1^A}$};

\draw (43,-6.9282032) node{$\cdot$};
\draw[black, thin] (43,-6.9282032) to (40,-9);
\draw (39.7,-9.3) node{$\scaleobj{0.8}{l_2^A}$};

\draw (40,-3.46410161) node{$\cdot$};
\draw[black, thin] (40,-3.46410161) to (39,-3.46410161);
\draw (38.5,-3.46410161) node{$\scaleobj{0.8}{a_2}$};

\draw[red, thick, dotted] (49.5,-0.8660254) to (44,-0.86602540);
\draw[red, thin] (49.5,-0.86602540) to (53,-0.8660254);
\draw[red, thin] (53,-0.8660254) to (56,-6.0621778);
\draw[red, thin] (56,-6.0621778) to (53,-11.2583302);
\draw[red, thin] (53,-11.2583302) to (47,-11.2583302);
\draw[red, thin] (47,-11.2583302) to (44.5,-6.92820323);
\draw[red, thick, dotted] (44.5,-6.92820323) to (43,-4.3301270);

\draw[red, thin] (46,-6.92820323) to (49.5,-0.8660254);

\draw (52,-0.8660254) node{$\cdot$};
\draw[black, thin] (52,-0.8660254) to (55,0);
\draw (55.1,0.45) node{$\scaleobj{0.8}{l_1^B}$};

\draw (54.5,-3.46410161) node{$\cdot$};
\draw[black, thin] (54.5,-3.46410161) to (56,-3.46410161);
\draw (56.4,-3.46410161) node{$\scaleobj{0.8}{b_1}$};

\draw (50,-11.2583302) node{$\cdot$};
\draw[black, thin] (50,-11.2583302) to (51,-12.3);
\draw (51.3,-12.6) node{$\scaleobj{0.8}{b_2}$};

\draw (46,-9.5262794) node{$\cdot$};
\draw[black, thin] (46,-9.5262794) to (45,-10.2);
\draw (44.7,-10.4) node{$\scaleobj{0.8}{l_2^B}$};


\draw[->] (58,-2) to (59,-2);



\draw[blue, thin] (-2,-15.5884572) to (4,-15.5884572);

\draw[blue, thin] (4,-15.5884572) to (6.5,-19.9185842);

\draw[blue, thin] (0.5,-25.9807621) to (-2,-25.9807621);
\draw[blue, thin] (-2,-25.9807621) to (-5,-20.7846096);
\draw[blue, thin] (-5,-20.7846096) to (-2,-15.5884572);
\draw[blue, thin] (1.25,-27.279800) to (0.5,-25.9807621);

\draw (1,-26.8467875) node{$\cdot$};
\draw[black, thin] (1,-26.8467875) to (0,-27.5);
\draw (-1,-28) node{$\scaleobj{0.8}{l_4^A}$};

\draw (1,-15.5884572) node{$\cdot$};
\draw[black, thin] (1,-15.5884572) to (1,-14.85255888);
\draw (1,-14.4525588) node{$\scaleobj{0.8}{a_1}$};

\draw (5,-17.3205080) node{$\cdot$};
\draw[black, thin] (5,-17.3205080) to (6,-16.6525588);
\draw (6.4,-16.1525588) node{$\scaleobj{0.8}{l_1^A}$};

\draw (-1,-25.9807621) node{$\cdot$};
\draw[black, thin] (-1,-25.9807621) to (-4,-28.05255888);
\draw (-4.3,-28.35255888) node{$\scaleobj{0.8}{l_2^A}$};

\draw (-4,-22.5166604) node{$\cdot$};
\draw[black, thin] (-4,-22.5166604) to (-5,-22.5166604);
\draw (5.5,-22.5166604) node{$\scaleobj{0.8}{a_2}$};

\draw[red, thick, dotted] (1,-19.9185842) to (5.5,-19.9185842);
\draw[red, thin] (5.5,-19.9185842) to (9,-19.9185842);
\draw[red, thin] (9,-19.9185842) to (12,-25.1147367);
\draw[red, thin] (12,-25.1147367) to (9,-30.3108891);
\draw[red, thin] (9,-30.3108891) to (3,-30.3108891);

\draw[red, thin] (3,-30.3108891) to (1.25,-27.279800);%

\draw[red, thick, dotted] (0.5,-25.9807621) to (-1,-23.3826859);

\draw[red, thin] (2,-25.9807621) to (1.25,-27.279800);

\draw[red, thin] (2,-25.9807621) to (5.5,-19.9185842);


\draw (8,-19.9185842) node{$\cdot$};
\draw[black, thin] (8,-19.9185842) to (11,-19.0525588);
\draw (11.1,-18.60255888) node{$\scaleobj{0.8}{l_1^B}$};

\draw (10.5,-22.5166604) node{$\cdot$};
\draw[black, thin] (10.5,-22.5166604) to (12,-22.5166604);
\draw (12.4,-22.5166604) node{$\scaleobj{0.8}{b_1}$};

\draw (6,-30.3108890) node{$\cdot$};
\draw[black, thin] (6,-30.3108890) to (7,-31.3525588);
\draw (7.3,-31.65255888) node{$\scaleobj{0.8}{b_2}$};

\draw (2,-28.578838) node{$\cdot$};
\draw[black, thin] (2,-28.578838) to (1,-29.2525588);
\draw (0.7,-29.4525588) node{$\scaleobj{0.8}{l_2^B}$};


\draw[->] (14,-24) to (15,-24);




\draw[blue, thin] (20,-15.5884572) to (26,-15.5884572);
\draw[blue, thin] (26,-15.5884572) to (28.5,-19.9185842);
\draw[blue, thin] (23.25,-27.2798002) to (20.75,-27.2798002);
\draw[blue, thin] (20,-25.9807621) to (17,-20.7846096);
\draw[blue, thin] (17,-20.7846096) to (20,-15.5884572);
\draw[blue, thin] (20.75,-27.279800) to (20,-25.9807621);

\draw (21.5,-27.279800) node{$\cdot$};
\draw[black, thin] (21.5,-27.279800) to (21.2,-29);
\draw (21,-30.2) node{$\scaleobj{0.8}{\tilde{l}_2^A}$};

\draw (20.75,-26.8467875) node{$\cdot$};
\draw[black, thin] (20.75,-26.8467875) to (19,-26.8467875);
\draw (18,-26.8467875) node{$\scaleobj{0.8}{\tilde{l}_4^A}$};

\draw (23,-15.5884572) node{$\cdot$};
\draw[black, thin] (23,-15.5884572) to (23,-14.85255888);
\draw (23,-14.4525588) node{$\scaleobj{0.8}{a_1}$};

\draw (27,-17.3205080) node{$\cdot$};
\draw[black, thin] (27,-17.3205080) to (28,-16.6525588);
\draw (28.4,-16.1525588) node{$\scaleobj{0.8}{l_1^A}$};




\draw[red, thick, dotted] (23,-19.9185842) to (27.5,-19.9185842);

\draw[red, thin] (27.5,-19.9185842) to (31,-19.9185842);
\draw[red, thin] (31,-19.9185842) to (34,-25.1147367);
\draw[red, thin] (34,-25.1147367) to (31,-30.3108891);
\draw[red, thin] (31,-30.3108891) to (25,-30.3108891);

\draw[red, thin] (25,-30.3108891) to (23.25,-27.279800);%

\draw[red, thick, dotted] (22.5,-25.9807621) to (21,-23.3826859);
\draw[blue, thick, dotted] (22.5,-25.9807621) to (23.5,-27.712812);
\draw[blue, thick, dotted] (20,-25.9807621) to (22.5,-25.9807621);

\draw[red, thin] (24,-25.9807621) to (23.25,-27.279800);

\draw[red, thin] (24,-25.9807621) to (27.5,-19.9185842);


\draw (30,-19.9185842) node{$\cdot$};
\draw[black, thin] (30,-19.9185842) to (33,-19.0525588);
\draw (33.1,-18.60255888) node{$\scaleobj{0.8}{l_1^B}$};

\draw (32.5,-22.5166604) node{$\cdot$};
\draw[black, thin] (32.5,-22.5166604) to (34,-22.5166604);
\draw (34.4,-22.5166604) node{$\scaleobj{0.8}{b_1}$};

\draw (28,-30.3108890) node{$\cdot$};
\draw[black, thin] (28,-30.3108890) to (29,-31.3525588);
\draw (29.3,-31.65255888) node{$\scaleobj{0.8}{b_2}$};

\draw (24,-28.578838) node{$\cdot$};
\draw[black, thin] (24,-28.578838) to (23,-29.2525588);
\draw (22.7,-29.4525588) node{$\scaleobj{0.8}{l_2^B}$};


\draw[->] (36,-24) to (37,-24);



\draw[blue, thin] (42,-15.5884572) to (48,-15.5884572);
\draw[blue, thin] (48,-15.5884572) to (50.5,-19.9185842);
\draw[blue, thin] (43.25,-28.1458256) to (45.75,-28.1458256);
\draw[blue, thin] (43.25,-28.1458256) to (39,-20.7846096);
\draw[blue, thin] (39,-20.7846096) to (42,-15.5884572);

\draw (45,-15.5884572) node{$\cdot$};
\draw[black, thin] (45,-15.5884572) to (45,-14.85255888);
\draw (45,-14.4525588) node{$\scaleobj{0.8}{a_1}$};

\draw (49,-17.3205080) node{$\cdot$};
\draw[black, thin] (49,-17.3205080) to (50,-16.6525588);
\draw (50.4,-16.1525588) node{$\scaleobj{0.8}{l_1^A}$};

\draw (43,-25.9807621) node{$\cdot$};
\draw[black, thin] (43,-25.9807621) to (40,-28.05255888);
\draw (39.7,-28.35255888) node{$\scaleobj{0.8}{l_2^A}$};


\draw[red, thick, dotted] (45,-19.9185842) to (50.5,-19.9185842);

\draw[red, thin] (50.5,-19.9185842) to (53,-19.9185842);
\draw[red, thin] (53,-19.9185842) to (56,-25.1147367);
\draw[red, thin] (56,-25.1147367) to (53,-30.3108891);
\draw[red, thin] (53,-30.3108891) to (47,-30.3108891);
\draw[red, thin] (47,-30.3108891) to (45.75,-28.1458256);

\draw[red, thin] (45.75,-28.1458256) to (50.5,-19.9185842);

\draw[red, thick, dotted] (45.5,-27.712812) to (43,-23.3826859);
\draw[blue, thick, dotted] (42,-25.9807621) to (44.5,-25.9807621);

\draw (44,-22) node{$\scaleobj{0.8}{A_2}$};
\draw (51,-24) node{$\scaleobj{0.8}{B_2}$};

\draw (52,-19.9185842) node{$\cdot$};
\draw[black, thin] (52,-19.9185842) to (55,-19.0525588);
\draw (55.1,-18.60255888) node{$\scaleobj{0.8}{l_1^B}$};
\draw (54.5,-22.5166604) node{$\cdot$};
\draw[black, thin] (54.5,-22.5166604) to (56,-22.5166604);
\draw (56.4,-22.5166604) node{$\scaleobj{0.8}{b_1}$};

\draw (50,-30.3108890) node{$\cdot$};
\draw[black, thin] (50,-30.3108890) to (51,-31.3525588);
\draw (51.3,-31.65255888) node{$\scaleobj{0.8}{b_2}$};

\draw (46,-28.578838) node{$\cdot$};
\draw[black, thin] (46,-28.578838) to (45,-29.2525588);
\draw (44.7,-29.4525588) node{$\scaleobj{0.8}{l_2^B}$};


\draw[->] (58,-24) to (59,-24);



\draw[blue, thin] (64,-15.5884572) to (70,-15.5884572);
\draw[blue, thin] (70,-15.5884572) to (72.5,-19.9185842);
\draw[blue, thin] (65.25,-28.1458256) to (67.75,-28.1458256);
\draw[blue, thin] (65.25,-28.1458256) to (61,-20.7846096);
\draw[blue, thin] (61,-20.7846096) to (64,-15.5884572);

\draw[blue, thick, dotted] (72.5,-19.9185842) to (73.5,-21.65063509);
\draw[blue, thick, dotted] (73.5,-21.65063509) to (69.75,-28.1458256);
\draw[blue, thick, dotted] (69.75,-28.1458256) to (67.75,-28.1458256);

\draw (67,-15.5884572) node{$\cdot$};
\draw[black, thin] (67,-15.5884572) to (67,-14.85255888);
\draw (67,-14.4525588) node{$\scaleobj{0.8}{a_1}$};

\draw (71,-17.3205080) node{$\cdot$};
\draw[black, thin] (71,-17.3205080) to (72,-16.6525588);
\draw (72.4,-16.1525588) node{$\scaleobj{0.8}{l_1^A}$};

\draw (65,-25.9807621) node{$\cdot$};
\draw[black, thin] (65,-25.9807621) to (62,-28.05255888);
\draw (61.7,-28.35255888) node{$\scaleobj{0.8}{l_2^A}$};


\draw[red, thick, dotted] (67,-19.9185842) to (72.5,-19.9185842);

\draw[red, thin] (72.5,-19.9185842) to (75,-19.9185842);
\draw[red, thin] (75,-19.9185842) to (78,-25.1147367);
\draw[red, thin] (78,-25.1147367) to (75,-30.3108891);
\draw[red, thin] (75,-30.3108891) to (69,-30.3108891);
\draw[red, thin] (69,-30.3108891) to (67.75,-28.1458256);

\draw[blue, thin] (67.75,-28.1458256) to (68.75,-28.1458256);
\draw[red, thin] (68.75,-28.1458256) to (73.5,-19.9185842);

\draw[red, thick, dotted] (67.5,-27.712812) to (65,-23.3826859);
\draw[blue, thick, dotted] (64,-25.9807621) to (66.5,-25.9807621);

\draw (74,-19.9185842) node{$\cdot$};
\draw[black, thin] (74,-19.9185842) to (77,-19.0525588);
\draw (77.1,-18.60255888) node{$\scaleobj{0.8}{l_1^B}$};
\draw (76.5,-22.5166604) node{$\cdot$};
\draw[black, thin] (76.5,-22.5166604) to (78,-22.5166604);
\draw (78.4,-22.5166604) node{$\scaleobj{0.8}{b_1}$};

\draw (72,-30.3108890) node{$\cdot$};
\draw[black, thin] (72,-30.3108890) to (73,-31.3525588);
\draw (73.3,-31.65255888) node{$\scaleobj{0.8}{b_2}$};

\draw (68,-28.578838) node{$\cdot$};
\draw[black, thin] (68,-28.578838) to (67,-29.2525588);
\draw (66.7,-29.4525588) node{$\scaleobj{0.8}{l_2^B}$};

\end{tikzpicture}

\caption{\label{fig:ThreeLineSegmentsVolAdj}}

\end{figure}

First, we can move $l_3^B$ to the right until, if necessary, its lower endpoint intersects the right endpoint of $l_2^A$. We know this happens at the same time or before the upper endpoint of $l_3^B$ intersects $l_1^A$ because we chose to operate on $\left(A^{\varhexagon}\setminus B^{\varhexagon},B^{\varhexagon}\right)$ specifically for the reason that the joint boundary of this configuration is at least as long as the joint boundary of $\left(A^{\varhexagon},B^{\varhexagon}\setminus A^{\varhexagon}\right)$. That is, the length of $l_1^{B_{ext}}$ must be at least as long as $l_2^{A_{ext}}$ (see Figure \ref{fig:ThreeLineSegments}). This gives us two new sets, which we call $A_1$ to replace $A^{\varhexagon}\setminus B^{\varhexagon}$, and $B_1$ to replace $B^{\varhexagon}$. This is shown in the second configuration of the first row in Figure \ref{fig:ThreeLineSegmentsVolAdj}. 

Suppose that at this point $\mu\left(A_1\right)<\mu\left(A\right)$. Then, we can continue to move $l_3^B$ to the right until, if necessary, the length of $l_1^{B_{ext}}$ is zero. In this step, all of the length of $l_1^{B_{ext}}$ is added to what was $l_2^{A_{ext}}$. This is shown in the third configuration of the first row in Figure \ref{fig:ThreeLineSegmentsVolAdj}. This portion of the now joint boundary that was part of $l_2^{A_{ext}}$ we reorient so that its right endpoint does not change, and it has slope $\sqrt{3}$. This is shown in the first configuration of the second row in Figure \ref{fig:ThreeLineSegmentsVolAdj}. Let's call this reoriented line $l_4^A$. Finally, to complete this step, we move $l_4^A$ to the left until its upper endpoint intersects the left endpoint of $l_2^A$. Call this translated line segment $\tilde{l}_4^A$. Then we translate $l_2^A$ to the right until its right endpoint intersects the lower endpoint of $l_2^A$. This is shown in the second configuration of the second row in Figure \ref{fig:ThreeLineSegmentsVolAdj}. We can continue to do this until, if necessary, the upper endpoint of $l_3^B$ intersects the lower endpoint of $l_1^A$. This is shown in the third configuration of the second row in Figure \ref{fig:ThreeLineSegmentsVolAdj}. This gives us two new sets, which we call $A_2$ to replace $A_1$, and $B_2$ to replace $B_1$.

If it is still the case that $\mu\left(A_2\right)<\mu\left(A\right)$, then we continue moving $l_3^B$ to the right. We can do this until, if necessary, we have regained all of the length of $l_2^{A_{ext}}$. Let's say this results in two sets $(A_3,B_3)$. Then the double bubble perimeter of $(A_3,B_3)$ is greater than the double bubble perimeter of $\left(A_2,B_2\right)$. However, since the length of $l_2^{A_{ext}}$ is less than the length of $l_1^{B_{ext}}$, $\rho_{DB}\left(A_3,B_3\right)\leq\rho_{DB}\left(A^{\varhexagon},B^{\varhexagon}\setminus A^{\varhexagon}\right)\leq\rho_{DB}\left(A^{\varhexagon}\setminus B^{\varhexagon},B^{\varhexagon}\right)\leq\rho_{DB}\left(A,B\right)$. This process is shown in the rightmost configuration of the second row in Figure \ref{fig:ThreeLineSegmentsVolAdj}. This configuration is not of the correct form. So, to fix it, as we move $l_3^B$ to the right, we can replace $A_3$ with $A_3^{\varhexagon}$. This results in a configuration in $\mathfs{F}_{\alpha'}$ for some $\alpha'\in(0,1]$.

Once we have a set that has volume at least $1$ and another set with volume at least $\alpha$, the remaining volume adjustments are simple and resemble the other volume adjustments described before.

Finally, we can analyse the final two cases in which either four or five sides forming $\partial A^{\varhexagon}$ intersect the interior of $B^{\varhexagon}$. These are, however, non-cases. This is because if four sides of $\partial A^{\varhexagon}$ intersect the interior of $B^{\varhexagon}$, and the two line segments of $\partial B^{\varhexagon}$ that cross $\partial A^{\varhexagon}$ form interior sixty degree angles with $\partial A^{\varhexagon}$, then the joint boundary of $\left(A^{\varhexagon}\setminus B^{\varhexagon}, B^{\varhexagon}\right)$ is greater than the joint boundary of $\left(A^{\varhexagon},B^{\varhexagon}\setminus A^{\varhexagon}\right)$. Therefore, we would have actually taken the latter configuration instead of the former. See Figure \ref{fig:NonCases}.

\begin{figure}[H]

\begin{tikzpicture}[scale=0.2]

\draw[blue, thin] (-2.74,-2) to (5.15,-2);
\draw[blue, thin] (5.15,-2) to (7,-5.1961524);
\draw[blue, thin] (7,-5.1961524) to (6,-6.9282032);
\draw[blue, thin] (6,-6.9282032) to (-2,-6.9282032);
\draw[blue, thin] (-2,-6.9282032) to (-4,-3.4641016);
\draw[blue, thin] (-4,-3.4641016) to (-2.74,-2);

\draw[red, thin] (2,-0.8660254) to (9,-0.8660254);
\draw[red, thin] (9,-0.8660254) to (12,-6.0621778);
\draw[red, thin] (12,-6.0621778) to (9,-11.2583302);
\draw[red, thin] (9,-11.2583302) to (3,-11.2583302);
\draw[red, thin] (3,-11.2583302) to (-1,-4.3301270);
\draw[red, thin] (-1,-4.3301270) to (2,-0.8660254);

\end{tikzpicture}

\caption{\label{fig:NonCases}}

\end{figure}

A similar argument applies to when five sides of $\partial A^{\varhexagon}$ intersect the interior of $B^{\varhexagon}$, and thus this is also a non-case.

\end{proof}

\begin{corollary}\label{cor:OnlyThreeJointBoundaryLines}
Let $\left(A,B\right)\in\mathfs{F}_{\alpha}$ have been formed by either the construction in Lemmas \ref{lem:Two120DegreeAngles} and \ref{lem:Two120AnglesVolAdj}, the construction in Lemmas \ref{lemma:ExactlyOneSixty} and \ref{lem:OneSixtyVolAdj}, or the construction in Lemma \ref{lemma:TwoSixtyDegreeAngles}. Suppose that four or more adjacent sides of  $B$ form joint boundary with $\partial A\setminus\left(\pi_1\cup\pi_2\right)$ (or four or more adjacent sides of $A$ form joint boundary with $\partial B\setminus\left(\pi_1\cup\pi_2\right)$). Then $\left(A,B\right)\notin\Gamma_{\alpha}.$

\end{corollary}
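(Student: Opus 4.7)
The plan is to show that $(A,B)\notin\Gamma_\alpha$ by constructing a competitor $(\tilde A,\tilde B)\in\gamma_\alpha$ with $\rho_{DB}(\tilde A,\tilde B)<\rho_{DB}(A,B)$. Writing $J=\partial A\cap\partial B$ for the joint boundary, I will use the identity $\rho_{DB}(A,B)=\rho(\partial(A\cup B))+\rho(J)$, so any interior modification of $A\cup B$ that preserves the two volumes and strictly shortens the interface produces the desired competitor.

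After a suitable composition of reflections and $60^\circ$ rotations I may assume that $J$ runs from $\pi_1$ to $\pi_2$ along (at least) four consecutive sides of $\partial B$, accumulating a total exterior turning angle of at least $3\cdot 60^\circ=180^\circ$. By Lemma \ref{lem:geodesics}, any $\mathcal{D}$-geodesic from $\pi_1$ to $\pi_2$ is a polygonal path of at most two segments parallel to sides of $\mathfs{H}$, with total length $\mathcal{D}(\pi_1,\pi_2)$. Because $J$ wraps around $B$ through a half-turn, any such geodesic $\gamma^*$ lies in the interior of $B$; in particular $\mathcal{D}(\pi_1,\pi_2)<\rho(J)$, with a deficit $\delta>0$ comparable to the shortest of the four joint side-lengths of $B$.

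I would then define a one-parameter family $\{\gamma'_t\}_{t\in I}$ of polygonal curves obtained from $\gamma^*$ by parallel translation inside $\overline{A\cup B}$, and use it as the new interface. For each $t$, $\gamma'_t$ divides $A\cup B$ into two components, the one adjacent to the two exposed sides of $B$ having volume $V(t)$ that depends continuously on $t$. As the translation sweeps across $A\cup B$, $V(t)$ varies continuously from $0$ to $\mu(A\cup B)=1+\alpha$, so the intermediate value theorem produces $t_0$ with $V(t_0)=\alpha$. Setting $\tilde B$ equal to this component and $\tilde A$ its complement in $A\cup B$ gives a pair in $\gamma_\alpha$ with $\rho_{DB}(\tilde A,\tilde B)=\rho(\partial(A\cup B))+\rho(\gamma'_{t_0})$.

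The main obstacle is the quantitative bound $\rho(\gamma'_{t_0})<\rho(J)$ after the volume correction. The cleanest way I see is to choose the translation direction parallel to one of the two segments of $\gamma^*$, so that this segment's length is preserved and only the other segment's length varies; one then shows the derivative of $\rho(\gamma'_t)$ with respect to $t$ is bounded in absolute value by a universal constant coming from hexagonal geometry, while the length change required to move $V(t)$ from its initial value to $\alpha$ is at most $\mu(A^\varhexagon\cap B^\varhexagon)$ divided by the cross-sectional length in the translation direction. Combining this derivative estimate with the $180^\circ$-turning deficit $\delta$—which forces $\rho(J)$ to exceed $\mathcal{D}(\pi_1,\pi_2)$ by a quantity of the same order as the perimeter of $B$ rather than its diameter—yields $\rho(\gamma'_{t_0})<\rho(J)$, contradicting $(A,B)\in\Gamma_\alpha$.
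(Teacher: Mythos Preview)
Your approach has a genuine gap in the construction of the family $\{\gamma'_t\}$. Translating the geodesic $\gamma^*$ in a fixed direction moves its endpoints $\pi_1,\pi_2$ off $\partial(A\cup B)$: near $\pi_1$ the outer boundary of $A\cup B$ is the non-joint part of $\partial A$ (in the paper's normalisation, a segment of slope $\pm\sqrt{3}$ heading away from $B$), which is not parallel to your translation direction. To keep $\gamma'_t$ an honest interface you must extend or truncate it so that its endpoints stay on $\partial(A\cup B)$, and once you do this the length of $\gamma'_t$ is controlled by the height of $A$, not of $B$. Since nothing in the hypotheses prevents $A$ from being much taller than $B$, the asserted bound $\rho(\gamma'_{t_0})<\rho(J)$ can simply fail; the ``derivative estimate'' and ``$180^\circ$-turning deficit'' heuristics do not close this. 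There is also a subtler issue with the volume sweep: the region to the right of the original $J$ already has volume exactly $\alpha$, so the IVT does not by itself guarantee that any \emph{shorter} interface achieves the same split.

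The paper's argument sidesteps all of this by exploiting the structure of $\mathfs{F}_\alpha$ directly. With $B=B^{\varhexagon}$ sitting in a notch of $A$, one translates $B$ rigidly to the right until its top-left corner $p_1$ meets $\pi_1$ (or $p_2$ meets $\pi_2$). A short computation shows this translation leaves $\rho_{DB}$ unchanged while strictly increasing $\mu(A)$. One then slides the two left sides $\Lambda_{\smallsearrow}(s_{\smallsearrow}^A)\cap\partial A$ and $\Lambda_{\smallnearrow}(i_{\smallnearrow}^A)\cap\partial A$ inward to restore $\mu(A)=1$; this strictly shortens $\partial A$ without touching $B$ or the joint boundary, yielding a configuration in $\mathfs{F}_\alpha$ with strictly smaller double bubble perimeter. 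No interface-length comparison is needed at all.
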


\begin{proof}

Let's suppose, without loss of generality, that four adjacent sides of $B$ form joint boundary with $\partial A\setminus\left(\pi_1\cup\pi_2\right)$. After some rotations and/or reflections, we may assume that the four sides are $\Lambda_{\smallrightarrow}\left(i_{\smallrightarrow}^B\right)$, $\Lambda_{\smallnearrow}\left(i_{\smallnearrow}^B\right)$, $\Lambda_{\smallsearrow}\left(s_{\smallsearrow}^B\right)$, and $\Lambda_{\smallrightarrow}\left(s_{\smallrightarrow}^B\right)$. Let $p_1$ be the point of intersection of $\Lambda_{\smallrightarrow}\left(i_{\smallrightarrow}^B\right)$ and $\Lambda_{\smallnearrow}\left(i_{\smallnearrow}^B\right)$, and let $p_2$ be the point of intersection of $\Lambda_{\smallsearrow}\left(s_{\smallsearrow}^B\right)$ and $\Lambda_{\smallrightarrow}\left(s_{\smallrightarrow}^B\right)$. See Figure \ref{fig:FourLinesIsTooMany} for illustration.

\begin{figure}[H]

\begin{tikzpicture}[scale=0.3]

\draw[red, thin] (5.26,-2) to (13.15,-2);
\draw[red, thin] (13.15,-2) to (15,-5.1961524);
\draw[red, thin] (15,-5.1961524) to (14,-6.9282032);
\draw[red, thin] (14,-6.9282032) to (6,-6.9282032);
\draw[red, thin] (6,-6.9282032) to (4,-3.4641016);
\draw[red, thin] (4,-3.4641016) to (5.26,-2);

\draw[blue, thin] (2,-0.8660254) to (9,-0.8660254);
\draw[blue, thin] (9,-0.8660254) to (9.65470054,-2);

\draw[blue, thin] (11.4999999,-6.9282032) to (9,-11.2583302);
\draw[blue, thin] (9,-11.2583302) to (3,-11.2583302);
\draw[blue, thin] (3,-11.2583302) to (-1,-4.3301270);
\draw[blue, thin] (-1,-4.3301270) to (2,-0.8660254);

\draw (5.26,-2) node{$\cdot$};
\draw[black, thin] (5.26,-2) to (5.26,0);
\draw (5.26,0.3) node{$p_1$};
\draw (6,-6.9282032) node{$\cdot$};
\draw[black, thin] (6,-6.9282032) to (6,-8.5);
\draw (6,-8.9) node{$p_2$};

\draw(9.65470054,-2) node{$\cdot$};
\draw[black, thin] (9.65470054,-2) to (9.65470054,0);
\draw(9.65470054,0.4) node{$\pi_1$};
\draw(11.4999999,-6.9282032) node{$\cdot$};
\draw[black, thin] (11.4999999,-6.9282032) to (11.4999999,-9);
\draw(11.4999999,-9.4) node{$\pi_2$};

\end{tikzpicture}

\caption{\label{fig:FourLinesIsTooMany}}

\end{figure}

Note that it is possible that, for example, $\Lambda_{\smallsearrow}\left(s_{\smallsearrow}^B\right)\cap\partial B$ has length zero. In that case, it might look like there are only three sides of $B$ that form joint boundary with $\partial A\setminus\left(\pi_1\cup\pi_2\right)$. But, as usual, we consider this to be four sides where one of them has length zero. 

We alter this configuration by moving all of $B$ to the right until either $p_1$ meets $\pi_1$, or $p_2$ meets $\pi_2$, whichever comes first. This strictly increases the volume of $A$. So, we can move $\Lambda_{\smallsearrow}\left(s_{\smallsearrow}^A\right)$ and $\Lambda_{\smallnearrow}\left(i_{\smallnearrow}^A\right)$ to the right until we have the correct volume. We call these two new sets $A_1$ and $B_1$. This process strictly reduces the non-joint boundary of $A$, which means that we have found a new configuration $\left(A_1,B_1\right)\in\mathfs{F}_{\alpha}$ with strictly better double bubble perimeter than $\left(A,B\right)$. A similar proof applies when five sides of $B$ form joint boundary with $\partial A\setminus\left(\pi_1\cup\pi_2\right)$. 

\end{proof}

\subsection{The third case}

Now, we want to start with $(A,B)$ such that $B^{\varhexagon}\subset A^{\varhexagon}$  (or vice versa) and show a process whereby we construct two new sets $\left(\tilde{A},\tilde{B}\right)\in\mathfs{F}_{\alpha}$ with double bubble perimeter no greater than that of $(A,B)$. This we do in the following lemma.

\begin{lemma}\label{lemma:AllSidesContained}
Let $(A,B)\in\gamma_{\alpha}$ have the property that either $B^{\varhexagon}\subset A^{\varhexagon}$ or $A^{\varhexagon}\subset B^{\varhexagon}$. Then, there exists $\left(\tilde{A},\tilde{B}\right)\in\mathfs{F}_{\alpha}$ such that $\rho_{DB}\left(\tilde{A},\tilde{B}\right)\leq\rho_{DB}\left(A,B\right)$. Furthermore, we can ensure that no more than three of the line segments in $\partial\tilde{B}$ form joint boundary with $\partial\tilde{A}\setminus\left(\pi_1\left(\tilde{A},\tilde{B}\right)\cup\pi_2\left(\tilde{A},\tilde{B}\right)\right)$, and similarly for the line segments in $\partial\tilde{A}$. 

\end{lemma}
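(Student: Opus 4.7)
The plan is the following. By the symmetry of the hypothesis we may assume $B^{\varhexagon}\subset A^{\varhexagon}$. I aim to construct $(\tilde{A},\tilde{B})\in\mathfs{F}_\alpha$ by first replacing $B$ with a translated copy of the hexagon $B^{\varhexagon}$ and then setting $\tilde{A}=A^{\varhexagon}\setminus\tilde{B}$, followed by a volume adjustment. The critical initial observation is that, because $A$ is simply connected and $B\subset A^{\varhexagon}\setminus A$, the set $A^{\varhexagon}\setminus A$ is connected and meets $\partial A^{\varhexagon}$; hence there exists a continuous path in $A^{\varhexagon}\setminus A$ from $B$ to $\partial A^{\varhexagon}$. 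This ``exterior channel'' selects at least one of the six hexagonal directions along which $B^{\varhexagon}$ may be slid toward the corresponding side of $\partial A^{\varhexagon}$ without ever intersecting $A$, until it becomes flush with $\partial A^{\varhexagon}$. After translation I take $\tilde{B}$ to be this translated copy of $B^{\varhexagon}$ and $\tilde{A}=A^{\varhexagon}\setminus\tilde{B}$; the latter is simply connected since $\tilde{B}$ now shares at least one full side with $\partial A^{\varhexagon}$, so removing it no longer creates a hole.

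For the perimeter estimate, I would combine the isoperimetric inequality established in the proof of Lemma~\ref{lem:IsoperimetricProblem} with a careful accounting of the original joint boundary. That proof yields $\rho(\partial A^{\varhexagon})\leq\rho(\partial A)$ and $\rho(\partial B^{\varhexagon})\leq\rho(\partial B)$; since $\tilde{B}$ is a translate of $B^{\varhexagon}$ it has the same perimeter, and $\rho(\partial\tilde{A})\leq \rho(\partial A^{\varhexagon})$ because $\tilde{A}$ differs from $A^{\varhexagon}$ only along the shared side, which is compensated by the corresponding side of $\tilde{B}$. To obtain $\rho_{DB}(\tilde{A},\tilde{B})\leq\rho_{DB}(A,B)$ it then suffices to verify that the new joint boundary $\partial\tilde{A}\cap\partial\tilde{B}$ has length at least $\rho(\partial A\cap\partial B)$. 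When the original joint boundary is empty this is automatic; in the general case I would use the exterior channel together with a crossing count analogous to Lemma~\ref{lem:fourcrossings} to extract the required length from the portion of $\partial A$ that must wrap around $B$.

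Volume adjustment then proceeds in the spirit of Lemmas~\ref{lem:Two120AnglesVolAdj} and~\ref{lem:OneSixtyVolAdj}: if $\mu(\tilde{B})>\alpha$ I shrink $\tilde{B}$ by translating its far side inward, and if $\mu(\tilde{A})>1$ I move the outer sides of $\tilde{A}$ inward, each step only decreasing the double bubble perimeter and preserving membership in $\mathfs{F}_\alpha$. For the final side-count condition, if four or more consecutive sides of $\tilde{B}$ form joint boundary with $\partial\tilde{A}\setminus(\pi_1\cup\pi_2)$ I would invoke the mechanism of Corollary~\ref{cor:OnlyThreeJointBoundaryLines}: translate $\tilde{B}$ along the joint boundary until one of its corners meets $\pi_1$ or $\pi_2$, which strictly decreases $\rho_{DB}$; the symmetric condition on $\tilde{A}$ is treated identically. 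The step I expect to be the main obstacle is the perimeter bookkeeping in the regime where $\partial A\cap\partial B$ is substantial, since then the naive replacement $B\mapsto B^{\varhexagon}$ may cause $B^{\varhexagon}$ to overlap $A$ and forces a simultaneous modification of both sets; verifying that the exterior channel is long enough to absorb the lost joint boundary requires the same kind of careful crossing-count argument used in the multi-sided case of Lemma~\ref{lemma:TwoSixtyDegreeAngles}.
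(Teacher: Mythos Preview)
Your proposal has a genuine gap at its core, and the approach diverges substantially from the paper's.

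The sliding step is not justified. You assert that the exterior channel ``selects at least one of the six hexagonal directions along which $B^{\varhexagon}$ may be slid toward the corresponding side of $\partial A^{\varhexagon}$ without ever intersecting $A$.'' But the existence of a path from $B$ to $\partial A^{\varhexagon}$ in $A^{\varhexagon}\setminus A$ does not imply that the full set $B^{\varhexagon}$ can be translated along a single hexagonal direction while remaining disjoint from $A$. The channel may be narrow or curved, and more basically, $B^{\varhexagon}$ may already overlap $A$ before any translation, since $B^{\varhexagon}\supsetneq B$ in general. Once $B^{\varhexagon}$ and $A$ overlap, the construction $\tilde A=A^{\varhexagon}\setminus\tilde B$ no longer has volume at least $1$ in any controlled way, and the perimeter bookkeeping breaks down.

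The perimeter inequality you isolate, ``new joint boundary has length at least $\rho(\partial A\cap\partial B)$,'' is exactly the hard part, and you do not prove it; you only gesture at a crossing argument. In fact this inequality can fail for the configuration you build: if only one short side of the translated $B^{\varhexagon}$ is flush with $\partial A^{\varhexagon}$, the new joint boundary is $\rho(\partial B^{\varhexagon})$ minus that side, whereas the original joint boundary could be close to $\rho(\partial B)$, which may exceed $\rho(\partial B^{\varhexagon})$.

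The paper does something quite different. It does \emph{not} translate $B^{\varhexagon}$ except in the degenerate case where none of the six lines defining $B^{\varhexagon}$ coincide with the corresponding lines of $A^{\varhexagon}$. Instead it splits into four cases according to whether $0$, $1$, $2$, or $3$ of these defining lines coincide. In each case with at least one coincidence, it keeps $B^{\varhexagon}$ in place, reconstructs as much of $\partial A^{\varhexagon}\setminus B^{\varhexagon}$ as possible from geodesic pieces of $\partial A$, then slides the two remaining segments $l_1^A,l_2^A$ inward until they first meet $\partial A$, and finally uses a three-crossings argument (between two parallel lines a distance $\min(\delta_1,\delta_2)$ apart) to recover the shorter of the two displacements. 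Only after this does it perform volume adjustments tailored to each case. The zero-coincidence case is handled separately by translating $B^{\varhexagon}$ to a corner of $A^{\varhexagon}$, but there the perimeter estimate uses the specific paths $P_1,P_2$ from $\partial A$ to $\partial B^{\varhexagon}$ rather than a global joint-boundary comparison.
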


\begin{proof}

Recall that $A^{\varhexagon}$ is created by first removing the points forming six lines in the plane from $\mathbb{R}^2$, and then taking the closure of the bounded connected component of the resulting set (as is $B^{\varhexagon}$). The proof of this lemma depends on how many of the lines used to form $A^{\varhexagon}$ are the same as the corresponding lines used to form $B^{\varhexagon}$. The four options are that $0$, $1$, $2$, or $3$ of these lines are the same. With this in mind, let us begin. 

Suppose first that one of the lines forming $\partial A^{\varhexagon}$ is the same as one of the lines forming $\partial B^{\varhexagon}$. We may assume, without loss of generality, that the line in question is $\Lambda_{\smallrightarrow}\left(i_{\smallrightarrow}^A\right)=\Lambda_{\smallrightarrow}\left(i_{\smallrightarrow}^B\right)$. See Figure \ref{fig:OneAgreeingLine} for illustration:

\begin{figure}[H]

\begin{tikzpicture}[scale=0.4]

\draw[red, thin] (0,0) to (1,1.732050808);
\draw[red, thin] (1,1.732050808) to (3,1.732050808);
\draw[red, thin] (3,1.732050808) to (4,0);
\draw[red, thin]  (4,0) to (3,-1.732050808);
\draw[red, thin] (3,-1.732050808) to (1,-1.732050808);
\draw[red, thin] (1,-1.732050808) to (0,0);

\draw[blue, thin] (1,1.732050808) to (-1,1.732050808);
\draw[blue, thin] (-1,1.732050808) to (-3,-1.732050808);
\draw[blue, thin] (-3,-1.732050808) to (-1,-5.196152423);
\draw[blue, thin] (-1,-5.196152423) to (5,-5.196152423);
\draw[blue, thin] (5,-5.196152423) to (7,-1.732050808);
\draw[blue, thin] (7,-1.732050808) to (5,1.732050808);
\draw[blue, thin] (5,1.732050808) to (3,1.732050808);


\draw[black, thin] (0,1.7320508) to (0,2.4);
\draw (0,2.9) node{$l_2^A$};
\draw (-2,1) node{$a_2$};

\draw[black, thin] (4,1.7320508) to (4,2.4);
\draw (4,2.9) node{$l_1^A$};
\draw (6,1) node{$a_1$};

\end{tikzpicture}

\caption{\label{fig:OneAgreeingLine}}

\end{figure}

As usual, we replace $B$ with $B^{\varhexagon}$. Then, we consider the line segments in $\partial A^{\varhexagon}$ that do not intersect $B^{\varhexagon}$. Since $\partial A$ must touch each of these line segments in at least one point, the path in $\partial A^{\varhexagon}$ that connects them (that doesn't intersect $B^{\varhexagon}$) can be no longer than the path in $\partial A$ that connects them (and doesn't intersect $B^{\varhexagon}$). This is illustrated in the lefthand side of the top row of Figure \ref{fig:OneAgreeingLineConstruction}. We can now move $l_1^A$ down and right (between $\Lambda_{\smallsearrow}\left(i_{\smallsearrow}^B\right)$ and $\Lambda_{\smallsearrow}\left(i_{\smallsearrow}^A\right)$) until it intersects a point $p_1\in\partial A$. Similarly, we can move $l_2^A$ down and to the left (between $\Lambda_{\smallnearrow}\left(i_{\smallnearrow}^B\right)$ and $\Lambda_{\smallnearrow}\left(i_{\smallnearrow}^A\right)$) until it intersects a point $p_2\in\partial A$. This is also demonstrated in Figure \ref{fig:OneAgreeingLineConstruction}. Since we have moved $l_1^A$ and $l_2^A$, we call these translated line segments $\tilde{l}_1^A$ and $\tilde{l}_2^A$ respectively. 

Let $\delta_1$ be the distance between $l_1^A$ and $\tilde{l}_1^A$, and $\delta_2$ be the distance between $l_2^A$ and $\tilde{l}_2^A$. Suppose, without loss of generality, that $\delta_1\leq\delta_2$. Let $\Lambda$ be the horizontal line passing through $\tilde{l}_1^A$. Then, there must be at least two separate crossings in $\partial A$ that pass between the lines $\Lambda$ and $\Lambda_{\smallrightarrow}\left(i_{\smallrightarrow}^A\right)$. Similarly, there must be two separate paths in $\partial B$ that also pass between these two lines. This is a total of four crossings between $\Lambda$ and $\Lambda_{\smallrightarrow}\left(i_{\smallrightarrow}^A\right)$. Accounting for possible joint boundary, there must be three crossings between these two lines, and, in order to form $B^{\varhexagon}$, we need only two such crossings. Therefore, there is a third path with length at least $\delta_1$ that we have not counted. This means that we can move $\tilde{l}_1^A$ back to its original position of $l_1^A$ without increasing the double bubble perimeter. See the righthand side of the top row of Figure \ref{fig:OneAgreeingLineConstruction}. 

Finally, as usual, we know that there is a geodesic that is partially in $a_2$ and partially in $\tilde{l}_2^A$ that connects $p\left(i_{\smallnearrow}^A\right)$ and $p_2$, which can be no longer than the path in $\partial A$ that connects these same two points. Similarly,  there is a geodesic in $\tilde{l}_2^A$ connecting $p_2$ to $\Lambda_{\smallnearrow}\left(i_{\smallnearrow}^B\right)$ that is no longer than the path in $\partial A$ that connects this same point and line. This is also demonstrated in the righthand side of the top row of Figure \ref{fig:OneAgreeingLineConstruction}.

\begin{figure}

\begin{tikzpicture}[scale=0.36]

\draw[red, thin] (0,0) to (1,1.732050808);
\draw[red, thin] (1,1.732050808) to (3,1.732050808);
\draw[red, thin] (3,1.732050808) to (4,0);
\draw[red, thin]  (4,0) to (3,-1.732050808);
\draw[red, thin] (3,-1.732050808) to (1,-1.732050808);
\draw[red, thin] (1,-1.732050808) to (0,0);

\draw[blue, thick, dotted] (1,1.732050808) to (-1,1.732050808);
\draw[blue, thick, dotted] (-1,1.732050808) to (-2,0);
\draw[blue, thin] (-2,0) to (-3,-1.732050808);
\draw[blue, thin] (-3,-1.732050808) to (-1,-5.196152423);
\draw[blue, thin] (-1,-5.196152423) to (5,-5.196152423);
\draw[blue, thin] (5,-5.196152423) to (7,-1.732050808);
\draw[blue, thin] (7,-1.732050808) to (5.5,0.86602540);
\draw[blue, thick, dotted] (5.5,0.86602540) to (5,1.732050808);
\draw[blue, thick, dotted] (5,1.732050808) to (3,1.732050808);

\draw (-2,0) node{$\bullet$};
\draw[black, thin] (-2,0) to (-3,0);
\draw (-4.5,0) node{$p\left(i_{\smallnearrow}^A\right)$};

\draw (-2,-3.46410161) node{$\bullet$};
\draw[black, thin] (-2,-3.46410161) to (-3,-3.46410161);
\draw (-4.5,-3.46410161) node{$p\left(s_{\smallsearrow}^A\right)$};

\draw (0,-5.196152423) node{$\bullet$};
\draw[black, thin] (0,-5.196152423) to (0,-6.2);
\draw (0,-7) node{$p\left(s_{\smallrightarrow}^A\right)$};

\draw (6,-3.46410161) node{$\bullet$};
\draw[black, thin] (6,-3.46410161) to (7,-3.46410161);
\draw (8.5,-3.46410161) node{$p\left(s_{\smallnearrow}^A\right)$};

\draw (5.5,0.86602540) node{$\bullet$};
\draw[black, thin] (5.5,0.86602540) to (6.5,0.86602540);
\draw (7.8,0.86602540) node{$p\left(i_{\smallsearrow}^A\right)$};

\draw (0.5,1.7320508) node{$\cdot$};
\draw[black, thin] (0.5,1.7320508) to (0.5,2.5);
\draw (0.5,3) node{$l_2^A$};

\draw (3.5,1.7320508) node{$\cdot$};
\draw[black, thin] (3.5,1.7320508) to (3.5,2.5);
\draw (3.5,3) node{$l_1^A$};


\draw[->] (10,-2) to (11,-2);

\draw[red, thin] (15,0) to (16,1.732050808);
\draw[red, thin] (16,1.732050808) to (18,1.732050808);
\draw[red, thin] (18,1.732050808) to (19,0);
\draw[red, thin] (19,0) to (18,-1.732050808);
\draw[red, thin] (18,-1.732050808) to (16,-1.732050808);
\draw[red, thin] (16,-1.732050808) to (15,0);

\draw[blue, thick, dotted] (15.5,0.8660254) to (13.5,0.8660254);
\draw[blue, thick, dotted] (14,1.732050808) to (13,0);
\draw[blue, thin] (13,0) to (12,-1.732050808);
\draw[blue, thin] (12,-1.732050808) to (14,-5.196152423);
\draw[blue, thin] (14,-5.196152423) to (20,-5.196152423);
\draw[blue, thin] (20,-5.196152423) to (22,-1.732050808);
\draw[blue, thin] (22,-1.732050808) to (20.5,0.86602540);
\draw[blue, thick, dotted] (20.5,0.86602540) to (20,1.732050808);
\draw[blue, thick, dotted] (20.5,0.86602540) to (18.5,0.86602540);

\draw (19,0.86602540) node{$\cdot$};
\draw[black, thin] (19,0.86602540) to (20,-0.5);
\draw (20.3,-0.8) node{$p_1$};

\draw[black, thin] (19.5,0.86602540) to (19.5,1.5);
\draw (19.5,2.3) node{$\tilde{l}_1^A$};

\draw (14.5,0.8660254) node{$\cdot$};
\draw[black, thin] (14.5,0.8660254) to (14.5,-0.5);
\draw (14.5,-0.85) node{$p_2$};

\draw[black, thin] (14.9,0.8660254) to (15,1.5);
\draw (15.2,2.4) node{$\tilde{l}_2^A$};

\draw (13,0) node{$\bullet$};

\draw (13,-3.46410161) node{$\bullet$};

\draw (15,-5.196152423) node{$\bullet$};

\draw (21,-3.46410161) node{$\bullet$};

\draw (20.5,0.86602540) node{$\bullet$};

\draw[->] (23,-2) to (24,-2);

\draw[red, thin] (28,0) to (29,1.732050808);
\draw[red, thin] (29,1.732050808) to (31,1.732050808);
\draw[red, thin] (31,1.732050808) to (32,0);
\draw[red, thin] (32,0) to (31,-1.732050808);
\draw[red, thin] (31,-1.732050808) to (29,-1.732050808);
\draw[red, thin] (29,-1.732050808) to (28,0);

\draw[blue, thin] (28.5,0.8660254) to (26.5,0.8660254);
\draw[blue, thick, dotted] (27,1.732050808) to (26,0);
\draw[blue, thin] (26.5,0.8660254) to (25,-1.732050808);
\draw[blue, thin] (25,-1.732050808) to (27,-5.196152423);
\draw[blue, thin] (27,-5.196152423) to (33,-5.196152423);
\draw[blue, thin] (33,-5.196152423) to (35,-1.732050808);
\draw[blue, thin] (35,-1.732050808) to (33.5,0.86602540);
\draw[blue, thin] (33.5,0.86602540) to (33,1.732050808);
\draw[blue, thin] (33,1.732050808) to (31,1.732050808);




\draw[black, thin] (27.4,0.8660254) to (27.5,1.6);
\draw (27.6,2.4) node{$\tilde{l}_2^A$};

\draw[black, thin] (32,1.7320508) to (32,2.5);
\draw (32,3.2) node{$l_1^A$};

\draw (26,0) node{$\bullet$};

\draw (26,-3.46410161) node{$\bullet$};

\draw (28,-5.196152423) node{$\bullet$};

\draw (34,-3.46410161) node{$\bullet$};

\draw (33.5,0.86602540) node{$\bullet$};

\draw (34,-7) node{$\hookleftarrow$};



\draw[red, thin] (0,-10.3923048) to (-0.5,-9.52627944);
\draw[red, thick, dotted] (0,-10.3923048) to (0.5,-9.52627944);
\draw[red, thin] (0.5,-9.52627944) to (1,-8.66025403);
\draw[red, thin] (1,-8.66025403) to (3,-8.66025403);
\draw[red, thick, dotted] (3,-8.66025403) to (4,-10.3923048);
\draw[red, thin] (4,-10.3923048) to (5,-8.66025403);
\draw[red, thin] (4,-10.3923048) to (3,-12.124355);
\draw[red, thin] (3,-12.124355) to (1,-12.124355);
\draw[red, thin] (1,-12.124355) to (0,-10.3923048);

\draw[red, thin] (5,-8.66025403) to (3,-8.66025403);


\draw[red, thin] (0.5,-9.52627944) to (-0.5,-9.52627944);

\draw[blue, thin] (-0.5,-9.52627944) to (-1.5,-9.52627944);
\draw[blue, thin] (-1.5,-9.52627944) to (-3,-12.124355);
\draw[blue, thin] (-3,-12.124355) to (-1,-15.58845726);
\draw[blue, thin] (-1,-15.58845726) to (5,-15.58845726);
\draw[blue, thin] (5,-15.58845726) to (7,-12.124355);
\draw[blue, thin] (7,-12.124355) to (5.5,-9.52627944);
\draw[blue, thin] (5.5,-9.52627944) to (5,-8.66025403);




\draw[red, thin] (13,-10.3923048) to (12.5,-9.52627944);
\draw[red, thick, dotted] (13,-10.3923048) to (13.5,-9.52627944);
\draw[red, thin] (12.5,-9.52627944) to (13,-8.66025403);

\draw[red, thin] (13,-8.66025403) to (16,-8.66025403);
\draw[red, thick, dotted] (16,-8.66025403) to (17,-10.3923048);
\draw[red, thin] (17,-10.3923048) to (18,-8.66025403);
\draw[red, thin] (17,-10.3923048) to (16,-12.124355);
\draw[red, thin] (16,-12.124355) to (14,-12.124355);
\draw[red, thin] (14,-12.124355) to (13,-10.3923048);

\draw[red, thin] (18,-8.66025403) to (16,-8.66025403);



\draw[blue, thin] (12.5,-9.52627944) to (11.5,-9.52627944);
\draw[blue, thin] (11.5,-9.52627944) to (10,-12.124355);
\draw[blue, thin] (10,-12.124355) to (12,-15.58845726);
\draw[blue, thin] (12,-15.58845726) to (18,-15.58845726);
\draw[blue, thin] (18,-15.58845726) to (20,-12.124355);
\draw[blue, thin] (20,-12.124355) to (18.5,-9.52627944);
\draw[blue, thin] (18.5,-9.52627944) to (18,-8.66025403);



\draw[red, thin] (26,-10.3923048) to (25.5,-9.52627944);
\draw[red, thin] (25.5,-9.52627944) to (26,-8.66025403);
\draw[red, thin] (26,-8.66025403) to (29,-8.66025403);
\draw[red, thin] (30,-8.66025403) to (28,-12.124355);
\draw[red, thick, dotted] (29,-12.124355) to (28,-12.124355);
\draw[red, thin] (28,-12.124355) to (27,-12.124355);
\draw[red, thin] (27,-12.124355) to (26,-10.3923048);
\draw[red, thin] (31,-8.66025403) to (29,-8.66025403);

\draw[blue, thin] (25.5,-9.52627944) to (24.5,-9.52627944);
\draw[blue, thin] (24.5,-9.52627944) to (23,-12.124355);
\draw[blue, thin] (23,-12.124355) to (25,-15.58845726);
\draw[blue, thin] (25,-15.58845726) to (31,-15.58845726);
\draw[blue, thin] (31,-15.58845726) to (33,-12.124355);
\draw[blue, thin] (33,-12.124355) to (31.5,-9.52627944);
\draw[blue, thin] (31.5,-9.52627944) to (31,-8.66025403);

\end{tikzpicture}

\caption{\label{fig:OneAgreeingLineConstruction}}

\end{figure}

We no longer have $A^{\varhexagon}\setminus B^{\varhexagon}$, so we call this new set $A_1$. Notice that we started with the two sets $A^{\varhexagon}\setminus B^{\varhexagon}$ and $B^{\varhexagon}$, which is of the correct form to belong to $\mathfs{F}_{\alpha'}$ for some $\alpha\in(0,1]$, but the volumes may be incorrect. Thus far, we have only adjusted the lengths of the line segments of $\left(\partial A^{\varhexagon}\right)\setminus B^{\varhexagon}$, which means that $\left(\tilde{A},B^{\varhexagon}\right)$ is of the correct form to belong to $\mathfs{F}_{\alpha''}$ for some $\alpha''\in(0,1]$, but again, the volumes may not be correct. We may now need to adjust the volume of either or both of $\tilde{A}$ and $B^{\varhexagon}$ so that their volume ratio is $\alpha$. If our process to achieve this consists of increasing some of the existing line segments and decreasing the others, then we will have constructed a set that is in $\mathfs{F}_{\alpha}$, which is to say that not only will our sets have the correct volume ratio, they will also be of the correct form. 

Before beginning the volume adjustments, we want to ensure that our final configuration is such that there are at most three sides of $\partial\tilde{B}$ that form joint boundary with $\tilde{A}$ at points other than $\pi_1\left(\tilde{A},\tilde{B}\right)$ and $\pi_2\left(\tilde{A},\tilde{B}\right)$. To do this, we reorient any of $\Lambda_{\smallsearrow}\left(i_{\smallsearrow}^B\right)\cap\partial B^{\varhexagon}$ that is joint boundary so that one of its endpoints does not change, and so that it is contained in $\Lambda_{\smallnearrow}\left(s_{\smallnearrow}^B\right)$. This is demonstrated in the lefthand side of the second row of Figure \ref{fig:OneAgreeingLineConstruction}. We then reorient any of $\Lambda_{\smallnearrow}\left(i_{\smallnearrow}^B\right)\cap\partial B^{\varhexagon}$ that is joint boundary so that it is contained in $\Lambda_{\smallsearrow}\left(s_{\smallsearrow}^B\right)$. This is also demonstrated in the lefthand side of the second row of Figure \ref{fig:OneAgreeingLineConstruction}. This has altered $B^{\varhexagon}$, and so we call this new set $B_1$. It has also altered $A_1$, and so we call this new set $A_2$. Then, to ensure that our final configuration is of the correct form, we replace $B_1$ with $B_1^{\varhexagon}$. See the middle configuration of the second row of Figure \ref{fig:OneAgreeingLineConstruction}.

To adjust the volumes to ensure we have one set of volume $1$ and the other of volume $\alpha$, we can move the right side of $B_1$ to the left until we have a set of the correct volume. See the righthand side of Figure \ref{fig:OneAgreeingLineConstruction}. We can then easily adjust the volume of $A_2$ by moving the right sides to the left and the bottom side up.

Now, we come to the configuration when two of the lines used to form $A^{\varhexagon}$ are the same as the corresponding two lines used to form $B^{\varhexagon}$. We may assume that the two lines in question are $\Lambda_{\smallrightarrow}\left(i_{\smallrightarrow}^A\right)=\Lambda_{\smallrightarrow}\left(i_{\smallrightarrow}^B\right)$ and $\Lambda_{\smallsearrow}\left(i_{\smallsearrow}^B\right)=\Lambda_{\smallsearrow}\left(i_{\smallsearrow}^A\right)$. See Figure \ref{fig:TwoAgreeingLinesBeginning}:

\begin{figure}

\begin{tikzpicture}[scale=0.24]


\draw[blue, thin] (11,0) to (12,1.732050808);
\draw[blue, thin] (12,1.732050808) to (14,1.732050808);
\draw[blue, thin] (14,1.732050808) to (15,0);
\draw[blue, thin]  (15,0) to (14,-1.732050808);
\draw[blue, thin] (14,-1.732050808) to (12,-1.732050808);
\draw[blue, thin] (12,-1.732050808) to (11,0);

\draw[red, thin] (12,1.732050808) to (10,1.732050808);
\draw[red, thin] (10,1.732050808) to (8,-1.732050808);
\draw[red, thin] (8,-1.732050808) to (10,-5.196152423);
\draw[red, thin] (10,-5.196152423) to (16,-5.196152423);
\draw[red, thin] (16,-5.196152423) to (17,-3.46411616);
\draw[red, thin] (17,-3.46411616) to (15,0);


\end{tikzpicture}

\caption{\label{fig:TwoAgreeingLinesBeginning}}

\end{figure}

We begin, as always, by replacing $B$ with $B^{\varhexagon}$. Then, we consider the sides of $\partial A^{\varhexagon}$ that do not intersect $B^{\varhexagon}$. Since $\partial A$ must touch each of these line segments in at least one point, the path in $\partial A^{\varhexagon}$ that connects them (that doesn't intersect $B^{\varhexagon}$) can be no longer than the path in $\partial A$ that connects them (and doesn't intersect $B^{\varhexagon}$). See the lefthand side of Figure \ref{fig:TwoAgreeingLines}:

\begin{figure}

\begin{tikzpicture}[scale=0.3]


\draw[red, thin] (3,0) to (4,1.732050808);
\draw[red, thin] (4,1.732050808) to (6,1.732050808);
\draw[red, thin] (6,1.732050808) to (7,0);
\draw[red, thin]  (7,0) to (6,-1.732050808);
\draw[red, thin] (6,-1.732050808) to (4,-1.732050808);
\draw[red, thin] (4,-1.732050808) to (3,0);

\draw[blue, thick, dotted] (4,1.732050808) to (2,1.732050808);
\draw[blue, thick, dotted] (2,1.732050808) to (1,0);
\draw[blue, thin] (1,0) to (0,-1.732050808);
\draw[blue, thin] (0,-1.732050808) to (2,-5.196152423);
\draw[blue, thin] (2,-5.196152423) to (8,-5.196152423);
\draw[blue, thin] (8,-5.196152423) to (8.5,-4.3301270189);
\draw[blue, thick, dotted] (8.5,-4.3301270189) to (9,-3.46411616);
\draw[blue, thick, dotted] (9,-3.46411616) to (7,0);


\draw[black, thin] (3,1.7320508) to (3.1,2.6);
\draw (3.5,3.1) node{$l_1^A$};

\draw[black, thin] (8,-1.7320508) to (9,-1.7320508);
\draw (10,-1.7320508) node{$l_2^A$};

\draw (1,0) node{$\bullet$};
\draw[black, thin] (1,0) to (0,0);
\draw (-1.8,0) node{$p\left(i_{\smallnearrow}^A\right)$};

\draw (1,-3.46410161) node{$\bullet$};
\draw[black, thin] (1,-3.46410161) to (0,-3.46410161);
\draw (-1.8,-3.46410161) node{$p\left(s_{\smallsearrow}^A\right)$};

\draw (3,-5.196152423) node{$\bullet$};
\draw[black, thin] (3,-5.196152423) to (3,-6.1);
\draw (3,-7) node{$p\left(s_{\smallrightarrow}^A\right)$};

\draw (8.5,-4.3301270189) node{$\bullet$};
\draw[black, thin] (8.5,-4.3301270189) to (9.5,-4.3301270189);
\draw (11.5,-4.3301270189) node{$p\left(s_{\smallnearrow}^A\right)$};

\draw[->] (14,-2) to (15,-2);


\draw[red, thin] (19,0) to (20,1.732050808);
\draw[red, thin] (20,1.732050808) to (22,1.732050808);
\draw[red, thin] (22,1.732050808) to (23,0);
\draw[red, thin]  (23,0) to (22,-1.732050808);
\draw[red, thin] (22,-1.732050808) to (20,-1.732050808);
\draw[red, thin] (20,-1.732050808) to (19,0);

\draw[blue, thick, dotted] (19.5,0.8660254038) to (17.5,0.8660254038);
\draw[blue, thick, dotted] (18,1.732050808) to (17,0);
\draw[blue, thin] (17,0) to (16,-1.732050808);
\draw[blue, thin] (16,-1.732050808) to (18,-5.196152423);
\draw[blue, thin] (18,-5.196152423) to (24,-5.196152423);
\draw[blue, thin] (24,-5.196152423) to (24.5,-4.3301270189);
\draw[blue, thick, dotted] (24.5,-4.3301270189) to (25,-3.46411616);
\draw[blue, thick, dotted] (24.5,-4.330127019) to (22.5,-0.8660254038);


\draw[black, thin] (3,1.7320508) to (3.1,2.6);
\draw (3.5,3.1) node{$l_1^A$};

\draw[black, thin] (8,-1.7320508) to (9,-1.7320508);
\draw (10,-1.7320508) node{$l_2^A$};

\draw (17,0) node{$\bullet$};

\draw (17,-3.46410161) node{$\bullet$};

\draw (19,-5.196152423) node{$\bullet$};

\draw (24.5,-4.3301270189) node{$\bullet$};

\draw[black, thin] (19,0.8660254038) to (19.2,1.6);
\draw (19.3,2.5) node{$\tilde{l}_1^A$};

\draw (18.3,0.8660254038) node{$\cdot$};
\draw[black, thin] (18.3,0.8660254038) to (18.7,-0.4);
\draw (18.7,-0.8) node{$p_1$};

\draw[black, thin] (23,-1.7320508) to (24,-1.7320508);
\draw (25,-1.7320508) node{$\tilde{l}_2^A$};

\draw (23.5,-2.5980762114) node{$\cdot$};
\draw[black, thin] (23.5,-2.5980762114) to (23,-3.2);
\draw (22.6,-3.5) node{$p_2$};


\draw[->] (27,-2) to (28,-2);


\draw[red, thin] (32,0) to (33,1.732050808);
\draw[red, thin] (33,1.732050808) to (35,1.732050808);
\draw[red, thin] (35,1.732050808) to (36,0);
\draw[red, thin]  (36,0) to (35,-1.732050808);
\draw[red, thin] (35,-1.732050808) to (33,-1.732050808);
\draw[red, thin] (33,-1.732050808) to (32,0);

\draw[blue, thin] (33,1.7320508) to (31,1.7320508);
\draw[blue, thin] (31,1.732050808) to (30,0);
\draw[blue, thin] (30,0) to (29,-1.732050808);
\draw[blue, thin] (29,-1.732050808) to (31,-5.196152423);
\draw[blue, thin] (31,-5.196152423) to (37,-5.196152423);
\draw[blue, thin] (37,-5.196152423) to (37.5,-4.3301270189);
\draw[blue, thick, dotted] (37.5,-4.3301270189) to (38,-3.46411616);
\draw[blue, thin] (37.5,-4.330127019) to (35.5,-0.8660254038);


\draw (30,0) node{$\bullet$};

\draw (30,-3.46410161) node{$\bullet$};

\draw (32,-5.196152423) node{$\bullet$};

\draw (37.5,-4.3301270189) node{$\bullet$};

\draw[black, thin] (19,0.8660254038) to (19.2,1.6);
\draw (19.3,2.5) node{$\tilde{l}_1^A$};

\draw (18.3,0.8660254038) node{$\cdot$};
\draw[black, thin] (18.3,0.8660254038) to (18.7,-0.4);
\draw (18.7,-0.8) node{$p_1$};

\draw[black, thin] (23,-1.7320508) to (24,-1.7320508);
\draw (25,-1.7320508) node{$\tilde{l}_2^A$};

\draw (23.5,-2.5980762114) node{$\cdot$};
\draw[black, thin] (23.5,-2.5980762114) to (23,-3.2);
\draw (22.6,-3.5) node{$p_2$};

\end{tikzpicture}

\caption{\label{fig:TwoAgreeingLines}}

\end{figure}

We then move $l_1^A$ down and to the left until it intersects a point in $\partial A$. Similarly, we move $l_2^A$ until it intersects a point of $\partial A$. These two adjusted lines we call $\tilde{l}_1^A$ and $\tilde{l}_2^A$. Let $\delta_1$ be the distance between $l_1^A$ and $\tilde{l}_2^A$, and $\delta_2$ be the distance between $l_2^A$ and $\tilde{l}_2^A$. Suppose, without loss of generality, that $\delta_1\leq\delta_2$. Let $\Lambda$ be the horizontal line passing through $\tilde{l}_1^A$. Then between $\Lambda_{\smallrightarrow}\left(i_{\smallrightarrow}^A\right)$ and $\Lambda$ there must be two crossings in $\partial A$. There must also be two paths in $\partial B$ that pass between these two lines. This makes a total of four such crossings. Accounting for possible joint boundary, this leaves at least three separate crossings between $\Lambda$ and $\Lambda_{\smallrightarrow}\left(i_{\smallrightarrow}^A\right)$. In order to form $B^{\varhexagon}$, only two such crossings are needed, which means that we have at least one more path with length $\delta_1$ that we have not yet counted. Therefore, there is enough uncounted boundary to be able to return $\tilde{l}_1^A$ to its original position and regain $l_1^A$. This process is demonstrated in Figure \ref{fig:TwoAgreeingLines}. 

Now, there is a path that is partially in $a_2$ and partially in $\tilde{l}_2^A$ that forms a geodesic from $p\left(s_{\smallnearrow}^A\right)$ to $p_2$ which can be no longer than the path in $\partial A$ that connects these two points. Similarly, there is a path in $\tilde{l}_2^A$ that connects $p_2$ to $\Lambda_{\smallnearrow}\left(s_{\smallnearrow}^B\right)$ which can be no longer than the path in $\partial A$ that connects this same point and line. Adding these two geodesics can be seen in the rightmost configuration of Figure \ref{fig:TwoAgreeingLines}. While this process does not alter $B^{\varhexagon}$, we may have changed $A^{\varhexagon}\setminus B^{\varhexagon}$. So, we name this new set $A_1$. 

Before we adjust the volumes, we want to ensure that our final configuration, call it $\left(\tilde{A},\tilde{B}\right)$, has the property that at most three sides of $\partial\tilde{B}$ form joint boundary with $\partial\tilde{A}\setminus\left(\pi_1\left(\tilde{A},\tilde{B}\right)\cup\pi_2\left(\tilde{A},\tilde{B}\right)\right)$. To do this, we first reorient any of $\Lambda_{\smallnearrow}\left(i_{\smallnearrow}^B\right)\cap\partial B^{\varhexagon}$ that is joint boundary so that it is contained in $\Lambda_{\smallsearrow}\left(s_{\smallsearrow}^B\right)$. Similarly, we reorient any of $\Lambda_{\smallnearrow}\left(s_{\smallnearrow}^B\right)\cap\partial B^{\varhexagon}$ that is joint boundary so that it is contained in $\Lambda_{\smallrightarrow}\left(s_{\smallrightarrow}^B\right)$. This process can be seen in the first two configurations of Figure \ref{fig:TwoAgreeingLinesVolAdj}. This process changes $B^{\varhexagon}$, and so we call this new set $B_1$. It also changes $A_1$, and so we call this new set $A_2$. Then, replace $B_1$ with $B_1^{\varhexagon}$. 

Now, we move the left side of $B_1$, which for simplicity we will call $l_1^{B_1}$, to the right. If instead $\delta_2\leq\delta_1$, we would move $\Lambda_{\smallnearrow}\left(s_{\smallnearrow}^B\right)$ to the left to reduce the volume of $B_1$. However, here we move $l_1^{B_1}$ to the right until we have a new set $B_2$ that either has the correct volume of $\alpha$, or until $l_1^{B_1}$ is collinear with $\tilde{l}_2^A$. If, once $l_1^{B_1}$ is collinear with $\tilde{l}_1^A$, the volume of $B_2$ is still greater than $\alpha$, we can move $\Lambda_{\smallsearrow}\left(s_{\smallnearrow}^B\right)\cap\partial B_2$ to the left until we have a set $\tilde{B}$ of volume $\alpha$. Figure \ref{fig:TwoAgreeingLinesVolAdj} demonstrates this process. We have changed the set $A_2$, and so we call this set $A_3$. To reduce the volume of $A_3$ so that it has volume $1$, we can, if necessary, move its bottom side up until it is collinear with the bottom side of $\tilde{B}$. If we still need to reduce volume, we can then move its left sides to the right until we have a set $\tilde{A}$ of volume $1$.

\begin{figure}

\begin{tikzpicture}[scale=0.3]


\draw[red, thin] (3,0) to (4,1.732050808);
\draw[red, thin] (4,1.732050808) to (6,1.732050808);
\draw[red, thin] (6,1.732050808) to (7,0);
\draw[red, thin]  (7,0) to (6,-1.732050808);
\draw[red, thin] (6,-1.732050808) to (4,-1.732050808);
\draw[red, thin] (4,-1.732050808) to (3,0);

\draw[blue, thin] (4,1.7320508) to (2,1.7320508);
\draw[blue, thin] (2,1.732050808) to (1,0);
\draw[blue, thin] (1,0) to (0,-1.732050808);
\draw[blue, thin] (0,-1.732050808) to (2,-5.196152423);
\draw[blue, thin] (2,-5.196152423) to (8,-5.196152423);
\draw[blue, thin] (8,-5.196152423) to (8.5,-4.3301270189);
\draw[blue, thick, dotted] (8.5,-4.3301270189) to (9,-3.46411616);
\draw[blue, thin] (8.5,-4.330127019) to (6.5,-0.8660254038);

\draw (1.8,-1.4) node{$A_1$};
\draw (5,0.5) node{$B^{\varhexagon}$};



\draw[red, thick, dotted] (16,0) to (17,1.732050808);
\draw[red, thin] (16,0) to (15,1.7320508);
\draw[red, thin] (15,1.7320508) to (17,1.7320508);
\draw[red, thin] (17,1.732050808) to (19,1.732050808);
\draw[red, thin] (19,1.732050808) to (20,0);
\draw[red, thin] (20,0) to (19.5,-0.86602540);
\draw[red, thin] (19.5,-0.86602540) to (20,-1.7320508);
\draw[red, thin] (20,-1.7320508) to (19,-1.7320508);

\draw[red, thin] (19,-1.732050808) to (17,-1.732050808);
\draw[red, thin] (17,-1.732050808) to (16,0);

\draw[blue, thin] (15,1.732050808) to (14,0);
\draw[blue, thin] (14,0) to (13,-1.732050808);
\draw[blue, thin] (13,-1.732050808) to (15,-5.196152423);
\draw[blue, thin] (15,-5.196152423) to (21,-5.196152423);
\draw[blue, thin] (21,-5.196152423) to (21.5,-4.3301270189);
\draw[blue, thick, dotted] (21.5,-4.3301270189) to (22,-3.46411616);
\draw[blue, thin] (21.5,-4.330127019) to (20,-1.7320508);

\draw (14.6,-1.4) node{$A_2$};
\draw (17.6,0.5) node{$B_1$};


\draw (14,0) node{$\bullet$};

\draw (14,-3.46410161) node{$\bullet$};

\draw (16,-5.196152423) node{$\bullet$};

\draw (21.5,-4.3301270189) node{$\bullet$};






\draw[red, thin] (29,0) to (28,1.7320508);
\draw[red, thin] (28,1.7320508) to (30,1.7320508);
\draw[red, thin] (30,1.732050808) to (32,1.732050808);
\draw[red, thin] (32,1.732050808) to (33,0);
\draw[red, thin] (33,0) to (33.5,-0.86602540);
\draw[red, thin] (33.5,-0.86602540) to (33,-1.7320508);
\draw[red, thick, dotted] (33,0) to (32.5,-0.86602540);
\draw[red, thick, dotted] (32.5,-0.86602540) to (33,-1.7320508);
\draw[red, thin] (33,-1.7320508) to (32,-1.7320508);

\draw[red, thin] (32,-1.732050808) to (30,-1.732050808);
\draw[red, thin] (30,-1.732050808) to (29,0);

\draw[blue, thin] (28,1.732050808) to (27,0);
\draw[blue, thin] (27,0) to (26,-1.732050808);
\draw[blue, thin] (26,-1.732050808) to (28,-5.196152423);
\draw[blue, thin] (28,-5.196152423) to (34,-5.196152423);
\draw[blue, thin] (34,-5.196152423) to (34.5,-4.3301270189);
\draw[blue, thick, dotted] (34.5,-4.3301270189) to (35,-3.46411616);
\draw[blue, thin] (34.5,-4.330127019) to (33,-1.7320508);

\draw (28.6,-1.4) node{$A_2$};
\draw (30.6,0.5) node{$B_1^{\varhexagon}$};



\draw[red, thin] (5,-13.8564064) to (3,-10.39230484);
\draw[red, thin] (2,-10.39230484) to (6,-10.39230484);
\draw[red, thin] (6,-10.39230484) to (7,-12.1243556);
\draw[red, thin] (7,-12.1243556) to (7.5,-12.99038105);
\draw[red, thin] (7.5,-12.99038105) to (7,-13.8564064);
\draw[red, thin] (7,-13.8564064) to (5,-13.8564064);
\draw[red, thick, dotted] (5,-13.8564064) to (4,-13.8564064);


\draw[blue, thin] (2,-10.39230484) to (1,-12.1243556);
\draw[blue, thin] (1,-12.1243556) to (0,-13.8564064);
\draw[blue, thin] (0,-13.8564064) to (2,-17.3205080);
\draw[blue, thin] (2,-17.3205080) to (8,-17.3205080);
\draw[blue, thin] (8,-17.3205080) to (8.5,-16.4544826);

\draw[blue, thin] (8.5,-16.4544826) to (7,-13.8564064);



\draw[red, thin] (20,-13.8564064) to (18,-10.39230484);
\draw[red, thin] (18,-10.39230484) to (19,-10.39230484);
\draw[red, thin] (19,-10.39230484) to (20,-12.1243556);
\draw[red, thin] (20,-12.1243556) to (20.5,-12.99038105);
\draw[red, thin] (20.5,-12.99038105) to (20,-13.8564064);
\draw[red, thick, dotted] (20,-13.8564064) to (17,-13.8564064);


\draw[blue, thin] (15,-10.39230484) to (18,-10.39230484);

\draw[blue, thin] (15,-10.39230484) to (14,-12.1243556);
\draw[blue, thin] (14,-12.1243556) to (13,-13.8564064);
\draw[blue, thin] (13,-13.8564064) to (15,-17.3205080);
\draw[blue, thin] (15,-17.3205080) to (21,-17.3205080);
\draw[blue, thin] (21,-17.3205080) to (21.5,-16.4544826);

\draw[blue, thin] (21.5,-16.4544826) to (20,-13.8564064);

\draw (15,-13) node{$A_3$};
\draw (18.9,-11.2) node{$\scaleobj{0.5}{B_2}$};



\draw[red, thin] (33,-13.8564064) to (31,-10.39230484);
\draw[red, thin] (31,-10.39230484) to (32,-10.39230484);
\draw[red, thin] (32,-10.39230484) to (33,-12.1243556);
\draw[red, thin] (33,-12.1243556) to (33.5,-12.99038105);
\draw[red, thin] (33.5,-12.99038105) to (33,-13.8564064);
\draw[red, thick, dotted] (33,-13.8564064) to (30,-13.8564064);


\draw[blue, thin] (28,-10.39230484) to (31,-10.39230484);

\draw[blue, thin] (28,-10.39230484) to (26,-13.8564064);
\draw[blue, thick, dotted] (26,-13.8564064) to (28,-17.3205080);

\draw[blue, thin] (26,-13.8564064) to (33,-13.8564064);

\draw[blue, thick, dotted] (34,-17.3205080) to (34.5,-16.4544826);

\draw[blue, thick, dotted] (34.5,-16.4544826) to (33,-13.8564064);


\end{tikzpicture}

\caption{\label{fig:TwoAgreeingLinesVolAdj}}

\end{figure}

Next, we consider when three of the lines used to form $A^{\varhexagon}$ are the same as the corresponding lines used to form $B^{\varhexagon}$. Without loss of generality, we may assume that the three lines in question are $\Lambda_{\smallrightarrow}\left(i_{\smallrightarrow}^A\right)=\Lambda_{\smallrightarrow}\left(i_{\smallrightarrow}^B\right)$, $\Lambda_{\smallsearrow}\left(i_{\smallsearrow}^A\right)=\Lambda_{\smallsearrow}\left(i_{\smallsearrow}^B\right)$, and $\Lambda_{\smallnearrow}\left(s_{\smallnearrow}^A\right)=\Lambda_{\smallnearrow}\left(s_{\smallnearrow}^B\right)$. See Figure \ref{fig:TwoAgreeingLinesProcess}:

\begin{figure}

\begin{tikzpicture}[scale=0.4]


\draw[red, thin] (3,0) to (4,1.732050808);
\draw[red, thin] (4,1.732050808) to (6,1.732050808);
\draw[red, thin] (6,1.732050808) to (7,0);
\draw[red, thin]  (7,0) to (6,-1.732050808);
\draw[red, thin] (6,-1.732050808) to (4,-1.732050808);
\draw[red, thin] (4,-1.732050808) to (3,0);

\draw[blue, thin] (4,1.732050808) to (2,1.732050808);
\draw[blue, thin] (2,1.732050808) to (0,-1.732050808);
\draw[blue, thin] (0,-1.732050808) to (2,-5.196152423);
\draw[blue, thin] (2,-5.196152423) to (4,-5.196152423);
\draw[blue, thin] (6,-1.732050808) to (4,-5.196152423);

\draw (2.8,-2) node{$\scaleobj{0.5}{A^{\varhexagon}\setminus B^{\varhexagon}}$};
\draw (4.8,0) node{$\scaleobj{0.5}{B^{\varhexagon}}$};

\draw[->] (7.5,-1) to (8.2,-1);



\draw[red, thin] (13,0) to (14,1.732050808);
\draw[red, thin] (14,1.732050808) to (16,1.732050808);
\draw[red, thin] (16,1.732050808) to (17,0);
\draw[red, thin] (17,0) to (16,-1.732050808);
\draw[red, thin] (16,-1.732050808) to (14,-1.732050808);
\draw[red, thin] (14,-1.732050808) to (13,0);

\draw[blue, thick, dotted] (14,1.732050808) to (12,1.732050808);
\draw[blue, thick, dotted] (12,1.7320508) to (11.5,0.86602540);
\draw[blue, thin] (11.5,0.86602540) to (10,-1.732050808);
\draw[blue, thin] (10,-1.732050808) to (12,-5.196152423);
\draw[blue, thin] (12,-5.196152423) to (13,-5.196152423);
\draw[blue, thick, dotted] (13,-5.196152423) to (14,-5.196152423);
\draw[blue, thick, dotted] (16,-1.732050808) to (14,-5.196152423);

\draw (11.5,0.86602540) node{$\bullet$};
\draw[black, thin] (11.5,0.86602540) to (11,0.9);
\draw (10,1) node{$\scaleobj{0.7}{p\left(i_{\smallnearrow}^A\right)}$};

\draw (11,-3.46410161) node{$\bullet$};
\draw[black, thin] (11,-3.46410161) to (10,-4);
\draw (8.9,-4.2) node{$\scaleobj{0.7}{p\left(s_{\smallsearrow}^A\right)}$};

\draw (13,-5.196152423) node{$\bullet$};
\draw[black, thin] (13,-5.196152423) to (13,-6);
\draw (13,-6.6) node{$\scaleobj{0.7}{p\left(s_{\smallrightarrow}^A\right)}$};

\draw (13,2.4) node{$\scaleobj{0.7}{l_1^A}$};

\draw (15.6,-3.5) node{$\scaleobj{0.7}{l_2^A}$};

\draw[->] (17.5,-1) to (18.2,-1);



\draw[red, thin] (23,0) to (24,1.732050808);
\draw[red, thin] (24,1.732050808) to (26,1.732050808);
\draw[red, thin] (26,1.732050808) to (27,0);
\draw[red, thin] (27,0) to (26,-1.732050808);
\draw[red, thin] (26,-1.732050808) to (24,-1.732050808);
\draw[red, thin] (24,-1.732050808) to (23,0);

\draw[blue, thick, dotted] (23.5,0.86602540) to (21.5,0.86602540);
\draw[blue, thick, dotted] (22,1.7320508) to (21.5,0.86602540);
\draw[blue, thin] (21.5,0.86602540) to (20,-1.732050808);
\draw[blue, thin] (20,-1.732050808) to (22,-5.196152423);
\draw[blue, thin] (22,-5.196152423) to (23,-5.196152423);
\draw[blue, thick, dotted] (23,-5.196152423) to (24,-5.196152423);
\draw[blue, thick, dotted] (25,-1.732050808) to (23,-5.196152423);

\draw (21.5,0.86602540) node{$\bullet$};
\draw[black, thin] (21.5,0.86602540) to (21,0.9);
\draw (19.6,1) node{$\scaleobj{0.6}{p_1=p\left(i_{\smallnearrow}^A\right)}$};

\draw (21,-3.46410161) node{$\bullet$};
\draw[black, thin] (21,-3.46410161) to (20,-4);
\draw (18.9,-4.2) node{$\scaleobj{0.6}{p\left(s_{\smallsearrow}^A\right)}$};

\draw (23,-5.196152423) node{$\bullet$};
\draw[black, thin] (23,-5.196152423) to (23,-6);
\draw (23,-6.6) node{$\scaleobj{0.6}{p_2=p\left(s_{\smallrightarrow}^A\right)}$};

\draw[black, thin] (22.8,2) to (23,0.866);
\draw (23,2.22) node{$\scaleobj{0.7}{\tilde{l}_1^A}$};

\draw[black, thin] (24.6,-3.5) to (24,-3.5);
\draw (24.9,-3.5) node{$\scaleobj{0.7}{\tilde{l}_2^A}$};

\draw[->] (27.5,-1) to (28.2,-1);



\draw[red, thin] (33,0) to (34,1.732050808);
\draw[red, thin] (34,1.732050808) to (36,1.732050808);
\draw[red, thin] (36,1.732050808) to (37,0);
\draw[red, thin] (37,0) to (36,-1.732050808);
\draw[red, thin] (36,-1.732050808) to (34,-1.732050808);
\draw[red, thin] (34,-1.732050808) to (33,0);

\draw[blue, thin] (34,1.7320508) to (32,1.7320508);
\draw[blue, thin] (32,1.7320508) to (30,-1.732050808);
\draw[blue, thin] (30,-1.732050808) to (32,-5.196152423);
\draw[blue, thin] (32,-5.196152423) to (33,-5.196152423);
\draw[blue, thick, dotted] (33,-5.196152423) to (34,-5.196152423);
\draw[blue, thin] (35,-1.732050808) to (33,-5.196152423);

\draw (32.8,-2) node{$\scaleobj{0.5}{A_1}$};
\draw (34.8,0) node{$\scaleobj{0.5}{B^{\varhexagon}}$};



\draw[red, thin] (5,-13.8564064) to (6.5,-11.2583302);
\draw[red, thin] (6,-10.39230484) to (7,-12.1243556);
\draw[red, thin] (7,-12.1243556) to (6,-13.8564064);
\draw[red, thin] (6,-13.8564064) to (4,-13.8564064);
\draw[red, thick, dotted] (4,-13.8564064) to (3,-12.1243556);

\draw[blue, thin] (6,-10.39230484) to (2,-10.39230484);
\draw[blue, thin] (2,-10.39230484) to (0,-13.8564064);
\draw[blue, thin] (0,-13.8564064) to (2,-17.3205080);
\draw[blue, thin] (2,-17.3205080) to (3,-17.3205080);
\draw[blue, thin] (5,-13.8564064) to (3,-17.3205080);

\draw[->] (7.8,-14) to (8.4,-14);



\draw[red, thin] (15.5,-13.8564064) to (16.75,-11.6913429);
\draw[red, thin] (16,-10.39230484) to (17,-12.1243556);
\draw[red, thin] (17,-12.1243556) to (16,-13.8564064);
\draw[red, thin] (16,-13.8564064) to (14,-13.8564064);
\draw[red, thick, dotted] (14,-13.8564064) to (13,-12.1243556);

\draw[blue, thin] (16,-10.39230484) to (12,-10.39230484);
\draw[blue, thin] (12,-10.39230484) to (10,-13.8564064);
\draw[blue, thin] (10,-13.8564064) to (12,-17.3205080);
\draw[blue, thin] (12,-17.3205080) to (13,-17.3205080);
\draw[blue, thin] (15,-13.8564064) to (13,-17.3205080);

\draw[->] (17.8,-14) to (18.4,-14);



\draw[red, thin] (25.5,-13.8564064) to (26.75,-11.6913429);
\draw[red, thin] (26,-10.39230484) to (27,-12.1243556);
\draw[red, thin] (27,-12.1243556) to (26,-13.8564064);
\draw[red, thin] (26,-13.8564064) to (24,-13.8564064);
\draw[red, thick, dotted] (24,-13.8564064) to (23,-12.1243556);

\draw[blue, thin] (26,-10.39230484) to (22,-10.39230484);
\draw[blue, thin] (22,-10.39230484) to (20,-13.8564064);
\draw[blue, thin] (20,-13.8564064) to (22,-17.3205080);
\draw[blue, thin] (22,-17.3205080) to (23.5,-17.3205080);
\draw[blue, thin] (25.5,-13.8564064) to (23.5,-17.3205080);

\draw[blue, thick, dotted] (25,-13.8564064) to (23,-17.3205080);


\end{tikzpicture}

\caption{\label{fig:TwoAgreeingLinesProcess}}

\end{figure}

First, we replace $B$ with $B^{\varhexagon}$, and then consider the sides of $A^{\varhexagon}$ that do not intersect $B^{\varhexagon}$. There must be at least one point in each of these line segments that $\partial A$ intersects, and the path in $\partial A^{\varhexagon}$ that connects them (and doesn't intersect $B^{\varhexagon}$) can be no longer than the path in $\partial A$ that connects them (and doesn't intersect $B^{\varhexagon}$). See the second configuration in the top row of Figure \ref{fig:TwoAgreeingLinesProcess}. Then, we move $l_1^A$ down and left until it intersects a point $p_1\in\partial A$. Similarly, we move $l_2^A$ to the left until it intersects a point $p_2\in\partial A$. Since we have moved $l_1^A$ and $l_2^A$, we call these two new lines $\tilde{l}_1^A$ and $\tilde{l}_2^A$, respectively. See the third configuration in the top row of Figure \ref{fig:TwoAgreeingLinesProcess}. We know that the path that is partially in $a_1$ and partially in $\tilde{l}_1^A$ that connects $p\left(i_{\smallnearrow}^A\right)$ to $p_1$, and then $p_1$ to $\Lambda_{\smallnearrow}\left(i_{\smallnearrow}^B\right)$, can be no longer than the path in $\partial A$ that first connects these same two points, and then $p_1$ to $\Lambda_{\smallnearrow}\left(i_{\smallnearrow}^B\right)$. A similar argument can be made for connecting $p\left(s_{\smallrightarrow}^A\right)$ to $\Lambda_{\smallrightarrow}\left(s_{\smallrightarrow}^B\right)$. 

Let $\delta_1$ be the distance between $l_1^A$ and $\tilde{l}_1^A$, and $\delta_2$ be the distance between $l_2^A$ and $\tilde{l}_2^A$. Suppose, without loss of generality, that $\delta_1\leq\delta_2$. In this case, let $\Lambda$ be the horizontal line that passes through $\tilde{l}_1^A$. In $\partial A$, there must be at least two paths between $\Lambda_{\smallrightarrow}\left(i_{\smallrightarrow}^A\right)$ and $\Lambda$. Similarly, there must be at least two paths in $\partial B$ between these same two lines. Accounting for possible joint boundary, this leaves at least three separate paths between $\Lambda$ and $\Lambda_{\smallrightarrow}\left(i_{\smallrightarrow}^A\right)$. To form $B^{\varhexagon}$, only two such paths are required. This means that there is a path in $\partial A$ that we have not yet counted, and that has length at least $\delta_1$. Therefore, we can move $\tilde{l}_1^A$ back to its original position and regain $l_1^A$. See the rightmost configuration in the top row of Figure \ref{fig:TwoAgreeingLinesProcess}. 

Notice that in this case, there are at most three line segments in $\partial B^{\varhexagon}$ that form joint boundary with $\partial A_1\setminus\left(\pi_1\left(A_1,B^{\varhexagon}\right)\cup\pi_2\left(A_1,B^{\varhexagon}\right)\right)$. So, we are ready to adjust the volumes, beginning with $B^{\varhexagon}$. If necessary, we move $\Lambda_{\smallnearrow}\left(i_{\smallnearrow}^B\right)\cap\partial B^{\varhexagon}$ to the right. We will call this line $l_1^B$. We move $l_1^B$ until we obtain a set $B_1$ that either has the correct volume, or until $l_1^B$ is collinear with $\tilde{l}_2^B$. If we still need to reduce the volume of $B_1$, we can continue to move $l_1^B$ to the right, but now, we also move $l_2^A$ with it. This is demonstrated in the bottom row of Figure \ref{fig:TwoAgreeingLinesProcess}. Eventually, this process will lead to, potentially, two new sets, which we call $A_1$ and $\tilde{B}$, with $\mu\left(\tilde{B}\right)=\alpha$. If we need to reduce the volume of $A_1$, we can move its bottom side up until it is collinear with the bottom side of $\tilde{B}$. If further volume adjustment is required, then we can move the left sides of $A_1$ to the right until we have a new set $\tilde{A}$ with volume $1$

Finally, we come to the possibility that $B^{\varhexagon}\subset A^{\varhexagon}$, but none of the lines used to form $A^{\varhexagon}$ are the same as the corresponding lines used to form $B^{\varhexagon}$. We begin as usual by replacing $B$ with $B^{\varhexagon}$. Consider $\partial A$ as a path $\lambda:[0,1]\rightarrow\mathbb{R}^2$. We may assume that $\lambda(0)=\lambda(1)\in\Lambda_{\smallrightarrow}\left(i_{\smallrightarrow}^A\right)$, that $\lambda$ runs clockwise in the sense that for some $t_1\geq0$ $\lambda\left(t_1\right)\in\Lambda_{\smallsearrow}\left(i_{\smallsearrow}^A\right)$, for some $t_2\geq t_1$ $\lambda\left(t_2\right)\in\Lambda_{\smallnearrow}\left(s_{\smallnearrow}^A\right)$, etc. There must be a last line segment in $\partial A^{\varhexagon}$ that $\lambda$ intersects before intersecting $B^{\varhexagon}$. Then, there must be a first line segment in $\partial A^{\varhexagon}$ that $\lambda$ intersects after the last time that $\lambda$ intersects $\partial B^{\varhexagon}$. We may assume, without loss of generality, that the first line segment in $\partial A$ that $\lambda$ intersects after the last time that it intersects $\partial B$ is $\Lambda_{\smallrightarrow}\left(i_{\smallrightarrow}^A\right)$. Therefore, the last line segment that $\lambda$ must intersect in $\partial A^{\varhexagon}$ before intersecting $B^{\varhexagon}$ is either $\Lambda_{\smallnearrow}\left(i_{\smallnearrow}^A\right)$ or $\Lambda_{\smallrightarrow}\left(i_{\smallrightarrow}^A\right)$.  See Figure \ref{fig:ZeroAgreeingLines} for illustration.

\begin{figure}[H]

\begin{tikzpicture}[scale=0.35]


\draw[blue, thin] (0,0) to (4,6.92820323);
\draw[blue, thin] (4,6.92820323) to (8,6.92820323);
\draw[blue, thin] (8,6.92820323) to (12,0);
\draw[blue, thin] (12,0) to (8,-6.92820323);
\draw[blue, thin] (8,-6.92820323) to (4,-6.92820323);
\draw[blue, thin] (4,-6.92820323) to (0,0);

\draw[red, thin] (2,1.7320508) to (3,3.4641016);
\draw[red, thin] (3,3.4641016) to (4,3.4641016);
\draw[red, thin] (4,3.4641016) to (5,1.7320508);
\draw[red, thin] (5,1.7320508) to (4,0);
\draw[red, thin] (4,0) to (3,0);
\draw[red, thin] (3,0) to (2,1.7320508);

\draw[black,thin] plot [smooth, tension=0.3] coordinates {(6,6.92820323) (6.3,6.8)(8.25,5.2)(8,4.4)(10,3.4641016)};

\draw[black, thin] plot [smooth, tension=0.3] coordinates {(10,3.4641016) (8.8,1)(9.1,-2)(9,-5.1961524)};

\draw[black, thin] plot [smooth, tension=0.3] coordinates {(9,-5.1961524) (8,-6.5)(7,-5.8)(6,-6.92820323)};

\draw[black, thin] plot [smooth, tension=0.3] coordinates {(6,-6.92820323) (5.5,-6.5)(4,-6)(2,-3.4641016)};

\draw[black, thin] plot [smooth, tension=0.3] coordinates {(2,-3.4641016) (1.8,-2.7)(0.9,1.2)(1,1.7320508)};

\draw[green, thin] plot [smooth, tension=0.3] coordinates {(1,1.7320508) (1.4,2)(2.2,0.7)(3,0)};
\draw[green] (1.75,0.7) node{$P_1$};

\draw[purple, thin] plot [smooth, tension=0.3] coordinates {(4,3.4641016) (3.7,4.2)(4.5,5.8)(5.4,6.8)(6,6.92820323)};
\draw[purple] (4.5,5) node{$P_2$};

\draw (6,6.92820323) node{$\cdot$};
\draw[black, thin] (6,6.92820323) to (6,8);
\draw (6,8.5) node{$p\left(i_{\smallrightarrow}^A\right)$};

\draw (10,3.4641016) node{$\cdot$};
\draw[black, thin] (10,3.4641016) to (12,4);
\draw (13.3,4) node{$p\left(i_{\smallsearrow}^A\right)$};

\draw (9,-5.1961524) node{$\cdot$};
\draw[black, thin] (9,-5.1961524) to (11,-6);
\draw (12.3,-6) node{$p\left(s_{\smallnearrow}^A\right)$};

\draw (6,-6.92820323) node{$\cdot$};
\draw[black, thin] (6,-6.92820323) to (6,-8);
\draw (6,-8.5) node{$p\left(s_{\smallrightarrow}^A\right)$};

\draw (2,-3.4641016) node{$\cdot$};
\draw[black, thin] (2,-3.4641016) to (0,-4);
\draw (-1.4,-4) node{$p\left(s_{\smallsearrow}^A\right)$};

\draw (1,1.7320508) node{$\cdot$};
\draw[black, thin] (1,1.7320508) to (0,2);
\draw (-1.4,2) node{$p\left(i_{\smallnearrow}^A\right)$};

\draw (3,0) node{$\cdot$};
\draw[black, thin] (3,0) to (3.1,-0.5);
\draw (3.2,-0.85) node{$\lambda\left(t_{en}\right)$};

\draw (4,3.4641016) node{$\cdot$};
\draw[black, thin] (4,3.4641016) to (5,3.5);
\draw (6,3.5) node{$\lambda\left(t_{ex}\right)$};

\draw (6,0) node{$A$};

\draw (3.4,2) node{$B^{\varhexagon}$};



\draw[blue, thin] (19,0) to (20,1.7320508);
\draw[blue, thick, dotted] (20,1.7320508) to (23,6.92820323);
\draw[blue, thick, dotted] (23,6.92820323) to (25,6.92820323);
\draw[blue, thin] (25,6.92820323) to (27,6.92820323);
\draw[blue, thin] (27,6.92820323) to (31,0);
\draw[blue, thin] (31,0) to (27,-6.92820323);
\draw[blue, thin] (27,-6.92820323) to (23,-6.92820323);
\draw[blue, thin] (23,-6.92820323) to (19,0);

\draw[red, thin] (21,1.7320508) to (22,3.4641016);
\draw[red, thin] (22,3.4641016) to (23,3.4641016);
\draw[red, thin] (23,3.4641016) to (24,1.7320508);
\draw[red, thin] (24,1.7320508) to (23,0);
\draw[red, thin] (23,0) to (22,0);
\draw[red, thin] (22,0) to (21,1.7320508);

\draw[black, thick, dotted] (21,6.92820323) to (26,-1.7320508);
\draw (21,7.3) node{$\scaleobj{0.6}{\Lambda_{\smallsearrow}\left(i_{\smallsearrow}^B\right)}$};

\draw[black, thick, dotted] (19,5.1961524) to (24,-3.46410161);
\draw (19,5.7) node{$\scaleobj{0.6}{\Lambda_{\smallsearrow}\left(s_{\smallsearrow}^B\right)}$};

\draw[black,thin] plot [smooth, tension=0.3] coordinates {(25,6.92820323) (25.3,6.8)(27.25,5.2)(27,4.4)(29,3.4641016)};

\draw[black, thin] plot [smooth, tension=0.3] coordinates {(29,3.4641016) (27.8,1)(28.1,-2)(28,-5.1961524)};

\draw[black, thin] plot [smooth, tension=0.3] coordinates {(28,-5.1961524) (27,-6.5)(26,-5.8)(25,-6.92820323)};

\draw[black, thin] plot [smooth, tension=0.3] coordinates {(25,-6.92820323) (24.5,-6.5)(23,-6)(21,-3.4641016)};

\draw[black, thin] plot [smooth, tension=0.3] coordinates {(21,-3.4641016) (20.8,-2.7)(19.9,1.2)(20,1.7320508)};

\draw[green, thin] plot [smooth, tension=0.3] coordinates {(20,1.7320508) (20.4,2)(21.2,0.7)(22,0)};
\draw[green] (20.75,0.7) node{$P_1$};

\draw[purple, thin] plot [smooth, tension=0.3] coordinates {(23,3.4641016) (22.7,4.2)(23.5,5.8)(24.4,6.8)(25,6.92820323)};
\draw[purple] (23.5,5) node{$P_2$};

\draw (25,6.92820323) node{$\cdot$};
\draw[black, thin] (25,6.92820323) to (25,8);
\draw (25,8.5) node{$p\left(i_{\smallrightarrow}^A\right)$};

\draw (29,3.4641016) node{$\cdot$};
\draw[black, thin] (29,3.4641016) to (31,4);
\draw (32.3,4) node{$p\left(i_{\smallsearrow}^A\right)$};

\draw (28,-5.1961524) node{$\cdot$};
\draw[black, thin] (28,-5.1961524) to (30,-6);
\draw (31.3,-6) node{$p\left(s_{\smallnearrow}^A\right)$};

\draw (25,-6.92820323) node{$\cdot$};
\draw[black, thin] (25,-6.92820323) to (25,-8);
\draw (25,-8.5) node{$p\left(s_{\smallrightarrow}^A\right)$};

\draw (21,-3.4641016) node{$\cdot$};
\draw[black, thin] (21,-3.4641016) to (19,-4);
\draw (17.6,-4) node{$p\left(s_{\smallsearrow}^A\right)$};

\draw (20,1.7320508) node{$\cdot$};
\draw[black, thin] (20,1.7320508) to (19,2);
\draw (17.6,2) node{$p\left(i_{\smallnearrow}^A\right)$};

\draw (22,0) node{$\cdot$};
\draw[black, thin] (22,0) to (22.1,-0.5);
\draw (22.2,-0.85) node{$\lambda\left(t_{en}\right)$};

\draw (23,3.4641016) node{$\cdot$};
\draw[black, thin] (23,3.4641016) to (24,3.5);
\draw (25,3.5) node{$\lambda\left(t_{ex}\right)$};

\draw (25,0) node{$A$};

\draw (22.4,2) node{$B^{\varhexagon}$};


\end{tikzpicture}

\caption{\label{fig:ZeroAgreeingLines}}

\end{figure}

Before we begin, it is important to notice that either the line segment $\Lambda_{\smallnearrow}\left(i_{\smallnearrow}^A\right)\cap\partial A^{\varhexagon}$ is longer than the line segment $\Lambda_{\smallnearrow}\left(i_{\smallnearrow}^B\right)\cap\partial B^{\varhexagon}$, or the line segment $\Lambda_{\smallrightarrow}\left(i_{\smallrightarrow}^A\right)\cap\partial A^{\varhexagon}$ is longer than the line segment $\Lambda_{\smallrightarrow}\left(i_{\smallrightarrow}^B\right)\cap\partial B^{\varhexagon}$ (or both). Otherwise, $B^{\varhexagon}$ could not be contained in $A^{\varhexagon}$. We assume, without loss of generality, that the line segment $\Lambda_{\smallnearrow}\left(i_{\smallnearrow}^A\right)\cap\partial A^{\varhexagon}$ is longer than the line segment $\Lambda_{\smallnearrow}\left(i_{\smallnearrow}^B\right)\cap\partial B^{\varhexagon}$. 

As usual, we first replace $B$ with $B^{\varhexagon}$. Then, we can connect the points $p\left(i_{\smallrightarrow}^A\right)$ and $p\left(i_{\smallsearrow}^A\right)$ via the geodesic in $\partial A^{\varhexagon}$. Similarly, we can connect $p\left(i_{\smallsearrow}^A\right)$ and $p\left(s_{\smallnearrow}^A\right)$ via the geodesic in $\partial A^{\varhexagon}$. We continue like this until we reach $p\left(i_{\smallnearrow}^A\right)$. This is shown in Figure \ref{fig:ZeroAgreeingLines}. 

At some time, $\lambda$ intersects $\partial B^{\varhexagon}$ for the first time. Call this time $t_{en}$. Consider the portion of $\partial A^{\varhexagon}$ between $p\left(i_{\smallnearrow}^A\right)$ and $\lambda\left(t_{en}\right)$, call this path $P_1$. Similarly, let $t_{ex}$ be the last time that $\lambda\in B^{\varhexagon}$, and consider the portion of $\partial A$ between $\lambda\left(t_{ex}\right)$ and $p\left(i_{\smallrightarrow}^A\right)$, call this path $P_2$.

Consider the line $\Lambda_{\smallsearrow}\left(s_{\smallsearrow}^B\right)$. If $p\left(i_{\smallnearrow}^A\right)$ is in the lower half plane $L\left(s_{\smallsearrow}^B\right)$, then there is a geodesic in $\partial A^{\varhexagon}$ between $p\left(i_{\smallnearrow}^A\right)$ and $\Lambda_{\smallsearrow}\left(s_{\smallsearrow}^B\right)$ which can be no longer than $P_1$. If, on the other hand, $p\left(i_{\smallnearrow}^A\right)$ is in the upper half plane $U\left(s_{\smallsearrow}^B\right)$, then we already have enough perimeter for this part of our construction, as will be seen. 

Now, consider the line $\Lambda_{\smallsearrow}\left(i_{\smallsearrow}^B\right)$. If $p\left(i_{\smallrightarrow}^A\right)$ is in the upper half plane $U\left(i_{\smallsearrow}^B\right)$, then $P_2$ is at least as long as the straight line segment with slope $-\sqrt{3}$ between $p\left(i_{\smallrightarrow}^A\right)$ and $\Lambda_{\smallsearrow}\left(i_{\smallsearrow}^B\right)$, which is the same length as the path in $\partial A^{\varhexagon}$ that connects this point and line since they are both geodesics. If $p\left(i_{\smallrightarrow}^A\right)$ is in the lower half plane $L\left(i_{\smallsearrow}^B\right)$, then we already have enough perimeter for this part of our construction, as we shall see. 

Since we assumed that $\Lambda_{\smallnearrow}\left(i_{\smallnearrow}^A\right)\cap\partial A^{\varhexagon}$ is at least as long as $\Lambda_{\smallnearrow}\left(i_{\smallnearrow}^B\right)\cap\partial B^{\varhexagon}$, we can translate $B^{\varhexagon}$ up and to the left between the lines $\Lambda\left(s_{\smallsearrow}^B\right)$ and $\Lambda_{\smallsearrow}\left(i_{\smallsearrow}^B\right)$ until we obtain a translation $B_t^{\varhexagon}$ such that $i_{\smallnearrow}^{B_t^{\varhexagon}}=i_{\smallnearrow}^A$. 

If $\Lambda_{\smallrightarrow}\left(i_{\smallrightarrow}^B\right)\cap\partial B^{\varhexagon}$ is also no longer than $\Lambda_{\smallrightarrow}\left(i_{\smallrightarrow}^A\right)\cap\partial A^{\varhexagon}$, then, we translate $B_t^{\varhexagon}$ up and to the right between $\Lambda
_{\smallnearrow}\left(i_{\smallnearrow}^{B_t^{\varhexagon}}\right)$ and $\Lambda
_{\smallnearrow}\left(s_{\smallnearrow}^{B_t^{\varhexagon}}\right)$. The result looks something like the lefthand side of Figure \ref{fig:FinalTranslation}. 

If, on the other hand $\Lambda_{\smallrightarrow}\left(i_{\smallrightarrow}^B\right)\cap\partial B^{\varhexagon}$ is longer than $\Lambda_{\smallrightarrow}\left(i_{\smallrightarrow}^A\right)\cap\partial A^{\varhexagon}$, then we still translate $B_t^{\varhexagon}$ up and to the right in the same way as before, but we stop when $\Lambda_{\smallrightarrow}\left(i_{\smallrightarrow}^{B_t^{\varhexagon}}\right)\cap\Lambda_{\smallsearrow}\left(i_{\smallsearrow}^{B_t^{\varhexagon}}\right)$ (this is something like the top right corner of $B_t^{\varhexagon}$) intersects $\partial A^{\varhexagon}$. By slight abuse of notation, we will still call this set $B_t^{\varhexagon}$. Then, we can remove the top of $B_t^{\varhexagon}$ (i.e. $\Lambda_{\smallrightarrow}\left(i_{\smallrightarrow}^{B_t^{\varhexagon}}\right)\cap\partial B_t^{\varhexagon}$), which adds some of the volume of $A^{\varhexagon}$ to $B_t^{\varhexagon}$. We call this new set $B_1$. This is shown in the righthand configuration of Figure \ref{fig:FinalTranslation}.

\begin{figure}[H]

\begin{tikzpicture}[scale=0.15]


\draw[blue, thin] (0,0) to (4,6.92820323);
\draw[blue, thin] (4,6.92820323) to (8,6.92820323);
\draw[blue, thin] (8,6.92820323) to (12,0);
\draw[blue, thin] (12,0) to (8,-6.92820323);
\draw[blue, thin] (8,-6.92820323) to (4,-6.92820323);
\draw[blue, thin] (4,-6.92820323) to (0,0);

\draw[red, thin] (3,5.1961524) to (4,6.92820323);
\draw[red, thin] (4,6.92820323) to (5,6.92820323);
\draw[red, thin] (5,6.92820323) to (6,5.1961524);
\draw[red, thin] (6,5.1961524) to (5,3.4641016);
\draw[red, thin] (5,3.4641016) to (4,3.4641016);
\draw[red, thin] (4,3.4641016) to (3,5.1961524);



\draw[blue, thin] (20,0) to (25,8.66025403);
\draw[blue, thin] (25,8.66025403) to (27,8.66025403);
\draw[blue, thin] (27,8.66025403) to (32,0);
\draw[blue, thin] (32,0) to (28,-6.92820323);
\draw[blue, thin] (28,-6.92820323) to (24,-6.92820323);
\draw[blue, thin] (24,-6.92820323) to (20,0);

\draw[red, thin] (23,5.1961524) to (24,6.92820323);
\draw[red, thick, dotted] (24,6.92820323) to (28,6.92820323);
\draw[red, thin] (28,6.92820323) to (29,5.1961524);
\draw[red, thin] (29,5.1961524) to (28,3.4641016);
\draw[red, thin] (28,3.4641016) to (24,3.4641016);
\draw[red, thin] (24,3.4641016) to (23,5.1961524);


\end{tikzpicture}

\caption{\label{fig:FinalTranslation}}

\end{figure}

At this point, the volume adjustments are simple. We can rescale $B_t^{\varhexagon}$ in the lefthand configuration to create a set $\tilde{B}_t^{\varhexagon}$ with volume $\alpha$, and $B_1$ in the righthand configuration to create a set $B_2$ with volume $\alpha$. Similarly, we can reduce the volume of $A^{\varhexagon}\setminus \tilde{B}_t^{\varhexagon}$ in the lefthand configuration and $A^{\varhexagon}\setminus B_2$ in the righthand configuration.

\end{proof}

\section{Eliminating Sixty Degree Angles}
\label{sec:Elimination}

The previous section showed us that we should be able to find a solution to the double bubble problem in the family $\mathfs{F}_{\alpha}$. We now wish to further reduce the number of possible configurations that we have to analyse. 

For ease of notation, we assume, in this section, that $\left(A,B\right)\in\mathfs{F}_{\alpha}$. We will eliminate most cases in which there are $60^{\circ}$ degree angles formed by either two line segments forming the boundary of one set, or one line segment from $A$ and another from $B$ forming an exterior $60^{\circ}$ degree angle. 

Consider the boundary of $\left(A,B\right)$. We want to show that if one of the line segments forming $\partial B$ meets one of the line segments forming $\partial A$ in such a way as to create an exterior $60^{\circ}$ angle, then $(A,B)$ cannot be a minimizing configuration. In other words, the exterior angle formed by $\partial A$ and $\partial B$ at $\pi_1$ and $\pi_2$ must be either $120^{\circ}$ or $180^{\circ}$ (Note that these angles can only be $60^{\circ}$, $120^{\circ}$, or $180^{\circ}$).

\begin{lemma}\label{lemma:NoSixtyDegreeAngles}
Let $(A,B)\in\mathfs{F}_{\alpha}$. Suppose that one of the line segments forming $\partial A$, call it $l_1$, and one of the line segments forming $\partial B$, call it $l_2$, meet in such a way as to form an exterior angle of $60^{\circ}$. Then $(A,B)\notin\Gamma_{\alpha}$. 
\end{lemma}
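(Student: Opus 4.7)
The plan is to show that any configuration $(A,B)\in\mathfs{F}_\alpha$ with an exterior $60^\circ$ angle between $l_1\subset\partial A$ and $l_2\subset\partial B$ admits a local surgery near the offending vertex that strictly decreases the double bubble perimeter, possibly followed by a lower-order volume correction; this will exhibit a competitor with strictly smaller $\rho_{DB}$, contradicting $(A,B)\in\Gamma_\alpha$. First I would observe that since $(A,B)\in\mathfs{F}_\alpha$, every line segment in $\partial A\cup\partial B$ has slope in $\{0,\pm\sqrt 3\}$, and the only points where a segment of $\partial A\setminus\partial B$ meets a segment of $\partial B\setminus\partial A$ are $\pi_1$ and $\pi_2$ (from Definition \ref{def:TwoImportantPoints}). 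Hence the offending pair $(l_1,l_2)$ is either $(l_1^A,l_1^B)$ or $(l_2^A,l_2^B)$; up to relabelling and up to a sequence of $60^\circ$ rotations and reflections I may assume the exterior $60^\circ$ angle occurs at $\pi_1$ with $l_1^A\subset\Lambda_{\smallrightarrow}$ pointing left from $\pi_1$ and $l_1^B\subset\Lambda_{\smallnearrow}$ pointing up-right from $\pi_1$ (the remaining pairings are treated identically after an additional rotation).

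Next I would enumerate the possible directions for the joint boundary segment emanating from $\pi_1$. Because the three edges meeting at $\pi_1$ (the joint boundary end, $l_1^A$, and $l_1^B$) each have slope in $\{0,\pm\sqrt 3\}$, and the prescribed exterior angle between $l_1^A$ and $l_1^B$ is $60^\circ$, there are only a bounded number of local pictures; in each, the joint boundary points into the interior of the wedge formed by the extensions of $l_1^A$ and $l_1^B$. For each such picture the local surgery is the same in spirit: pull $\pi_1$ a small distance $\epsilon$ along the joint boundary direction, shortening both $l_1^A$ and $l_1^B$ by an amount of order $\epsilon$, and extending the joint boundary by $\epsilon$ (in $\mathcal{D}$). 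Using Lemma \ref{lem:geodesics} and the explicit form of $\mathcal{D}$, I would verify in each subcase that the total change in $\rho(\partial A)+\rho(\partial B)-\rho(\partial A\cap\partial B)$ is of the form $-c\,\epsilon+O(\epsilon^2)$ with $c>0$, while the volumes of $A$ and $B$ each change by $O(\epsilon^2)$.

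To finish, I would correct the two $O(\epsilon^2)$ volume discrepancies by translating two of the existing line segments of $A$ and $B$ (away from $\pi_1$, so as not to interfere with the surgery) by an amount of order $\epsilon^2$; this modification changes $\rho_{DB}$ by at most $O(\epsilon^2)$. Since the surgery saves $c\,\epsilon$ while the correction costs only $O(\epsilon^2)$, for $\epsilon$ small enough the net change in $\rho_{DB}$ is strictly negative. Because the resulting configuration either already lies in $\mathfs{F}_\alpha$ or can be reduced to one in $\mathfs{F}_\alpha$ by applying the lemmas of Section \ref{sec:optimalsets} without increasing $\rho_{DB}$, this contradicts $(A,B)\in\Gamma_\alpha$.

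The main obstacle will be the case analysis: for each orientation of the joint boundary at $\pi_1$ and each configuration of the adjacent segments $a_1$ and $b_1$, one must confirm that the prescribed surgery (i) produces a valid configuration whose boundary is still a union of closed, simple, rectifiable curves, (ii) does not introduce any new exterior $60^\circ$ angle that would nullify the gain, and (iii) yields a strictly linear-in-$\epsilon$ reduction in perimeter rather than an accidental cancellation. The restriction to $\mathfs{F}_\alpha$ keeps the number of local pictures small and ensures the adjacent segments $a_1,b_1$ are long enough to absorb the surgery for all sufficiently small $\epsilon$.
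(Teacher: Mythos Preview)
Your first-variation claim is wrong, and the argument hinges on it. Take the local picture the paper treats (Figure~\ref{fig:ExteriorSixtyDegrees}): $l_1^A$ points at angle $120^\circ$ and $l_1^B$ at $60^\circ$ from $\pi_1$, so the exterior $60^\circ$ wedge opens upward. In that configuration the joint segment is collinear with $l_1^A$ (it lies on the \emph{boundary} of the wedge of extensions, not in its interior as you assert). If you now pull $\pi_1$ by $\epsilon$ so as to extend the joint, a direct computation with $\mathcal{D}$ gives: $l_1^A$ shortens by $\epsilon$, $l_1^B$ is \emph{unchanged} (not shortened), the joint lengthens by $\epsilon$, and hence $\Delta\rho_{DB}=0$, not $-c\epsilon$. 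In fact, for this local picture every displacement of $\pi_1$ yields $\Delta\rho_{DB}\ge 0$ to first order. What actually happens along the zero-cost directions is that the exterior notch is partially filled, so one of the volumes changes at order $\epsilon$, not $\epsilon^2$. Your balance sheet (``surgery saves $c\epsilon$, correction costs $O(\epsilon^2)$'') is therefore inverted: the strict improvement must come from the $O(\epsilon)$ volume correction itself, and you would have to argue separately that translating a side by $O(\epsilon)$ strictly decreases $\rho_{DB}$---which is not automatic and is essentially the whole content of the lemma.

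The paper avoids the infinitesimal analysis by a finite surgery: it reorients the entire segment $l_2$ to lie horizontally so that it meets $\partial A$, which keeps $\rho_{DB}$ exactly the same but adds a nondegenerate triangle to $B$ (or to $A$, after a simple case split on whether $A$ extends past the line through $l_1$ and on the relative lengths of $l_1,l_2$). Lowering the top of the enlarged set back to the correct volume then strictly shortens its non-joint boundary. This is cleaner because it needs no asymptotics, no enumeration of joint directions, and the competitor remains in $\mathfs{F}_\alpha$ throughout.
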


\begin{proof}
Through a series of reflections and/or $60^{\circ}$ rotations, we may assume that the exterior angle is formed as in Figure \ref{fig:ExteriorSixtyDegrees}. The sides of $\partial A$ and $\partial B$ that are not $l_1$ or $l_2$ may, of course, be different. It will not affect our analysis.

\begin{figure}[H]

\begin{tikzpicture}[scale=0.4]

\draw[black, thick, dotted] (-3,5.19615242) to (2,-3.46410161);\draw (-2.45,5) node{$\Lambda$};

\draw[blue, thin] (0,0) to (-2,3.46410161);
\draw[blue, thin] (-2,3.46410161) to (-4,3.46410161);
\draw[blue, thin] (-4,3.46410161) to (-6,0);
\draw[blue, thin] (-6,0) to (-4,-3.46410161);
\draw[blue, thin] (-4,-3.46410161) to (0,-3.46410161);
\draw[blue, thin] (0,-3.46410161) to (1,-1.7320508);

\draw (-0.7,2) node{$l_1$};
\draw (-3,0) node{$A$};

\draw[red, thin] (0,0) to (1,1.7320508);
\draw[red, thin] (1,1.7320508) to (2,1.7320508);
\draw[red, thin] (2,1.7320508) to (3,0);
\draw[red, thin] (3,0) to (2,-1.7320508);
\draw[red, thin] (2,-1.7320508) to (1,-1.7320508);
\draw[red, thin] (1,-1.7320508) to (0,0);

\draw (1,1) node{$l_2$};
\draw (1.5,0) node{$B$};


\draw[->] (3.5,0) to (4.5,0);

\draw[blue, thin] (11,0) to (9,3.46410161);
\draw[blue, thin] (9,3.46410161) to (7,3.46410161);
\draw[blue, thin] (7,3.46410161) to (5,0);
\draw[blue, thin] (5,0) to (7,-3.46410161);
\draw[blue, thin] (7,-3.46410161) to (11,-3.46410161);
\draw[blue, thin] (11,-3.46410161) to (12,-1.7320508);

\draw (10.3,2) node{$l_1$};
\draw (8,0) node{$A$};

\draw[red, thick, dotted] (11,0) to (12,1.7320508);
\draw[red, thin] (10,1.7320508) to (12,1.7320508);
\draw[red, thin] (12,1.7320508) to (13,1.7320508);
\draw[red, thin] (13,1.7320508) to (14,0);
\draw[red, thin] (14,0) to (13,-1.7320508);
\draw[red, thin] (13,-1.7320508) to (12,-1.7320508);
\draw[red, thin] (12,-1.7320508) to (11,0);

\draw (12,1) node{$l_2$};
\draw (12.5,0) node{$B$};

\end{tikzpicture}

\caption{\label{fig:ExteriorSixtyDegrees}}

\end{figure}

Let $\Lambda$ be the line with slope $-\sqrt{3}$ that passes through $l_1$. Either there is a point in $A$ that is in the upper half-plane formed by this line, i.e. there is a point to the right of this line in Figure \ref{fig:ExteriorSixtyDegrees}, or there is not. Assume first that there is not. In this case, if $l_2<l_1$, we reorient the line segment $l_2$ so that it is horizontal and connects the top of $B$ to $A$, as shown in Figure \ref{fig:ExteriorSixtyDegrees}. This creates a new set $B_1$ with volume strictly greater than the volume of $B$. Then, we merely lower the top of $B_1$ until we have a set of volume $\alpha$. This process strictly reduces the non-joint perimeter of $B_1$ which has the same length as the non-joint perimeter of $B$. Thus, we have found a new configuration with strictly better double bubble perimeter. 

Suppose, on the other hand, that $l_2>l_1$. In this case, let $\Lambda$ be the horizontal line passing through $i_{\smallrightarrow}^A$, and consider the point where this line intersects $l_2$ - call this point $p$. We reorient the part of $l_2$ that is between $p$ and $\pi_1$ so that this line segment connects $p$ to $\partial A$ and is contained in $\Lambda$. See the lefthand side of Figure \ref{fig:ExteriorSixtyDegreesVolAdj2}. Then, replace $B_1$ with $B_1^{\varhexagon}$. See the righthand side of Figure \ref{fig:ExteriorSixtyDegreesVolAdj2}. Then, we can again lower the top side of $B_1$ until we obtain a set of volume $\alpha$. This volume adjustment strictly reduces the double bubble perimeter. 

\begin{figure}[H]

\begin{tikzpicture}[scale=0.4]

\draw[black, thick, dotted] (-3,6.4282) to (5,6.4282);\draw (-2.8,6.75) node{$\Lambda$};

\draw[blue, thin] (2.5,4.330125) to (1.28,6.4282);
\draw[blue, thin] (4.28867584,1.23204756) to (3.28867584,-0.50000324);
\draw[blue, thin] (3.28867584,-0.50000324) to (0.28867584,-0.50000324);
\draw[blue, thin] (0.28867584,-0.50000324) to (-2.71132416,4.69614916);
\draw[blue, thin] (-2.71132416,4.69614916) to (-1.71132416,6.4282);
\draw[blue, thin] (-1.71132416,6.4282) to (1.28,6.4282);

\draw (1.6,5) node{$l_1$};
\draw (0.9,4.5) node{$A$};

\draw[red, thick, dotted] (3.7,6.4282) to (2.5,4.330125);
\draw[red, thin] (3.7,6.4282) to (1.28,6.4282);
\draw[red, thin] (3.7,6.4282) to (4.7,8.1602508);
\draw[red, thin] (4.7,8.1602508) to (6.7,8.1602508);
\draw[red, thin] (6.7,8.1602508) to (8.7,4.69614918);
\draw[red, thin] (8.7,4.69614918) to (6.7,1.23204756);
\draw[red, thin] (6.7,1.23204756) to (4.28867584,1.23204756);
\draw[red, thin] (4.28867584,1.23204756) to (2.5,4.330125);

\draw (3.4,5) node{$l_2$};
\draw (4.9,5) node{$B_1$};

\draw[->] (10.25,4.5) to (11.25,4.5);


\draw[blue, thin] (17.5,4.330125) to (16.28,6.4282);
\draw[blue, thin] (19.28867584,1.23204756) to (18.28867584,-0.50000324);
\draw[blue, thin] (18.28867584,-0.50000324) to (15.28867584,-0.50000324);
\draw[blue, thin] (15.28867584,-0.50000324) to (12.288675,4.69614916);
\draw[blue, thin] (12.288675,4.69614916) to (13.2886758,6.4282);
\draw[blue, thin] (13.2886758,6.4282) to (16.28,6.4282);

\draw (16.6,5) node{$l_1$};
\draw (15.9,4.5) node{$A$};

\draw[red, thick, dotted] (19.7,8.1602508) to (17.5,4.330125);
\draw[red, thin] (16.28,6.4282) to (17.28,8.1602508);
\draw[red, thin] (19.7,8.1602508) to (17.28,8.1602508);
\draw[red, thin] (19.7,8.1602508) to (21.7,8.1602508);
\draw[red, thin] (21.7,8.1602508) to (23.7,4.69614918);
\draw[red, thin] (23.7,4.69614918) to (21.7,1.23204756);
\draw[red, thin] (21.7,1.23204756) to (19.28867584,1.23204756);
\draw[red, thin] (19.28867584,1.23204756) to (17.5,4.330125);

\draw (18.4,5) node{$l_2$};
\draw (16.4,7.5) node{$\tilde{l}_2$};
\draw (19.9,5) node{$B_1^{\varhexagon}$};

\end{tikzpicture}

\caption{\label{fig:ExteriorSixtyDegreesVolAdj2}}

\end{figure}

If there is a point in $A$ that is in the upper half-plane created by $\Lambda$, then we perform an analogous process but by adding volume to $A$ instead of $B$.

\end{proof}

We now wish to prove similar results, but for interior angles of $60^{\circ}$.

\begin{lemma}\label{lemma:NoNonJointBndrySixtyDegAngles}
Let $(A,B)\in\gamma_{\alpha}$. Suppose there are two adjacent sides, call them $l_1$ and $l_2$, both of which form part of $\partial A$ or both of which form part of $\partial B$. Suppose further that the point of intersection of these two line segments is not part of the joint boundary. If the interior angle formed by these two lines is $60^{\circ}$, then $(A,B)$ is not a double bubble minimizing configuration. 

\end{lemma}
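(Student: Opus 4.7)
The plan is to truncate the $60^\circ$ spike by a short horizontal cut (after rotating to a canonical frame) and then restore the lost volume by pushing a distant side outward; the savings from the cut are $O(h)$ while the cost of the restoration is $O(h^2)$, so for $h$ small enough the double bubble perimeter strictly decreases.

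First, by Section \ref{sec:optimalsets} we may assume $(A,B)\in\mathfs{F}_\alpha$, so that $\partial A,\partial B$ are polygonal with sides of slopes $0, \pm\sqrt{3}$. After a finite sequence of reflections and rotations by $60^\circ$, assume the $60^\circ$ corner lies on $\partial A$ at a point $p=(x_p,y_p)$, with $l_1\subset\partial A$ of slope $\sqrt{3}$ and $l_2\subset\partial A$ of slope $-\sqrt{3}$ both emanating upward from $p$, so that $A$ has a downward-pointing wedge with apex $p$. Since $p\notin\partial A\cap\partial B$, there is an open neighborhood $U$ of $p$ with $U\cap(l_1\cup l_2)$ disjoint from the joint boundary, and such that the wedge above $p$ up to some height lies entirely inside $A\setminus\overline{B}$.

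Second, pick $h>0$ small enough that the horizontal segment from $q_1=(x_p-h/\sqrt{3},y_p+h)$ to $q_2=(x_p+h/\sqrt{3},y_p+h)$ lies inside $U$, inside $A$, and disjoint from $\overline{B}$. Let $T_h\subset A$ be the closed triangle with vertices $p,q_1,q_2$ and set $A_h:=A\setminus T_h$. A direct computation in $\mathcal D$ (cf.\ Lemma \ref{lem:geodesics} and the calculation preceding this lemma) gives $\rho(pq_i)=2h/\sqrt{3}$ for $i=1,2$ and $\rho(q_1q_2)=2h/\sqrt{3}$, so the truncation shortens $l_1$ and $l_2$ by $2h/\sqrt{3}$ each and introduces a horizontal edge of length $2h/\sqrt{3}$, yielding
\begin{equation*}
\rho(\partial A_h)=\rho(\partial A)-\tfrac{2h}{\sqrt 3}, \qquad \mu(A_h)=\mu(A)-\tfrac{h^2}{\sqrt 3},
\end{equation*}
with $\partial A_h\cap\partial B=\partial A\cap\partial B$ unchanged. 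Hence $\rho_{DB}(A_h,B)=\rho_{DB}(A,B)-2h/\sqrt{3}$.

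Third, restore volume. Since $\mu(A)=1$ while $T_h$ has area $h^2/\sqrt{3}\to 0$, the set $A_h$ still has positive volume and polygonal boundary, and in $\mathfs{F}_\alpha$ one can locate an edge $l_3\subset\partial A$, away from the joint boundary and from $p$, of positive length $L$ with $120^\circ$ interior angles at both endpoints, whose outward translation by a small amount is contained in $(\overline{A\cup B})^c$. Translating $l_3$ outward by distance $d$ preserves the two $120^\circ$ angles; by the calculation analogous to the one used to derive Figure \ref{fig:OneCornerVolumeAdjustment2}, the two adjacent edges each grow by $2d/\sqrt{3}$ in $\mathcal D$-length while $l_3$ shrinks by $2d/\sqrt{3}$, giving a net perimeter increase of $2d/\sqrt{3}$, while $\mu$ increases by $Ld-d^2/\sqrt{3}$. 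Choosing $d$ to satisfy $Ld-d^2/\sqrt{3}=h^2/\sqrt{3}$ gives $d=h^2/(\sqrt 3\,L)+O(h^4)$, hence a perimeter cost of $2h^2/(3L)+O(h^4)$. Calling $\tilde A$ the resulting set, $(\tilde A,B)\in\gamma_\alpha$ and
\begin{equation*}
\rho_{DB}(\tilde A,B)=\rho_{DB}(A,B)-\tfrac{2h}{\sqrt 3}+\tfrac{2h^2}{3L}+O(h^4).
\end{equation*}
For $h$ sufficiently small the right-hand side is strictly less than $\rho_{DB}(A,B)$, so $(A,B)\notin\Gamma_\alpha$.

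The main obstacle is the third step: one must exhibit an edge $l_3$ whose outward translation is geometrically admissible, i.e.\ does not collide with $\partial B$ and does not create a new $60^\circ$ corner. Using the parametrized description of $\mathfs{F}_\alpha$ in Figure \ref{fig:GeneralCase}, one can always select $l_3$ on the side of $A^\varhexagon$ opposite the joint boundary, where there is uniform room to translate. A small bookkeeping argument (analogous to Corollary \ref{cor:OnlyThreeJointBoundaryLines}) confirms that the modification stays inside $\mathfs{F}_\alpha$, completing the proof.
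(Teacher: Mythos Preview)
Your proof uses the same core idea as the paper: truncate the $60^\circ$ spike for an $O(h)$ perimeter saving, then restore the $O(h^2)$ volume loss at only $O(h^2)$ perimeter cost. Your computation of the truncation (step two) is correct.

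The difference is in the volume restoration. You push out a remote edge $l_3$ with $120^\circ$ corners, which forces you to verify that such an edge exists away from the joint boundary and that its outward translate avoids $\overline B$; you flag this as the ``main obstacle'' but only sketch why it should hold. The paper avoids this entirely by working locally: taking $l_3$ to be the side of $\partial B$ adjacent to $l_2$ on the far side from $l_1$, it extends both $l_1$ and $l_3$ (or inserts a new short horizontal segment if $l_2,l_3$ meet at $60^\circ$) by the same amount $\epsilon'\in(0,\tfrac{\epsilon}{2})$, effectively sliding $l_2$ sideways. This adds volume $\epsilon' L$ against perimeter cost $2\epsilon'$, and Corollary~\ref{cor:OnlyThreeJointBoundaryLines} and Lemma~\ref{lemma:AllSidesContained} guarantee the needed $l_3$ is immediately available next to the corner. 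The paper's move is thus self-contained, whereas yours would require a short case analysis on the structure of $\partial A$ in $\mathfs{F}_\alpha$ to close.

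One minor correction: your final sentence, that the modified configuration stays in $\mathfs{F}_\alpha$, is unnecessary and not obviously true---truncation can introduce an extra edge not present in the parametrization of Figure~\ref{fig:GeneralCase}. The lemma only asserts $(A,B)\notin\Gamma_\alpha$, so you merely need $(\tilde A,B)\in\gamma_\alpha$ with strictly smaller $\rho_{\text{DB}}$, which is immediate.
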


\begin{proof}
Suppose, without loss of generality, that the line segments $l_1$ and $l_2$ are in $\partial B$. We know, by Corollary \ref{cor:OnlyThreeJointBoundaryLines} and Lemma \ref{lemma:AllSidesContained}, that there are at most three adjacent line segments in $\partial B$ that form joint boundary at points other than $\pi_1$ and $\pi_2$. Let's assume that $l_1$ is horizontal and $l_2$ has slope $\sqrt{3}$ (see Figure \ref{fig:InteriorSixtyNonJointBoundary}). Then either both $l_1$ and $l_2$ do not contribute to the joint boundary, or at most one of them does, say $l_1$ does and $l_2$ does not. Let $l_3$ be the other line segment in $\partial B$ that shares an endpoint with $l_2$ (and is not $l_1$).

\begin{figure}[H]

\begin{tikzpicture}[scale=0.5]

\draw[red, thin] (-1,0) to (2,0);
\draw[red, thin] (2,0) to (0,-3.4641016);
\draw[red, thin] (0,-3.46410161) to (-1,-3.46410161);
\draw[red, thin] (-1,-3.46410161) to (-2,-1.7320508);
\draw[red, thin] (-2,-1.7320508) to (-1,0);

\draw[blue, thin] (0,-3.46410161) to (-1,-5.19615242);
\draw[blue, thin] (-1,-5.19615242) to (-3,-5.19615242);
\draw[blue, thin] (-3,-5.19615242) to (-5,-1.7320508);
\draw[blue, thin] (-5,-1.7320508) to (-3,1.7320508);
\draw[blue, thin] (-3,1.7320508) to (-2,1.7320508);
\draw[blue, thin] (-2,1.7320508) to (-1,0);

\draw (0.2,-1) node{$B$};
\draw (-0.3,1) node{$l_1$};
\draw (-0.2,-4.55) node{$l_2$};

\draw[->] (3,-1.5) to (4,-1.5);


\draw[red, thin] (9,0) to (11,0);
\draw[black, thick, dotted] (11,0) to (12,0);
\draw[black, thick, dotted] (12,0) to (11.5,-0.8660254);
\draw[red, thin] (11,0) to (11.5,-0.8660254);
\draw[red, thin] (11.5,-0.8660254) to (10,-3.4641016);

\draw[red, thin] (10,-3.46410161) to (9,-3.46410161);
\draw[red, thin] (9,-3.46410161) to (8,-1.7320508);
\draw[red, thin] (8,-1.7320508) to (9,0);

\draw[blue, thin] (10,-3.46410161) to (9,-5.19615242);
\draw[blue, thin] (9,-5.19615242) to (7,-5.19615242);
\draw[blue, thin] (7,-5.19615242) to (5,-1.7320508);
\draw[blue, thin] (5,-1.7320508) to (7,1.7320508);
\draw[blue, thin] (7,1.7320508) to (8,1.7320508);
\draw[blue, thin] (8,1.7320508) to (9,0);

\draw (10.2,-1) node{$B$};
\draw (9.7,1) node{$l_1$};
\draw (9.8,-4.55) node{$l_2$};


\draw[green, thin] (20,0) to (20.5,0);
\draw[red, thin] (20.5,0) to (21.5,0);
\draw[black, thick, dotted] (21.5,0) to (22.5,0);
\draw[black, thick, dotted] (22.5,0) to (22,-0.8660254);
\draw[red, thin] (21.5,0) to (22,-0.8660254);
\draw[red, thin] (22,-0.8660254) to (20.5,-3.4641016);
\draw[green, thin] (20.5,-3.4641016) to (20,-3.4641016);

\draw[red, thin] (20,-3.46410161) to (19,-3.46410161);
\draw[red, thin] (19,-3.46410161) to (18,-1.7320508);
\draw[red, thin] (18,-1.7320508) to (19,0);
\draw[red, thin] (19,0) to (20,0);

\draw[blue, thin] (20,-3.46410161) to (19,-5.19615242);
\draw[blue, thin] (19,-5.19615242) to (17,-5.19615242);
\draw[blue, thin] (17,-5.19615242) to (15,-1.7320508);
\draw[blue, thin] (15,-1.7320508) to (17,1.7320508);
\draw[blue, thin] (17,1.7320508) to (18,1.7320508);
\draw[blue, thin] (18,1.7320508) to (19,0);

\draw (20.2,-1) node{$B$};
\draw (19.7,1) node{$l_1$};
\draw (19.8,-4.55) node{$l_2$};


\draw (1.55,0.2) node{$\epsilon$};

\draw (11.55,0.2) node{$\epsilon$};

\end{tikzpicture}

\caption{\label{fig:InteriorSixtyNonJointBoundary}}

\end{figure}

Figure \ref{fig:InteriorSixtyNonJointBoundary} demonstrates the process by which we reduce the double bubble perimeter. First, let $L$ be the distance between the horizontal line passing through $l_1$ and the  point of intersection of $l_2$ and $l_3$, and let $0<\epsilon<\min\left\{l_1,\frac{L}{2}\right\}$. Then, we can reorient the part of the line segment $l_1$ that has length $\epsilon$ and shares an endpoint with $l_2$. This line segment of length $\epsilon$ has two endpoints, say $p_1$ and $p_2$, with $p_2$ also being an endpoint of $l_2$. We reorient this line segment so that it has slope $-\sqrt{3}$, the endpoint $p_1$ does not change, and the endpoint $p_2\in l_2$. This decreases the volume of $B$ by $\frac{\sqrt{3}}{4}\cdot\epsilon^2$, and also reduces the double bubble perimeter by $\epsilon$. Then, we increase the length of $l_1$ by $\frac{\epsilon}{2}$ and either add a length of $\frac{\epsilon}{2}$ to $l_3$ in the exterior of $A$ if $l_3$ is horizontal, or make a horizontal line segment of length $\frac{\epsilon}{2}$ (in the exterior of $A$) that connects $l_2$ and $l_3$ if these two line segments form a $60^{\circ}$ angle. This operation is demonstrated in Figure \ref{fig:InteriorSixtyNonJointBoundary}. This adds a volume of $\frac{\epsilon}{2}\cdot L>\epsilon^2>\frac{\sqrt{3}}{4}\cdot\epsilon^2$. So, there is some value $\epsilon'\in\left(0,\frac{\epsilon}{2}\right)$ such that we can add two line segments of length $\epsilon'$ in the way just described that results in the correct volume ratio.

\end{proof}

\begin{lemma}\label{lem:JoinBoundarySixty}
Let $(A,B)\in\mathfs{F}_{\alpha}$. Suppose that there are two adjacent sides, call them $l_1$ and $l_2$, such that their meeting point is joint boundary, and also a nontrivial portion of both line segments is part of the joint boundary. If $l_1$ and $l_2$ form a $60^{\circ}$ angle, then $(A,B)$ is not a double bubble minimizing configuration. 
\end{lemma}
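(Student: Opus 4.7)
The plan is to perform a local surgery at the $60^{\circ}$ corner shared by $l_1$ and $l_2$ that strictly shortens the joint boundary, and then correct the small volume discrepancy by moving non-joint sides of $A$ and $B$ by a higher-order amount. Since the joint boundary path connects $\pi_1$ to $\pi_2$ and the meeting point $p$ of $l_1$ and $l_2$ is an interior point of this path, the two sets $A$ and $B$ lie on opposite sides of $p$; the $60^{\circ}$ angle is the interior angle of exactly one of them at $p$, say $A$, so that $B$ sits on the reflex side.

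First, I would fix a small $\epsilon>0$ smaller than the joint-boundary length of $l_1$ and $l_2$, and let $p_1\in l_1$, $p_2\in l_2$ be the points at $\mathcal{D}$-distance $\epsilon$ from $p$. Because the angle at $p$ is $60^{\circ}$ and the two edges are in allowable orientations, the segment $\overline{p_1p_2}$ is itself parallel to a third allowable direction and has $\mathcal{D}$-length exactly $\epsilon$ (the triangle $pp_1p_2$ is equilateral). Define $(A_1,B_1)$ by cutting off the triangle $pp_1p_2$ from $A$ and adjoining it to $B$, so that the segment $\overline{p_1p_2}$ becomes the new joint boundary in place of the broken path $p_1\to p\to p_2$. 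A direct computation gives $\rho(\partial A_1)=\rho(\partial A)-\epsilon$, $\rho(\partial B_1)=\rho(\partial B)-\epsilon$, and $\rho(\partial A_1\cap\partial B_1)=\rho(\partial A\cap\partial B)-\epsilon$, so
\[
\rho_{DB}(A_1,B_1)=\rho_{DB}(A,B)-\epsilon,
\]
while $\mu(A_1)=\mu(A)-\tfrac{\sqrt3}{4}\epsilon^2$ and $\mu(B_1)=\mu(B)+\tfrac{\sqrt3}{4}\epsilon^2$.

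Next I would restore the correct volumes without spending more than a higher-order amount of perimeter. Since $(A,B)\in\mathfs{F}_\alpha$, each set admits non-joint boundary sides of strictly positive length that can be translated outward or inward: I would pick a non-joint side of $A$ with positive length (for instance one of the parallel edges of $A$ far from $p$) and translate it outward by a distance $\delta_A$, which increases $\mu(A)$ linearly in $\delta_A$ and changes $\rho(\partial A)$ by $O(\delta_A)$; similarly, I would translate a non-joint side of $B$ inward by a distance $\delta_B$. Choosing $\delta_A,\delta_B=O(\epsilon^2)$ so that the added area to $A$ and removed area from $B$ each equal $\tfrac{\sqrt3}{4}\epsilon^2$, the resulting configuration $(\tilde A,\tilde B)$ lies in $\gamma_\alpha$ and satisfies
\[
\rho_{DB}(\tilde A,\tilde B)=\rho_{DB}(A,B)-\epsilon+O(\epsilon^2).
\]
Taking $\epsilon$ sufficiently small makes this strictly less than $\rho_{DB}(A,B)$, so $(A,B)\notin\Gamma_\alpha$.

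The main obstacle I expect is verifying that suitable non-joint sides with strictly positive length are available for the volume restoration and that the translations do not interfere with the joint boundary or with each other; in particular, for the degenerate configurations listed in Figure \ref{fig:GeneralCase}, some sides may have length zero. This can be handled by using the characterization of $\mathfs{F}_\alpha$ together with the fact that both $\mu(A)$ and $\mu(B)$ are strictly positive and finite, which forces each set to have at least two non-joint sides in at least two different orientations available to translate, with room $\Omega(1)$ independent of $\epsilon$. A minor additional care is needed to ensure the surgery at $p$ itself is admissible, i.e.\ that $\epsilon$ is smaller than the joint-boundary lengths of both $l_1$ and $l_2$; this is guaranteed by the hypothesis that nontrivial portions of both segments are joint boundary.
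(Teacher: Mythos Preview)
Your proposal is correct and takes essentially the same approach as the paper: both cut off the equilateral triangle at the joint $60^\circ$ corner (saving exactly $\epsilon$ in $\rho_{DB}$ while transferring $\tfrac{\sqrt{3}}{4}\epsilon^2$ of volume between the sets), then restore the volumes by moving non-joint sides at perimeter cost strictly less than $\epsilon$. The only differences are cosmetic: the paper places the acute angle inside $B$ rather than $A$, reuses the explicit side-extension mechanism from Lemma~\ref{lemma:NoNonJointBndrySixtyDegAngles} to restore $B$'s volume, and does a short case analysis on relative positions to decide which side of $A$ to move, whereas you package the correction as a generic $-\epsilon+O(\epsilon^2)$ estimate.
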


\begin{proof}

This proof is similar to that of Lemma \ref{lemma:NoNonJointBndrySixtyDegAngles}. Assume, without loss of generality, that the acute angle formed by $l_1$ and $l_2$ is interior to $B$. We can remove the $60^{\circ}$ angle in the same way as before and find points in $\partial B$ to add the boundary of length $\epsilon'\in\left(0,\frac{\epsilon}{2}\right)$ in the same way. This process is demonstrated in Figure \ref{fig:RemovingJointSixty}. The only difference is that when we originally remove volume from $B$, it must be added to $A$. This means that we have to find a way to reduce the volume of $A$ without increasing the double bubble perimeter, and in such a way that the resulting configuration is still in $\mathfs{F}_{\alpha}$. Notice that we can make $\epsilon$ as small as we wish, so that the extra volume that is added to $A$ is also as small as we wish. If $i_{\smallrightarrow}^A>i_{\smallrightarrow}^B$, then we can move the top of $A$ down to strictly reduce its volume. Since we can make the added volume as small as we wish, we can ensure that this process results in the correct volume for $A$. A similar argument holds if $s_{\smallrightarrow}^A<s_{\smallrightarrow}^B$. If neither of these inequalities holds, then we can either reduce the volume of $A$ by moving its left sides to the right, or its right sides to the left.

\begin{figure}[H]

\begin{tikzpicture}[scale=0.5]

\draw[red, thin] (2,0) to (-1,0);
\draw[red, thin] (-1,0) to (0,-1.7320508);
\draw[red, thin] (0,-1.7320508) to (2,-1.7320508);
\draw[red, thin] (2,-1.7320508) to (2.5,-0.86602540);
\draw[red, thin] (2.5,-0.86602540) to (2,0);

\draw[blue, thin] (1,0) to (0,1.7320508);
\draw[blue, thin] (0,1.7320508) to (-3,1.7320508);
\draw[blue, thin] (-3,1.7320508) to (-4,0);
\draw[blue, thin] (-4,0) to (-2,-3.46410161);
\draw[blue, thin] (-2,-3.46410161) to (-1,-3.46410161);
\draw[blue, thin] (-1,-3.46410161) to (0,-1.7320508);

\draw (0,0.5) node{$l_1$};

\draw (-0.8,-1.2) node{$l_2$};

\draw[->] (3,-0.8) to (4,-0.8);


\draw[red, thin] (11,0) to (9,0);
\draw[red, thin] (9,0) to (8.5,-0.86602540);
\draw[black, thick, dotted] (9,0) to (8,0);
\draw[black, thick, dotted] (8,0) to (8.5,-0.86602540);
\draw[red, thin] (8.5,-0.86602540) to (9,-1.7320508);
\draw[red, thin] (9,-1.7320508) to (11,-1.7320508);
\draw[red, thin] (11,-1.7320508) to (11.5,-0.86602540);
\draw[red, thin] (11.5,-0.86602540) to (11,0);

\draw[blue, thin] (10,0) to (9,1.7320508);
\draw[blue, thin] (9,1.7320508) to (6,1.7320508);
\draw[blue, thin] (6,1.7320508) to (5,0);
\draw[blue, thin] (5,0) to (7,-3.46410161);
\draw[blue, thin] (7,-3.46410161) to (8,-3.46410161);
\draw[blue, thin] (8,-3.46410161) to (9,-1.7320508);

\draw (9.1,0.4) node{$l_1$};

\draw (8.2,-1.2) node{$l_2$};

\draw (8.2,0.3) node{$\epsilon$};

\draw[->] (12,-0.8) to (13,-0.8);


\draw[green, thin] (20,0) to (20.5,0);
\draw[red, thin] (20,0) to (18,0);
\draw[red, thin] (18,0) to (17.5,-0.86602540);
\draw[black, thick, dotted] (18,0) to (17,0);
\draw[black, thick, dotted] (17,0) to (17.5,-0.86602540);
\draw[red, thin] (17.5,-0.86602540) to (18,-1.7320508);
\draw[red, thin] (18,-1.7320508) to (20,-1.7320508);
\draw[green, thin] (20,-1.7320508) to (20.5,-1.7320508);
\draw[red, thin] (20.5,-1.7320508) to (21,-0.86602540);
\draw[red, thin] (21,-0.86602540) to (20.5,0);

\draw[blue, thin] (19,0) to (18,1.7320508);
\draw[blue, thin] (18,1.7320508) to (15,1.7320508);
\draw[blue, thin] (15,1.7320508) to (14,0);
\draw[blue, thin] (14,0) to (16,-3.46410161);
\draw[blue, thin] (16,-3.46410161) to (17,-3.46410161);
\draw[blue, thin] (17,-3.46410161) to (18,-1.7320508);

\draw (18.1,0.4) node{$l_1$};

\draw (17.2,-1.2) node{$l_2$};

\draw (17.2,0.3) node{$\epsilon$};

\end{tikzpicture}

\caption{\label{fig:RemovingJointSixty}}

\end{figure}

\end{proof}

The previous two lemmas show that if there is a $60^{\circ}$ angle, then it must occur at either $\pi_1$ or $\pi_2$. At each of these points, there are three angles: one interior to $A$, one interior to $B$, and one exterior to both $A$ and $B$. Lemma \ref{lemma:NoSixtyDegreeAngles} shows us that the exterior angle at these points cannot be $60^{\circ}$, which means that this angle must be either $120^{\circ}$ or $180^{\circ}$. The possibilities are shown in Figure \ref{fig:RemainingSixties}: 

\begin{figure}[H]

\begin{tikzpicture}[scale=0.8]

\draw[blue, thin] (0,0) to (2,0);
\draw (0.1,0.4) node{$l_1$};
\draw (0.6,-0.6) node{$A$};

\draw[red, thin] (1,-1.7320508) to (3,1.73205081);
\draw (2.4,1.5) node{$l_2$};
\draw (2.8,0.3) node{$B$};

\draw[red, thin] (6,-1) to (7,0.732050808);
\draw[red, thin] (7,0.732050808) to (9,0.732050808);

\draw[blue, thin] (7,0.732050808) to (5,0.732050808);

\draw (5.4,-0.3) node{$A$};
\draw (7.1,-0.3) node{$B$};

\end{tikzpicture}

\caption{\label{fig:RemainingSixties}}

\end{figure}

We know that the joint boundary cannot consist of four or more line segments. Now, we want to eliminate all of the configurations in $\mathfs{F}_{\alpha}$ that have both a $60^{\circ}$ angle as well as joint boundary consisting of three line segments. Consider the family $\mathfs{F}_{\alpha}$. Notice that a configuration in this family has the property that at least one set must be its own hexagon, i.e. either $A=A^{\varhexagon}$ or $B=B^{\varhexagon}$, or potentially both. We suppose, for the remainder of this section, that $B=B^{\varhexagon}$. In the next lemma, we begin to eliminate configurations whose joint boundary consists of three line segments. 

\begin{lemma}

Let $(A,B)\in\mathfs{F}_{\alpha}$, and assume that $B=B^{\varhexagon}$. Suppose that the exterior angle associated with $\pi_1$ is $120^{\circ}$, while the angle interior to $A$ at this point is $60^{\circ}$. Suppose further that either the joint boundary consists of three line segments, or the joint boundary consists of two line segments and the point $\pi_2$ occurs in $\partial B$ at the point of intersection of two line segments of different orientations. Then, $(A,B)$ is not a minimizing configuration. 

\end{lemma}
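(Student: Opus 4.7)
The plan is to cut off the $60^\circ$ corner of $A$ at $\pi_1$ by a small equilateral triangle of side length $\epsilon>0$, producing a $60^\circ$ exterior angle elsewhere that can then be eliminated by the method of Lemma~\ref{lemma:NoSixtyDegreeAngles}. Specifically, I travel a distance $\epsilon$ along the joint boundary from $\pi_1$ to reach a point $P_1$, and a distance $\epsilon$ along the non-joint $A$-boundary from $\pi_1$ to reach $P_2$; I then replace the two segments $\pi_1 P_1$ and $\pi_1 P_2$ by the single segment $P_1 P_2$. A direct accounting (recalling that the former joint segment $\pi_1 P_1$ becomes a non-joint piece of $\partial B$, not shared with $A$) shows the double bubble perimeter is unchanged: $\rho(\partial A)$ loses $2\epsilon$ and gains $\epsilon$, $\rho(\partial B)$ is unchanged as a point set, and $\rho(\partial A\cap\partial B)$ loses $\epsilon$, so the net change is zero. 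However, $\mu(A)$ decreases by $\tfrac{\sqrt{3}}{4}\epsilon^2$ while $\mu(B)$ is unchanged.

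Next I compute the angles at the new boundary point $P_1$. For $\epsilon$ smaller than the distance from $\pi_1$ to the next corner of $B$ along the joint boundary, the interior angle of $B$ at $P_1$ remains $180^\circ$; the new interior angle of $A$ at $P_1$, formed by the hypotenuse and the remaining joint boundary, is $120^\circ$. The exterior angle at $P_1$ is therefore $60^\circ$, and the modified configuration has exactly the structure forbidden by Lemma~\ref{lemma:NoSixtyDegreeAngles}. Applying the construction in that lemma's proof, which neither alters the two volumes nor reintroduces a $60^\circ$ angle, one obtains a further modification whose double bubble perimeter is strictly smaller by a quantity linear in $\epsilon$.

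To close the argument I must restore the volume ratio $\mu(A)=1,\mu(B)=\alpha$, which currently reads $\mu(A)=1-\tfrac{\sqrt3}{4}\epsilon^2$, $\mu(B)=\alpha$. The structural hypothesis enters here: in case (a), the joint boundary has an interior vertex $q$ which is a corner of $B$ where two differently oriented sides meet, and small perpendicular perturbations of the sides of $B$ near $q$ (consistent with $B=B^\varhexagon$) can transfer a volume of order $\epsilon^2$ from $B$ back to $A$ at a perimeter cost of order $\epsilon^2$; case (b) uses the corner of $B$ at $\pi_2$ in an analogous manner, exploiting the freedom to shift the two differently oriented sides of $B$ meeting there. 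For $\epsilon$ small enough the order-$\epsilon$ savings from the second step dominate the order-$\epsilon^2$ cost of volume restoration, producing a configuration in $\mathfs{F}_\alpha$ with strictly smaller double bubble perimeter than $(A,B)$. The main obstacle is a careful case analysis on the orientations of the sides of $B$ adjacent to the exploited corner, showing that the volume-restoring perturbation can be performed while remaining in $\mathfs{F}_\alpha$ and without reintroducing any $60^\circ$ angle of a type already excluded by the previous lemmas; this is where the structural assumption on the number of joint segments (or the presence of a genuine corner of $B$ at $\pi_2$) is essential, since it guarantees at least one such exploitable corner is available.
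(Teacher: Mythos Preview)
Your approach has a genuine gap in step~2: the claimed ``linear in $\epsilon$'' saving from applying Lemma~\ref{lemma:NoSixtyDegreeAngles} does not hold. After the cut, the exterior $60^\circ$ wedge at $P_1$ is bounded on the $A$-side by the segment $P_1P_2$ of length $\epsilon$, so in the lemma's notation $l_1=\epsilon$. Since the interior $B$-angle at the original $\pi_1$ is $180^\circ$, the non-joint $B$-segment continues past $\pi_1$, and $l_2=\epsilon+\rho(l_1^B)>l_1$. In the $l_2>l_1$ branch of the lemma's proof one reorients only the portion of $l_2$ between $P_1$ and height $i_{\smallrightarrow}^{A}=y(\pi_1)$, which is exactly the segment $P_1\pi_1$ of length $\epsilon$; this glues the triangle you just removed onto $B$, giving $\mu(B_1)=\alpha+\tfrac{\sqrt3}{4}\epsilon^2$, and shrinking $B_1$ back saves only $O(\epsilon^2)$ perimeter. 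The lemma's further step of passing to $B_1^{\varhexagon}$ would extend the left $\sqrt3$-side of $B$ through $P_2$ and yield an $O(\epsilon)$ volume gain, but here the old joint boundary lies along that same $\sqrt3$-direction, so the strip between the old and new left sides of $B$ below $P_2$ sits inside $A$; the hexagonal hull overlaps $A$ and the construction breaks down. (Contrast Figure~\ref{fig:ExteriorSixtyDegreesVolAdj2}, where the joint boundary has slope $-\sqrt3$, orthogonal in this sense to the expansion direction, so no overlap occurs.) Either way the saving in step~2 is $O(\epsilon^2)$, the same order as your step~3 cost, and you have not shown the constants favour you.

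The paper avoids this first-order cancellation by making a \emph{non-infinitesimal} modification: it reorients the entire joint portion of $l_2$ (a segment of length independent of any small parameter), producing a definite drop $l_5>0$ in the double bubble perimeter, and then executes a careful multi-stage volume adjustment (Figures~\ref{fig:Breakingl_2}--\ref{fig:LastVolAdjUglySixtyCase}) that never spends more than $l_5$. The structural hypothesis on the number of joint segments is used to guarantee that these adjustments stay within $\mathfs{F}_\alpha$, not to enable a perturbative volume-transfer at a corner.
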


\begin{proof}

First, after some reflections and/or $60^{\circ}$ rotations, we may assume that $\pi_1$ occurs at the point of intersection of $\Lambda_{\smallrightarrow}\left(i_{\smallrightarrow}^A\right)$ and $\Lambda_{\smallnearrow}\left(i_{\smallnearrow}^B\right)$. This means that $\pi_2$ must occur in $\Lambda_{\rightarrow}\left(s_{\smallrightarrow}^B\right)$. Figure \ref{fig:TheUglySixtyCase} demonstrates the situation:

\begin{figure}[H]

\begin{tikzpicture}[scale=0.6]

\draw[red, thin] (12,0) to (13,1.7320508);
\draw[red, thin] (13,1.7320508) to (14,1.7320508);
\draw[red, thin] (14,1.7320508) to (15,0);
\draw[red, thin] (15,0) to (14,-1.7320508);
\draw[red, thin] (14,-1.7320508) to (13,-1.7320508);
\draw[red, thin] (13,-1.7320508) to (12,0);

\draw[blue, thin] (12.75,1.299038) to (11,1.299038);
\draw[blue, thin] (11,1.299038) to (10.75,0.866025);
\draw[blue, thin] (10.75,0.866025) to (12.5,-2.1650635);
\draw[blue, thin] (12.5,-2.1650635) to (13,-2.1650635);
\draw[blue, thin] (13,-2.1650635) to (13.25,-1.7320508);

\draw (12.75,1.299038) node{$\cdot$};
\draw[black, thin] (12.75,1.299038) to (11.7,2.4);
\draw (11.8,2.6) node{$\pi_1$};

\draw (13.25,-1.7320508) node{$\cdot$};
\draw (13.8,-3) node{$\pi_2$};

\draw[black, thin] (13.25,-1.7320508) to (13.7,-2.7);

\draw (11.6,0.6) node{$\scaleobj{0.6}{A}$};
\draw (13.5,0) node{$\scaleobj{0.6}{B}$};


\end{tikzpicture}

\caption{\label{fig:TheUglySixtyCase}}

\end{figure}

Note that the point $\pi_2$ does not come into the proof in any significant way. We merely need it to occur at some point in $\Lambda_{\smallrightarrow}\left(s_{\smallrightarrow}^B\right)\cap\partial B$. This could be at either endpoint of this line segment. We continue with the notation shown in the lefthand side of Figure \ref{fig:RemainingSixties}. Also, for ease of notation and when clear from context, we will use $l_i$ to represent not only a line segment, but also the length of this line segment.

With this situation, we can perform the following operation: first we reorient the part of $l_2$ that is joint boundary so that it has slope $-\sqrt{3}$. Figure \ref{fig:Breakingl_2} gives an illustration of this:

\begin{figure}[H]

\begin{tikzpicture}[scale=0.5]

\draw[red, thin] (0,0) to (2,3.46410161);
\draw[red, thin] (2,3.46410161) to (4,3.46410161);\draw (1.4,1.8) node{$\scaleobj{0.7}{l_2}$};
\draw[red, thin] (4,3.46410161) to (6,0);
\draw[red, thin] (6,0) to (4,-3.46410161);
\draw[red, thin] (4,-3.46410161) to (2,-3.46410161);
\draw[red, thin] (2,-3.46410161) to (0,0);

\draw (3,0) node{$\scaleobj{0.8}{B}$};

\draw[blue, thin] (3,-3.46410161) to (2,-5.19615242);
\draw[blue, thin] (2,-5.19615242) to (1.5,-5.19615242);
\draw[blue, thin] (1.5,-5.19615242) to (-1.5,0);
\draw[blue, thin] (-1.5,0) to (-0.5,1.7320508);\draw (-1.3,0.94) node{$\scaleobj{0.7}{a_1}$};
\draw[blue, thin] (-0.5,1.7320508) to (1,1.7320508);\draw (0.3,2) node{$\scaleobj{0.7}{l_1}$};

\draw (-0.2,-1) node{$\scaleobj{0.8}{A}$};

\draw[->] (6.8,0) to (7.4,0);


\draw[red, thick, dotted] (10,0) to (11,1.7320508);

\draw[black, thin] (10,0) to (10.3,0);
\draw[black, thin] (10.3,0) to (10.9,-1.03923048);
\draw[black, thin] (12,-3.46410161) to (12.2,-3.11769145);
\draw[black, thin] (12.2,-3.11769145) to (11.4,-1.73205080);

\draw[red, thin] (10,0) to (9.25,1.2990381);\draw (9.6,0.3) node{$\scaleobj{0.7}{l_4}$};
\draw[red, thin] (11,1.7320508) to (12,3.46410161);\draw (11.8,2.5) node{$\scaleobj{0.7}{l_3}$};
\draw[red, thin] (12,3.46410161) to (14,3.46410161);
\draw[red, thin] (14,3.46410161) to (16,0);
\draw[red, thin] (16,0) to (14,-3.46410161);
\draw[red, thin] (14,-3.46410161) to (12,-3.46410161);
\draw[red, thin] (12,-3.46410161) to (10,0);\draw (11.25,-1.265) node{$\scaleobj{0.7}{l_6}$};

\draw (13,0) node{$\scaleobj{0.8}{B_1}$};

\draw[blue, thin] (13,-3.46410161) to (12,-5.19615242);
\draw[blue, thin] (12,-5.19615242) to (11.5,-5.19615242);
\draw[blue, thin] (11.5,-5.19615242) to (8.5,0);
\draw[blue, thin] (8.5,0) to (9.5,1.7320508);\draw (8.7,0.94) node{$\scaleobj{0.7}{a_1}$};
\draw[blue, thin] (9.5,1.7320508) to (11,1.7320508);\draw (10.3,2) node{$\scaleobj{0.7}{l_1}$};

\draw (9.8,-1) node{$\scaleobj{0.8}{A_1}$};

\draw[->] (16.8,0) to (17.4,0);


\draw[red, thick, dotted] (20,0) to (21,1.7320508);

\draw[black, thin] (20,0) to (20.3,0);
\draw[black, thin] (20.3,0) to (20.9,-1.03923048);
\draw[black, thin] (22,-3.46410161) to (22.2,-3.11769145);
\draw[black, thin] (22.2,-3.11769145) to (21.4,-1.73205080);

\draw[red, thin] (20,0) to (19.25,1.2990381);\draw (19.6,0.3) node{$\scaleobj{0.7}{l_4}$};
\draw[red, thin] (19.25,1.2990381) to (20.5,3.46410161);
\draw[red, thin] (20.5,3.46410161) to (22,3.46410161);
\draw[red, thick, dotted] (21,1.7320508) to (22,3.46410161);
\draw[red, thin] (22,3.46410161) to (24,3.46410161);\draw (21.8,2.5) node{$\scaleobj{0.7}{l_3}$};
\draw[red, thin] (24,3.46410161) to (26,0);
\draw[red, thin] (26,0) to (24,-3.46410161);
\draw[red, thin] (24,-3.46410161) to (22,-3.46410161);
\draw[red, thin] (22,-3.46410161) to (20,0);\draw (21.25,-1.265) node{$\scaleobj{0.7}{l_6}$};

\draw (23,0) node{$\scaleobj{0.8}{B}$};

\draw[blue, thin] (23,-3.46410161) to (22,-5.19615242);
\draw[blue, thin] (22,-5.19615242) to (21.5,-5.19615242);
\draw[blue, thin] (21.5,-5.19615242) to (18.5,0);
\draw[blue, thin] (18.5,0) to (19.5,1.7320508);\draw (18.7,0.94) node{$\scaleobj{0.7}{a_1}$};
\draw[blue, thick, dotted] (19.5,1.7320508) to (21,1.7320508);\draw (20.3,2) node{$\scaleobj{0.7}{l_1}$};

\draw (19.8,-1) node{$\scaleobj{0.8}{A}$};


\end{tikzpicture}

\caption{\label{fig:Breakingl_2}}

\end{figure}

The reoriented part of $l_2$ we rename $l_4$, and the part of $l_2$ that did not change we name $l_3$. Notice that $l_4$ may have an endpoint in $l_1$ if $l_1\geq l_2-l_3$, or it could have an endpoint in $a_1$ (at a point that is not the meeting point of $a_1$ and $l_1$), as it does in Figure \ref{fig:Breakingl_2}. In this case, $l_1<l_2-l_3$. So, the double bubble perimeter of this new configuration is less than that of $(A,B)$ by a length of $l_5=l_2-\left(l_3+l_4\right)>0$. Finally, let $l_6$ be the original line segment $\Lambda_{\smallsearrow}\left(s_{\smallsearrow}^B\right)\cap\partial B$. See Figure \ref{fig:VolAdjUglySixtyCase} for reference. 

Thus far, we have created two new sets, which we call $A_1$ (to replace $A$), and $B_1$ (to replace $B$). Once we have done this, we replace $B_1$ with $B_1^{\varhexagon}$, which does not alter $A_1$ or the double bubble perimeter. However, it does translate the line segment $l_3$ to the left, so we call this new line segment $\tilde{l}_3$. Similarly it translates the line segment $l_1$ up and to the right between $l_3$ and $\tilde{l}_3$; we call this translated line segment $\tilde{l}_1$. See the lefthand side of Figure \ref{fig:VolAdjUglySixtyCase}.

\begin{figure}[H]

\begin{tikzpicture}[scale=0.55]


\draw[red, thick, dotted] (0,0) to (1,1.7320508);
\draw[red, thin] (0,0) to (-0.75,1.2990381);\draw (-0.4,0.3) node{$\scaleobj{0.7}{l_4}$};
\draw[red, thin] (-0.5,1.7320508) to (0.5,3.46410161);\draw (1.8,2.5) node{$\scaleobj{0.7}{l_3}$};\draw (-0.5,2.5) node{$\scaleobj{0.7}{\tilde{l}_3}$};

\draw[red, thin] (0.5,3.46410161) to (4,3.46410161);
\draw[red, thin] (4,3.46410161) to (6,0);
\draw[red, thin] (6,0) to (4,-3.46410161);
\draw[red, thin] (4,-3.46410161) to (2,-3.46410161);
\draw[red, thin] (2,-3.46410161) to (0,0);\draw (1.2,-1.2) node{$\scaleobj{0.7}{l_6}$};

\draw (3,0) node{$\scaleobj{0.8}{B_1}$};

\draw (1.56,3.86) node{$\scaleobj{0.7}{\tilde{l}_1}$};

\draw[blue, thin] (3,-3.46410161) to (2,-5.19615242);
\draw[blue, thin] (2,-5.19615242) to (1.5,-5.19615242);
\draw[blue, thin] (1.5,-5.19615242) to (-1.5,0);
\draw[blue, thin] (-1.5,0) to (-0.5,1.7320508);\draw (-1.3,0.94) node{$\scaleobj{0.7}{a_1}$};
\draw (-1.2,1.5) node{$\scaleobj{0.6}{l_5}$};

\draw[blue, thick, dotted] (-0.5,1.7320508) to (1,1.7320508);\draw (0.3,2) node{$\scaleobj{0.7}{l_1}$};

\draw[black, thick, dotted] (-1,1.7320508) to (1.5,1.7320508);
\draw[black, thick, dotted] (-1,1.2990381) to (1.5,1.2990381);
\draw[black, thick, dotted] (0,0) to (3,5.1961524);
\draw[black, thick, dotted] (-0.75,1.2990381) to (1.5,5.1961524);
\draw[black, thick, dotted] (-1,3.46410161) to (3,3.46410161);

\draw (-0.2,-1) node{$\scaleobj{0.8}{A_1}$};


\draw[red, thin] (9.5,1.7320508) to (9.75,1.2990381);
\draw[red, thin] (10.5,0) to (9.75,1.2990381);\draw (9.8,0.7) node{$\scaleobj{0.7}{\tilde{l}_4}$};
\draw[red, thin] (9.5,1.7320508) to (10.5,3.46410161);\draw (11.8,2.5) node{$\scaleobj{0.7}{l_3}$};\draw (9.5,2.5) node{$\scaleobj{0.7}{\tilde{l}_3}$};

\draw[red, thin] (10.5,3.46410161) to (14,3.46410161);
\draw[red, thin] (14,3.46410161) to (16,0);
\draw[red, thin] (16,0) to (14,-3.46410161);
\draw[red, thin] (14,-3.46410161) to (12.5,-3.46410161);
\draw[red, thick, dotted] (12.5,-3.46410161) to (12,-3.46410161);
\draw[red, thin] (12.5,-3.46410161) to (10.5,0);\draw (10.9,-1.2) node{$\scaleobj{0.7}{\tilde{l}_6}$};

\draw (13,0) node{$\scaleobj{0.8}{B_2}$};

\draw (11.56,3.86) node{$\scaleobj{0.7}{\tilde{l}_1}$};

\draw[blue, thin] (13,-3.46410161) to (12,-5.19615242);
\draw[blue, thin] (12,-5.19615242) to (11.5,-5.19615242);
\draw[blue, thin] (11.5,-5.19615242) to (8.5,0);
\draw[blue, thin] (8.5,0) to (9.5,1.7320508);\draw (8.7,0.94) node{$\scaleobj{0.7}{a_1}$};
\draw (8.8,1.5) node{$\scaleobj{0.6}{l_5}$};

\draw[blue, thick, dotted] (9.5,1.7320508) to (11,1.7320508);\draw (10.3,2) node{$\scaleobj{0.7}{l_1}$};

\draw[black, thick, dotted] (9,1.7320508) to (11.5,1.7320508);
\draw[black, thick, dotted] (9,1.2990381) to (11.5,1.2990381);
\draw[black, thick, dotted] (10,0) to (13,5.1961524);
\draw[black, thick, dotted] (9.25,1.2990381) to (11.5,5.1961524);
\draw[black, thick, dotted] (9,3.46410161) to (13,3.46410161);

\draw (9.8,-1) node{$\scaleobj{0.8}{A_2}$};



\draw[red, thin] (20.5,1.7320508) to (20.75,1.2990381);
\draw[red, thin] (21.5,0) to (20.75,1.2990381);
\draw[red, thin] (20.5,1.7320508) to (21.5,3.46410161);

\draw[red, thick, dotted] (20.5,3.46410161) to (21.5,3.46410161);
\draw[red, thin] (21.5,3.46410161) to (24,3.46410161);
\draw[red, thin] (24,3.46410161) to (26,0);
\draw[red, thin] (26,0) to (24,-3.46410161);
\draw[red, thin] (24,-3.46410161) to (23.5,-3.46410161);
\draw[red, thick, dotted] (23.5,-3.46410161) to (22.5,-3.46410161);
\draw[red, thick, dotted] (22.5,-3.46410161) to (19.5,1.7320508);
\draw[red, thin] (23.5,-3.46410161) to (21.5,0);

\draw (23,0) node{$\scaleobj{0.8}{B_3}$};


\draw[blue, thin] (23.5,-3.46410161) to (22.5,-5.19615242);
\draw[blue, thin] (22.5,-5.19615242) to (21.5,-5.19615242);
\draw[blue, thin] (21.5,-5.19615242) to (18.5,0);
\draw[blue, thin] (18.5,0) to (19.5,1.7320508);

\draw[blue, thin] (19.5,1.7320508) to (20.5,1.7320508);

\draw[blue, thick, dotted] (23,-3.46410161) to (22,-5.19615242);

\draw[black, thick, dotted] (19,1.7320508) to (21.5,1.7320508);
\draw[black, thick, dotted] (19.25,1.2990381) to (21.5,5.1961524);

\draw (19.8,-1) node{$\scaleobj{0.8}{A_3}$};

\end{tikzpicture}

\caption{\label{fig:VolAdjUglySixtyCase}}

\end{figure}

Recall that $\rho_{DB}\left(A,B\right)-\rho_{DB}\left(A_1,B_1\right)=\rho\left(l_5\right)$ (the length of $l_5$). So, we can move the line $l_4\cup l_6$ to the right any distance between $0$ and $\rho\left(l_5\right)$, and connect this line to $l_5$ via a line segment with slope $-\sqrt{3}$. While we do this, we continuously take volume from $B_1$ and add it to $A_1$. This creates two new sets, call them $B_2$ to replace $B_1$, and $A_2$ to replace $A_1$. See the middle configuration of Figure \ref{fig:VolAdjUglySixtyCase}. We have increased the double bubble perimeter of $\left(A_1,B_1\right)$ by at most $\rho\left(l_5\right)$, which means that the double bubble perimeter of $\left(A_2,B_2\right)$ is still at most the double bubble perimeter of $\left(A,B\right)$. 

Suppose that we move the line segment $l_4\cup l_6$ to the right by a distance of $\rho\left(l_5\right)$, and the resulting $A_2$ still has volume less than $A$. Then, we can continue to move $l_4\cup l_6$ to the right while shortening $\tilde{l}_1$ and moving $\tilde{l}_3$ to the right the same amount. While we reduce the length of $\tilde{l}_1$, we must add a horizontal line segment of this same length with one endpoint intersecting the upper endpoint of $l_5$ and the other endpoint intersecting $\tilde{l}_3$ (here we are abusing notation slightly since, technically, we have moved $\tilde{l}_3$ to the right). This process does decrease the overall volume of both sets. However, all volume that is lost comes from the exterior of $A\cup B$. This process creates two new sets $A_3$ to replace $A_2$, and $B_3$ to replace $B_2$. We can move the line segments in question until $\tilde{l}_3$ has returned to its original position and we have regained $l_3$. 

Suppose that at this point we still have that $\mu\left(A_3\right)<\mu\left(A\right)$. Then, we can continue to move $\tilde{l}_4\cup\tilde{l}_6$ to the right in the same manner as before; now, however, we do not move $l_3$. Instead, we extend the length of $l_1$. See Figure \ref{fig:LastVolAdjUglySixtyCase}. Suppose that we extend the length of $l_1$ by an amount $\epsilon$ and call this extended line $l_1^+$. Also, we denote the set that has replaced $A_3$ by $A_4$ and the set that has replaced $B_3$ by $B_4$. Then, replace $A_4$ with $A_4^{\varhexagon}$ and $B_4$ with $B_4\setminus A_4^{\varhexagon}$. Again, see Figure \ref{fig:LastVolAdjUglySixtyCase}. The effect of this process does not change the double bubble perimeter of $\left(A_4,B_4\right)$, but it does remove volume from $B_4$ and add it to $A_4$.

\begin{figure}

\begin{tikzpicture}[scale=0.45]

\draw[red, thin] (20.5,1.7320508) to (20.75,1.2990381);
\draw[red, thin] (21.5,0) to (20.75,1.2990381);
\draw[red, thin] (20.5,1.7320508) to (21.5,3.46410161);

\draw[red, thick, dotted] (20.5,3.46410161) to (21.5,3.46410161);
\draw[red, thin] (21.5,3.46410161) to (24,3.46410161);
\draw[red, thin] (24,3.46410161) to (26,0);
\draw[red, thin] (26,0) to (24,-3.46410161);
\draw[red, thin] (24,-3.46410161) to (23.5,-3.46410161);
\draw[red, thick, dotted] (23.5,-3.46410161) to (22.5,-3.46410161);
\draw[red, thick, dotted] (22.5,-3.46410161) to (19.5,1.7320508);
\draw[red, thin] (23.5,-3.46410161) to (21.5,0);

\draw (23,0) node{$\scaleobj{0.8}{B_3}$};


\draw[blue, thin] (23.5,-3.46410161) to (22.5,-5.19615242);
\draw[blue, thin] (22.5,-5.19615242) to (21.5,-5.19615242);
\draw[blue, thin] (21.5,-5.19615242) to (18.5,0);
\draw[blue, thin] (18.5,0) to (19.5,1.7320508);

\draw[blue, thin] (19.5,1.7320508) to (20.5,1.7320508);

\draw[blue, thick, dotted] (23,-3.46410161) to (22,-5.19615242);

\draw[black, thick, dotted] (19,1.7320508) to (21.5,1.7320508);
\draw[black, thick, dotted] (19.25,1.2990381) to (21.5,5.1961524);

\draw (19.8,-1) node{$\scaleobj{0.8}{A_3}$};


\draw[red, thin] (30.5,1.7320508) to (31.5,3.46410161);
\draw[red, thick, dotted] (30.5,3.46410161) to (31.5,3.46410161);
\draw[red, thin] (31.5,3.46410161) to (34,3.46410161);
\draw[red, thin] (34,3.46410161) to (36,0);
\draw[red, thin] (36,0) to (34,-3.46410161);
\draw[red, thin] (34,-3.46410161) to (33.5,-3.46410161);
\draw[red, thick, dotted] (33.5,-3.46410161) to (32.5,-3.46410161);
\draw[red, thick, dotted] (32.5,-3.46410161) to (29.5,1.7320508);
\draw[red, thin] (31,1.7320508) to (34,-3.46410161);
\draw[red, thin] (31,1.7320508) to (30.5,1.7320508);

\draw (33,0) node{$\scaleobj{0.8}{B_3}$};

\draw[blue, thin] (33.5,-3.46410161) to (32.5,-5.19615242);
\draw[blue, thin] (32.5,-5.19615242) to (31.5,-5.19615242);
\draw[blue, thin] (31.5,-5.19615242) to (28.5,0);
\draw[blue, thin] (28.5,0) to (29.5,1.7320508);
\draw[blue, thin] (29.5,1.7320508) to (30.5,1.7320508);
\draw[blue, thick, dotted] (33,-3.46410161) to (32,-5.19615242);

\draw[black, thick, dotted] (29,1.7320508) to (31.5,1.7320508);
\draw[black, thick, dotted] (29.25,1.2990381) to (31.5,5.1961524);

\draw (29.8,-1) node{$\scaleobj{0.8}{A_3}$};

\end{tikzpicture}

\caption{\label{fig:LastVolAdjUglySixtyCase}}

\end{figure}

Notice that at this point the double bubble perimeter of $\left(A_4^{\varhexagon},B_4\setminus A_4^{\varhexagon}\right)$ may be longer than the double bubble perimeter of $\left(A_3,B_3\right)$ by $\epsilon$. However, the reason we choose to analyse $\left(A^{\varhexagon}\setminus B^{\varhexagon},B^{\varhexagon}\right)$ is that it has joint boundary that is no longer than the joint boundary of $\left(A^{\varhexagon},B^{\varhexagon}\setminus A^{\varhexagon}\right)$. This means that the amount of joint boundary in $\left(A^{\varhexagon},B^{\varhexagon}\setminus A^{\varhexagon}\right)$ that is contained in $\Lambda_{\smallrightarrow}\left(i_{\smallrightarrow}^A\right)$ is no longer than the joint boundary in $\left(A^{\varhexagon}\setminus B^{\varhexagon},B^{\varhexagon}\right)$ that is contained in $\Lambda_{\smallrightarrow}\left(s_{\smallrightarrow}^B\right)$.

This, however, must be too far because then $A_2$ strictly contains $A$, which means that $\mu\left(A_3\right)>\mu\left(A\right)$. Therefore, we only have to move the line segments to the right by an amount strictly less than the length of $\tilde{l}_1$. This means that the (closure of the) union of the resulting $A_3$ and $B_3$ strictly contains $A\cup B$. Therefore, $B_3$ has volume strictly greater than $\alpha$ (because we stop the process when $\mu\left(A_3\right)=1$). So, we can lower the top of $B_3$, and potentially move its right sides to the left, until we get a set $B_4$ that has volume $\alpha$. This strictly reduces the double bubble perimeter. Thus, $\left(A_3,B_4\right)\in\mathfs{F}_{\alpha}$, and $\rho_{DB}\left(A_3,B_4\right)<\rho_{DB}\left(A,B\right)$, which is what we wanted to show. 

\end{proof}

We continue eliminating configurations whose joint boundary consists of three line segments. 

\begin{lemma}
Let $\left(A,B\right)\in\mathfs{F}_{\alpha}$. Suppose that $B=B^{\varhexagon}$, that the joint boundary of $A$ and $B$ consists of three line segments, and that the angle interior to $A$ associated with $\pi_1$ is $60^{\circ}$. Suppose further that the angle exterior to both $A$ and $B$ at the point $\pi_1$ is $180^{\circ}$. Then, $(A,B)$ is not a double bubble minimizing configuration. The same result holds when $\pi_1$ is replaced with $\pi_2$. 
\end{lemma}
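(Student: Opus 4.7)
The plan is to adapt the proof of the preceding lemma to the configuration in which the exterior angle at $\pi_1$ equals $180^\circ$ rather than $120^\circ$. Under this assumption the non-joint edges $l_1^A$ and $l_1^B$ emanating from $\pi_1$ are collinear, and the interior angle of $B$ at $\pi_1$ is forced to be $120^\circ$; since $B = B^{\varhexagon}$, the point $\pi_1$ must therefore be a vertex of the hexagon $B$. After a sequence of reflections and $60^\circ$ rotations I will place $\pi_1$ at the origin with $l_1^A$ extending westward and $l_1^B$ extending eastward along $\Lambda_{\smallrightarrow}(0)$, with the initial joint boundary segment $s_1$ emanating from $\pi_1$ along $\Lambda_{\smallnearrow}(0)$ in the SW direction. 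In this frame $A$ occupies the thin $60^\circ$ wedge immediately southwest of $\pi_1$.

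The central move mirrors the one in the previous lemma. Let $L$ denote the length of (the portion of) $s_1$ to be reoriented, so that its far endpoint is $Q = (-L/2, -L\sqrt{3}/2)$. I will replace the old slope-$\sqrt{3}$ joint segment from $\pi_1$ to $Q$ by a new slope-$(-\sqrt{3})$ segment from $Q$ NW to the point $(-L,0)$. Geometrically this transfers the equilateral triangle $T$ with vertices $\pi_1, Q, (-L,0)$ from $A$ into $B$, and the horizontal piece from $(-L,0)$ to $\pi_1$ that used to lie in $l_1^A$ becomes a new piece of non-joint $\partial B$. When $l_1^A$ is long enough to accommodate the landing point, this operation preserves the double bubble perimeter exactly; when $l_1^A$ is shorter, the reoriented segment overshoots $l_1^A$ and lands on $a_1$ instead, and exactly as in the preceding lemma this yields a strictly positive savings $l_5 = L - (l_3 + l_4) > 0$ at this step alone.

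Call the result $(A_1, B_1)$. I will then replace $B_1$ by its circumscribed hexagon $B_1^{\varhexagon}$ and set $A_2 := A_1 \setminus B_1^{\varhexagon}$; neither operation increases the double bubble perimeter, and the resulting pair is of the correct form. Because $\mu(B_1^{\varhexagon}) \geq \mu(B_1) > \alpha$, I will then restore the correct volume of $B$ by sliding the NW edge of $B_1^{\varhexagon}$ (the piece along $\Lambda_{\smallnearrow}(i_{\smallnearrow}^{B_1^{\varhexagon}})$) eastward until $\mu$ returns to $\alpha$. This contraction strictly shortens an edge of $\partial B$ that is non-joint with $A_2$, and so strictly decreases the double bubble perimeter. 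If $\mu(A_2)$ then exceeds $1$, I reduce it by lowering its top side or shifting its left sides to the right; neither adjustment increases the perimeter. The final pair $(\tilde{A}, \tilde{B}) \in \mathfs{F}_\alpha$ then satisfies $\rho_{DB}(\tilde{A}, \tilde{B}) < \rho_{DB}(A,B)$, so $(A,B)$ is not a minimizer.

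The main obstacle will be verifying that the strict decrease truly materialises: one must track carefully, at each step, which edges are joint versus non-joint so that the shortening of the NW edge of $B_1^{\varhexagon}$ is not silently offset by the creation of new joint boundary with $A_2$, and one must check that the subsequent volume adjustments on $A_2$ do not cost more perimeter than the savings earned from the $B$-contraction. The overshooting sub-case (where $l_1^A < L$) requires separate but entirely analogous bookkeeping using the $l_5$-style savings. The corresponding statement at $\pi_2$ follows by the same argument after an appropriate relabelling.
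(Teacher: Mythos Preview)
Your first move, reorienting the slope-$\sqrt{3}$ joint segment at $\pi_1$ to slope $-\sqrt{3}$ while fixing its far endpoint $Q$, is exactly the paper's move. After this, the new left side of $B_1$ is the single slope-$(-\sqrt{3})$ segment the paper calls $L$ (I will write $\mathcal{L}$), running from $(-L,0)$ down through $Q$ to the bottom-left corner of $B$, and in fact $B_1=B_1^{\varhexagon}$, so your hexagonalisation step is vacuous.

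The gap is in your second move. Sliding the $\Lambda_{\smallnearrow}(i_{\smallnearrow}^{B_1^{\varhexagon}})$ edge eastward does \emph{not} strictly decrease the double bubble perimeter. That edge is currently degenerate at $(-L,0)$; pushing it east by $\delta$ cuts off an equilateral triangle from the top-left of $B_1$. You correctly observe that the top of $B_1$ (non-joint) shortens by $\delta$, but you have overlooked that a brand-new slope-$\sqrt{3}$ edge of the same length $\delta$ is created on $\partial B$. If the released triangle is discarded, this new edge is non-joint $\partial B$ and exactly cancels the savings, while the portion of $\mathcal{L}$ that is exposed merely switches from joint to non-joint $\partial A$ (no change). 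If instead the released triangle is given to $A$, the new slope-$\sqrt{3}$ edge becomes joint boundary, the exposed piece of $\mathcal{L}$ becomes interior to $A$ (saving $\delta$), and the top piece becomes non-joint $\partial A$ (no change) --- again net zero. In the latter case you have also simply recreated the original situation: three joint segments, a $60^\circ$ interior angle of $A$, and a $180^\circ$ exterior angle at the new $\pi_1'=(-L+\delta,0)$. Moreover, in the discard variant $\mu(A_2)=1-\mu(T)<1$, which your ``if $\mu(A_2)$ exceeds $1$'' clause does not address.

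The paper instead slides $\mathcal{L}$ itself (the slope-$(-\sqrt{3})$ side $\Lambda_{\smallsearrow}(s_{\smallsearrow}^{B_1})$) to the right, transferring a strip from $B_1$ to $A_1$. The crucial difference is at the \emph{bottom}: that strip eats into the third joint segment $\Lambda_{\smallrightarrow}(s_{\smallrightarrow}^B)\cap\partial B$, and the eaten portion becomes interior to $A$, giving an honest $-\epsilon$. Volumes then balance exactly ($\mu(A_2)=1$, $\mu(B_2)=\alpha$) with no further adjustment. Your argument can be repaired by sliding $\mathcal{L}$ rather than the degenerate $\Lambda_{\smallnearrow}$ edge.
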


\begin{proof}

Since we are assuming that $B=B^{\varhexagon}$, and the joint boundary consists of three line segments, we may assume that these three line segments are $\Lambda_{\smallnearrow}\left(i_{\smallnearrow}^B\right)\cap\partial B$, $\Lambda_{\smallsearrow}\left(s_{\smallsearrow}^B\right)\cap\partial B$, and $\Lambda_{\smallrightarrow}\left(s_{\smallrightarrow}^B\right)\cap\partial B$. See Figure \ref{fig:No60InsideA} for illustration:

\begin{figure}[H]

\begin{tikzpicture}[scale=0.45]


\draw[blue, thin] (0,0) to (2,3.46410161);
\draw[blue, thin] (2,3.46410161) to (4,3.46410161);
\draw[blue, thin] (4,3.46410161) to (3,1.7320508);
\draw[blue, thin] (3,1.7320508) to (4,0);
\draw[blue, thin] (4,0) to (5,0);
\draw[blue, thin] (5,0) to (6,-1.7320508);
\draw[blue, thin] (6,-1.7320508) to (5,-3.46410161);
\draw[blue, thin] (5,-3.46410161) to (2,-3.46410161);
\draw[blue, thin] (2,-3.46410161) to (0,0);

\draw[red, thin] (4,3.46410161) to (5,3.46410161);
\draw[red, thin] (5,3.46410161) to (6,1.7320508);
\draw[red, thin] (6,1.7320508) to (5,0);
\draw[red, thin] (5,0) to (4,0);
\draw[red, thin] (4,0) to (3,1.7320508);
\draw[red, thin] (3,1.7320508) to (4,3.46410161);

\draw (2,0) node{$A$};
\draw (4.5,1.7) node{$B$};

\draw[->] (6.75,0) to (7.25,0);


\draw[blue, thin] (8,0) to (10,3.46410161);
\draw[blue, thin] (11,1.7320508) to (12,0);
\draw[blue, thin] (12,0) to (13,0);
\draw[blue, thin] (13,0) to (14,-1.7320508);
\draw[blue, thin] (14,-1.7320508) to (13,-3.46410161);
\draw[blue, thin] (13,-3.46410161) to (10,-3.46410161);
\draw[blue, thin] (10,-3.46410161) to (8,0);

\draw[red, thin] (12,3.46410161) to (13,3.46410161);
\draw[red, thin] (13,3.46410161) to (14,1.7320508);
\draw[red, thin] (14,1.7320508) to (13,0);
\draw[red, thin] (13,0) to (12,0);
\draw[red, thin] (12,0) to (11,1.7320508);
\draw[red, thin] (11,1.7320508) to (10,3.46410161);
\draw[red, thin] (10,3.46410161) to (12,3.46410161);

\draw (10,0) node{$A_1$};
\draw (12.5,1.7) node{$B_1$};

\draw (10.5,1.5) node{$L$};

\end{tikzpicture}

\caption{\label{fig:No60InsideA}}

\end{figure}

Note that at $\pi_2$ the angles don't have to be as they are in Figure \ref{fig:No60InsideA} above for this proof to work. The angle interior to $A$ at this point could be any of $60^{\circ}$, $120^{\circ}$, or $180^{\circ}$, as could the interior angle to $B$, so long as the sum of these two angles is at most $240^{\circ}$ (since the exterior angle cannot be $60^{\circ}$). Similarly, the angle interior to $B$ at the point $\pi_1$ can be $60^{\circ}$ or $120^{\circ}$.

The process is simple enough. We merely reorient the line segment $\Lambda_{\smallnearrow}\left(i_{\smallnearrow}^B\right)\cap\partial B$ so that its endpoint that is not at $\pi_1$ does not change, and so that it is contained in $\Lambda_{\smallsearrow}\left(s_{\smallsearrow}^B\right)$. See Figure \ref{fig:No60InsideA} above for reference.

For simplicity, we denote by $L$ the line segment that we have just reoriented, along with the line segment $\Lambda_{\smallsearrow}\left(s_{\smallsearrow}^B\right)\cap\partial B$. We have changed the volume ratio. So, to fix this, we move the line $L$ to the right, continuously reducing the volume of $B_1$ and adding it to $A_1$. We can do this until we obtain a new set $B_2$ that has volume $\alpha$. Note that this could result in reducing the line segment $\Lambda_{\smallrightarrow}\left(s_{\smallrightarrow}^B\right)\cap\partial B$ to length zero, and then possibly even adding part of $\Lambda_{\smallnearrow}\left(s_{\smallnearrow}^B\right)$ to the boundary of the set $A_2$, which is what we call the set that replaces $A_1$. The result is two sets whose joint boundary consists of either two line segments or one line segment. If these two sets have joint boundary consisting of two line segments, then we have a configuration $(A_2,B_2)$ in $\mathfs{G}_{\alpha}$ that has strictly better double bubble perimeter than did $(A,B)$ since we reduced the length of the bottom line segment in $B$ ($\Lambda_{\smallrightarrow}\left(s_{\smallrightarrow}^B\right)\cap\partial B$). If $(A_2,B_2)$ has joint boundary consisting of one line segment, we replace $A_2$ with $A_2^{\varhexagon}$ and rescale this set so that it has volume $1$. Let's call this new set $A_3$. Then $(A_3,B_2)$ has strictly better double bubble perimeter than did $(A,B)$ because, again, we reduced the length of $\Lambda_{\smallrightarrow}\left(s_{\smallrightarrow}^B\right)\cap\partial B$. Therefore, we have found a new set that is in $\mathfs{G}_{\alpha}$ with strictly better double bubble perimeter than $(A,B)$, as we wished.

\end{proof}

\begin{lemma}\label{lem:ThreeLineSegBndry}
Let $\left(A,B\right)\in\mathfs{F}_{\alpha}$. Suppose that the joint boundary of $(A,B)$ consists of three line segments, and that $B=B^{\varhexagon}$. If the angle interior to $B$ associated with $\pi_1$ is $60^{\circ}$, and the angle interior to $B$ associated with the point $\pi_2$ is also $60^{\circ}$, then $(A,B)$ is not a double bubble minimizing configuration. 
\end{lemma}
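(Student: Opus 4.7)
The proof will closely parallel the surgery argument of the preceding lemma, but now carried out at both $\pi_1$ and $\pi_2$ simultaneously. After applying a sequence of reflections and $60^\circ$ rotations, I may assume the three joint line segments lie along three consecutive sides of the hexagon $B=B^\varhexagon$ in a standard orientation analogous to Figure \ref{fig:TheUglySixtyCase}. The hypothesis that the angle interior to $B$ is $60^\circ$ at both $\pi_1$ and $\pi_2$ pins down the local configuration at each endpoint: the non-joint segment of $\partial A$ leaving $\pi_i$ cuts across the corner at a sharp angle, producing the same local picture as in the middle configuration of Figure \ref{fig:Breakingl_2}, but occurring symmetrically at both endpoints of the joint path.

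The first step is to perform, at each $\pi_i$, the break-and-reorient operation from the previous lemma: rotate the portion of the joint segment adjacent to $\pi_i$ so that it becomes collinear with the nearby non-joint $\partial B$ side, yielding a strict perimeter saving $\ell_5^{(i)}>0$. Because the joint boundary consists of three distinct positive-length segments, the two modifications are supported in disjoint neighborhoods of $\pi_1$ and $\pi_2$ and do not interfere with one another. This produces a configuration $(A_1,B_1)$ with $\rho_{\text{DB}}(A_1,B_1)\le \rho_{\text{DB}}(A,B)-\ell_5^{(1)}-\ell_5^{(2)}$, though the volume ratio may no longer equal $\alpha$. I then restore the volume ratio via the continuous-translation argument of Figure \ref{fig:VolAdjUglySixtyCase}: sliding the reoriented segments transfers volume between $A_1$ and $B_1$, and the perimeter ceiling is preserved so long as the translations stay within the slack $\ell_5^{(1)}+\ell_5^{(2)}$. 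If further correction is still required, I either lower the top of $B_1$, shift its lateral sides inward, or replace the resulting $A$-component by its hexagonal cover and rescale --- each of which strictly decreases the perimeter. The final pair $(\tilde A,\tilde B)$ lies in $\mathfs{F}_\alpha$ with strictly smaller double bubble perimeter, contradicting the minimality of $(A,B)$.

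The main technical obstacle will be the bookkeeping forced by the fact that the reorientations and volume transfer happen at two different locations: one must verify that the two ends of the joint boundary can be modified independently (which reduces to checking that each of the three joint segments carries enough length for the local surgeries to live in disjoint regions) and that the subsequent volume-transfer step does not push the sets outside $\mathfs{F}_\alpha$. This will require a short case split on whether the corrected $\tilde B$ ends up sharing one, two, or three joint segments with $\tilde A$; the first two alternatives deposit the configuration directly in the simpler family $\mathfs{G}_\alpha$, and all three yield strictly smaller double bubble perimeter than $(A,B)$.
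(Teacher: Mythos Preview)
Your approach may be salvageable, but it misses a key structural consequence of the hypotheses that makes a much simpler argument available. Because $B=B^{\varhexagon}$, the joint boundary consists of three consecutive sides of the hexagon $B$, and the interior-to-$B$ angle is $60^{\circ}$ at \emph{both} $\pi_1$ and $\pi_2$, the non-joint part of $\partial B$ is forced to be a \emph{single} straight segment (the ``cap'') joining $\pi_1$ to $\pi_2$. The paper exploits this directly: reflect $B$ across the line through this cap. The reflection is an isometry, so $\rho_{\text{DB}}$ is unchanged, but the region formerly occupied by $B$ is now absorbed into $A$, so $\mu(A)$ strictly increases; rescaling $A$ back to volume $1$ then strictly decreases perimeter, and the resulting configuration sits in $\mathfs{G}_{\alpha}$ with a single joint segment. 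No local surgery, no two-sided bookkeeping, no case split.

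Your plan to run the break-and-reorient surgery of Figure~\ref{fig:Breakingl_2} at both endpoints has two problems. First, that surgery was designed for a $60^{\circ}$ angle interior to $A$, not interior to $B$; transplanting it requires reworking which segment gets reoriented and in which direction, and you have not done this. Second, and more seriously, your claim that the two modifications ``are supported in disjoint neighborhoods of $\pi_1$ and $\pi_2$ and do not interfere'' is not justified: since $\partial B\setminus\partial A$ is the single cap segment with endpoints $\pi_1$ and $\pi_2$, any surgery at $\pi_1$ and any surgery at $\pi_2$ both touch this same segment, so independence is exactly what must be proved rather than asserted. The subsequent volume transfer and case split you describe would then have to track both modifications simultaneously, which is considerably more work than the paper's one-step reflection.
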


\begin{proof}

As in the previous lemma, we may assume that the joint boundary consists of $\Lambda_{\smallnearrow}\left(i_{\smallnearrow}^B\right)\cap\partial B$, $\Lambda_{\smallsearrow}\left(s_{\smallsearrow}^B\right)\cap\partial B$, and $\Lambda_{\smallrightarrow}\left(s_{\smallrightarrow}^B\right)\cap\partial B$. Since we are assuming that there are two $60^{\circ}$ angles interior to $B$ at $\pi_1$ and $\pi_2$, there can only be one other line segment forming $\partial B$. Figure \ref{fig:NoDouble60sInsideB} gives an illustration:

\begin{figure}[H]

\begin{tikzpicture}[scale=0.3]


\draw[blue, thin] (-1,0) to (2,5.19615242);
\draw[blue, thin] (2,5.19615242) to (3,5.19615242);
\draw[blue, thin] (3,5.19615242) to (4,3.46410161);
\draw[blue, thin] (4,3.46410161) to (3,1.7320508);
\draw[blue, thin] (3,1.7320508) to (4,0);
\draw[blue, thin] (4,0) to (6,0);
\draw[blue, thin] (6,0) to (7,-1.7320508);
\draw[blue, thin] (7,-1.7320508) to (6,-3.46410161);
\draw[blue, thin] (6,-3.46410161) to (1,-3.46410161);
\draw[blue, thin] (1,-3.46410161) to (-1,0);

\draw[red, thin] (4,3.46410161) to (6,0);
\draw[red, thin] (6,0) to (4,0);
\draw[red, thin] (4,0) to (3,1.7320508);
\draw[red, thin] (3,1.7320508) to (4,3.46410161);

\draw[->] (7,1) to (7.6,1);


\draw[blue, thin] (9,0) to (12,5.19615242);
\draw[blue, thin] (12,5.19615242) to (13,5.19615242);
\draw[blue, thin] (13,5.19615242) to (14,3.46410161);
\draw[blue, thin] (16,0) to (17,-1.7320508);
\draw[blue, thin] (17,-1.7320508) to (16,-3.46410161);
\draw[blue, thin] (16,-3.46410161) to (11,-3.46410161);
\draw[blue, thin] (11,-3.46410161) to (9,0);


\draw[red, thin] (14,3.46410161) to (16,0);
\draw[red, thin] (16,0) to (17,1.7320508);
\draw[red, thin] (17,1.7320508) to (16,3.46410161);
\draw[red, thin] (16,3.46410161) to (14,3.46410161);


\draw[->] (7,1) to (7.6,1);

\end{tikzpicture}

\caption{\label{fig:NoDouble60sInsideB}}

\end{figure}

The line segments in $\partial A\setminus\partial B$ need not be as shown in Figure \ref{fig:NoDouble60sInsideB}; the angle interior to $A$ associated with $\pi_1$ can form a $60^{\circ}$ or a $120^{\circ}$ angle, and similarly for the line segment in $\partial A$ that has an endpoint at $\pi_2$. 

Again, the process is simple enough. We merely reflect $B$ about the line $\Lambda_{\smallsearrow}\left(i_{\smallsearrow}^B\right)$ as demonstrated in Figure \ref{fig:NoDouble60sInsideB}. Let's denote by $B_1$ this reflected version of $B$. This process adds volume to $A$, which we can rescale to create a new set $A_1$ that has volume $1$. The result is a configuration $(A_1,B_1)\in\mathfs{G}_{\alpha}$ that has strictly better double bubble perimeter than $(A,B)$. 

As a side note, if the reflecting process results in an exterior $60^{\circ}$ angle, then we have a process to eliminate it as presented in Lemma \ref{lemma:NoSixtyDegreeAngles}.

\end{proof}

There are two other options for configurations whose joint boundary consists of three line segments (assuming that there is a $60^{\circ}$ angle). Again, consider the three angles associated with the point $\pi_1$. We know that the angle interior to $A$ cannot be $60^{\circ}$, and the angle exterior to both sets cannot be $60^{\circ}$. So, assume that the angle interior to $B$ is $60^{\circ}$. Now consider the angles associated with the point $\pi_2$. Again, we know that the angle exterior to both $A$ and $B$ cannot be $60^{\circ}$, nor can the angle interior to $A$. Since we are assuming that the angle interior to $B$ associated with $\pi_1$ is $60^{\circ}$, the corresponding angle associated with $\pi_2$ cannot be $60^{\circ}$. Therefore, all three angles associated with $\pi_2$ must be $120^{\circ}$. The final option is quite similar, but in this case the angle interior to $A$ at $\pi_1$ is $180^{\circ}$, while the angle interior to $B$ at $\pi_1$ is, again, $60^{\circ}$. Figure \ref{fig:FinalTwoSixties} gives an illustration of these two options.

\begin{figure}[H]

\begin{tikzpicture}[scale=0.3]


\draw[blue, thin] (0,0) to (2,3.46410161);
\draw[blue, thin] (2,3.46410161) to (4,3.46410161);
\draw[blue, thin] (4,3.46410161) to (5,1.7320508);
\draw[blue, thin] (5,1.7320508) to (7,1.7320508);
\draw[blue, thin] (7,1.7320508) to (7.5,2.59807621);
\draw[blue, thin] (7.5,2.59807621) to (8.5,2.59807621);
\draw[blue, thin] (8.5,2.59807621) to (10,0);
\draw[blue, thin] (10,0) to (8,-3.46410161);
\draw[blue, thin] (8,-3.46410161) to (2,-3.46410161);
\draw[blue, thin] (2,-3.46410161) to (0,0);

\draw[red, thin] (4,3.46410161) to (7,3.46410161);
\draw[red, thin] (7,3.46410161) to (7.5,2.59807621);
\draw[red, thin] (7.5,2.59807621) to (7,1.7320508);
\draw[red, thin] (7,1.7320508) to (5,1.7320508);
\draw[red, thin] (5,1.7320508) to (4,3.46410161);

\draw (4.8,0) node{$A$};
\draw (5.8,2.6) node{$B$};
\draw (3,4.1) node{$l_1^A$};
\draw (8,3.26) node{$l_2^A$};


\draw[blue, thin] (11,0) to (14,5.19615242);
\draw[blue, thin] (14,5.19615242) to (15,5.19615242);
\draw[blue, thin] (15,5.19615242) to (16,3.46410161);
\draw[blue, thin] (16,3.46410161) to (17,1.7320508);
\draw[blue, thin] (17,1.7320508) to (19,1.7320508);
\draw[blue, thin] (19,1.7320508) to (19.5,2.59807621);
\draw[blue, thin] (19.5,2.59807621) to (20.5,2.59807621);
\draw[blue, thin] (20.5,2.59807621) to (22,0);
\draw[blue, thin] (22,0) to (20,-3.46410161);
\draw[blue, thin] (20,-3.46410161) to (13,-3.46410161);
\draw[blue, thin] (13,-3.46410161) to (11,0);

\draw[red, thin] (16,3.46410161) to (19,3.46410161);
\draw[red, thin] (19,3.46410161) to (19.5,2.59807621);
\draw[red, thin] (19.5,2.59807621) to (19,1.7320508);
\draw[red, thin] (19,1.7320508) to (17,1.7320508);
\draw[red, thin] (17,1.7320508) to (16,3.46410161);

\end{tikzpicture}

\caption{\label{fig:FinalTwoSixties}}

\end{figure}

Neither of the configurations in the previous figure are double bubble minimizers, and both can be easily eliminated. Consider the lefthand configuration, with the notation as in the figure. We can move $B$ to the right, which shortens $l_2^A$, and lengthens $l_1^A$ by the same amount. We do this until $l_2^A=0$. This translated version of $B$ we call $\tilde{B}$. The set $A$ has also changed, and we call this new set $A_1$. The volume of $A_1$ is strictly greater than the volume of $A$. So, we can move the left sides of $A_1$ to the right until we have a set $A_2$ of volume $1$. Furthermore, we have a $60^{\circ}$ angle interior to $A_2$ that is created by the boundary of $\tilde{B}$ and $A_2$ at what would be the point $\pi_2\left(A_2,\tilde{B}\right)$. We already have a method to remove this angle and find a configuration with strictly better double bubble perimeter. Thus, the configuration on the lefthand side of Figure \ref{fig:FinalTwoSixties} is not a minimizer. 

The configuration on the righthand side is not in the family $\mathfs{F}_{\alpha}$ since there are three separate horizontal line segments and three separate line segments with slope $-\sqrt{3}$. Indeed, one can see that in this case $B^{\varhexagon}\subset A^{\varhexagon}$, and none of the lines used to form $B^{\varhexagon}$ are the same as the corresponding lines used to form $A^{\varhexagon}$. Lemma \ref{lem:AllSidesContained} shows us that this case can be replaced with a configuration in which two of the lines used to form $A^{\varhexagon}$ are the same as two of the lines used to form $\tilde{B}^{\varhexagon}$, where $\tilde{B}$ is a translated version of $B$. In this case, there would be a $60^{\circ}$ angle in $A$, which is a case which we have already analysed.

\begin{lemma}
Let $\left(A,B\right)\in\mathfs{F}_{\alpha}$. Suppose that $B=B^{\varhexagon}$, and that the joint boundary consists of two line segments. We may assume that these two line segments occur in $\Lambda_{\smallnearrow}\left(i_{\smallnearrow}^B\right)$ and $\Lambda_{\smallsearrow}\left(s_{\smallsearrow}^B\right)$. Suppose also that $\pi_1$ occurs at a point in $\Lambda_{\smallnearrow}\left(i_{\smallnearrow}^B\right)\cap\partial B$ that is not an endpoint. Similarly, suppose that $\pi_2$ occurs at a point in $\Lambda_{\smallsearrow}\left(s_{\smallsearrow}^B\right)\cap\partial B$ that is not an endpoint. Then, $\left(A,B\right)$ is not a double bubble minimizing configuration. 
\end{lemma}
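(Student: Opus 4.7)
The plan is to exploit the precise angle structure forced by the fact that $\pi_1$ and $\pi_2$ are interior to the sides of $B$, and then execute a perimeter-neutral reorientation that manufactures an exterior $60^{\circ}$ angle, to which the previously established Lemma~\ref{lemma:NoSixtyDegreeAngles} applies.

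First I would deduce the local structure at $\pi_1$ (the analysis at $\pi_2$ is identical). Since $\pi_1$ lies strictly inside the line segment $\Lambda_{\smallnearrow}(i_{\smallnearrow}^B)\cap\partial B$, the angle of $B$ at $\pi_1$ is $180^{\circ}$. The three angles at $\pi_1$ sum to $360^{\circ}$ and by Lemma~\ref{lemma:NoSixtyDegreeAngles} the exterior angle cannot be $60^{\circ}$; since admissible angles lie in $\{60^{\circ},120^{\circ},180^{\circ}\}$, the exterior angle must be $120^{\circ}$ and the angle interior to $A$ must be $60^{\circ}$. A direction check pins down the non-joint segment of $\partial A$ at $\pi_1$ as horizontal, pointing away from $B$.

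Next I perform a local reorientation. For small $\epsilon>0$, replace the last $\epsilon$-piece of the horizontal non-joint $\partial A$ adjacent to $\pi_1$ by a segment of slope $-\sqrt{3}$. This segment terminates at $\pi_1':=\pi_1+\epsilon(-1/2,-\sqrt{3}/2)$, a point on $\Lambda_{\smallnearrow}(i_{\smallnearrow}^B)$, which becomes the new endpoint of the joint boundary. The segment of $\Lambda_{\smallnearrow}(i_{\smallnearrow}^B)$ from $\pi_1'$ to $\pi_1$, formerly joint, is now non-joint $\partial B$. A direct bookkeeping shows this move is perimeter-neutral: the non-joint $\partial A$ length is unchanged (it loses $\epsilon$ from the horizontal and gains $\epsilon$ from the reoriented segment), the joint boundary loses length $\epsilon$, and non-joint $\partial B$ gains length $\epsilon$, so the three contributions to $\rho_{DB}$ cancel. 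However, a triangular region of area $\sqrt{3}\epsilon^2/4$ is removed from $A$ (it becomes exterior to both $A$ and $B$).

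The resulting configuration $(A_1,B_1)$ has the same double bubble perimeter as $(A,B)$ but exhibits a $60^{\circ}$ exterior angle at $\pi_1'$, formed by the non-joint $\partial A$ (direction NW) and the non-joint $\partial B$ (direction NE). By Lemma~\ref{lemma:NoSixtyDegreeAngles}, $(A_1,B_1)$ is not a minimizer for its volume ratio, so an explicit strict improvement is available, reducing the double bubble perimeter by an amount of order $\epsilon$. Rescaling the improved configuration back to the volumes $(1,\alpha)$ incurs a multiplicative perimeter factor of $1+O(\epsilon^2)$, since $\mu(A_1)=1-\sqrt{3}\epsilon^2/4$. For sufficiently small $\epsilon$, the $O(\epsilon)$ gain from Lemma~\ref{lemma:NoSixtyDegreeAngles} strictly dominates the $O(\epsilon^2)$ rescaling cost, producing a configuration in $\gamma_\alpha$ with double bubble perimeter strictly less than $\rho_{DB}(A,B)$. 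Thus $(A,B)\notin\Gamma_\alpha$.

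The main obstacle is the asymptotic balance in the last step: one must show that the linear-in-$\epsilon$ improvement from eliminating the manufactured $60^{\circ}$ exterior angle beats the quadratic-in-$\epsilon$ perimeter cost of restoring the volume ratio, which requires choosing $\epsilon$ small (and small enough that $\pi_1'$ remains interior to $\Lambda_{\smallnearrow}(i_{\smallnearrow}^B)\cap\partial B$ and the reoriented $\epsilon$-segment stays within the non-joint horizontal segment of $\partial A$).
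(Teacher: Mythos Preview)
Your approach is genuinely different from the paper's, and it has a real gap in the quantitative step.

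The paper's proof is a one-line global move: replace $A$ by $A^{\varhexagon}$ and $B$ by $B\setminus A^{\varhexagon}$, which is perimeter-neutral but transfers a nontrivial region from $B$ to $A$, and then invoke the already-developed volume-rebalancing procedure (the one illustrated in Figure~\ref{fig:OneCornerVolumeAdjustment2}) to restore the correct volumes with a strict perimeter decrease. No $\epsilon$-perturbation, no asymptotic comparison.

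Your local-perturbation idea is clever, but the claim that Lemma~\ref{lemma:NoSixtyDegreeAngles} yields an improvement ``of order $\epsilon$'' is not correct. After your reorientation, the two arms of the manufactured exterior $60^{\circ}$ angle at $\pi_1'$ are the diagonal segment in $\partial A_1$ (of length exactly $\epsilon$) and the non-joint $\partial B$ segment (of length $O(1)$). The exterior wedge that can be filled in is therefore a triangle of area $\Theta(\epsilon^2)$, and tracing through the proof of Lemma~\ref{lemma:NoSixtyDegreeAngles} in this geometry shows that the strict perimeter saving it produces is $\Theta(\epsilon^2)$, not $\Theta(\epsilon)$. (Moreover, when $l_2>l_1$ the lemma's ``replace $B_1$ by $B_1^{\varhexagon}$'' step pushes $\Lambda_{\smallnearrow}(i_{\smallnearrow}^{B_1})$ left by $\epsilon$, and in your configuration the resulting $B_1^{\varhexagon}$ actually overlaps $A_1$, so the procedure needs further adaptation before it even applies.)

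Since your rescaling cost is also $\Theta(\epsilon^2)$, the final balance is $\Theta(\epsilon^2)$ against $\Theta(\epsilon^2)$, and you would have to compare constants to conclude anything. You have not done this, and it is not clear that the constants cooperate for all $\alpha\in(0,1]$. So as written, the argument does not close. The paper's global hexagonification avoids this delicate balance entirely.
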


\begin{proof}

We may assume that the angle exterior to both $A$ and $B$ at $\pi_1$ is $120^{\circ}$ and similarly for the angle exterior to both $A$ and $B$ at $\pi_2$. This is because the only options for these two angles are $60^{\circ}$ or $120^{\circ}$, and we already saw that there cannot be an exterior $60^{\circ}$ angle. The configuration looks as in Figure \ref{fig:ASillyConfiguration}:

\begin{figure}[H]

\begin{tikzpicture}[scale=0.6]

\draw[blue, thin] (6,0) to (7,1.7320508);
\draw[blue, thin] (7,1.7320508) to (8,1.7320508);
\draw[blue, thin] (8,1.7320508) to (9,0);
\draw[blue, thin] (9,0) to (8,-1.7320508);
\draw[blue, thin] (8,-1.7320508) to (7,-1.7320508);
\draw[blue, thin] (7,-1.7320508) to (6,0);

\draw[red, thin] (6.75,1.299038) to (5,1.299038);
\draw[red, thin] (5,1.299038) to (4.75,0.866025);
\draw[red, thin] (4.75,0.866025) to (5.5,-0.4330127);
\draw[red, thin] (5.5,-0.4330127) to (6.25,-0.4330127);

\draw (5.6,0.5) node{$A$};
\draw (7.5,0.1) node{$B$};

\end{tikzpicture}
\caption{\label{fig:ASillyConfiguration}}
\end{figure}

We can replace $A$ with $A^{\varhexagon}$ and $B$ with $B\setminus A^{\varhexagon}$. Then, we can adjust the volumes in a manner nearly identical to what we presented in Figure \ref{fig:OneCornerVolumeAdjustment2}. This will result in strictly decreasing the double bubble perimeter because it will strictly increase the volume of $B$.

\end{proof}

\section{Configurations With No Sixty Degree Angles}
\label{sec:NoSixties}

Throughout this section, we assume that $(A,B)\in\mathfs{F}_{\alpha}$. We will include here the analysis of possible solutions that have no interior or exterior $60^{\circ}$ angles. We know that the joint boundary must consist of one, two, or three line segments. If it is one line segment only, then the configuration is what we call a kissing hexagon case, and these are in the family $\mathfs{G}_{\alpha}$. If the joint boundary consists of two line segments, then this is what we call the embedded hexagon case, and these, again, are in the family $\mathfs{G}_{\alpha}$. The case when the joint boundary consists of three line segments, and all angles are $120^{\circ}$, we eliminate now.

\begin{figure}[H]

\begin{tikzpicture}[scale=0.2]

\draw[red, thin] (10,0) to (9,1.732050808);
\draw[red, thin] (9,1.732050808) to (6.5,1.732050808);
\draw[red, thin] (6.5,1.732050808) to (5.5,0);
\draw[red, thin] (5.5,0) to (6.5,-1.732050808);
\draw[red, thin] (6.5,-1.732050808) to (9,-1.732050808);
\draw[red, thin] (9,-1.732050808) to (10,0);

\draw[blue, thin] (6.5,1.732050808) to (5.5,3.464101615);
\draw[blue, thin] (5.5,3.464101615) to (4,3.464101615);
\draw[blue, thin] (4,3.464101615) to (0,-3.464101615);
\draw[blue, thin] (0,-3.464101615) to (2,-6.92820323);
\draw[blue, thin] (2,-6.92820323) to (8,-6.92820323);
\draw[blue, thin] (8,-6.92820323) to (10,-3.464101615);
\draw[blue, thin] (10,-3.464101615) to (9,-1.732050808);


\end{tikzpicture}

\caption{\label{fig:TwoPossibleCases}}

\end{figure}

As can be seen in Figure \ref{fig:TwoLinesNoSixties}, there are two line segments in $\partial A$ that are parallel to $\Lambda_{\smallsearrow}\left(i_{\smallsearrow}^B\right)$, don't contribute positive length to the joint boundary, and intersect $\partial B$. Let $L_1$ and $L_2$ be the lines passing through these two line segments, as in Figure \ref{fig:TwoLinesNoSixties}:

\begin{figure}[H]

\begin{tikzpicture}[scale=0.25]

\draw[red, thin] (10,0) to (9,1.732050808);
\draw[red, thin] (9,1.732050808) to (6.5,1.732050808);
\draw[red, thin] (6.5,1.732050808) to (5.5,0);
\draw[red, thin] (5.5,0) to (6.5,-1.732050808);
\draw[red, thin] (6.5,-1.732050808) to (9,-1.732050808);
\draw[red, thin] (9,-1.732050808) to (10,0);

\draw[blue, thin] (6.5,1.732050808) to (5.5,3.464101615);
\draw[blue, thin] (5.5,3.464101615) to (4,3.464101615);
\draw[blue, thin] (4,3.464101615) to (0,-3.464101615);
\draw[blue, thin] (0,-3.464101615) to (2,-6.92820323);
\draw[blue, thin] (2,-6.92820323) to (8,-6.92820323);
\draw[blue, thin] (8,-6.92820323) to (10,-3.464101615);
\draw[blue, thin] (10,-3.464101615) to (9,-1.732050808);

\draw[black, thick, dashed] (7.5,0) to (3.5,6.92820323028);

\draw[black, thick, dashed] (11,-5.19615242271) to (7,1.732050808);

\draw (5.5,5) node{$L_1$};

\draw (11.2,-4) node{$L_2$};

\end{tikzpicture}

\caption{\label{fig:TwoLinesNoSixties}}

\end{figure}

If, as is the case in Figure \ref{fig:TwoLinesNoSixties}, the $x$-intercept of $L_2$ is greater than that of $L_1$, then we can slide $B$ parallel to $L_1$ (or $L_2$) toward the top of $\partial A$. Moving it at all will increase the volume of $A$. As we slide $B$, we reduce the perimeter of $A$ that is contained in $L_1$ and add it to the perimeter of $A$ that is contained in $L_2$. This will not change the double bubble perimeter, but it will add volume to $A$. We can do this until the line segment $L_1\cap\partial A$ has length zero. Let's denote this translated version of $B$ by $B_t$, and since we have also altered the set $A$, lets call this new set $A_1$. The volume of $A_1$ is strictly greater than the volume of $A$. Adjusting this volume is simple and similar to previous volume adjustments. Furthermore, this procedure results in a configuration whose joint boundary consists of three line segments, and also an interior $60^{\circ}$ angle. Thus, if the volume adjustment does not eliminate this $60^{\circ}$ angle, then the previous section showed us that this cannot be a minimizing configuration either. If the $x$-intercept of $L_1$ is greater than (or equal to) the $x$-intercept of $L_2$, we can perform the same procedure, but by sliding $B$ in the opposite direction. Notice that if the $x$-intercepts of both $L_1$ and $L_2$ are the same, then no volume adjustments are necessary, and this procedure results in an interior $60^{\circ}$ angle and a configuration whose joint boundary consists of three line segments. Again, we know that such a configuration is not a double bubble minimizing configuration and that we can replace it with something in $\mathfs{G}_{\alpha}$. 

If $L_1=L_2$, then we first slide $B$ so that there is a point of intersection of the lines $\Lambda_{\smallnearrow}\left(i_{\smallnearrow}^B\right)$, $\Lambda_{\smallrightarrow}\left(i_{\smallrightarrow}^B\right)$, and $\Lambda_{\smallrightarrow}\left(i_{\smallrightarrow}^A\right)$. This creates a figure with an interior $60^{\circ}$ angles, and so we will analyse it in the following section.

\section{Calculations}
\label{sec:kkt}

Based on the previous sections, in order to find a double bubble minimizer, it is enough to look only at the configurations in the family $\mathfs{G}_{\alpha}$. See Figure \ref{fig:TheGoodFamily}. The configuration on the lefthand side of this figure we call Embedded Hexagons, and the configuration on the righthand side of this figure we call Kissing Hexagons. We begin by analysing this latter case.

\subsection{Kissing Hexagons}
\label{subsec:kissinghex}

In the kissing hexagons case, we begin by first analysing a single hexagon where the length of one side is fixed, call this length $L$, to obtain an expression for the minimizing perimeter that depends on $L$. This fixed side will be at least part of the joint boundary in the double bubble configuration. 

To solve the problem of the kissing hexagons would normally require six parameters. We arrive at this number because there are six sides of each hexagon, making a total of 12. Then, suppose we fix one of the corners of one of the hexagons, say $A$, at the origin. This means that the sum of the first coordinates of the six corners of the hexagon must add to $0$, as must the sum of the second coordinates. This makes two equations. We can make a similar argument for the other hexagon, in this case $B$. This makes two more equations. The last two equations come from the fact that $\mu\left(A\right)=1,\mu\left(B\right)=\alpha$. Thus, in total, we have six equations, which reduces our number of parameters from $12$ to $6$.

For a single set, we start with $5$ parameters (because we fix one side as $L_1$). We can then use the same three equations, which reduces the single bubble problem from five parameters to two. This will result in a solution that depends on $L_1$. Then, we can do the same thing with the other set and with $L_2$ being the length of the fixed side. 

Finally, we will have to solve a problem that depends on $L_1$ and $L_2$, which is again a two parameter problem.

Figure~\ref{fig:params2} shows how we parameterize the single set. 

\begin{figure}[H]

\begin{tikzpicture}[scale=0.4]

\draw[blue, thin] (-1,0) to (1,0);
\draw[blue, thin] (1,0) to (3,3.4641);
\draw[blue, thin] (3,3.4641) to (1,6.9282);
\draw[blue, thin] (1,6.9282) to (-1,6.9282);
\draw[blue, thin] (-1,6.9282) to (-3,3.4641);
\draw[blue, thin] (-3,3.4641) to (-1,0);

\draw (2.51,5) node{$\scaleobj{0.8}{x_2}$};
\draw (0,7.16) node{$\scaleobj{0.8}{x_3}$};
\draw (-2.54,4.95) node{$\scaleobj{0.8}{x_4}$};
\draw (-2.5,2) node{$\scaleobj{0.8}{x_5}$};
\draw (0,-0.24) node{$\scaleobj{0.8}{L_1}$};
\draw (2.55,2) node{$\scaleobj{0.8}{x_1}$};

\end{tikzpicture}
\caption{\label{fig:params2}}
\end{figure}

First, assume that the left endpoint of $L_1$ is at the origin. Since the boundary of the set is a closed curve, the sum of the sides of the hexagon results in $(0,0)$. This gives 

$$\left(L_1+\frac{1}{2}\left(x_{1}-x_{2}\right)-x_{3}+\frac{1}{2}\left(-x_{4}+x_{5}\right),\frac{\sqrt{3}}{2}\left(x_{1}+x_{2}-x_{4}-x_{5}\right)\right)=\left(0,0\right)$$ 
(this is because $\left(\sin\left(\frac{\pi}{3}\right),\cos\left(\frac{\pi}{3}\right)\right)=\left(\frac{1}{2},\frac{\sqrt{3}}{2}\right)$). Notice that the second coordinate gives $\frac{\sqrt{3}}{2}\left(x_1+x_2-x_4-x_5\right)=0$, which implies that 

\begin{equation}\label{eq:EqForX5}
x_5=x_1+x_2-x_4.
\end{equation} 

The first coordinate gives us (after a little rearranging) that $x_{3}=L_1+\frac{1}{2}\left(x_1-x_2-x_4+x_5\right)=L_1+\frac{1}{2}\left(x_{1}-x_2-x_{4}+\left(x_1+x_2-x_4\right)\right)=L_1+x_1-x_4$. We save this as our second equation:

\begin{equation}\label{eq:EqForX3}
x_3=L_1+x_1-x_4. 
\end{equation}

The final equation comes from the volume, which we will call $V$ and can be either $1$ or $\alpha\in(0,1]$. We rely on the shoelace theorem, also known as Gauss's area formula, to show that the volume is given by: $V=\frac{\sqrt{3}}{4}\left(2(x_1+x_2)(x_3+x_4)-x_1^2-x_4^2\right)=\frac{\sqrt{3}}{4}\left(2(x_1+x_2)(L+x_1)-x_1^2-x_4^2\right)$. Solving this equation for $x_4$ yields:

\begin{equation}\label{eq:EqForX4}
x_{4}=\sqrt{x_{1}^{2}+2x_{1}x_{2}+2L(x_{1}+x_{2})-4\frac{V}{\sqrt{3}}}. 
\end{equation}

We want to minimize the perimeter, which is given by $P=L_1+\sum_{i=1}^5x_i=L_1+x_1+x_2+\left(L_1+x_1-x_4\right)+x_4+\left(x_1+x_2-x_4\right)$. Here, we have used \eqref{eq:EqForX5} and \eqref{eq:EqForX3} to replace $x_3$ and $x_5$. Now, replacing $x_4$ using \eqref{eq:EqForX4} yields a perimeter of $P\left(x_1,x_2\right)=3x_{1}+2x_{2}-\sqrt{x_{1}^{2}+2x_{1}x_{2}+2L(x_{1}+x_{2})-4\frac{V}{\sqrt{3}}}+2L_1$. We want to minimize this perimeter subject to the constraints that each side length is non-negative. Substituting the equations for $x_3$, $x_4$, and $x_5$, we get the following problem:

\small
\begin{equation}
	\label{eq:kktproblem}
	\begin{aligned}
	argmin(\{3x_{1}+2x_{2}-\sqrt{x_{1}^{2}+2x_{1}x_{2}+2L_1(x_{1}+x_{2})-4\frac{V}{\sqrt{3}}}+2L_1|x_{1},x_{2}\ge 0,
	\\L_1+x_{1}-\sqrt{x_{1}^{2}+2x_{1}x_{2}+2L_1(x_{1}+x_{2})-4\frac{V}{\sqrt{3}}}\ge 0,\sqrt{x_{1}^{2}+2x_{1}x_{2}+2L_1(x_{1}+x_{2})-4\frac{V}{\sqrt{3}}}\ge 0,
	\\\textrm{ and }x_{1}+x_{2}\ge \sqrt{x_{1}^{2}+2x_{1}x_{2}+2L_1(x_{1}+x_{2})-4\frac{V}{\sqrt{3}}}\}). 
	\end{aligned}
\end{equation}
\normalsize

This can be solved with elementary calculus. However, the calculations are quite cumbersome and not particularly instructive. So, we do not include them here. The minimizers change depending on whether $3\sqrt{3}L_1^2<16V$ or not. If this inequality holds, then we find that $x_1=x_5=2x_2-L_1$, and $x_2=x_3=x_4=\sqrt{\frac{3L_1^2+4\sqrt{3}V}{21}}$. 

At the point when $3\sqrt{3}L_1^2=16V$, the equation for $x_1$ becomes 

$$x_1=2x_2-L_1=2\sqrt{\frac{3L_1^2+4\sqrt{3}V}{21}}-L_1=...=2\sqrt{\frac{3L_1^2+\frac{3\sqrt{3}L_1^2\cdot\sqrt{3}}{4}}{21}}-L_1=...=0.$$

This indicates that for $3\sqrt{3}L_1^2<16V$, the figure is a hexagon with six sides, whereas when this inequality does not hold, the figure consists of only four sides. In this latter case, we get that $x_1=x_5=0$, $x_2=x_4=L_1-\sqrt{\frac{2L_1^2-4\sqrt{3}V}{3}}$, and $x_3=\sqrt{\frac{2L_1^2-4\sqrt{3}V}{3}}$. 

Now, it is easy to solve the isoperimetric problem that was left unfinished in Lemma \ref{lem:IsoperimetricProblem}. There are two possible functions for the perimeter depending on if $3\sqrt{3}L^2<16V$ or not. Let $P_1\left(L\right)$ be the length of the perimeter when $3\sqrt{3}L^2<16V$, and $P_2\left(L\right)$ be the perimeter when this inequality does not hold. Then, we have an explicit equation for the perimeter given by one of the following: 

$$P_1\left(L\right)=L+2\left(2\sqrt{\frac{3L^2+4\sqrt{3}V}{21}}-L\right)+3\sqrt{\frac{3L^2+4\sqrt{3}V}{21}}=7\sqrt{\frac{3L^2+4\sqrt{3}V}{21}}-L,$$

or

$$P_2\left(L\right)=L+2\left(L-\sqrt{\frac{2L^2-4\sqrt{3}V}{3}}\right)+\sqrt{\frac{2L^2-4\sqrt{3}V}{3}}=3L+3\sqrt{\frac{2L^2-4\sqrt{3}V}{3}}.$$

This second perimeter function has domain $L\geq\sqrt{2V}\sqrt[4]{3}$. It is easily shown that on this domain $P_2>P_1$. See Appendix \ref{sec:Comparison} for a similar calculation. So, we only need to analyse $P_1$. 
Equating the derivative of this function to zero and solving for $L$ yields $L_0=\frac{\sqrt{2V}}{3^{3/4}}$. This gives a total perimeter of $P\left(L_0\right)=2\sqrt{2V}\sqrt[4]{3}$. This is the perimeter of a regular hexagon with volume $V$, as we wanted to show.

We now turn to solving the kissing hexagons problem. Say that the side of $A$ that is at least partially joint boundary has length $L_1$, and the side of $B$ that is at least partially joint boundary has length $L_2$. There are four possible equations for the double bubble perimeter depending on if $3\sqrt{3}L_{1}^{2}<16$ or not, and if $3\sqrt{3}L_{2}^{2}<16\alpha$ or not. To find these equation, we merely add $P_i\left(L_1\right)$ to $P_i\left(L_2\right)$ ($i=1,2$) and subtract the shorter of $L_1$ and $L_2$ since this value represents the shared boundary that we only count once. Equation \ref{eq:kktmult} shows the possible perimeters.

\begin{equation}
\label{eq:kktmult}
\begin{aligned}
&7\sqrt{\frac{3L_{1}^{2}+4\sqrt{3}}{21}}+7\sqrt{\frac{3L_{2}^{2}+4\sqrt{3}\alpha}{21}}-L_{1}-L_{2}-min(L_{1},L_{2}),\hspace{0.5cm}\left(3\sqrt{3}L_{1}^{2}<16,3\sqrt{3}L_{2}^{2}<16\alpha\right)\\
&7\sqrt{\frac{3L_{1}^{2}+4\sqrt{3}}{21}}-\sqrt{\frac{2L_{2}^{2}-4\sqrt{3}\alpha}{3}}+3L_{2}-L_{1}-min(L_{1},L_{2}),\hspace{0.3cm}\left(3\sqrt{3}L_{1}^{2}<16,3\sqrt{3}L_{2}^{2}\ge 16\alpha\right)\\
&7\sqrt{\frac{3L_{2}^{2}+4\sqrt{3}\alpha}{21}}-\sqrt{\frac{2L_{1}^{2}-4\sqrt{3}}{3}}+3L_{1}-L_{2}-min(L_{1},L_{2}),\hspace{0.3cm}\left(3\sqrt{3}L_{1}^{2}\ge 16,3\sqrt{3}L_{2}^{2}<16\alpha\right)\\
&3(L_{1}+L_{2})-\sqrt{\frac{2L_{1}^{2}-4\sqrt{3}}{3}}-\sqrt{\frac{2L_{2}^{2}-4\sqrt{3}\alpha}{3}}-min(L_{1},L_{2}),\hspace{0.3cm}\left(3\sqrt{3}L_{1}^{2}\ge 16,3\sqrt{3}L_{2}^{2}\ge 16\alpha\right).\\
\end{aligned}
\end{equation}

We assume first that either $L_1<L_2$, or vice versa. Taking the partial derivative of these equations with respect to $L_1$ and equating them to zero results in one of the following:

\begin{equation}
    \label{eq:crit}
    \sqrt{\frac{21L_{1}^{2}}{3L_{1}^{2}+4\sqrt{3}V}}-1-\mathbbm{1}_{L_{1}<L_{2}}=0,3-\mathbbm{1}_{L_{1}<L_{2}}-\sqrt{\frac{2L_{1}^{2}}{3(L_{1}^{2}-2\sqrt{3}V)}}=0.
\end{equation}

Here, $V=1$. Taking the partial derivatives with respect to $L_2$ and equating to zero results in the same equations, but with $L_2$ instead of $L_1$ and with $V=\alpha$ instead of $1$. Solving \eqref{eq:crit} results in:

\begin{equation}
    \label{eq:crit2}
    L_{1}=\sqrt{\frac{4\sqrt{3}V(1+\mathbbm{1}_{L_{1}<L_{2}})^{2}}{(21-3(1+\mathbbm{1}_{L_{1}<L_{2}})^{2})}},L_{1}=\sqrt{\frac{(3-\mathbbm{1}_{L_{1}<L_{2}})^{2}(3(2\sqrt{3}V))}{(3(3-\mathbbm{1}_{L_{1}<L_{2}})^{2}-2)}}. 
\end{equation}

This results in eight possible pairs of lengths for $L_1$ and $L_2$. They are given below. The values of $\alpha$ for which these lengths hold are also given. These are obtained by comparing the side lengths and solving for $\alpha$. For example, consider when $L_1=\sqrt{\frac{16\sqrt{3}}{9}}$, and $L_2=\sqrt{\frac{4\sqrt{3}\alpha}{18}}$, as in the first row of \eqref{KissingHexL1NEQL2}. These lengths occur when $L_1<L_2$. So, we solve this inequality for $\alpha$ and obtain $8<\alpha$. Thus, we know that these values of $L_1$ and $L_2$ do not yield a minimizing configuration because we are assuming that $\alpha\in(0,1]$. We denote this range of possible values of $\alpha$ by $\mathfs{R}$.

\bae\label{KissingHexL1NEQL2}
\left(L_1,L_2,\mathfs{R}\right)=&\left(\sqrt{\frac{16\sqrt{3}}{9}},\sqrt{\frac{4\sqrt{3}\alpha}{18}},8<\alpha\right),&L_1<L_2,\left(3\sqrt{3}L_{1}^{2}<16,3\sqrt{3}L_{2}^{2}<16\alpha\right)\\
&\left(\sqrt{\frac{4\sqrt{3}}{18}},\sqrt{\frac{16\sqrt{3}\alpha}{9}},\frac{1}{8}>\alpha\right),&L_2<L_1,\left(3\sqrt{3}L_{1}^{2}<16,3\sqrt{3}L_{2}^{2}<16\alpha\right)\\
&\left(\sqrt{\frac{4(3(2\sqrt{3}))}{10}},\sqrt{\frac{9(3(2\sqrt{3}\alpha))}{25}},\frac{10}{9}<\alpha\right),&L_1<L_2,\left(3\sqrt{3}L_{1}^{2}\ge 16,3\sqrt{3}L_{2}^{2}\ge 16\alpha\right)\\
&\left(\sqrt{\frac{9(3(2\sqrt{3}))}{25}},\sqrt{\frac{4(3(2\sqrt{3}\alpha))}{10}},\frac{9}{10}>\alpha\right)&L_2<L_1,\left(3\sqrt{3}L_{1}^{2}\ge 16,3\sqrt{3}L_{2}^{2}\ge 16\alpha\right)\\
&\left(\sqrt{\frac{16\sqrt{3}}{9}},\sqrt{\frac{9(3(2\sqrt{3}\alpha))}{25}},\frac{200}{243}<\alpha\right),&L_1<L_2,\left(3\sqrt{3}L_{1}^{2}<16,3\sqrt{3}L_{2}^{2}\ge 16\alpha\right)\\
&\left(\sqrt{\frac{4\sqrt{3}}{18}},\sqrt{\frac{4(3(2\sqrt{3}\alpha))}{10}},\frac{5}{54}>\alpha\right),&L_2<L_1,\left(3\sqrt{3}L_{1}^{2}<16,3\sqrt{3}L_{2}^{2}\ge 16\alpha\right)\\
&\left(\sqrt{\frac{4(3(2\sqrt{3}))}{10}},\sqrt{\frac{4\sqrt{3}\alpha}{18}},\frac{54}{5}<\alpha\right),&L_1<L_2,\left(3\sqrt{3}L_{1}^{2}\ge 16,3\sqrt{3}L_{2}^{2}<16\alpha\right)\\
&\left(\sqrt{\frac{9(3(2\sqrt{3}))}{25}},\sqrt{\frac{16\sqrt{3}\alpha}{9}},\frac{243}{200}>\alpha\right),&L_2<L_1,\left(3\sqrt{3}L_{1}^{2}\ge 16,3\sqrt{3}L_{2}^{2}<16\alpha\right). 
\eae

It is easily seen that some of these possible values for $L_1$ and $L_2$ can be eliminated since the range for $\alpha$ does not include any values in $(0,1]$. Upon closer inspection, it can also be seen that not all values of $\alpha\in(0,1]$ are included. This is because we assumed that either $L_1<L_2$, or $L_2<L_1$. Thus, to find the possible solution for the remaining values of $\alpha$, we set $L_1=L_2$. The possible perimeters are shown in \eqref{eq:kktmult2}. 

\bae
\label{eq:kktmult2}
&P_3=7\sqrt{\frac{3L^{2}+4\sqrt{3}}{21}}+7\sqrt{\frac{3L^{2}+4\sqrt{3}\alpha}{21}}-3L,
\\
&P_4=7\sqrt{\frac{3L^{2}+4\sqrt{3}}{21}}-\sqrt{\frac{2L^{2}-4\sqrt{3}\alpha}{3}}+L,
\\
&P_5=7\sqrt{\frac{3L^2+4\sqrt{3}\alpha}{21}}-\sqrt{\frac{2L^2-4\sqrt{3}}{3}}+L,
\\
&P_6=5L-\sqrt{\frac{2L^{2}-4\sqrt{3}}{3}}-\sqrt{\frac{2L^{2}-4\sqrt{3}\alpha}{3}}.
\eae

We can easily compare these four functions to each other to see inside which regimes, if any, each one is the smallest. This is quite a tedious process and is therefore relegated to an appendix, where we show the outline for how to compare $P_3$ and $P_4$. When these comparisons are made, we find that $P_3\leq P_5$ and $P_3\leq P_6$ for all $L\geq\sqrt{2}\sqrt[4]{3}$, $P_3\leq P_4$ for $L\geq\sqrt{2}\sqrt[4]{3}\sqrt{\alpha}$.  Of course, $P_5$ and $P_6$ are not defined for $L<\sqrt{2}\sqrt[4]{3}$, and $P_4$ is not defined for $L<\sqrt{2}\sqrt[4]{3}\sqrt{\alpha}$. So, we must now compare $P_3$ to the perimeters found when $L_1\neq L_2$.  

In \eqref{KissingHexL1NEQL2}, notice that there are three possible solutions that we can immediately eliminate as contenders because their range does not intersect $(0,1]$. These correspond to the first, third, and seventh rows in this equation. For the other possible lengths of $L_1$ and $L_2$, we merely have to evaluate the perimeter functions that produced each pair of lengths. Then, we compare these results to $P_3$. Again, this is somewhat tedious, and we therefore do not include these calculations here as they are similar enough to the one shown in Appendix \ref{sec:Comparison}. These comparisons show that for $\alpha<\frac{1}{8}$, the double bubble perimeter corresponding to the second row of \eqref{KissingHexL1NEQL2} is less than the double bubble perimeter produced by $P_3$. That is, when $\L_1=\sqrt{\frac{4\sqrt{3}}{18}},L_2=\sqrt{\frac{16\sqrt{3}\alpha}{9}},\frac{1}{8}>\alpha,L_2<L_1,\left(3\sqrt{3}L_{1}^{2}<16,3\sqrt{3}L_{2}^{2}<16\alpha\right)$, the double bubble perimeter in the kissing hexagons case is 
$$7\sqrt{\frac{3L_{1}^{2}+4\sqrt{3}}{21}}+7\sqrt{\frac{3L_{2}^{2}+4\sqrt{3}\alpha}{21}}-L_{1}-L_{2}-min(L_{1},L_{2})=2\sqrt[4]{3}\left(\sqrt{2}+\sqrt{\alpha}\right).$$

For all other values of $\alpha$, $P_3$ produces the shortest double bubble perimeter. Therefore, we must find the critical points of $P_3$ to obtain the global minimizing value of $L$ (which will, naturally, depend on $\alpha$). 

Finding the derivative of $P_3$ is simple. Finding the critical points, however, is not as easy. This is because the values of $L$ that result in minima are the roots of a polynomial of degree $8$. We show in Appendix \ref{sec:TheBigPolynomial} what this polynomial is, call it $p$, and how to obtain it. The root that results in the global minimizer for $P_3$ cannot be expressed in closed form. We can, however, give an order of the roots of this polynomial and find which root gives the global minimizer; the root in question is the third root under the ordering that we give, so we call this root $r_3$. Thus, for the kissing hexagons case, we have our potential double bubble minimizing configurations. For $0<\alpha<\frac{1}{8}$, the minimizing perimeter is given by $2\sqrt[4]{3}\left(\sqrt{2}+\sqrt{\alpha}\right)$. For $\alpha\geq\frac{1}{8}$, the minimizer is $P_3\left(r_3\right)$. 

\subsection{Embedded Hexagon Case}
\label{subsec:embedded}

The next case we consider is the embedded hexagon case, which is demonstrated in Figure~\ref{fig:EmbeddedRectangle}

\begin{figure}[H]

\begin{tikzpicture}[scale=0.6]

\draw[blue, thin] (0,0) to (-1,1.732050808);
\draw[blue, thin] (-1,1.732050808) to (-3.5,1.732050808);
\draw[blue, thin] (-3.5,1.732050808) to (-4.5,0);
\draw[blue, thin] (-4.5,0) to (-3.5,-1.732050808);
\draw[blue, thin] (-3.5,-1.732050808) to (-1,-1.732050808);
\draw[blue, thin] (-1,-1.732050808) to (0,0);

\draw[red, thin] (-3.5,1.732050808) to (-4.5,3.464101615);
\draw[red, thin] (-4.5,3.464101615) to (-6,3.464101615);
\draw[red, thin] (-6,3.464101615) to (-8,0);
\draw[red, thin] (-8,0) to (-6,-3.464101615);
\draw[red, thin] (-4.5,-3.464101615) to (-6,-3.464101615);
\draw[red, thin] (-4.5,-3.464101615) to (-3.5,-1.732050808);


\draw (-2,-2.15) node{$x_1$};
\draw (-0.15,-1) node{$x_2$};
\draw (-0.15,1) node{$x_3$};
\draw (-2,2.05) node{$x_4$};
\draw (-4.3,1) node{$x_5$};
\draw (-4.3,-1) node{$x_6$};


\draw (-5.25,-3.9) node{$y_1$};
\draw (-3.8,-3) node{$y_2$};
\draw (-3.8,3) node{$y_3$};
\draw (-5.25,3.9) node{$y_4$};
\draw (-7,2.6) node{$y_5$};
\draw (-7,-2.6) node{$y_6$};

\end{tikzpicture}

\caption{\label{fig:EmbeddedRectangle}}
\end{figure}



Let's say $B$ is the set with parameters $x_1,...x_6$ and $A$ is the set with parameters $y_1,...,y_6$. We employ a similar method to our process of solving the kissing hexagons problem by fixing the distance (in $\mathcal{D}$, not in the Euclidean norm), call it $L$, between $x_1$ and $x_4$. This means that $x_2+x_3=x_5+x_6=L$. Similarly to before, we can find three equations in terms of our six parameters based off of the facts that $\partial B$ forms a closed curve, and $\mu\left(B\right)=\alpha$. The first of these equations is as follows:

\begin{equation}\label{eq:SumOfFirstCoord}
x_1-x_4+\frac{1}{2}\left(x_2+x_6-x_3-x_5\right)=0.
\end{equation}

This is because $\cos\left(\frac{\pi}{3}\right)=\frac{1}{2}$. 

The second equation, similarly, comes from the fact that the second coordinates of these vectors must sum to zero:

\begin{equation}\label{eq:SumOfSecondCoord}
x_2+x_3-x_5-x_6=0.
\end{equation}

Finally, the third and fourth equation merely come from us setting both $x_5+x_6$ and $x_2+x_3$ equal to $L$:

\begin{equation}\label{eq:VertEq}
x_2+x_3=x_5+x_6=L.
\end{equation}

Using \eqref{eq:VertEq}, we can replace $x_3$ and $x_6$ in \eqref{eq:SumOfFirstCoord}, which, after a little rearranging, is $2(x_1-x_4)=x_5-x_6+x_3-x_2$, whence $2(x_1-x_4)=2x_5-L+L-2x_2=2(x_5-x_2)$. Solving this for $x_5$ gives $x_5=x_1+x_2-x_4$. We now have equations for $x_3$, $x_5$, and $x_6$ in terms of the other variables. 

We again use Gauss's area formula to show that the volume $V$ must be $V=\frac{\sqrt{3}}{4}(2(x_{1}+x_{2})(L+x_{4}-x_{2})-x_{1}^{2}-x_{4}^{2})$. Solving this for $x_4$ gives:

\begin{equation}
x_{4}=x_{1}+x_{2}\pm\sqrt{2L(x_{1}+x_{2})-x_{2}^{2}-\frac{4V}{\sqrt{3}}}.
\end{equation}

Using these equations to replace $x_3,...,x_6$ in the formula for the perimeter gives:

\begin{equation}
P\left(x_1,x_2\right)=2L+2x_1+x_2\pm\sqrt{2L(x_1+x_2)-x_2^2-\frac{4V}{\sqrt{3}}}.
\end{equation}

We can use calculus to find the values of $x_1,x_2$ that minimize this function. Then, using the equations for $x_3,...,x_6$, we can find the remaining side lengths. Doing this results in $x_2=x_3=x_5=x_6=\frac{L}{2}$. We can then use this to solve for $x_1$ and $x_4$. The result is $x_1=x_4=\frac{8\sqrt{3}V-3L^2}{12L}$. This gives a global minimum perimeter of $\frac{9L^{2}+8\sqrt{3}V}{6L}$. One also obtains the hexagon's perimeter when one minimizes over $L$.

Now we consider the red set in Figure~\ref{fig:EmbeddedRectangle}, call this set $A$. Let $y_2+x_5+x_6+y_3=L_{2}$ and $x_5+x_6=L_{1}$, and fix these two values. We want to initially remove the indentation corresponding to $x_5$ and $x_6$. This is achieved by replacing $A$ with $A^{\varhexagon}$. Then, the argument of the previous paragraph gives us the minimum perimeter shape for a given volume, which in this case is $V+\frac{\sqrt{3}}{2}\cdot x_5\cdot x_6$, where $V=\mu\left(A\right)$. In this minimization problem, we make one change to enlarge the parameter space. This change is to allow for the possibility that one of $x_5$ or $x_6$ is greater than $\frac{L_2}{2}$. Without loss of generality, we assume that $x_6\leq\frac{L}{2}$ (otherwise we can just rotate the set about a horizontal axis). This means that it is possible that $x_5>\frac{L_2}{2}$. So, to compensate for this, we allow for the possibility that $y_3<0$. This may look strange, but instead of considering our perimeter function as an expression representing the perimeter of a set that can be drawn, we can, for the moment, consider it merely as an algebraic expression that requires minimizing. Doing this results in $y_1=y_4$, $\frac{L_2}{2}=y_5=y_6=y_2+x_6=y_3+x_5$. After a little bit of algebra, we find the following for $y_1,...,y_6$ and for $x_5$ and $x_6$: 

\begin{align*}
y_1&=y_1, &y_2&=y_2,\\
x_6&=\frac{L_2}{2}-y_2, &x_5&=L_1-\frac{L_2}{2}+y_2,\\
y_3&=L_2-L_1-y_2, &y_4&=y_1,\\
y_5&=\frac{L_2}{2}, &y_6&=\frac{L_2}{2}.\\ 
\end{align*}

Summing $y_1$ to $y_6$ along with  $x_5$ and $x_6$ gives us: 
\begin{equation}
x_5+x_6+\sum_{j=1}^6y_j=2y_1+2L_2=2(\frac{\sqrt{3}L_{2}^2-4 \sqrt{3}L_{2}y_{2}+4V'+2 \sqrt{3}y_{2}^2}{2 \sqrt{3} L_{2}})+2L_{2}.
\end{equation} 

Here, $V'=\mu\left(A\right)+\frac{\sqrt{3}}{2}x_5x_6$. We can then rewrite this perimeter in terms of $y_2, L_1$, and $L_2$ to obtain: $2(\frac{\sqrt{3}L_{2}^2-4 \sqrt{3}L_{2}y_{2}+4(V+\frac{\sqrt{3}(\frac{L_{2}}{2}-y_{2})(L_{1}-\frac{L_{2}}{2}+y_{2})}{2})+2 \sqrt{3}y_{2}^2}{2 \sqrt{3} L_{2}})+2L_{2}$. Minimizing this equation we find that $y_2=\frac{L_2-L_1}{2}$, which then gives us $x_5=x_6=\frac{L_1}{2}$. This gives us two possible perimeters to minimize in terms of $L_{1}$ and $L_{2}$ depending on which configuration is volume 1 or $\alpha$, as given in ~\eqref{eq:finalEm}.

\begin{equation}
\label{eq:finalEm}
\begin{aligned}
&\rho_{1}(\alpha)=\frac{9L_{2}^{2}+8\sqrt{3}(1+\frac{\sqrt{3}}{2}(\frac{L_{1}}{2})^{2})}{6L_{2}}+\frac{9L_{1}^{2}+8\sqrt{3}\alpha}{6L_{1}}-L_{1},\\
&\rho_{2}(\alpha)=\frac{9L_{2}^{2}+8\sqrt{3}(\alpha+\frac{\sqrt{3}}{2}(\frac{L_{1}}{2})^{2})}{6L_{2}}+\frac{9L_{1}^{2}+8\sqrt{3}}{6L_{1}}-L_{1}.
\end{aligned}
\end{equation}

The minimum of this first function must be found in the same way as was shown in Appendix \ref{sec:TheBigPolynomial}. The second function has minimum $\frac{2\sqrt{10(1+\alpha)}}{3^{\frac{1}{4}}}$ for $\alpha\le \frac{2}{3}$ with $L_{2}=\frac{480+1080\alpha+\sqrt{3(19200-57600\alpha+432000\alpha^{2})}}{180(\sqrt[4]{3})\sqrt{10+10\alpha}}$, and $L_{1}=\frac{2\sqrt{\frac{2}{5}}\sqrt{1+\alpha}}{3^{\frac{1}{4}}}$; for $\alpha$ greater than this value, we must again use the methods of Appendix \ref{sec:TheBigPolynomial}. The first minimum is always less than the second, and therefore it is this one that is the the minimum for the Embedded Hexagons case.

Now let us deal with \textbf{case 2}. Our analysis in the previous section already gives us the optimal properties of each shape. Namely the blue hexagon, as a function of $L_{1}$, will have the same optimal shape. Similarly the red curve on the right of Figure~\ref{fig:EmbeddedRectangle} has $x_{1}=x_{2},x_{3}=x_{4}=\frac{L_{1}}{2}$ but $x_{5}=x_{2}=0$ (as $L_{2}=L_{1}$ in this case). This yeilds an identical equation to Equation~\ref{eq:finalEm2} as the removed volume from the red curve has sidlength $L_{2}$ rather than $L_{1}$

\begin{multline}
\label{eq:finalEm2}
\rho_{1}(\alpha)=\frac{9L_{2}^{2}+8\sqrt{3}(1+\frac{\sqrt{3}}{2}(\frac{L_{2}}{2})^{2})}{6L_{2}}+\frac{9L_{1}^{2}+8\sqrt{3}\alpha}{6L_{1}}-L_{2}
\\
\rho_{2}(\alpha)=\frac{9L_{2}^{2}+8\sqrt{3}(\alpha+\frac{\sqrt{3}}{2}(\frac{L_{2}}{2})^{2})}{6L_{2}}+\frac{9L_{1}^{2}+8\sqrt{3}}{6L_{1}}-L_{2}
\end{multline}

Using cylindrical algebraic decomposition, we obtain that the minimizer occurs at all volumes when $L_{2}=L_{1}$.


\subsection{Final Solution}
\label{subsec:finalanal}
Note that up to $\alpha=\frac{1}{8}$ the embedded hexagon case is the minimizer. Therefore, we never see the kissing hexagons case when $L_{1}\neq L_{2}$. The transition from the embedded hexagons case to the kissing hexagons case occurs at another polynomial root with no closed form. However, we can use the methods previously mentioned to find what the polynomial is, and which root it is. If we estimate its value, we find that the phase transition occurs at approximately $\alpha_0\approx0.152$.

\section{Proof of Theorem \ref{thm:maintheorem}}
\label{sec:ProofOfMainTheorem}

In this section we prove Theorem \ref{thm:maintheorem}. The idea is that for fixed $\alpha$, and for any sequence of configurations in $\gamma_{\alpha}$, we can replace each element of the sequence with a configuration in $\mathfs{G}_{\alpha}$. By the previous section, we know that there is a specific element $\chi_{\alpha}\in\mathfs{G}_{\alpha}$ such that $\rho_{DB}\left(\chi_{\alpha}\right)$ is less than the double bubble perimeter of every other element of $\mathfs{G}_{\alpha}$. Thus, $\chi_{\alpha}\in\Gamma_{\alpha}$. \\

\textbf{Proof of Theorem \ref{thm:maintheorem}} Let $\alpha\in(0,1]$, and $\chi_i\in\gamma_{\alpha}$ be a sequence such that $\lim_{i\rightarrow\infty}\rho_{DB}\left(\chi_i\right)=\rho_{DB}(\alpha)$. By Lemmas \ref{lem:Two120DegreeAngles} to \ref{lemma:AllSidesContained}, we can replace each $\chi_i$ with some $\chi'_i$ with $\chi'_i\in\mathfs{F}_{\alpha}$. By Lemmas \ref{lemma:NoSixtyDegreeAngles} to \ref{lem:ThreeLineSegBndry}, as well as the discussion in Section \ref{sec:NoSixties}, we can replace each $\chi'_i$ with a $\chi''_i\in\mathfs{G}_{\alpha}$. It follows from Section \ref{sec:kkt} that there is some $\chi_{\alpha}\in\mathfs{G}_{\alpha}$ such that $\rho_{DB}\left(\chi_{\alpha}\right)\leq\rho_{DB}\left(\chi''_i\right)$ for all $i\in\mathbb{N}$. Therefore, we get:

$$\rho_{DB}\left(\alpha\right)\leq\rho_{DB}\left(\chi_{\alpha}\right)\leq\lim_{i\rightarrow\infty}\rho_{DB}\left(\chi_i\right)\leq\rho_{DB}\left(\alpha\right).$$

Thus, $\chi_{\alpha}\in\Gamma_{\alpha}$, which establishes Theorem \ref{thm:maintheorem} part I. The second part of Theorem \ref{thm:maintheorem} is proved by comparing the three possible minimizing double bubble perimeters that were found in Section \ref{sec:kkt}. Namely, in the case of Kissing Hexagons, the minimizing double bubble perimeter is given by $2\sqrt[4]{3}\left(\sqrt{2}+\sqrt{\alpha}\right)$ when $\alpha\in\left(0,\frac{1}{8}\right)$, and by $P_3\left(r_3\right)$ when $\alpha\in\left[\frac{1}{8},1\right]$. Here, $P_3=7\sqrt{\frac{3L^2+4\sqrt{3}}{21}}+7\sqrt{\frac{3L^2+4\sqrt{3}\alpha}{21}}-3L$, and $r_3$ is the root of the polynomial in Appendix \ref{sec:TheBigPolynomial}. On the other hand, for the Embedded Hexagons case, the minimizing double bubble perimeter is given by the expression in \eqref{eq:finalEm}, where the values of $L_1$ and $L_2$ are given directly afterwards. To find the global minimizing double bubble perimeter, we compare the lengths of these three perimeters for $\alpha\in\left(0,1\right]$. The final result is that there is a value $\alpha_0$, which is a root with no closed form of a polynomial of degree $8$, such that the minimizing configuration is given by the Embedded Hexagons case when $\alpha\leq\alpha_0$, and the minimizing configuration is given by the Kissing Hexagons case when $\alpha\geq\alpha_0$. We can estimate the value of $\alpha_0$ to be approximately $0.152$, which is greater than $\frac{1}{8}$. This means that $2\sqrt[4]{3}\left(\sqrt{2}+\sqrt{\alpha}\right)$ is never the global minimizing double bubble perimeter.

\appendix

\section{Comparison of $P_3$ and $P_4$}\label{sec:Comparison}

Recall from \eqref{eq:kktmult2} that 

\bae
\label{eq:kktmult3}
&P_3=7\sqrt{\frac{3L^{2}+4\sqrt{3}}{21}}+7\sqrt{\frac{3L^{2}+4\sqrt{3}\alpha}{21}}-3L,
\\
&P_4=7\sqrt{\frac{3L^{2}+4\sqrt{3}}{21}}-\sqrt{\frac{2L^{2}-4\sqrt{3}\alpha}{3}}+L.
\\
\eae

Therefore,

\begin{align*}
P_4-P_3&=7\sqrt{\frac{3L^{2}+4\sqrt{3}}{21}}-\sqrt{\frac{2L^{2}-4\sqrt{3}\alpha}{3}}+L-\left(7\sqrt{\frac{3L^{2}+4\sqrt{3}}{21}}+7\sqrt{\frac{3L^{2}+4\sqrt{3}\alpha}{21}}-3L\right)\\
&=4L+7\sqrt{\frac{3L^2+4\sqrt{3}\alpha}{21}}-\sqrt{\frac{2L^2-4\sqrt{3}\alpha}{3}}\\
&\Rightarrow 4L+7\sqrt{\frac{3L^2+4\sqrt{3}\alpha}{21}}=\sqrt{\frac{2L^2-4\sqrt{3}\alpha}{3}}\\
&\Rightarrow\left(4L+7\sqrt{\frac{3L^2+4\sqrt{3}\alpha}{21}}\right)^2=\frac{2L^2-4\sqrt{3}\alpha}{3}\\
&\Rightarrow 16L^2+56L\sqrt{\frac{3L^2+4\sqrt{3}\alpha}{21}}+\frac{3L^2+4\sqrt{3}\alpha}{21}=\frac{2L^2-4\sqrt{3}\alpha}{3}\\
&\Rightarrow 16L^2+\frac{3L^2+4\sqrt{3}\alpha}{21}-\frac{2L^2-4\sqrt{3}\alpha}{3}=-56L\sqrt{\frac{3L^2+4\sqrt{3}\alpha}{21}}\\
&\Rightarrow \left(16L^2+\frac{3L^2+4\sqrt{3}\alpha}{21}-\frac{2L^2-4\sqrt{3}\alpha}{3}\right)^2=3136L^2\frac{3L^2+4\sqrt{3}\alpha}{21}. 
\end{align*}

This is a fourth degree polynomial, and therefore the roots can be calculated. From this, it is easily seen that, on the domain of $P_4$, $P_3<P_4$. Similarly calculations are done for the rest of the comparisons. 

\section{Finding the Minimizer of $P_3$}\label{sec:TheBigPolynomial}

Here, we find the polynomial that has a root which gives the global minimizer of $P_3$. The root cannot be expressed in closed form. So, we merely give an order for the roots and find which one is the root we want. 

Recall that $P_3=7\sqrt{\frac{3L^2+4\sqrt{3}}{21}}+7\sqrt{\frac{3L^2+4\sqrt{3}\alpha}{21}}-3L$. Taking the derivative and simplifying gives us: $P_3'=\frac{L}{\sqrt{\frac{3L^2+4\sqrt{3}}{21}}}+\frac{L}{\sqrt{\frac{3L^2+4\sqrt{3}\alpha}{21}}}-3$. 

We equate the derivative of $P_3$ to zero and solve:

\begin{align*}
&\scaleobj{0.9}{\frac{L}{\sqrt{\frac{3L^2+4\sqrt{3}}{21}}}+\frac{L}{\sqrt{\frac{3L^2+4\sqrt{3}\alpha}{21}}}-3}\scaleobj{0.9}{=0}\\
&\scaleobj{0.9}{\Rightarrow L\sqrt{\frac{3L^2+4\sqrt{3}\alpha}{21}}+L\sqrt{\frac{3L^2+4\sqrt{3}}{21}}}\scaleobj{0.9}{=3\sqrt{\frac{(3L^2+4\sqrt{3}\alpha)(3L^2+4\sqrt{3})}{441}}.}\\
&\scaleobj{0.9}{\Rightarrow L^2\left(\frac{3L^2+4\sqrt{3}\alpha}{21}\right)+2L^2\sqrt{\frac{3L^2+4\sqrt{3}\alpha}{21}}\sqrt{\frac{3L^2+4\sqrt{3}}{21}}+L^2\left(\frac{3L^2+4\sqrt{3}}{21}\right)}\scaleobj{0.9}{=9\left(\frac{(3L^2+4\sqrt{3}\alpha)(3L^2+4\sqrt{3})}{441}\right)}\\
&\scaleobj{0.9}{\Rightarrow4L^4\left(\frac{9L^4+12\sqrt{3}L^2+12\sqrt{3}\alpha L^2+48\alpha}{441}\right)-\left(\frac{9L^4+12\sqrt{3}L^2+12\sqrt{3}\alpha L^2+48\alpha}{49}-\frac{6L^4+4\sqrt{3}\alpha L^2+4\sqrt{3} L^2}{21}\right)^2}\scaleobj{0.9}{=0.}
\end{align*}

The lefthand side of this final expression can be expanded to give a polynomial of degree $8$. If we order its roots in terms of the magnitude (from smallest to largest) of their real parts first and their complex parts second, we can find, by fixing $\alpha$ and using Newton's method, that the third root gives the global minimizer.

\bibliographystyle{amsplain}
\bibliography{Main.org.tug.bib}

\end{document}